\newcommand{\F}{F_{0,1,\infty}}
\newcommand{\CP}{\C P^1}
\newcommand{\CPm}{\C P^1 \setminus \{0,1,\infty\}}
\newcommand{\sign}{\mathrm{sign}}
\newcommand{\Z}{\mathbb{Z}}
\newcommand{\Q}{\mathbb{Q}}
\newcommand{\R}{\mathbb{R}}
\newcommand{\C}{\mathbb{C}}
\newcommand{\va}{\bm{1}}
\newcommand{\z}{{\bar{z}}}
\newcommand{\h}{{\bar{h}}}
\newcommand{\p}{{\bar{p}}}
\newcommand{\y}{{\bar{y}}}
\newcommand{\uz}{{\underline{z}}}
\newcommand{\uuz}{{z,\bar{z}}}
\newcommand{\uuzo}{{z_1,\bar{z}_1}}
\newcommand{\uuzt}{{z_2,\bar{z}_2}}
\newcommand{\uuzz}{{z_0,\bar{z}_0}}
\newcommand{\s}{{\bar{s}}}
\newcommand{\td}{{\bar{t}}}
\newcommand{\n}{{\bar{n}}}
\newcommand{\al}{\alpha}
\newcommand{\be}{\beta}
\newcommand{\om}{\omega}
\newcommand{\omb}{{\bar{\omega}}}
\newcommand{\si}{\sigma}
\newcommand{\dz}{\frac{d}{dz}}
\newcommand{\ddz}{\frac{d}{d\z}}
\newcommand{\lat}{{\mathrm{lat}}}
\newcommand{\hY}{\hat{Y}}
\newcommand{\Ld}{{\overline{L}}}
\newcommand{\D}{{\bar{D}}}
\newcommand{\Om}{{\Omega}}
\newcommand{\GCor}{{\mathrm{GCor}}}
\newcommand{\Aut}{\mathrm{Aut}\,}
\newcommand{\genus}{{{I\hspace{-.1em}I}}}
\newcommand{\gen}{{\text{genus}}}
\newcommand{\tw}{{{I\hspace{-.1em}I}_{1,1}}}
\newcommand{\twN}{{{I\hspace{-.1em}I}_{k,k}}}
\newcommand{\End}{\mathrm{End}\,}
\newcommand{\Hom}{\mathrm{Hom}\,}
\newcommand{\fora}{\text{ for any }}
\newcommand{\FHp}{\underline{\text{Full $\mathcal{H}$-VA}}}
\newcommand{\VHp}{\underline{\text{$\mathcal{H}$-VA}}}
\newcommand{\AH}{\underline{\text{even AH pair}}}
\newcommand{\gAH}{\underline{\text{good AH pair}}}
\newcommand{\LP}{\underline{\text{Lattice pair}}}
\newcommand{\OHp}{\underline{\text{G-full VAp}}}
\newcommand{\GVA}{\underline{\text{G-VA}}}
\newcommand{\GFA}{\underline{\text{G-full VA}}}
\newtheorem{thm}{Theorem}[section]
\newtheorem{lem}[thm]{Lemma}
\newtheorem{prop}[thm]{Proposition}
\newtheorem{cor}[thm]{Corollary}
\newtheorem{rem}[thm]{Remark}
\begin{document}

\title[current-current deformation]{current-current deformation}
% Consistency, analysis, two dimensional conformal field theory on projective space
% 
%
\author{YUTO MORIWAKI}
%\date{}
%\address[Y. Moriwaki]{Graduate School of Mathematical Science, The University of Tokyo, 3-8-1
%Komaba, Meguro-ku, Tokyo 153-8914, Japan}
%\email {moriwaki@ms.u-tokyo.ac.jp}%
%\maketitle

%\includepdf{title.pdf}

\vspace*{10mm}
\begin{center}
{\LARGE \bf Two-dimensional conformal field theory, full vertex algebra and
current-current deformation
} \par \bigskip

\renewcommand*{\thefootnote}{\fnsymbol{footnote}}
{\normalsize
Yuto Moriwaki \footnote{email: \texttt{moriwaki.yuto (at) gmail.com}}
}
\par \bigskip
{\footnotesize Kavli Institute for the Physics and Mathematics of the Universe, Chiba, Japan}

\par \bigskip
\end{center}

%\vspace*{8mm}

\noindent
\begin{center}
{\large Abstract
}
\end{center}
 \par \bigskip
The main purpose of this paper is a mathematical construction of non-perturbative deformations of two-dimensional conformal field theories.

We introduce the notion of a full vertex algebra which formulates a compact two-dimensional conformal field theory. Then, we construct a deformation family of full vertex algebras
which serves as a current-current deformation of conformal field theory in physics.
The parameter space of the deformations is expressed as a quotient of an orthogonal Grassmannian.
This parameter space is a part of the CFT moduli space expected in physics. We demonstrate that nontrivial quotients of Grassmannians are obtained from holomorphic vertex operator algebras of central charge $24$.

%As an application, we consider a deformation of chiral conformal field theories, vertex operator algebras.
%A current-current deformation of a ``vertex operator algebra''
%may produce new vertex operator algebras.
%We give a formula for counting the number of the isomorphic
%classes of vertex operator algebras obtained in this way.
%We demonstrate it for some holomorphic vertex operator algebra of central charge $24$.
%%

\pagenumbering{roman}

\vspace*{8mm}

\pagenumbering{arabic}

\clearpage

\begin{center}
{\large \bf Introduction
}
\end{center}
 \par \bigskip

In theoretical physics, quantum field theory is the conceptual framework that describes a wide range of objects from the world of elementary particles to the scale of the universe, and its mathematical basis is one of the most important problems in modern mathematics \cite{We,PS,Ha,W}.
%
% mathematical conjectures.
%
In quantum field theory, deformations of theories are important, since in the case of free field theories, their deformations give phenomenological predictions about the real world.
A deformation is defined by adding 
 a new term to the original Lagrangian $\mathcal{L}(\mathcal{O}_i,\partial_\mu \mathcal{O}_i) \mapsto \mathcal{L}(\mathcal{O}_i,\partial_\mu \mathcal{O}_i)+g \mathcal{O}_k$.
Here $\mathcal{O}_k$ is an additional field, and $g \in \R$ is called a coupling constant (cf., \cite{IZ,Sr}).
A deformed {\it correlation function}, a physical quantity,
can be obtained by perturbation theory, i.e., expanded as a power series in $g$ by using the path-integral.
%In quantum field theory, a {\it deformation} of theory is very important and useful for constructing new theories.
%The method of approximately calculating the correlation function of the unsolvable field theory by transforming the theory that can be solved exactly is one of the important tools in quantum field theory. Useful for 
In most cases, the deformation obtained in this way remains  only an approximation. Therefore, it is unclear whether the deformed theory rigorously satisfies the axiom of quantum field theory. This is one of the difficulties in constructing a new quantum field theory mathematically.

Quantum field theory in higher dimensions is difficult to construct. However, {\it conformal field theory} (quantum field theory with conformal symmetry) in two-dimension has many mathematically rigorous and non-trivial examples \cite{FMS}. It is noteworthy that
two-dimensional conformal field theory
is an interesting object in itself
since it plays a significant role in statistical mechanics \cite{He}, condensed matter physics \cite{Kitae}, and string theory \cite{P} in physics and it is deeply related to 
elliptic genus \cite{Ta}, modular forms \cite{Zh}, infinite-dimensional Lie algebras and sporadic finite simple groups \cite{FLM,B2} in mathematics.

The purposes of this paper are
\begin{enumerate}
\item
to introduce the notion of a {\it full vertex algebra}
which is a mathematical formulation of two-dimensional conformal field theory;
\item
to construct a deformation of a full vertex algebra,
which serves as a deformation of conformal field theory;
\item
to apply the deformation to the classification theory of vertex algebras.
\end{enumerate}
%(1) is based on \cite{M2}, (2) is on \cite{M3}
%and (3) is on \cite{M1,M3}.
%
% emphasize the importance??
%

There are various definitions of quantum field theory, such as axiomatic quantum field theory and conformal net \cite{OS,Ha,Wa}. These theories are based on a functional analysis, and the space of states is assumed to be a Hilbert space. However, a full vertex algebra does not consider such an assumption.
In fact, two-dimensional conformal field theories without unitarity, such as the Yang-Lee model, are examples of full vertex algebras.
Various examples of full vertex algebras are constructed in \cite{M4,M5}.
%The relation between the axioms of a full vertex algebra and the Osterwalder-Schrader axioms (AQFT) will be discussed in a future work.
%We would like to emphasize that our formulation does not assume that the theory is unitary. Therefore, we also include as examples non-unitary conformal field theories \cite{M5} that cannot be described by functional-analytic approachs \cite{Ha,Wa}.

%We would like to emphasize that the deformations constructed in this paper can be applied to general compact conformal field theories (full vertex algebras).
%In fact, in \cite{M4}, new conformal field theories are constructed by applying the method of this paper to
%various ``non-free'' conformal field theories.

%We also remark that although this paper treats only free theory as an example of a full vertex algebra, the results of this paper can be applied to any compact conformal field theory (a full vertex algebra).
%The current-current deformation of non-free full vertex algebras will be discussed in \cite{M4}.

\noindent
\begin{center}
{0.1. \bf 
Conformal field theory in physics and mathematics
}
\end{center}

First, we briefly recall a formulation of quantum field theory from physics, which is not necessarily two-dimensional.
One aim of quantum field theory is to calculate {\it $n$-point correlation functions}, that is, the vacuum expectation value of an interaction of $n$ particles.
An interaction of $n$ particles decomposes into 
subsequent interactions of three particles.
Thus, an $n$-point correlation function can be expressed in terms of three-point correlation functions, together with a choice of decompositions.
Quantum field theory requires that the resulting $n$-point correlation functions are \emph{independent} of the choice of decompositions.
This principle is known as the {\it consistency of quantum field theory}.
Although it is known to be difficult to construct mathematically rigorous quantum field theories, surprisingly many examples, especially conformal field theories, have been constructed in two-dimension in physics literature (see \cite{FMS}).

In (not necessarily two-dimensional) conformal field theories, it is believed in physics that the whole consistency of $n$-point correlation functions follows from 
the \emph{bootstrap equations (or hypothesis)},
which are distinguished consistencies of four-point correlation functions \cite{FGG,P2}.
This hypothesis was used successfully by Belavin, Polyakov, and Zamolodchikov in \cite{BPZ}, where the modern study of 
two-dimensional conformal field theories was initiated.

Hereafter, we consider two-dimensional conformal field theory.  A field of two-dimensional conformal field theory is an operator-valued real analytic function.
A conformal field theory in which any field is holomorphic is called a {\it chiral conformal field theory.}
It is noteworthy that the algebra of a chiral conformal field theory satisfies a purely algebraic set of axioms, which was introduced by Borcherds \cite{B} (see also \cite{G}).
It is called a {\it vertex algebra} or a {\it vertex operator algebra} \cite{FLM}
and has been studied intensively by many authors, e.g., \cite{LL,FHL,FB}.
In contrast, a formulation of the algebra of a non-chiral conformal field theory needs analytic properties and seems impossible to describe in a purely algebraic way.

Moore and Seiberg constructed a non-chiral conformal field theory as an extension of a chiral and an anti-chiral (anti-holomorphic) vertex operator algebras by their modules \cite{MS1,MS2}.
The bootstrap equations, in this case, are translated as a monodromy invariant property of the four-point correlation functions. In the physics literature, this property was reformulated later by Fuchs, Runkel, and Schweigert in \cite{FRS}, which says that the algebra describing the conformal field theory is a Frobenius algebra object in the braided tensor category constructed from chiral and anti-chiral vertex operator algebras.

A mathematical approach in this direction is due to Huang and Kong \cite{HK} based on the representation theory of a {\it regular vertex operator algebra} developed by Huang and Lepowsky in a series of papers \cite{HL1,HL2,HL3,H1,H2} (see also \cite{M6} for a different method using conformal blocks).
A regular vertex operator algebra is a class of vertex operator algebras with a semisimple module category (all the representations are completely reducible).
One of the noticeable results is obtained by Huang, which states that the representation category of a regular vertex operator algebra (of strong CFT type) inherits a modular tensor category structure \cite{H3,H4}.
%Note that a vertex operator algebra is called regular if every (weak) module is completely reducible.

Based on this theory, Huang and Kong \cite{HK} introduced the notion of a {\it full field algebra}, which is a mathematical axiomatization of the algebras describing non-chiral two-dimensional conformal field theory. They also constructed conformal field theories, called \emph{diagonal theories} in physics, as finite module extensions of the tensor products of regular vertex operator algebras.
Their theory basically assumes that the conformal field theory is a finite extension of a tensor product of chiral and anti-chiral regular vertex operator algebras.
Such a conformal field theory is called a {\it rational conformal field theory}, and it is known that the energy spectrum of the theory is a subset of the rational numbers. 
However, when considering a deformation of a theory, the energies must change continuously;
thus it is necessary to consider irrational conformal field theories. 

\noindent
\begin{center}
{0.2. \bf 
Full vertex algebra -- a formulation of compact conformal field theory
}
\end{center}

In this paper, we introduce \footnote{This is the author's doctoral dissertation and is a reorganization of \cite{M3}, on which this paper is based, and a part of \cite{M2}, from which the introduction of a full vertex algebra was taken.} the notion of a full vertex algebra (and a full vertex operator algebra) which formulates compact two-dimensional conformal field theory on $ \CP$.
While the definition of a full field algebra by \cite{HK} is based on a part of the consistency of $n$-point correlation functions for all $n \geq 1$, the definition of a full vertex algebra is based on  ``the bootstrap equations'', which are expected to be sufficient to derive the whole consistency of the theory.

We note that in recent years, the bootstrap hypothesis has
become increasingly important in the study of conformal field theory, including higher-dimensional cases.
An infinite number of inequalities can be obtained from the bootstrap equation for a unitary conformal field theory, which is a constraint on the existence of the theory.
By numerically evaluating the constraint conditions, the critical exponents (physical quantities) of the three-dimensional critical Ising model are calculated with high accuracy (cf., \cite{RRTV,EPPRSV}).
In \cite{M2}, we prove that the axiom of a full vertex algebra is equivalent to the bootstrap equation
under reasonable assumptions.

% bootstrap equationは四点相関関数に関する仮定である。
% 共形対称性が\CP^1にthree transitiveより、
% 四点は0,1,\infty,zとして良い。この四点相関関数の極限は
%\CPm 上の実解析的な関数になる。
%　我々の定義の一つの重要な点は、この実解析的関数の0,1,infty
%　での振る舞いを数学的に定式化し、共形特異性という概念を導入したことである。
%

A crucial point of our definition is
introducing a class of real analytic functions on $\CPm$
with certain possible singularities at $\{0,1,\infty\}$, which we call {\it conformal singularities}.
Roughly speaking, a function with a conformal singularity at $0$ has the following expansion around $z=0$,
\begin{align}
\sum_{r \in \R}\sum_{n,m \geq 0} a_{n,m}^r z^n \z^m |z|^r, \label{eq_CS}
\end{align}
where $|z|=z\z$, the square of the absolute value, and $a_{n,m}^r \in \C$. This series is assumed to be absolutely convergent in an annulus $0<|z|<R$ (for the precise definition, see Section \ref{sec_singularity}).
A typical example of such a function on $\CP$
is $|z|^r$ ($r\in \R$), which has the conformal singularities at $\{0,\infty\}$.
Another example is 
\begin{align}
f_{\mathrm{Ising}}(z)=\frac{1}{2}(|1-\sqrt{1-z}|^{1/2}+|1+\sqrt{1-z}|^{1/2}), \label{Ising}
\end{align}
which appears as a four point function of the two-dimensional critical Ising model 
(for construction of the full vertex algebra corresponding to this conformal field theory, see \cite{M4}).
The expansion of $f_{\mathrm{Ising}}(z)$ at $z=0$ is
\begin{align*}
1+|z|^{1/2}/4-z/8-\z/8+|z|^{1/2}(z+\z)/32+ z\z/64-5z^2/128-5\z^2/128+\dots.
\end{align*}

By using the notion of a conformal singularity,
we introduce a space of real analytic functions
on $Y_2=\{(z_1,z_2)\in \C^2\;|\; z_1\neq z_2, z_1\neq 0,z_2 \neq 0 \}$
which has possible conformal singularities along $z_1=0,z_2=0,z_1=z_2$
and denote it by $\GCor_2$ (see Section \ref{sec_gen}).

Let us describe the precise definition of a full vertex algebra.
For a vector space $V$, let $V[[z,\z,|z|^\R]]$ be a space of formal power series spanned by $$\sum_{r \in \R} \sum_{n,m \geq 0} v_{n,m}^r z^n\z^m |z|^r,$$
where $v_{n,m}^r \in V$
and $V((z,\z,|z|^\R))$ a subspace of $V[[z,\z,|z|^\R]]$ consisting of formal power series which are bounded below and discrete (see Section \ref{sec_formal}).
A full vertex algebra is an $\R^2$-graded vector space $F=\bigoplus_{h,\h\in \R} F_{h,\h}$ with a distinguished vector $\va \in F_{0,0}$ and a linear map
$$Y(-,\uuz):F\rightarrow \End F[[z,\z,|z|^\R]],\; a\mapsto Y(a,\uuz)=
\sum_{r,s\in \R}a(r,s)z^{-r-1}\z^{-s-1}$$
satisfying the following axioms:
\begin{enumerate}
\item[FV1)]
For any $a,b\in F$, $Y(a,\uuz)b \in F((z,\z,|z|^\R))$;
\item[FV2)]
$F_{h,\h}=0$ unless $h-\h \in \Z$;
\item[FV3)]
For any $a \in F$, $Y(a,\uuz)\va \in F[[z,\z]]$ and $\lim_{z \to 0}Y(a,\uuz)\va = a(-1,-1)\va=a$;
\item[FV4)]
$Y(\va ,\uuz)=\mathrm{id}_F$;
\item[FV5)]
For any $a,b,c \in F$ and $u \in F^\vee=\bigoplus_{h,\h\in \R} F_{h,\h}^*$,
there exists $\mu(z_1,z_2) \in \GCor_2$ such that
\begin{align}
u(Y(a,\uuzo)Y(b,\uuzt)c)&=\mu(z_1,z_2)|_{|z_1|>|z_2|},\nonumber \\
u(Y(Y(a,\uuzz)b,\uuzt)c)&=\mu(z_0+z_2,z_2)|_{|z_2|>|z_0|}, \label{eq_Borcherds} \\
u(Y(b,\uuzt)Y(a,\uuzo)c)&=\mu(z_1,z_2)|_{|z_2|>|z_1|},\nonumber
\end{align}
where $F_{h,\h}^*$ is the dual of $F_{h,\h}$
and $\mu(z_1,z_2)|_{|z_1|>|z_2|}$ is the expansion of $\mu(z_1,z_2)$ in
$\{|z_1|>|z_2|\}$;
\item[FV6)]
$F_{h,\h}(r,s)F_{h',\h'}\subset F_{h+h'-r-1,\h+\h'-s-1}$ for
any $h,h',\h,\h',r,s\in\R$.
\end{enumerate}

Let us explain the physical background of this definition.
All the states of a conformal field theory
form a vector space, which is $F$ in our definition.
The global conformal symmetry 
$\mathrm{SO}(3,1)$ acts on $F$.
The $\R^2$-grading on $F$ is induced from this action, and the assumptions (FV3), (FV4), and (FV6) are natural requirements which conformal field theory satisfies.
For a vector $v \in F_{h,\h}$, the value $h+\h$ and $h-\h$
are physically the energy and the spin of a state $v$.
A state $v$ changes as $\exp(i\theta (h-\h))$ under the rotation group $\mathrm{SO}(2) \subset \mathrm{SO}(3,1)$,
which requires the assumption (FV2) (if the theory does not contain fermions). Although (FV1) and (FV5) are not satisfied by general conformal field theories, they are satisfied by a wide class of conformal field theories, called {\it compact conformal field theories}.%, which contain rational conformal field theories.

In this paper, a compact conformal field theory is a conformal field theory whose state space $F$ satisfies the following conditions:
%The compactness is an assumption on the $\R^2$-graded vector space $F_{\mathrm{phys}}$ and $F$:
\begin{enumerate}
\item[C1)]
There exists $N \in \R$ such that $F_{h,\h}=0$ for any $h\leq N$ or $\h \leq N$;
\item[C2)]
For any $H\in \R$, $\sum_{\substack{h,\h\in \R\\h+\h \leq H}} \dim F_{h,\h}$ is finite.
\end{enumerate}
We also call a full vertex algebra $F$ {\it compact} if it satisfies (C1) and (C2) (Note that this definition of compactness is a bit different from the definition used in physics).

For non-compact conformal field theory, the correlation functions are no longer power series of the form \eqref{eq_CS} but an integral over $\R^2$. Thus, in this paper, we restrict ourselves to compact conformal field theory to avoid difficulties in analysis.
% the space of irreducible representations and integral over there. 
There are important non-compact conformal field theories, e.g., the Liouville field theory and non-compact WZW conformal field theory \cite{DO,ZZ}. We hope to come back to this point in the future.
%
%The compactness makes us possible to 
%
%There exist non-compact conformal field theories, e.g., the Liouville field theory and non-compact WZW conformal field theory \cite{DO,ZZ}. To formulate them mathematically, we need to consider some measure on the space of irreducible representations and integral over there.
%We restrict ourselves on compact conformal field theory
%and exclude these models to avoid difficulties in analysis.
%We hope to come back to this point in future work.

Now, we explain the physical meaning of (FV1) and (FV5) from the compactness.
(FV1) is a mathematical consequence of (C), (FV2), and (FV6)  (see Proposition \ref{lem_grading_add}),
thus is satisfied for any compact conformal field theory.
Furthermore, (FV1) and the bootstrap equation imply (FV5).
Therefore, the notion of a compact full vertex algebra gives a mathematical formulation of two-dimensional compact conformal field theory on $\CP$. In particular, any correlation function of compact conformal field theory always has an expansion of the form \eqref{eq_CS}, which is our motivation for defining a conformal singularity.

As discussed in Section 0.1, rational conformal field theory contains many important conformal field theories, but it is too restrictive to consider deformations. Compact conformal field theory is a wider class of conformal field theory which includes rational conformal field theory,
e.g., the WZW-model for a compact semisimple Lie group and the Virasoro minimal models.
Furthermore, by the definition of the compactness, at least its small deformation seems to be compact.
In particular, we prove under some mild assumption compactness is preserved by the current-current deformation constructed in this paper (for the precise statement, see Proposition \ref{positive_compact}). We expect that (unitary) compact conformal field theory is stable under all exactly marginal deformations.

%
%In this paper, we consider the following condition for an $\R^2$-graded vector space $F_{\mathrm{phys}}$ and $F$:
%\begin{enumerate}
%\item[C1)]
%There exists $N \in \R$ such that $F_{h,\h}=0$ for any $h\leq N$ or $\h \leq N$;
%\item[C2)]
%For any $H\in \R$, $\sum_{h,\h \leq H} \dim F_{h,\h}$ is finite.
%\end{enumerate}
%We call a full vertex algebra or a conformal field theory {\it compact} if their $\R^2$-grading satisfies (C1) and (C2) (this definition of compactness is a bit different from the definition used in physics). Then, any rational conformal field theory
%is compact, e.g., the WZW-model for a compact semisimple Lie group, the Virasoro minimal models or any conformal field theory coming from regular vertex operator algebras.
%Furthermore, by the definition of the compactness,
%at least its small deformations are also compact.
%In particular, under some mild assumption,
%compactness is preserved by the current-current deformation, constructed in this thesis, that is not necessarily small (see Proposition \ref{positive_compact}).
%We expect that exactly marginal deformations of any unitary compact conformal field theory preserve compactness.

Finally, as expected in physics, a chiral conformal field theory (vertex algebra) naturally appears as a subalgebra of a full vertex algebra.
%despite the definition of a full vertex algebra is independent of the theory of a vertex algebra.
In fact, the chiral subspace $\ker \frac{d}{d\z}\bigl|_F$ of a full vertex algebra $F$ forms a vertex algebra,
and $F$ is a module on $\ker \frac{d}{d\z}\bigl|_F$ (see Proposition \ref{vertex_algebra}).
Hence the full vertex algebra $F$ can be seen as an extension of the tensor product of chiral and anti-chiral vertex algebras $\ker \frac{d}{d\z} \bigl|_F \otimes \ker \frac{d}{dz} \bigl|_F$ (Proposition \ref{ker_hom}).
This is an assumption in the study of Huang and Kong.
We note that the notions of a full field algebra \cite{HK} and a full vertex algebra are equivalent
\footnote{Since a full vertex algebra satisfies {\it the associativity} and {\it the skew-symmetry} (Proposition \ref{translation}), if $\ker \frac{d}{d\z} \bigl|_F$ and $\ker \frac{d}{dz} \bigl|_F$ are regular VOAs, then $F$ is a full field algebra by \cite[Theorem 2.11]{HK}. Conversely, given a full field algebra $F$ that is an extension of regular vertex operator algebras, we see from \cite[Theorem 2.11]{HK} and \cite[Proposition 4.3]{M2} that $F$ gives a full vertex algebra.}
 if the algebra is an extension of a tensor product of chiral and anti-chiral regular vertex operator algebras.

%
%
%We recall that a certain limit of four point correlations of chiral conformal field theory (vertex algebra)
%is in $\C[z^\pm,(1-z)^\pm]$. Hence the holomorphic subspace of a full vertex algebra forms a vertex algebra,
%because a holomorphic function on $\CPm$ with possible conformal singularities is nothing but a meromorphic function on $\CP$ with possible poles at $\{0,1,\infty\}$ and thus is in $\C[z^\pm,(1-z)^\pm]$ (see Proposition \ref{vertex_algebra}).
%
%Finally, we note that 

\noindent
\begin{center}
{0.3. \bf 
Current-current deformation in physics and its formulation
}
\end{center}

Now, we briefly review a deformation of two-dimensional conformal field theory in physics.
The deformation of two-dimensional conformal field theory $F=\bigoplus_{h,\h\in \R}F_{h,\h}$ generated by a general field $O_k \in F_{h,\h}$ does not always preserve the conformal symmetry.
This general deformation has been studied by many physicists, e.g., \cite{Z,EY} to understand the structure of quantum field theories.
Meanwhile, a deformation of a two-dimensional conformal field theory which preserves the conformal symmetry is known to be generated by a special field $O_k \in F_{1,1}$, called an (exactly) {\it marginal field} \cite{DVV}.

Chaudhuri and Schwartz considered the deformation of a conformal field theory generated by
a field in $F_{1,0}\otimes F_{0,1} \subset F_{1,1}$ (a sum of products of chiral currents and anti-chiral currents).
They showed that the field is exactly marginal if and only if
the chiral currents, as well as the anti-chiral currents, belong to {\it commutative current algebras} \cite{CS}. The deformation generated by this $(1,1)$-field is called a {\it current-current deformation} in the physics literature (cf., \cite{FR}).
Those studies depend on the path integral method,
which is not mathematically rigorous.
This paper aims to mathematically formulate and construct the current-current deformation of two-dimensional conformal field theory.

\begin{comment}
A current-current deformation naturally appears in string theory
as a toroidal compactification. 
If the target space is  $k$-dimensional,
it is parametrized by the double coset
\begin{align}
O(k,k;\Z) \backslash O(k,k;\R) / O(k;\R) \times O(k;\R),
\label{eq_string}
\end{align}
where $O(k,k)$ is the algebraic orthogonal group with the signature
$(k,k)$ and $O(k,k; S)$ is the $S$-points for a ring $S$ (see  \cite{P}).
The right action of $O(k,k;\Z)$ is called a {\it T-duality}.
In the case of $k=1$,
$O(1,1;\R) / O(1;\R) \times O(1;\R)$ is isomorphic to $\R\cong \R_{>0}$
and $R \in \R_{>0}$ corresponds to the radius of the compactification and
%Denote by $C_R$ the full vertex (operator) algebra associated with the radius $R$. 
the right action of the {\it duality group} $O(1,1;\Z) \cong D_4$ (the dihedral group)
exchanges $R$ and $R^{-1}$.

In general setting,
F\"{o}rste and Roggenkamp 
showed that the lowest weight space of a current algebra
is invariant, and only the charges change 
under the current-current deformation based on the path-integral \cite{FR}. They also showed the double coset description of the parameter space.
The duality group is given as
an automorphism group of a ``structure of the lowest weight space''.
The purpose of this paper is to mathematically formulate and construct the non-perturbative current-current deformation of two-dimensional conformal field theory.

\noindent
\begin{center}
{0.4. \bf Mathematical formulations - full $\mathcal{H}$-vertex algebras.
}
\end{center}
\end{comment}

In terms of a full vertex algebra, the commutative current algebra which generates a current-current deformation 
corresponds to a subalgebra of a full vertex algebra which is isomorphic to the tensor product of
chiral and anti-chiral Heisenberg vertex algebras.

It is convenient to introduce the notion of a {\it full $\mathcal{H}$-vertex algebra}.
Let $H_l$ and $H_r$ be real vector spaces equipped with non-degenerate bilinear forms $(-,-)_l:H_l\times H_l\rightarrow \R$
and $(-,-)_r:H_r\times H_r\rightarrow \R$
and $M_{H_l}(0)$ and $M_{H_r}(0)$ be the affine Heisenberg vertex algebras associated with $(H_l,(-,-)_l)$ and $(H_r,(-,-)_r)$, respectively.
Set $H=H_l \oplus H_r$ and let $p,\p \in \End H$ be the projections of $H$ onto $H_l$ and $H_r$,
$(H,(-,-)_p)=(H_l\oplus H_r,(-,-)_l\oplus (-,-)_r)$
the orthogonal sum of vector spaces
and
$$
M_{H,p}=M_{H_l}(0)\otimes \overline{M_{H_r}(0)}
$$
the tensor product of the vertex algebra $M_{H_l}(0)$
and the anti-chiral vertex algebra $\overline{M_{H_r}(0)}$.
A {\it full $\mathcal{H}$-vertex algebra}
is a full vertex algebra $F$
together with a full vertex algebra homomorphism
$M_{H,p} \rightarrow F$.
Since $F$ is an $M_{H,p}$-module,
$F$ is a module of the affine Heisenberg Lie algebra $\hat{H}$
associated with $(H,(-,-)_l\oplus (-,-)_r)$.
For $\al \in H$,
set
$$
\Om_{F,H}^\al=\{v\in F\;|\;
h(n)v=0, h(0)v=(h,\al)_p v \fora h \in H \text{ and }n\geq 1\}
$$ and $\Om_{F,H}=\bigoplus_{\al \in H} \Om^\al$.
The lowest weight space $\Om_{F,H}$ is called a {\it vacuum space}
in \cite[Section 1.7]{FLM}.
We assume that the vacuum space $\Om_{F,H}$ generates $F$ as an $\hat{H}$-module,
that is,
\begin{align}
F \cong \bigoplus_{\al \in H} M_{H,p}(\al) \otimes \Om_{F,H}^\al.
\label{eq_decomposition}
\end{align}
Then, as suggested by F\"{o}rste and Roggenkamp in \cite{FR},
$\Om_{F,H}$ inherits an algebra structure by modifying the full vertex algebra structure on $F$.
More precisely, we introduce the notion of a {\it generalized full vertex algebra},
which is in fact a mathematical formulation of the above ``structure of the lowest weight space''.
Then, we show that $\Om_{F,H}$ is a generalized full vertex algebra (Theorem \ref{vacuum_space}).
Before stating the main results,
we briefly explain the definition of a generalized full vertex algebra,
which plays a crucial role in this paper.
%Furthermore, we show that the assignment $(F,H) \mapsto \Om_{F,H}$ gives a functor from the category of full $\mathcal{H}$-vertex algebras to the category of generalized full vertex algebras (Theorem \ref{vacuum_space}).

\noindent
\begin{center}
{0.4. \bf Generalized full vertex algebras.
}
\end{center}

%\subsection{Generalized full vertex algebras}
The notion of a generalized full vertex algebra
is a ``full'' analogy of the notion of a (chiral) generalized vertex algebra
introduced by Dong and Lepowsky \cite{DL},
in order to study the affine vertex algebras and the parafermion vertex algebras
\cite{DL}.

We first recall their results. Let $\mathfrak{g}$ be a simple Lie algebra
and $L_{\mathfrak{g},k}$ the simple affine vertex algebra at level $k$.
Then, $L_{\mathfrak{g},k}$ has a Heisenberg vertex subalgebra generated by a Cartan subalgebra of the Lie algebra,
$H_{\mathfrak{g}}\subset \mathfrak{g}$.
Thus, $(L_{\mathfrak{g},k},H_{\mathfrak{g}})$ is a chiral full $\mathcal{H}$-vertex algebra, which we call an
{\it $\mathcal{H}$-vertex algebra}.
Dong and Lepowsky showed that if $k \in \Z_{\geq 0}$, called an integrable level, the vacuum space $\Om_{L_{\mathfrak{g},k}, H_{\mathfrak{g}}}$ inherits a generalized vertex algebra structure \cite[Theorem 14.16]{DL}.
They also constructed a generalized vertex algebra from a pair of a real finite-dimensional vector space $H$ equipped with a non-degenerate symmetric bilinear form and an abelian subgroup $L \subset H$. They call it a {\it generalized lattice vertex algebra.}

We remark that 
our proof of the existence of a generalized full vertex algebra
structure on $\Om_{F,H}$ (Theorem \ref{vacuum_space}) seems different from \cite[Theorem 14.16]{DL}.
Since any $\Z$-graded vertex algebra is a full vertex algebra (Proposition \ref{graded_vertex}),
Theorem \ref{vacuum_space} generalizes their results to any vertex algebras, in particular, to the affine vertex algebras at any level $k \in \R$.
In fact, we prove that the category of generalized vertex algebras
and the category of $\mathcal{H}$-vertex algebras are equivalent (Proposition \ref{vacuum_vertex}).

A generalized full vertex algebra is,
roughly, an $H$-graded vector space $\Om=\bigoplus_{\al \in H}\Om^\al$
equipped with a linear map
$$
\hY(-,\uuz):\Om \rightarrow \End\Om[[z^\R,\z^\R]],\;
a \mapsto \hY(a,\uuz)=\sum_{r,s \in \R}a(r,s)z^{-r-1}\z^{-s-1},
$$
where $H$ is a finite-dimensional vector space
equipped with a non-degenerate symmetric bilinear form.
The critical point is that we allow the 
correlation function for $\al_i\in H$ and $a_i \in \Om^{\al_i}$
to have a $U(1)$-monodromy of the form
$\exp(2\pi (\al_i,\al_j))$ under the interchange of states $a_i$ and $a_j$
 (for the precise definition, see Section \ref{sec_generalized}).
If the monodromy is trivial,
then a generalized full vertex algebra is a full vertex algebra (Lemma \ref{even_omega}).
%The key point is that we can always 
%for any given monodromy data, one can construct 
%a generalized full vertex algebra
%, there exists a generalized 

Thus, a fundamental question is
whether it is possible to cancel the monodromy
for a given generalized full vertex algebra.
The answer is yes.
Let $\Om$ be a generalized full vertex algebra  graded by $H$
and $P(H)$ the set of projections $p \in \End H$ such that
the subspaces $\ker p$ and $\ker (1-p)$ is orthogonal.
Then, we can construct a full vertex algebra for each $p \in P(H)$
by canceling the monodromy (Theorem \ref{construction}).
In fact, we have a family of full $\mathcal{H}$-vertex algebras
parametrized by $P(H)$.
Each element of $P(H)$ determines the charge of the decomposition \eqref{eq_decomposition}.

\noindent
\begin{center}
{0.5. \bf Main results
%Current-current deformation of full $\mathcal{H}$-vertex algebra
}
\end{center}
Before stating the main result, we explain how the $U(1)$-monodromies on the vacuum space appear.
Let $(F,H,p)$ be a full $\mathcal{H}$-vertex algebra and $\al_1,\al_2 \in H$.
Then, the conformal block (or the correlation function) of the affine Heisenberg full vertex algebra $M_{H,p}$ labeled by $\al_1,\al_2$
is of the form
$$
(z_1-z_2)^{(p\al_1,p\al_2)_l}(\z_1-\z_2)^{(\p\al_1,\p\al_2)_r}
=|z_1-z_2|^{(\p\al_1,\p\al_2)_r}(z_1-z_2)^{(p\al_1,p\al_2)_l-(\p\al_1,\p\al_2)_r},
$$
where $|z_1-z_2|$ is the square of the absolute value $(z_1-z_2)(\z_1-\z_2)$.
The above
$|z_1-z_2|^r$ is a single-valued function for any $r\in\R$.
Thus, the monodromy of the conformal block is controlled by
the bilinear form $(-,-)_\lat$ on $H$ defined by
$(\al_1,\al_2)_\lat=
(p\al_1,p\al_2)_l-(\p\al_1,\p\al_2)_r.$
We denote the space $(H,(-,-)_\lat)$ by
$H_l\oplus -H_r$.
Then, the first main result of this paper is
that the assignment $(F,H,p) \mapsto (\Om_{F,H},H_l\oplus -H_r, p)$
gives an equivalence between 
the category of full $\mathcal{H}$-vertex algebras
and the category of generalized full vertex algebras with the charge structure $p$ (Theorem \ref{equivalence}).

The real orthogonal group $O(H_l\oplus-H_r;\R)$ acts on 
the set of all the possible charge structures
$P(H_l\oplus -H_r)$ and the orbit of the original projection $p$
forms the orthogonal Grassmannian 
$O(H_l\oplus -H_r;\R)/O(H_l;\R) \times O(-H_r;\R)$,
which is a connected component of $P(H_l\oplus -H_r)$.
Thus, by using the inverse functor,
we have a family of full $\mathcal{H}$-vertex algebras parametrized by 
the Grassmannian.

We note that for $h_l \in H_l$ and $h_r \in H_r$ with $(h_l,h_l)\neq 0$ and
$(h_r,h_r)\neq 0$,
we have a one-parameter subgroup $\{\si(g)\}_{g\in \R} \subset 
O(H_l\oplus -H_r)$ (see Section \ref{sec_physics}).
The family of full $\mathcal{H}$-vertex algebras associated with $\{\si(g)p \si(g)^{-1} \}_{g \in \R} \subset P(H_l\oplus -H_r)$
is, in fact, the current-current deformation of a full $\mathcal{H}$-vertex algebra $(F,H,p)$ associated with the exactly marginal field $Y(h_l(-1,-1)h_r,\uuz)=h_l(z)h_r(\z)$.
Thus, the above family gives a mathematical formulation of the non-perturbative current-current deformation associated with the commutative
current algebras $H_l$ and $H_r$.

Finally, we give the double coset description of the parameter space.
The automorphism group of
the generalized full vertex algebra $\Om_{F,H}$
naturally acts on the grading $H_l\oplus -H_r$.
Let $D_{F,H}$ be the image of the automorphism group 
in $O(H_l\oplus -H_r)$.
Then, the isomorphism classes of the current-current deformation
of a full $\mathcal{H}$-vertex algebra $(F,H,p)$ are parametrized by the double coset (Theorem \ref{moduli})
\begin{align}
D_{F,H} \backslash O(H_l\oplus -H_r) / O(H_l) \times O(-H_r),
\label{eq_double}
\end{align}
which is conjectured in \cite{FR}.
Thus, $D_{F,H}$ is a mathematical formulation of the duality group,
which implies the T-duality of string theory (see below).

For example, let $F_{\mathrm{SU}(2)}$ be a full vertex algebra
corresponding to the $\mathrm{SU}(2)$ WZW model at level one.
Then, $F_{\mathrm{SU}(2)}$ is a full $\mathcal{H}$-vertex algebra by one-dimensional Cartan subalgebras of $\mathrm{SU}(2)$. 
Since $O(1,1) / O(1) \times O(1) \cong \R_{>0}$, the current-current deformation of $F_{\mathrm{SU}(2)}$ is
parametrized by $R \in \R_{>0}$. Let denote $C_R$ the full $\mathcal{H}$-vertex algebra corresponding to $R \in \R_{>0}$.
The algebra structure of $C_R$ will be studied in detail in Section \ref{sec_toroidal}.
As mentioned in Section 0.2, the chiral and the anti-chiral parts of $C_R$ are vertex operator algebras.
If the square $R^2$ is an irrational number, then both the chiral and the anti-chiral parts
are Heisenberg vertex operator algebras, and $C_R$ defines an irrational conformal field theory.
If $R^2= p / q $ with $p,q \in \Z_{>0}$ are coprime integers,
then both the chiral and the anti-chiral parts are isomorphic to the lattice vertex operator algebra $V_{\sqrt{2pq}\Z}$ and $C_R$ is a finite extension of $V_{\sqrt{2pq}\Z}\otimes \overline{V_{\sqrt{2pq}\Z}}$,
where $\sqrt{2pq}\Z$ is the rank one lattice generated by $\al$ with $(\al,\al)=2pq$
and $\overline{V_{\sqrt{2pq}\Z}}$ is an anti-chiral vertex operator algebra (see Proposition \ref{conjugate}).
For example, the full vertex algebras $C_R$ with $R=\sqrt{6}$ or $R=\sqrt{3/2}$ have
 the same underlying lattice vertex algebra $V_{\sqrt{12}\Z}$.
However, $C_{\sqrt{6}}$ and $C_{\sqrt{3/2}}$ are non-isomorphic.
The decomposition of $C_{\sqrt{6}}$ and $C_{\sqrt{3/2}}$
into irreducible $V_{\sqrt{12}\Z}\otimes \overline{V_{\sqrt{12}\Z}}$-modules are
\begin{align*}
C_{\sqrt{6}} &= \bigoplus_{i \in \Z /12\Z} 
V_{\sqrt{12}\Z+\frac{i}{\sqrt{12}}} \otimes
 \overline{V_{\sqrt{12}\Z+\frac{i}{\sqrt{12}}}}\\
C_{\sqrt{3/2}} &= \bigoplus_{i \in \Z /12\Z} V_{\sqrt{12}\Z+\frac{i}{\sqrt{12}}} \otimes \overline{V_{\sqrt{12}\Z+\frac{7i}{\sqrt{12}}}}.
\end{align*}
Thus, while $C_{\sqrt{6}}$ is a diagonal sum of irreducible modules of $V_{\sqrt{12}\Z}$,
$C_{\sqrt{3/2}}$ is twisted by $7 \in (\Z/\Z_{12})^\times$.
The general twist $n_{p,q} \in (\Z/2pq\Z)^\times$ for $R^2= p/q$ is given in Proposition \ref{twist},
which corresponds to an automorphism of the modular tensor category $\mathrm{Rep}V_{\sqrt{2pq}\Z}$.
In this way, the rational conformal field theory $C_R$ with $R^2\in \mathbb{Q}$ is controlled by a number-theoretic discrete structure,
and the irrational conformal field theory connects them continuously.

\begin{wrapfigure}{c}{0.3\textwidth}
\centering
\includegraphics[scale=0.3,width=0.3 \textwidth]{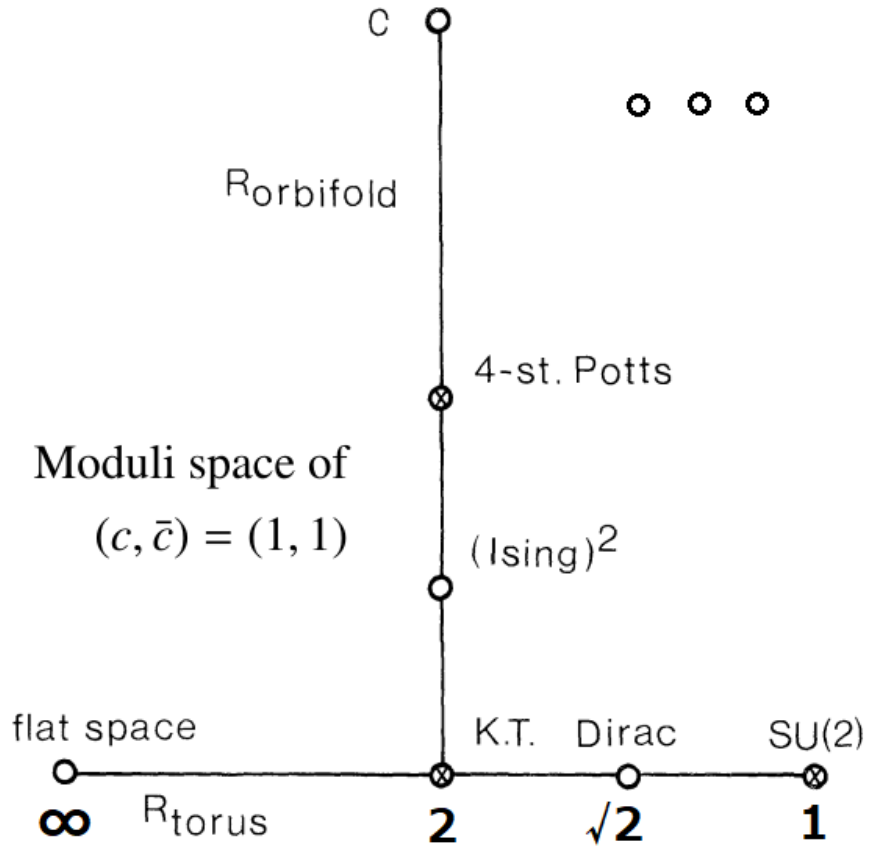}
%\begin{minipage}[c]{0.3\textwidth}
\caption{\\ \tiny{CFT moduli space of $(c,\bar{c})=(1,1)$}}
%\end{minipage}
\end{wrapfigure}

It is noteworthy that $C_R$ and $C_{R'}$ are isomorphic if and only if $R=R'$ or $R=\frac{1}{R'}$,
which just corresponds to the action of the duality group $ D_{F_{\mathrm{SU}(2)}} \cong D_4$ (the dihedral group)
on $O(1,1) / O(1) \times O(1) \cong \R_{>0}$.
The double coset $D_4 \backslash \mathrm{O}(1,1;\R) / \mathrm{O}(1;\R) \times \mathrm{O}(1;\R)$ is a half line $[1,\infty)$. This corresponds to the horizontal line in the moduli space of conformal field theories of central charge $ (c, \bar{c}) = (1,1)$ expected in physics (see Fig.$1$, \cite{Gi,DVV,DVV2}).
The line also corresponds to a family of conformal field theories resulting from a compactification of string theory
whose target space is the cycle $S_R^1 = \R /R \R$ with a radius $R \in \R_{>0}$, and the group $D_ {F}$ generalizes the T-duality $R \leftrightarrow R ^ {-1}$ in string theory.
We note that there is a conjectured central charge $ (c, \bar{c}) = (1,1)$ conformal field theory which does not belong to Fig 1 \cite{RW}, however, if we restrict ourselves to compact conformal field theory, then such models seem to be excluded.

In Section \ref{sec_adjoint2}, we construct a family of full $\mathcal{H}$-vertex operator algebras which corresponds to
the toroidal compactification of string theory with $N$-dimensional target space, called a Narain moduli space \cite{N,NSW},
parameterized by the following double coset:
\begin{align}
\mathrm{O}(N,N;\Z) \backslash \mathrm{O}(N,N;\R) / \mathrm{O}(N;\R) \times \mathrm{O}(N;\R).
\label{eq_string}
\end{align}
Thus, the double coset description \eqref{eq_double} gives global information about the moduli space of conformal field theories, which is important in the study of string theory. We remark that recently the moduli space of conformal field theories
is also of interest in the context of three-dimensional gravity, where a random ensemble of conformal field theory seems to be important and Maloney and Witten considered an integral over the Narain moduli space \eqref{eq_string} \cite{MW}.
We hope our results will motivate further studies of the CFT moduli spaces.

%%%%%%%%%%%%%%
%%%%%%%%%%%%%%
%%%%%%%%%%%%%%
%%%%%%%%%%%%%% 新しいコメント　
%%%%%%%%%%%%%%
%%%%%%%%%%%%%%
%%%%%%%%%%%%%%

We also remark that although this paper treats only free theory as an example of a full vertex algebra, the results of this paper can be applied to any compact conformal field theory (a full vertex algebra).
The current-current deformation of non-free full vertex algebras is discussed in \cite{M4}.

\noindent
\begin{center}
{0.6. \bf Application to classification of vertex operator algebras.
}
\end{center}

One of the key problems in vertex operator algebras (VOAs) is the classification of vertex operator algebras with good representation theory.
A holomorphic VOA is a simple vertex operator algebra whose any module is completely reducible and any irreducible module is isomorphic to the adjoint module. The problem of classifying holomorphic VOAs has a long history
(``holomorphic'' here has nothing to do with real analytic/holomorphic in the context of full/chiral).

If a VOA is holomorphic, the central charge is a
non-negative integer multiple of $8$, and if the central charge is $0,8,16$, the classification is complete \cite{Zh,DM}.
There are expected to be $71$ VOAs of central charge $24$ \cite{Sc}, whose existence and uniqueness have been proved 
if VOAs have non-zero Lie algebras in the degree $1$ subspaces (These are results of many researchers, see for example the introductions of \cite{Ho,MSc,CLM,ELMS,BLS}).

Originally, these $71$ holomorphic VOAs were constructed one by one, but H\"{o}hn proposed a more systematic construction by classifying them into $12$ types \cite{Ho} (see also \cite{La}).
Inspired by the papers of H\"{o}hn and Scheithauer \cite{HS,HS2},
we introduced an equivalence class for general $\mathcal{H}$-vertex algebras (not necessarily holomorphic VOAs), which we call a {\it genus of $\mathcal{H}$-vertex algebras} \cite{M1}.
All $\mathcal{H}$-vertex algebras in the same genus can be systematically constructed.
In the case of holomorphic VOAs of central charge $24$,
the $12$ types given by H\"{o}hn and the genera of $\mathcal{H}$-vertex algebras are the same (see Proposition \ref{prop_genera_hol}).
This means that from the $12$ conformal vertex algebras of central charge $26$ (a class of vertex algebras broader than VOAs), we can construct all $71$ holomorphic VOAs of central charges $24$ \cite[Theorem 4.5]{M1}.
Although the notion of a genus of $\mathcal{H}$-vertex algebras was introduced independently of current-current deformations, they are closely related to each other. Indeed, we have shown that from 12 families of the current-current deformations of full VOAs of central charge $(25,1)$, we can obtain all the $71$ chiral holomorphic VOAs of central charge $24$ (Proposition \ref{prop_genera_hol}). Thus, deformations of full VOAs are useful for classifying  and constructing chiral VOAs. This is explained in more detail below.

%In this paper, we consider deformations of full $\mathcal{H}$-vertex algebras.
A chiral vertex algebra cannot be deformed, but by tensoring a full vertex algebra, we can consider a deformation that mixes the chiral part  and the anti-chiral part. We can then consider the classification of vertex algebras up to deformations.
In this paper, we consider this problem in the case of tensoring a full vertex algebra $F_{\mathrm{SU}(2)}$.
$\mathcal{H}$-vertex algebras $V,W$ are said to be equivalent if
there exits a current-current deformation between the full $\mathcal{H}$-vertex algebras $V\otimes F_{\mathrm{SU}(2)}$ and $W\otimes F_{\mathrm{SU}(2)}$.
We have shown that this equivalence relation is the same as the genus of $\mathcal{H}$-vertex algebras introduced in \cite{M1} (Theorem \ref{genus}), and thus, the H\"ohn's genus (Proposition \ref{prop_genera_hol}).

If $V$ is a VOA of central charge $c$, then $V\otimes F_{\mathrm{SU}(2)}$ is a full VOA of central charge $(c+1,1)$.
Thus, if two holomorphic VOAs of central charge $24$ are in the same genus,
the corresponding full VOAs are in the same connected component of the moduli space of conformal field theory of central charge $(25,1)$ (the conjectured CFT moduli space of central charge $(1,1)$ in Fig 1 has four connected components).

%Thus, by leaving chiral VOAs and considering full VOAs, we can deform the algebras and construct new VOAs.

A mathematical definition of the CFT moduli space does not yet exist, what we have just described suggests that defining and constructing the CFT moduli space is important for constructing and classifying chiral/full vertex algebras.
The parameter space of current-current deformations (a subspace of the CFT moduli space) has the form of a double coset as in \eqref{eq_double}  and \eqref{eq_string}.
Using holomorphic VOA of central charge $24$, non-trivial duality groups are obtained and thus interesting quotients of the Grassmannians. We will illustrate this point at the end.

Let $V_B$ (resp. $V_G$) be a holomorphic VOA of central charge $24$
and of type B (resp. of type G) in H\"{o}hn's notion \cite{Ho}.
By a result of \cite{BLS}, the duality groups of $V_{B}\otimes F_{\mathrm{SU}(2)}$ and $V_G \otimes F_{\mathrm{SU}(2)}$
 are obtained and the parameter spaces of the current-current deformations are of the form (Theorem \ref{prop_new_grass}):
\begin{align}
\Aut \genus_{17,1}(2_{\genus}^{+10}) \backslash O(17,1;\R) /O(17;\R) \times O(1;\R),
\label{eq_new_lattice}
\end{align}
for $V_{B}\otimes F_{\mathrm{SU}(2)}$ and
\begin{align}
\Aut \genus_{9,1}(2_\genus^{+6}3^{-6}) \backslash O(9,1;\R) /O(9;\R) \times O(1;\R)
\label{eq_new_lattice2}
\end{align}
for $V_{G}\otimes F_{\mathrm{SU}(2)}$.
Here, $\genus_{17,1}(2_{\genus}^{+10})$ and $\genus_{9,1}(2_\genus^{+6}3^{-6})$ are some even lattices of signature $(17,1)$ and $(9,1)$, respectively.
Note that this result does not depend on the choice of $V_B$, since all holomorphic VOAs of type B are included in \eqref{eq_new_lattice}.
(The same is true for type G.)

Comparing with \eqref{eq_string}, it is interesting to note that here nontrivial lattices appear instead of unimodular lattices.
The moduli spaces \eqref{eq_new_lattice} and \eqref{eq_new_lattice2} are new in physics, and it is a future task to investigate the CFT moduli spaces from the studies of the automorphism groups of holomorphic VOAs (for example \cite{BLS}).

\noindent
\begin{center}
{\bf Outline.
}
\end{center}

%\subsection{Outline}
The definition of a full vertex algebra is an analogue of the definition of a vertex algebra, and therefore we review the definition of a vertex algebra in a way that makes the analogy clearer in Section 1.
In Section 2, we examine a space of formal power series of two variables needed to define a full vertex algebra.
In Section 3, we introduce the notion of a full vertex algebra and study its properties.
In Section 4, we introduce the notion of a generalized full vertex algebra, construct a standard example and tensor product
and prove Theorem \ref{construction} by canceling the monodromies.
The notion of a full $\mathcal{H}$-vertex algebra is introduced in Section 5.
We show that the vacuum space inherits a generalized full vertex algebra structure
(Theorem \ref{vacuum_space})
and the equivalence of the categories (Theorem \ref{equivalence}).
We also construct some adjoint functors, which will be used later.
Combining the above results,
the current-current deformation of a full $\mathcal{H}$-vertex algebra is defined, and the double coset description
of the parameter space is proved (Theorem \ref{moduli}) in Section 6.
As an application, we study the relation between the current-current deformation of $\mathcal{H}$-vertex algebras
and the genus of vertex algebras and compute the duality group for non-trivial example in Section 7.

\tableofcontents

\section{From chiral vertex algebra to full vertex algebra}

Scale invariance (conformal invariance) is a key concept in vertex algebras and full vertex algebras. In this section, we reconsider the definition of a vertex algebra with a focus on scale invariance. This will naturally lead to the notion of a full vertex algebra.

In Section \ref{sec_chiral_vertex}, we derive the {\it vertex operator} $Y(-,z):V\rightarrow \End V[[z^\pm]]$ of a vertex algebra starting from scale-invariant operator-valued holomorphic functions (quantum fields). 
Turning the operator-valued holomorphic functions into operator-valued real-analytic functions immediately leads to the notion of a {\it full vertex operator} 
 $Y(-,z,\z):F\rightarrow \End F[[z,\bar{z},|z|^\R]]$.
We also generalize the {\it field condition} $Y(a,z)b \in V((z))$ of a chiral vertex operator to a full vertex operator from this perspective.

In Section \ref{sec_def_vertex}, we revisit the definition of a vertex algebra. In \cite{LL,FB}, a vertex algebra is defined by using a system of correlation functions (see also \cite{FLM}).
We consider the definition of a vertex algebra 
with correlation functions in a slightly different way by focusing on scale invariance and analytic properties  (see Remark \ref{rem_diff_LL} for the differences).
Such a derivation motivates the definition of a full vertex algebra in the next and subsequent sections.

\subsection{The spaces of chiral / full formal power series}
In this section, we introduce spaces of formal power series which is necessary to define a chiral / full vertex algebras. 
We assume that the base field is $\C$ unless otherwise stated. 
Let $z$ be a formal variable. 
For a vector space $W$,
we denote by $W[[z^\pm]]$
the set of formal sums 
$\sum_{n \in \Z} a_{n}z^n$ with $a_n \in W$.
We set the following subspaces of $W[[z^\pm]]$:
\begin{align}
\begin{split}
W[z] &=\Bigl\{\sum_{n \geq 0} a_{n}z^n \mid a_n \in W, a_n=0 \text{ for }n \text{ sufficiently large} \Bigr\},\\
W[z^\pm]&=\Bigl\{\sum_{n \in \Z} a_{n}z^n \mid a_n \in W, a_n=0 \text{ for all but finitely many }n \Bigr\},\\
W((z))&=\Bigl\{\sum_{n \in \Z} a_{n}z^n \mid a_n \in W, a_n=0 \text{ for }n \text{ sufficiently large} \Bigr\},\\
W[[z]] &=\Bigl\{\sum_{n \geq 0} a_{n}z^n \mid a_n \in W\Bigr\}.
\end{split} 
\end{align}
These formal power series are important in defining a vertex algebra \cite{FLM,LL,FHL}.

Next, we consider a 2-variable version of these.
Let $z$ and $\z$ be independent formal variables.
We will use the notation $\underline{z}$ for the pair $(z,\z)$ and $|z|$ for $z\z$.
For a vector space $W$, we denote by $W[[z,\z,|z|^\R]]$
the set of formal sums 
$\sum_{s,\s \in \R} a_{s,\s}z^s\z^\s$ with $a_{s,\s} \in W$ and $a_{s,\s}=0$ unless $s-\s \in \Z$.
\begin{rem}
If $s-\s \in \Z$, then $z^s\z^\s= z^{s-\s} (z\z)^{\s}$ is a single-valued function around $z=0$.
\end{rem}
We will consider the following subspaces of $W[[z,\z,|z|^\R]]$:
\begin{align*}
W[[z,\z]]&= \Bigl\{ \sum_{s,\s \in \Z_{\geq 0}} a_{s,\s} z^s\z^\s \mid a_{s,\s} \in W \Bigr\}, \\
W[z^\pm,\z^\pm]&=\Bigl\{\sum_{s,\s \in \Z}a_{s,\s}z^s \z^\s \;|\;
a_{s,\s} \in W,
\text{ all but finitely many } a_{s,\s}=0 \Bigr\}, \\
W[|z|^\R] &=\Bigl \{\sum_{r \in \R}a_{r}z^r \z^r \;|\;
a_{r} \in W,
\text{ all but finitely many } a_{r}=0 \Bigr\}.
\end{align*}

The most important is the generalization of the space of Laurent series $W((z))$ to two variables.
We denote by
$W((z,\z,|z|^\R))$ the subspace of $W[[z,\z,|z|^\R]]$
consisting of series
$\sum_{s,\s \in \R} a_{s,\s}z^{s} \bar{z}^{\s} \in W[[z,\z,|z|^\R]]$  such that:
\begin{enumerate}
\item
There exists $N\in \R$ such that $a_{s,\s}=0$ for any $s\leq N$ or $\s \leq N$.
\item
For any $R \in \R$,
$\#\{(s,\s)\in \R^2\;|\; a_{s,\s}\neq 0 \text{ and }s+\s\leq R \}
$ is finite.
\end{enumerate}

Let $\frac{d}{dz}$ and $\frac{d}{d\z}$ be the formal differential operators
acting on $W[[z,\z,|z|^\R]]$ by
\begin{align*}
\frac{d}{dz}\sum_{s,\s \in \R} a_{s,\s}z^s \bar{z}^\s
&= \sum_{s,\s \in \R} s a_{s,\s}z^{s-1} \bar{z}^\s \\
\frac{d}{d\z}\sum_{s,\s \in \R} a_{s,\s}z^s \bar{z}^\s
&= \sum_{s,\s \in \R} \s a_{s,\s}z^{s} \bar{z}^{\s-1}.
\end{align*}
Since $\frac{d}{dz}|z|^s=s|z|^s z^{-1}$, the differential operators
$\frac{d}{dz}$ and $\frac{d}{d\z}$ acts on all the above vector spaces.
\begin{lem} \label{hol_formal}
If $f(\uz) \in W((z,\z,|z|^\R))$ satisfies $\frac{d}{d\z} f(\uz)=0$,
then $f(\uz) \in W((z))$.
\end{lem}

%Let $f(\uz) \in W((z,\z,|z|^\R))$.
%By the assumption, 
%there exists $r_0,r_1,r_2,\dots \in \R$
%such that
%\begin{enumerate}
%\item
%$r_0<r_1<r_2<\cdots$;
%\item
%$r_i \rightarrow \infty$;
%\item
%$f(\uz)$ could be written as
%$$\sum_{i=0}^\infty \sum_{n,m=0}^\infty a_{n,m}^i z^{n}\z^{m}|z|^{r_i},
%$$
%where $a_{n,m}^i \in \C$.
%\end{enumerate}
%\begin{rem}
%As seen above, $\C((z,\z,|z|^\R))$ is a Novikov ring with polynomial coefficients.
%\end{rem}

\subsection{Chiral / full vertex operators and expansions}\mbox{}\label{sec_chiral_vertex}

%この章では、chiral 頂点作用素(頂点代数を定義するデータ)をスケール不変性を持った作用素値正則関数として導入する。
%この視点はfull頂点作用素を定義する上で大切である。

In this section, we introduce a {\it chiral vertex operator} (the data defining a vertex algebra) as an operator-valued holomorphic function with scale invariance. We then generalize this definition to a full vertex operator.
%This perspective is important in defining a {\it full vertex operator} in section \ref{sec_full_vertex_operator}.
The chiral vertex operator is the crucial data defining a vertex algebra. The reader is referred to \cite{LL,FLM,FB,FHL} as textbooks for vertex algebras.

Let $V=\bigoplus_{n \in \Z}V_n$ be a $\Z$-graded vector space and $L(0):V\rightarrow V$ a linear map defined by $L(0)|_{V_n}=n\, \mathrm{id}_{V_n}$ for $n\in \Z$.
In this section, we assume that
\begin{enumerate}
\item[CO1)]
For any $N \in \Z$, $\sum_{n \leq N} \dim V_n$ is finite.
\end{enumerate}
%\begin{description}
%\item[finiteness]
%For any $N \in \Z$, $\sum_{n \leq N} \dim V_n$ is finite.
%\end{description}
Set 
\begin{align}
\overline{V}&= \Pi_{n\in \Z}V_n,\\
V^\vee &= \bigoplus_{n\in \Z}V_n^*,\label{eq_V_dual}
\end{align}
where $V_n^*$ is the dual vector space.
We note that the canonical pairing $\langle -, -\rangle: V^\vee \otimes \overline{V} \rightarrow \C$ is well-defined.

We consider a map 
\begin{align*}
O(-,z):V \times \C^\times \rightarrow \mathrm{Hom}_\C(V,\overline{V}),\; (a,z)\mapsto O(a,z) 
\end{align*}
which satisfies the following property:
\begin{enumerate}
\item[CO2)]
$O(-,z)$ is linear in the first variable;
\item[CO3)]
For any $u\in V^\vee$ and $a,b \in V$, 
\begin{align*}
\langle u,O(a,-)b\rangle: \C^\times \rightarrow \C,\; z\mapsto \langle u,O(a,z)b\rangle
\end{align*}
is a holomorphic function.
\item[CO4)]
For any $a\in V$,
\begin{align*}
z\frac{d}{dz}O(a,z)=[L(0),O(a,z)] - O(L(0)a,z).
\end{align*}
\end{enumerate}

Let $h_0,h_1,h_2 \in \Z$ and $u \in V_{h_0}^*$, $a_i \in V_{h_i}$.
Then, by (CO4),
\begin{align}
\begin{split}
z\frac{d}{dz}\langle u,O(a_1,z)a_2 \rangle&=
\langle u,[L(0),O(a_1,z)]a_2\rangle - \langle u, O(L(0)a_1,z)a_2\rangle\\
&=(h_0-h_1-h_2) \langle u, O(a_1,z)a_2\rangle.\label{eq_cov_above}
\end{split}
\end{align}
Since $\langle u, O(a_1,z)a_2\rangle$ is a holomorphic function,
there exists a constant $C_{a_1,a_2}^u \in \C$ such that
\begin{align*}
\langle u, O(a_1,z)a_2\rangle = C_{a_1,a_2}^u z^{h_0-h_1-h_2}
\end{align*}
as holomorphic functions on $\C^\times$.
Since $C_{a_1,a_2}^-:V_{h_0}^* \rightarrow \C, u\mapsto C_{a_1,a_2}^u$ is linear in $u$,
there exists a bilinear map
$C_{-,-}^{h_0}:V_{h_1}\otimes V_{h_2} \rightarrow V_{h_0},\; (a_1,a_2)\mapsto C_{a_1,a_2}^{h_0}$
such that
$\langle u, C_{a_1,a_2}^{h_0} \rangle=C_{a_1,a_2}^u$ for any $u\in V_{h_0}^*$.
Then, 
\begin{align}
O(a_1,z)a_2 = (C_{a_1,a_2}^{n} z^{n-h_1-h_2})_{n\in\Z} \in \Pi_{n\in \Z}V_{n}.\label{eq_exp_fir}
\end{align}
Since $V_n=0$ for sufficiently small $n$, we may rewrite \eqref{eq_exp_fir} as
$O(a_1,z)a_2 \in V((z))$.

For $a\in V$, we define a sequence of linear maps $\{a(n) \in \End V\}_{n\in \Z}$ to satisfy 
$O(a,z)b = \sum_{n\in \Z} a(n)b\, z^{-n-1}$ for any $b\in V$,
and let $Y(a,z)=\sum_{n \in \Z}a(n)z^{-n-1} \in \End V[[z^\pm]]$ be its generating function.
Then, we have:
\begin{lem}\label{lem_chiral_cond}
The linear map 
\begin{align*}
Y(-,z):V\rightarrow \End V[[z^\pm]], \; a\mapsto Y(a,z)=\sum_{n\in \Z}a(n)z^{-n-1}
\end{align*}
satisfies
$z\frac{d}{dz}Y(a,z) =[L(0),Y(a,z)]-Y(L(0)a,z)$
and
$Y(a,z)b \in V((z))$
for any $a,b\in V$.
\end{lem}
We call a linear map $Y(-,z):V\rightarrow \End V[[z^\pm]]$
 which satisfies the conditions in Lemma \ref{lem_chiral_cond} a {\it chiral vertex operator} (see Section \ref{sec_def_vertex} for a more precise definition, and see also \cite{LL,FLM,FB}).
By replacing the $\Z$-graded vector space $V$ with $\R^2$-graded vector space, we introduce a {\it full vertex operator}.

Let $F=\bigoplus_{h,\h \in \R}F_{h,\h}$ be a $\R^2$-graded vector space and $L(0),\Ld(0):F\rightarrow F$ a linear map defined by $L(0)|_{F_{h,\h}}=h\, \mathrm{id}_{F_{h,\h}}$ and $\Ld(0)|_{F_{h,\h}}=\h\, \mathrm{id}_{F_{h,\h}}$ for $h,\h\in \R$.
In this section, we assume that
\begin{enumerate}
\item[FO1)]
$F_{h,\h}=0$ unless $h-\h \in \Z$;
\item[FO2)]
There exists $N\in \R$ such that $F_{h,\h}=0$ for any $h\leq N$ or $\h \leq N$;
\item[FO3)]
For any $H \in \R$, $\sum_{\substack{h,h\in\R\\ h+\h \leq H}} \dim F_{h,\h}$ is finite.
\end{enumerate}
Set 
\begin{align}
\overline{F}&= \Pi_{h,\h\in \R}F_{h,\h},\\
F^\vee &= \bigoplus_{h,\h \in \R}F_{h,\h}^*. \label{eq_F_dual}
\end{align}

\begin{rem}
\label{rem_spin_energy}
The vector space of all the states in a full 2d CFT is a module of 
the Lie algebra $\mathrm{so}(1,3)_\C \cong \mathrm{sl}_2\C\oplus \mathrm{sl}_2\C=\bigoplus_{n=-1,0,1}\C L(n) \oplus \bigoplus_{n=-1,0,1}\C\Ld(n)$.
The $\R^2$-grading is defined by the action of $L(0)$ and $\Ld(0)$.
From a physical background, eigenvalues of $L(0)+\Ld(0)$ (resp. $L(0)-\Ld(0)$) is the energy (resp. the spin) of the states.
(FO3) means that the vector space of states below a certain energy is finite dimensional, and (FO1) means that $\exp(2\pi i\theta(L(0)-\Ld(0)))$ defines a representation of $\mathrm{SO}(2)$.
\end{rem}

We consider a map 
\begin{align*}
O(-,x,y): F \times \R^2\setminus \{(0,0)\} \rightarrow \mathrm{Hom}_\C(F,\overline{F}),\; (a,x,y)\mapsto O(a,x,y) 
\end{align*}
which satisfies the following property:
\begin{enumerate}
\item[FO4)]
$O(-,x,y)$ is linear in the first variable;
\item[FO5)]
For any $u\in F^\vee$ and $a,b \in F$, 
\begin{align*}
\langle u,O(a,-)b\rangle: \R^2\setminus \{(0,0)\} \rightarrow \C,\; (x,y)\mapsto \langle u,O(a,x,y)b\rangle
\end{align*}
is continuously differentiable in $x$ and $y$;
\item[FO6)]
For any $a\in F$,
\begin{align*}
z\frac{d}{dz}O(a,x,y)&=[L(0),O(a,x,y)] - O(L(0)a,x,y),\\
\z\frac{d}{d\z}O(a,x,y)&=[\Ld(0),O(a,x,y)] - O(\Ld(0)a,x,y),
\end{align*}
where $\frac{d}{dz}=\frac{1}{2}(\frac{d}{dx} - i\frac{d}{dy})$ and $\frac{d}{d\z}=\frac{1}{2}(\frac{d}{dx} + i\frac{d}{dy})$.
\end{enumerate}

Let $h_0,\h_0,h_1,\h_1,h_2,\h_2 \in \R$ with $h_i-\h_i \in \Z$ ($i=0,1,2$) and $u \in F_{h_0,\h_0}^*$, $a_i \in F_{h_i,\h_i}$.
Then, similar to \eqref{eq_cov_above}, we have
\begin{align*}
z\frac{d}{dz}\langle u,O(a_1,x,y)a_2 \rangle&=(h_0-h_1-h_2) \langle u, O(a_1,x,y)a_2\rangle,\\
\z\frac{d}{d\z}\langle u,O(a_1,x,y)a_2 \rangle&=(\h_0-\h_1-\h_2) \langle u, O(a_1,x,y)a_2\rangle.
\end{align*}
We note that 
since $h_i-\h_i \in \Z$ by (FO1),
\begin{align*}
(x+iy)^{h_0-h_1-h_2}(x-iy)^{\h_0-\h_1-\h_2} = (x^2+y^2)^{\h_0-\h_1-\h_2} (x+iy)^{(h_0-\h_0)-(h_1-\h_1)-(h_2-\h_2)}
\end{align*}
is a single-valued real analytic function on $\R^2 \setminus \{(0,0)\}$.
Hence,
there exists a constant $C_{a_1,a_2}^u \in \C$ such that
\begin{align*}
\langle u, O(a_1,z)a_2\rangle = C_{a_1,a_2}^u z^{h_0-h_1-h_2}\z^{\h_0-h_1-h_2}
\end{align*}
with $z=x+iy$ and $\z = x-iy$ as differentiable functions on $\R^2\setminus \{0,0\}$.

Since $C_{a_1,a_2}^-:F_{h_0,\h_0}^* \rightarrow \C, u\mapsto C_{a_1,a_2}^u$ is linear in $u$,
there exists a bilinear map
$C_{-,-}^{h_0,\h_0}:F_{h_1,\h_1}\otimes F_{h_2,\h_2} \rightarrow F_{h_0,\h_0},\; (a_1,a_2)\mapsto C_{a_1,a_2}^{h_0,\h_0}$
such that
$\langle u, C_{a_1,a_2}^{h_0,\h_0} \rangle=C_{a_1,a_2}^u$ for any $u\in F_{h_0,\h_0}^*$.
Then, 
\begin{align}
O(a_1,x,y)a_2 = (C_{a_1,a_2}^{h,\h} z^{h-h_1-h_2}\z^{\h-\h_1-\h_2})_{h,\h\in\R} \in \Pi_{h,\h\in \R}F_{h,\h}.\label{eq_exp_fir2}
\end{align}
Thus, there exists a unique linear map 
\begin{align*}
Y(-,\uz):F\rightarrow \End F[[z,\z,|z|^\R]],\;a\mapsto Y(a,\uz)=\sum_{r,s\in \R^2}a(r,s)z^{-r-1}\z^{-s-1}
\end{align*}
such that $\langle u,Y(a,\uz)b\rangle = \langle u, O(a,x,y)b\rangle$ for any $u\in F^\vee$ and $a,b\in F$. Moreover, by (FO1), (FO2) and (FO3), it satisfies $Y(a_1,\uz)a_2 \in F((z,\z,|z|^\R))$.
Thus, we have:
\begin{prop}\label{lem_grading_add}
The linear map $Y(-,\uz)$ satisfies
$z\frac{d}{dz}Y(a,\uz) =[L(0),Y(a,\uz)]-Y(L(0)a,\uz)$, $\z\frac{d}{d\z}Y(a,\uz) =[\Ld(0),Y(a,\uz)]-Y(\Ld(0)a,\uz)$ and $Y(a,\uz)b \in F((z,\z,|z|^\R))$.
%In particular, $a(r,s)b \in F_{h_1+h_2-r-1,\h_1+\h_2-s-1}$ for $a \in F_{h_1,\h_1}$ and $b \in F_{h_2,\h_2}$.
\end{prop}
%Set
%\begin{align*}
%F(H,n)=F_{\frac{H+n}{2},\frac{H-n}{2}}
%\end{align*}
%for $H \in \R$ and $n\in \Z$,
%which is 
%%\begin{align*}
%%D=L(0)+\Ld(0),\quad \quad J=L(0)-\Ld(0),
%%\end{align*}
%the eigenspaces of energy and rotation operators (see Remark \ref{rem_spin_energy}).
%Then, we have
%\begin{align}
%\begin{split}
%Y(a,\uz)b&=\sum_{H \in \R,n\in \Z} a\left(\frac{H+n}{2},\frac{H-n}{2}\right)b (z\z)^{-\frac{H}{2}-1}\left(\frac{\z}{z}\right)^{\frac{n}{2}}\label{eq_Hn_gra}\\
%a(\frac{H+n}{2},&\frac{H-n}{2})b \in F(H_1+H_2-H-1,n_1+n_2-n) \quad\text{ for } a \in F(H_1,n_1), b \in F(H_2,n_2).
%\end{split}
%\end{align}
%\begin{prop}\label{lem_full_cond}
%For any $a,b\in F$,
%$Y(a,\uz)b \in F((z,\z,|z|^\R))$.
%\end{prop}
%\begin{proof}
%By definition of $F((z,\z,|z|^\R))$ and \eqref{eq_Hn_gra}, it suffices to show that
%for any $R \in \R$,
%\begin{align*}
%\#\{(H,n) \in \R\times \Z \mid a(\frac{H+n}{2},&\frac{H-n}{2})b\neq 0, -H\leq R \}
%\end{align*}
%is finite, which follows from the compactness.
%\end{proof}
We call a linear map $Y(-,\uz):F\rightarrow \End F[[z,\z,|z|^\R]]$
which satisfies the conditions in Proposition \ref{lem_grading_add} a {\it full vertex operator}.
Note that (FO1), (FO2) and (FO3) lead to the property $Y(a,\uz)b\in F((z,\z,|z|^\R))$.

%In the next section, we will study compositions of a chiral vertex operator
%and recall the definition of a $\Z$-graded vertex algebra.

\subsection{Compositions of chiral vertex operator and $\Z$-graded vertex algebra}\mbox{}\label{sec_def_vertex}
%It differs from \cite{FB,LL} in that we focuses on the scale invariance ($L(0)$-covariance) of the vertex operator.
%この章では\cite{FB,LL}を基に、頂点作用素の合成を調べる。
%頂点作用素のscale不変性($L(0)$-covariance)を中心に調べている点が\cite{FB,LL}との違いである。

Let $V$ be a $\Z$-graded vector space and $Y(-,z):V\rightarrow \End V[[z^\pm]]$ be a linear map such that:
\begin{enumerate}
\item[CV1)]
$Y(a,z)b \in V((z))$ for any $a,b\in V$;
\item[CV2)]
$z\frac{d}{dz}Y(a,z) = [L(0),Y(a,z)]-Y(L(0)a,z)$ for any $a\in V$.
\end{enumerate}
We call $Y(-,z)$ a chiral vertex operator. 
In this section, we investigate compositions of chiral vertex operators based on \cite[Section 1.2]{FB} and \cite[Section 4.2 and 4.3]{LL}.
The difference between us and \cite{FB,LL} is that we focus on the scale invariance ($L(0)$-covariance) and analytic properties of the chiral vertex operator.

\begin{lem}\label{chiral_generalized_formal}
Let $n_i \in \Z$, $a_i \in V_{n_i}$, ($i=1,2,3$), and $u \in V_{n_0}^*$.
Then, $u(Y(a_1,z_1)Y(a_{2},z_{2})a_3) \in z_2^{n_0-n_1-n_2-n_3}
\C((z_2/z_1))$ and $u(Y(Y(a_1,z_0)a_{2},z_{2})a_3) \in z_2^{n_0-n_1-n_2-n_3}
\C((z_0/z_2))$.
\end{lem}
\begin{proof}
Set
$$\sum_{k_1,k_2 \in \Z}c_{k_1,k_2}z_1^{k_1}z_{2}^{k_{2}}=u(Y(a_1,z_1)Y(a_2,z_2)a_3).$$
Then, $$c_{k_1,k_{2}}=
u(a_1(-k_1-1)a_2(-k_2-1)a_3).$$
Since
 $a_1(-k_1-1)a_2(-k_2-1)a_3\in 
V_{n_1+n_2+n_3+k_1+k_2}$.
Hence,
$c_{k_1,k_{2}}=0$
unless
%\begin{align*}
$n_0=n_1+n_2+n_3+k_1+k_2$.
%\end{align*}
Thus, we have 
\begin{align*}
u(Y(a_1,z_1)Y(a_2,z_2)a_3)
&=\sum_{k \in \Z}c_{n-k,k}z_1^{n-k}z_2^{k}\\
&=z_1^n \sum_{k \in \Z}c_{n-k,k} \left(\frac{z_2}{z_1}\right)^{k},
\end{align*}
where $n=n_0-n_1-n_2-n_3$. The assertion follows from $Y(a_2,z_2)a_3 \in V((z_2))$.
\end{proof}

The composition of chiral vertex operators $u(Y(a_1,z_1)Y(a_{2},z_{2})a_3)$ (resp. $u(Y(Y(a_1,z_0)a_{2},z_{2})a_3)$) is said to be convergent in $|z_1|>|z_2|$ (resp. $|z_2|>|z_0|$) if the formal power series of Lemma \ref{chiral_generalized_formal} is absolutely convergent in $\left|\frac{z_2}{z_1}\right|<1$ (resp. $\left|\frac{z_0}{z_2}\right|<1$).

Set 
\begin{align*}
Y_2 = \Bigl\{(z_1,z_2)\in \C^2\mid z_1\neq z_2,z_1\neq 0,z_2\neq 0 \Bigr\},
\end{align*}
the configuration space of two points in $\C^\times$.

We consider the following conditions:
\begin{enumerate}
\item[B1)]
For any $u\in V^\vee$ and $a_1,a_2,a_3\in V$, $u(Y(a_1,z_1)Y(a_{2},z_{2})a_3)$ (resp. $u(Y(Y(a_1,z_0)a_{2},z_{2})a_3)$) is convergent in $|z_1|>|z_2|$ (resp. $|z_2|>|z_0|$).
\item[B2)]
For any $u\in V^\vee$ and $a_1,a_2,a_3\in V$, there exists a holomorphic function $\mu(z_1,z_2)$ on $Y_2$ such that:
\begin{align}
\begin{split}
u(Y(a,z_1)Y(b,z_2)c) &= \mu(z_1,z_2)|_{|z_1|>|z_2|}, \\
u(Y(Y(a,z_0)b,z_2)c) &= \mu(z_0+z_2,z_2)|_{|z_2|>|z_0|},\\
u(Y(b,z_2)Y(a,z_1)c)&=\mu(z_1,z_2)|_{|z_2|>|z_1|},\label{eq_add_bor}
\end{split}
\end{align}
as holomorphic functions, where $z_0=z_1-z_2$.
\end{enumerate}
The following proposition is significant in considering the generalization from chiral vertex algebras to full vertex algebras (see also Remark \ref{rem_diff_LL}):
%The following proposition can be found in \cite{LL,FB} in a slightly weaker form, but stating the proposition in this form is important for considering generalization to the non-chiral case.
\begin{prop}\label{prop_weak_equation}
Let $Y(-,z):V\rightarrow \End V[[z^\pm]]$ be a chiral vertex operator satisfying (B1) and (B2).
%\begin{description}
%\item[convergence]
%For any $u\in V^\vee$ and $a_1,a_2,a_3\in V$, $u(Y(a_1,z_1)Y(a_{2},z_{2})a_3)$ is convergent in $|z_1|>|z_2|$
%and $u(Y(Y(a_1,z_0)a_{2},z_{2})a_3)$ in $|z_2|>|z_0|$.
%\item[bootstrap equation]
%For any $u\in V^\vee$ and $a_1,a_2,a_3\in V$, there exists a holomorphic function $\mu(z_1,z_2)$ on $Y_2$ such that:
%\begin{align*}
%u(Y(a,z_1)Y(b,z_2)c) &= \mu(z_1,z_2)|_{|z_1|>|z_2|}, \\
%u(Y(Y(a,z_0)b,z_2)c) &= \mu(z_0+z_2,z_2)|_{|z_2|>|z_0|},\\
%u(Y(b,z_2)Y(a,z_1)c)&=\mu(z_1,z_2)|_{|z_2|>|z_1|},
%\end{align*}
%as holomorphic functions, where $z_0=z_1-z_2$.
%\end{description}
Then, for any $u\in V^\vee$ and $a_1,a_2,a_3\in V$, the holomorphic function $\mu(z_1,z_2)$ in (B2) is a rational polynomial in $\C[z_1^\pm,z_2^\pm,(z_1-z_2)^\pm]$.
\end{prop}
\begin{proof}
Let $u\in V_{n_0}^\vee$ and $a_i \in V_{n_i}$ ($i=1,2,3$) and $\mu(z_1,z_2)$ be the holomorphic function on $Y_2$ satisfying (B2).
We note that $\C^\times$ acts on $Y_2$ by $(z_1,z_2)\mapsto (\lambda z_1,\lambda z_2)$ for $(z_1,z_2)\in Y_2$ and $\lambda \in \C^\times$. The quotient map can be written as
\begin{align}
\eta:Y_2\rightarrow \CP\setminus \{0,1,\infty \},\quad
(z_1,z_2)\mapsto \frac{z_2}{z_1}.
\end{align}
By Lemma \ref{chiral_generalized_formal}, 
$(z_1\frac{d}{dz_1}+z_2\frac{d}{dz_2}) \left(z_2^{n_0-n_1-n_2-n_3}\mu(z_1,z_2)\right)=0$,
and thus  there exists a holomorphic function $f$ on $\CP\setminus \{0,1,\infty \}$ such that
$f(\frac{z_2}{z_1}) = z_2^{n_0-n_1-n_2-n_3} \mu(z_1,z_2)$.
The three formal power series in \eqref{eq_add_bor} corresponds to the expansion of $f(z)$ around $z=0$, $z=1$
and $z=\infty$, respectively. 
Hence, (CV1) implies that $f$ has poles at $z=0,1,\infty$.
Such meromorphic functions on $\CP$ are in the polynomial ring $\C[z^\pm,(1-z)^\pm]$.
Hence, the assertion follows.
\end{proof}
\begin{rem}\label{rem_diff_LL}
The claim that $u(Y(a,z_1)Y(b,z_2)c)$ is the expansion of a rational polynomial in $\C[z^\pm,w^\pm,(z-w)^\pm]$ can be found in \cite[Section 1.2]{FB} and \cite[Section 4.2 and 4.3]{LL} as a consequence of the locality axiom $(z-w)^N[Y(a,z),Y(b,w)]=0$. But we give here a more analytical proof.
\end{rem}

Condition (B2) says that the compositions of a chiral vertex operator in different ways will result in the
expansions in the different regions of the same holomorphic function. 
Since the holomorphic functions are just rational polynomials by Proposition \ref{prop_weak_equation},
 the expansions can be written combinatorially \cite{B,FLM}.
Set
\begin{align*}
\GCor_2^\text{hol} = \C[z_1^\pm,z_2^\pm,(z_1-z_2)^\pm],
\end{align*}
the ring of regular functions on the affine scheme $Y_2$.
A function $\mu \in \GCor_2^\text{hol}$ has series expansions in the domains
%$\{|z_1|>|z_2|\}$, $\{|z_2|>|z_1|\}$, and $\{|z_2|>|z_1-z_2|\}$.
\begin{align*}
\Bigl\{|z_1|>|z_2|\Bigr\}, \;\;\Bigl\{|z_2|>|z_1|\Bigr\},\;\;\Bigl\{|z_2|>|z_1-z_2|\Bigr\}\subset Y_2.
\end{align*}
More explicitly, for $\mu=z_1^n z_2^m (z_1-z_2)^l$ with $n,m,l \in \Z$, the series expansion in $\Bigl\{|z_1|>|z_2|\Bigr\}$ is
\begin{align*}
z_1^n z_2^m (z_1-z_2)^l\Bigl|_{|z_1|>|z_2|}=
\sum_{i \geq 0} (-1)^i \binom{l}{i}z_1^{l+n-i} z_2^{m+i} \in \C\Bigl(\Bigl(\frac{z_2}{z_1}\Bigr)\Bigr)[z_1^\pm],
\end{align*}
and by setting $z_0=z_1-z_2$, the series expansion in $\Bigl\{|z_2|>|z_0|\Bigr\}$ is
\begin{align*}
z_1^n z_2^m (z_1-z_2)^l\Bigl|_{|z_2|>|z_1-z_2|}=
\sum_{j \geq 0} \binom{n}{j}z_2^{m+n-j} z_0^{l+j} \in \C\Bigl(\Bigl(\frac{z_0}{z_2}\Bigr)\Bigr)[z_2^\pm].
\end{align*}
Hence, we have linear maps
$|_{|z_1|>|z_2|}:\GCor_2^{\text{hol}} \rightarrow \C\Bigl(\Bigl(\frac{z_2}{z_1}\Bigr)\Bigr)[z_1^\pm]$,
$|_{|z_2|>|z_1|}:\GCor_2^{\text{hol}} \rightarrow \C\Bigl(\Bigl(\frac{z_1}{z_2}\Bigr)\Bigr)[z_2^\pm]$, and
$|_{|z_2|>|z_1-z_2|}:\GCor_2^{\text{hol}} \rightarrow \C\Bigl(\Bigl(\frac{z_0}{z_2}\Bigr)\Bigr)[z_2^\pm]$.
%The following lemma is easy to show:
%\begin{lem}
%\label{lem_chiral_add}
%Let $\mu \in \GCor_2^\text{hol}$ satisfy $(z_1\frac{d}{dz_1}+z_2\frac{d}{dz_2})\mu = n \mu$ for some $n \in \Z$.
%Then, there exists $f\in \C[z^\pm, (1-z)^\pm]$ such that $\mu(z_1,z_2) = z_1^n f\bigl(\frac{z_1}{z_2}\bigr)$.
%\end{lem}

Now we can state here the axioms of a $\Z$-graded vertex algebra based on \cite{FB,LL,Li2}.
%In this section, we follow \cite{FB,LL,Li2} to review the definition of a $\Z$-graded vertex algebra and explain how the definition generalizes to a full vertex algebra.
%For a $\Z$-graded vector space $V=\bigoplus_{n\in \Z} V_n$, 
%set $V^\vee=\bigoplus_{n\in \Z} V_n^*$,
%where $V_n^*$ is the dual vector space of $V_n$.
%Let $L(0):V\rightarrow V$ be a linear map defined by $L(0)|_{V_{n}}=n\, \mathrm{id}_{V_n}$ for $n\in \Z$.

A $\Z$-graded vertex algebra is a $\Z$-graded $\C$-vector space $V=\bigoplus_{n\in \Z} V_n$ equipped with a linear map
$$Y(-,z):V \rightarrow \End (V)[[z^\pm]],\; a\mapsto Y(a,z)=\sum_{n \in \Z}a(n)z^{-n-1}$$
and an element $\va \in V_0$ satisfying the following conditions:
\begin{enumerate}
\item[V1)]
For any $a,b \in F$, $Y(a,z)b \in V((z))$;
\item[V2)]
For any $a \in V$, $Y(a,z)\va \in V[[z,\z]]$ and $\lim_{z \to 0}Y(a,z)\va = a(-1)\va=a$;
\item[V3)]
$Y(\va,z)=\mathrm{id} \in \End V$;
\item[V4)]
%convergence
For any $a,b,c \in V$ and $u \in V^\vee$, there exists $\mu(z_1,z_2) \in \GCor_2^{\text{hol}}$ such that
\begin{align*}
u(Y(a,z_1)Y(b,z_2)c) &= \mu|_{|z_1|>|z_2|}, \\
u(Y(Y(a,z_0)b,z_2)c) &= \mu|_{|z_2|>|z_1-z_2|},\\
u(Y(b,z_2)Y(a,z_1)c)&=\mu|_{|z_2|>|z_1|},
\end{align*}
where $z_0=z_1-z_2$;
\item[V5)]
For any $a \in V$, $z\frac{d}{dz}Y(a,z) = [L(0),Y(a,z)]-Y(L(0)a,z)$.
%$V_n(r)V_m \subset V_{n+m-r-1}$ for any $n,m,r \in \Z$.
\end{enumerate}

\begin{rem}\label{rem_vertex}
A standard definition of a vertex algebra uses the Borcherds identity (the Jacobi identity) \cite{B}.
From the Borcherds identity and $\Z$-grading, we can show (V4) from \cite[Remark 1.2.7]{FB} and \cite[Proposition 3.2.7, Proposition 3.3.5, and Proposition 3.3.8]{LL}.
Conversely, from (V4), the Borcherds identity follows from \cite[Proposition 3.4.1]{LL}.
Hence, the above definition of a $\Z$-graded vertex algebra is equivalent to the standard one.
We do not use the Borcherds identity since it seems impossible to
obtain such an algebraic identity in the case of non-chiral conformal field theory in general.
\end{rem}

Using what we have discussed so far, we can state the axioms of a vertex algebra in a slightly weaker form.
\begin{prop}
Let $V$ be a $\Z$-graded vector space equipped with a linear map
$Y(-,z):V \rightarrow \End (V)[[z^\pm]]$ and an element $\va \in V_0$ satisfying the following conditions:
\begin{enumerate}
\item
$V_n=0$ for sufficiently small $n$;
\item
For any $a \in V$, $z\frac{d}{dz}Y(a,z) = [L(0),Y(a,z)]-Y(L(0)a,z)$;
\item
For any $a \in V$, $Y(a,z)\va \in V[[z,\z]]$ and $\lim_{z \to 0}Y(a,z)\va = a(-1)\va=a$;
\item
$Y(\va,z)=\mathrm{id} \in \End V$;
\item
The compositions of chiral vertex operators satisfy (B1) and (B2).
\end{enumerate}
Then, $V$ is a $\Z$-graded vertex algebra.
\end{prop}
\begin{proof}
(1) and (2) implies (V1), i.e., $Y(a,z)b\in V((z))$ for any $a,b\in V$.
(V1) together with (5) implies that  (V4) by Proposition \ref{prop_weak_equation}.
\end{proof}

The discussions in this section can be paralleled for full vertex operators. 
The most important change is that two-point correlation functions are changed from 
rational polynomials of the form $\C[z^\pm,(1-z)^\pm]$ to real analytic functions on 
$\CP \setminus \{0,1,\infty\}$ with expansions of the form $C((z,\z,|z|^\R))$ around $z = 0,1,\infty$.
The space of such functions is denoted by $\F$ and is studied in the next section.
%In section \ref{sec_formal}, we introduced $\C((z,\z,|z|^\R))$, a ``full'' generalization of $\C((z))$, and studied its properties. Using it, we generalize the definition of a $\Z$-graded vertex algebra in section \ref{sec_full_vertex}, as shown in Table \ref{table_add}.

\begin{table}[htb]
\begin{tabular}{|l||c|c|}\hline
 & chiral vertex algebra & full vertex algebra \\ \hline \hline
vector space &  $V=\bigoplus_{n \in \Z} V_n$ & $F=\bigoplus_{r,s \in \R^2} F_{r,s}$ \\
vertex operator &$\End V[[z^\pm]]$ & $\End F[[z,\z,|z|^\R]]$ \\
pole & $\C((z))$ & $\C((z,\z,|z|^\R))$ \\
correlation function &$\C[z^\pm,(1-z)^\pm]$ & $\F$\\ \hline
\end{tabular}
\vspace{2mm}
\caption{Comparison of chiral and full vertex algebras}
 \label{table_add}
\end{table}

%
%
%For $f \in \C[z^\pm,(1-z)^\pm]$, let $j(z,f)$ (resp. $j(1-z^{-1},f), j(z^{-1},f)$) be the Laurent expansion of $f(z)$ (resp. $f\Bigl(\frac{1}{1-z}\Bigr),f(z^{-1})$) around $z = 0$. Then, we have:
%\begin{rem}
%\label{rem_chiral_exp}
%For $f\in \C[z^\pm,(1-z)^\pm]$, $f\left(\frac{z_2}{z_1}\right)\Bigl|_{|z_1|>|z_2|} = j(z,f)\Bigl|_{z=\frac{z_2}{z_1}}$,
%$f\left(\frac{z_2}{z_1}\right)\Bigl|_{|z_2|>|z_1|} = j(z^{-1},f)\Bigl|_{z=\frac{z_1}{z_2}}$, and
%$f\left(\frac{z_2}{z_1}\right)\Bigl|_{|z_2|>|z_1-z_2|} = j(1-z^{-1},f)\bigl|_{z=\frac{z_0}{z_2}}$.
%\begin{align*}
%f\left(\frac{z_2}{z_1}\right)\Bigl|_{|z_1|>|z_2|} &= j(z,f)|_{z=\frac{z_1}{z_2}}\\
%f\left(\frac{z_2}{z_1}\right)\Bigl|_{|z_2|>|z_1|} &= j(z^{-1},f)|_{z=\frac{z_2}{z_1}}\\
%f\left(\frac{z_2}{z_1}\right)\Bigl|_{|z_2|>|z_1-z_2|} &= j(1-z,f)|_{z=\frac{z_0}{z_2}}.
%\end{align*}
%\end{rem}
%
%
%
% and Remark \ref{rem_chiral_exp}, we have:
%\begin{prop}
%For $u\in V_{n_0}^*$ and $a_i \in V_{n_i}$ ($i=1,2,3$), there exits $f\in \C[z^\pm,(1-z)^\pm]$ such that
%\begin{align*}
%z_2^{n_1+n_2+n_3-n_0}u(Y(a,z_1)Y(b,z_2)c) &= \mu|_{|z_1|>|z_2|}, \\
%z_2^{n_1+n_2+n_3-n_0}u(Y(Y(a,z_0)b,z_2)c) &= \mu|_{|z_2|>|z_1-z_2|},\\
%z_2^{n_1+n_2+n_3-n_0}u(Y(b,z_2)Y(a,z_1)c)&=\mu|_{|z_2|>|z_1|},
%\end{align*}
%\end{prop}
%Since $V((z)) \subset V((z,\z,|z|^\R))$ and $\GCor_2^{\text{hol}} \subset \GCor_2$,
%we have:

\section{Non-chiral correlation functions and formal calculus}\label{sec_formal}
In this section, we introduce the notion of a conformal singularity,
 a typical singularity appearing in generalized two-point correlation functions of a conformal field theory (see Table \ref{table_add}).
We also introduce a space of real analytic functions with possible conformal singularity, which is important to define a full vertex algebra.

\subsection{Convergence}\label{sec_convergent}
In this section, we discuss a convergence of a formal power series in $\C((z,\z,|z|^\R))$ and the uniqueness of expansions.
We will use $z,\z$ as both formal variables and 
the canonical coordinate of $\C$.
For any $R \in \R_{>0}$, set $A_R=\{z\in \C \;|\; 0<|z|<R\}$,
an annulus.

Let $f(\uz) \in \C((z,\z,|z|^\R)).$
Then, there exists $N \in \R$ such that
\begin{align}
|z|^N f(\uz)
= \sum_{\substack{s,\s \in \R
\\  s,\s \geq 0}} a_{s,\s}
z^{s} \bar{z}^{\s}.
\end{align}
We say the series $f(\uz)$ is absolutely convergent around $0$
if there exists $R \in \R_{>0}$ such that the sum
$\sum_{s,\s \in \R} |a_{s,\s}|R^{s+\s}$ is convergent.
In this case, $f(\uz)$ is locally uniformly convergent
to a continuous function defined on the annulus $A_R$. We note that the definition of the convergence is independent of the choice of $N$.

\begin{prop}\label{formal_derivation}
If $f(\uz) \in \C((z,\z,|z|^\R))$ is absolutely convergent around $0$,
then both $\dz f(\uz)$ and $\ddz f(\uz)$ are absolutely
convergent around $0$.
\end{prop}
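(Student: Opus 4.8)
The plan is to reduce everything to the elementary fact that multiplying the coefficients of an absolutely convergent series by a factor of linear growth leaves it absolutely convergent, provided one is willing to shrink the annulus slightly.

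First I would put the hypothesis in a convenient shift-free form. Write $f(\uz)=\sum_{s,\s}a_{s,\s}z^s\z^\s$ with $a_{s,\s}=0$ unless $s,\s\ge N$ for some $N\in\R$, as guaranteed by condition (2). Since absolute convergence around $0$ is independent of the chosen normalization $|z|^N$ (as remarked just after the definition), the hypothesis is equivalent to the existence of $R\in\R_{>0}$ with $\Sigma:=\sum_{s,\s}|a_{s,\s}|R^{s+\s}<\infty$. Now $\dz f(\uz)=\sum_{s,\s}s\,a_{s,\s}z^{s-1}\z^\s$, so its coefficients are $s\,a_{s,\s}$, up to the unit downward shift in the $z$-exponent, which only rescales the convergence sum by a fixed positive power of the radius. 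Hence it suffices to produce $R'\in\R_{>0}$ with $\sum_{s,\s}|s|\,|a_{s,\s}|(R')^{s+\s}<\infty$.

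The key step is to fix any $R'$ with $0<R'<R$ and set $t=R'/R\in(0,1)$, so that each term factors as $|s|\,|a_{s,\s}|(R')^{s+\s}=|a_{s,\s}|R^{s+\s}\cdot|s|\,t^{s+\s}$, and then to bound $|s|\,t^{s+\s}$ uniformly over the whole index set. Because $s,\s\ge N$ and $0<t<1$, I would argue that the single-variable function $s\mapsto|s|\,t^{s}$ is continuous on $[N,\infty)$ and tends to $0$ as $s\to\infty$ (geometric decay dominates linear growth), hence is bounded by some $C_1<\infty$, while $t^\s\le\max(1,t^N)=:C_2<\infty$ for all $\s\ge N$. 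Thus $|s|\,t^{s+\s}\le C_1C_2$ for every admissible $(s,\s)$, and term-by-term comparison yields $\sum_{s,\s}|s|\,|a_{s,\s}|(R')^{s+\s}\le C_1C_2\,\Sigma<\infty$, which is exactly absolute convergence of $\dz f$ around $0$. The identical computation with the factor $|\s|$ in place of $|s|$ settles $\ddz f$.

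I do not expect a genuine obstacle here, since the statement is a real-analytic triviality once the correct reduction is in place. The only points needing care are bookkeeping, namely tracking the $|z|^N$ normalization and the unit shift in the $z$-exponent so that the reduced inequality matches the definition of absolute convergence of $\dz f$, and, more essentially, making the bound on $|s|\,t^{s+\s}$ uniform in $s$ and $\s$ simultaneously rather than merely finite term by term; it is this uniformity that licenses pulling $C_1C_2$ out of the sum and invoking the convergence of $\Sigma$.
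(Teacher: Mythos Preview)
Your proof is correct and follows essentially the same approach as the paper: shrink the radius and use that the linear factor $|s|$ is uniformly dominated by the resulting geometric factor $t^{s+\s}$. The paper's version is slightly terser—it first normalizes to $N=0$, takes the specific choice $R'=R/2$, and then invokes the explicit bound $s/2^{s+\s}<1$ for $s,\s\ge 0$—whereas you keep $N$ general and argue the uniform bound via continuity and decay; but the underlying idea is identical.
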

\begin{proof}
We may assume that $f(\uz)= \sum_{\substack{s,\s \in \R
\\  s,\s \geq 0}} a_{s,\s}z^{s} \bar{z}^{\s}.$
Let $R >0$ be a real number such that
$\sum_{\substack{s,\s \in \R 
\\  s,\s \geq 0}} |a_{s,\s}| R^{s+\s} < \infty$.
Then, $\sum_{\substack{s,\s \in \R 
\\  s,\s \geq 0}} |s a_{s,\s}| (R/2)^{s+\s}
=\sum_{\substack{s,\s \in \R 
\\  s,\s \geq 0}} |s/2^{s+\s}||a_{s,\s}|R^{s+\s} <
\sum_{\substack{s,\s \in \R 
\\  s,\s \geq 0}} |a_{s,\s}|R^{s+\s}< \infty$.
\end{proof}
\begin{rem}
In the above proof, the fact that the sum runs over $s,\s \geq 0$
is essential.
In fact, $\sum_{n =1}^\infty \frac{1}{n^2}(z/\z)^n \in \C[[z/\z]]$
is convergent for $R=1$.
However,
its derivative is not absolutely convergent from
\begin{align*}
\sum_{n =1}^\infty \frac{1}{n}|z/\z|^n/|z| = \frac{1}{R}\sum_{n =1}^\infty \frac{1}{n}
\end{align*}
for any radius $R$.
\end{rem}

\begin{prop}\label{formal_real}
If $f(\uz) \in \C((z,\z,|z|^\R))$ is absolutely convergent around $0$,
then $f(\uz)$ is a real analytic function on the annulus $A_R$ for some $R >0$.
\end{prop}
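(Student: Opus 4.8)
The plan is to reduce to a series with non-negative exponents and then prove real analyticity by the standard device of complexifying $\z$ into an independent holomorphic variable. First I would use the normalization recalled before the statement: there is $N\in\R$ with $|z|^N f(\uz)=g(\uz):=\sum_{s,\s\ge 0}a_{s,\s}z^s\z^\s$ and $\sum_{s,\s\ge0}|a_{s,\s}|R^{s+\s}<\infty$. Since $|z|^{-N}=(z\z)^{-N}$ is real analytic and nowhere vanishing on $\C\setminus\{0\}\supset A_R$, and a product of real analytic functions is real analytic, it suffices to show that $g$ is real analytic on $A_R$. Recall also that $a_{s,\s}=0$ unless $s-\s\in\Z$, so each monomial $z^s\z^\s=|z|^\s\,z^{\,s-\s}$ is a genuine single-valued real analytic function on the punctured plane; the real issue is to control the convergence of the sum.

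Real analyticity is local, so I fix $z_0\in A_R$ and argue on a neighborhood. The key idea is to replace $\z$ by an independent complex variable $w$: on a small disc $D$ around $z_0$ and a small disc $D'$ around $\bar z_0$ choose holomorphic branches of $\log z$ on $D$ and $\log w$ on $D'$ that are complex conjugate along the diagonal, so that $\log w|_{w=\z}=\overline{\log z}$. Define
\[
G(z,w)=\sum_{s,\s\ge 0}a_{s,\s}\,e^{s\log z}\,e^{\s\log w}
\]
on the polydisc $D\times D'\subset\C^2$. Each summand is holomorphic in $(z,w)$, and on the totally real diagonal $\{w=\z\}$ one has $e^{s\log z}e^{\s\log w}=|z|^\s z^{\,s-\s}=z^s\z^\s$ (this is where $s-\s\in\Z$ makes the restriction single-valued and independent of the branch), so $G(z,\z)=g(\uz)$ near $z_0$.

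It remains to show that $G$ is holomorphic, which is the technical heart. Shrinking the polydisc so that the Euclidean moduli of $z\in D$ and $w\in D'$ are all at most $\rho_0+\delta$, where $\rho_0$ is the Euclidean modulus of $z_0$, I bound $|e^{s\log z}\,e^{\s\log w}|\le(\rho_0+\delta)^{s+\s}$; here the condition $s,\s\ge 0$ is essential (cf.\ the Remark after Proposition \ref{formal_derivation}). Choosing $\delta$ so small that the circle of Euclidean radius $\rho_0+\delta$ is a compact subset of $A_R$, the compact absolute convergence of $g$ on $A_R$ recalled above yields $\sum_{s,\s\ge0}|a_{s,\s}|(\rho_0+\delta)^{s+\s}<\infty$. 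By the Weierstrass $M$-test the series for $G$ then converges uniformly on the polydisc, and a locally uniform limit of holomorphic functions of several variables is holomorphic (Osgood/Weierstrass). Hence $G$ is holomorphic, its restriction $g=G|_{w=\z}$ to the totally real diagonal is real analytic near $z_0$, and therefore so is $f=|z|^{-N}g$; since $z_0\in A_R$ was arbitrary, $f$ is real analytic on $A_R$. I expect the main obstacle to be precisely this last passage—upgrading termwise holomorphy to holomorphy of the sum—so that the uniform estimate resting on $s,\s\ge 0$ together with the several-variables convergence theorem are the crucial ingredients, while the branch bookkeeping needed to identify $G|_{w=\z}$ with $g$ is a minor but necessary point.
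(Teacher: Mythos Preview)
Your proof is correct and takes a genuinely different route from the paper's. The paper proceeds by direct Taylor expansion: at a point $\al\in A_R$ it expands each monomial $z^s\z^\s$ as $(\al+w)^s(\bar\al+\bar w)^\s=\sum_{n,m\ge0}\binom{s}{n}\binom{\s}{m}\al^{s-n}\bar\al^{\s-m}w^n\bar w^m$ and controls the resulting quadruple sum using an elementary estimate $\sum_{n\ge0}\bigl|\binom{s}{n}\bigr|\,r^n<(1+r)^s+2r^{1+s}/(1-r)$ for $s\ge0$, $0<r<1$ (this is the lemma stated immediately before the proposition). Your complexification argument bypasses that lemma entirely: once the $M$-test gives holomorphy of $G(z,w)$ on a polydisc, real analyticity of the restriction to the totally real diagonal $w=\z$ is automatic. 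The paper's approach has the advantage of producing the local power-series coefficients explicitly; yours is cleaner and more conceptual, and it makes transparent why the normalization to $s,\s\ge0$ (not merely a lower bound on exponents) is exactly what the uniform estimate needs---the same phenomenon the paper emphasizes in the remark following Proposition~\ref{formal_derivation}.
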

For the proof, we use the following elementary lemma:
\begin{lem}
Let $s,r\in \R$.
If $s \geq 0$ and $1> |r|$,
then $\sum_{n=0}^\infty |\binom{s}{n}| r^n < (1+r)^s + 2\frac{r^{1+s}}{1-r}$.
\end{lem}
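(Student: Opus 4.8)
The plan is to compare $\sum_{n\ge0}\left|\binom{s}{n}\right|r^n$ with the ordinary binomial series $(1+r)^s=\sum_{n\ge0}\binom{s}{n}r^n$, which converges for $|r|<1$ and arbitrary real $s$. The two series have identical terms apart from the signs of the coefficients, so the entire content of the estimate is to account for the indices $n$ at which $\binom{s}{n}<0$. I will take $0<r<1$; the endpoint $r=0$ gives an equality and is irrelevant for the application to Proposition \ref{formal_real}, where $r$ arises as a nonnegative radius.

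First I would determine the sign pattern. Writing $\binom{s}{n}=\prod_{j=0}^{n-1}\frac{s-j}{j+1}$, a factor $s-j$ is negative precisely when $j>s$; hence for $s\ge0$ we have $\binom{s}{n}>0$ whenever $n\le \lceil s\rceil$, and for $n\ge\lceil s\rceil$ the sign is $(-1)^{n-\lceil s\rceil}$. Putting $K=\lceil s\rceil$, the negative coefficients are exactly those with $n\ge K+1$ and $n-K$ odd. Subtracting the two series termwise, $\left|\binom{s}{n}\right|-\binom{s}{n}$ vanishes wherever $\binom{s}{n}\ge0$ and equals $2\left|\binom{s}{n}\right|$ at the negative indices, so
\begin{align*}
\sum_{n=0}^\infty \left|\binom{s}{n}\right| r^n-(1+r)^s=2\sum_{\substack{n\ge K+1\\ n-K\text{ odd}}}\left|\binom{s}{n}\right|r^n,
\end{align*}
and it remains to bound this tail by $2r^{1+s}/(1-r)$.

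The key estimate is that $\left|\binom{s}{n}\right|\le 1$ as soon as $n\ge K$. For the base case, $\binom{s}{K}=\frac{1}{K!}\prod_{j=0}^{K-1}(s-j)<\frac{1}{K!}\prod_{j=0}^{K-1}(K-j)=1$ by a term-by-term comparison, valid because $0<s-j<K-j$ when $s$ is not an integer (for integer $s$ one has $K=s$, $\binom{s}{K}=1$, and all higher coefficients vanish). The bound then propagates to $n>K$ since $\left|\binom{s}{n+1}\right|/\left|\binom{s}{n}\right|=(n-s)/(n+1)<1$ for $n\ge K$, so $\left|\binom{s}{n}\right|$ is nonincreasing past $K$. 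Therefore
\begin{align*}
2\sum_{\substack{n\ge K+1\\ n-K\text{ odd}}}\left|\binom{s}{n}\right|r^n\le 2\sum_{n\ge K+1}r^n=\frac{2r^{K+1}}{1-r}\le\frac{2r^{1+s}}{1-r},
\end{align*}
where the final step uses $K+1=\lceil s\rceil+1\ge s+1$ together with $0<r<1$. Combined with the identity above this yields the claim, the strictness coming from $r^{K+1}<r^{1+s}$ when $s\notin\Z$ and from the vanishing of the odd tail when $s\in\Z$ (in both cases $2r^{1+s}/(1-r)>0$).

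I expect the only genuine subtlety to be bookkeeping: getting the sign pattern of $\binom{s}{n}$ exactly right, and reconciling the integer powers $r^n$ on the left with the single fractional power $r^{1+s}$ on the right. The latter is precisely what $r^{\lceil s\rceil+1}\le r^{1+s}$ resolves, while the monotonicity and base-case bounds on $\left|\binom{s}{n}\right|$ are elementary once the signs are pinned down.
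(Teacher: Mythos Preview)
Your argument is correct. The paper states this lemma without proof, calling it elementary, so there is no approach to compare against; your sign analysis of $\binom{s}{n}$, the bound $\left|\binom{s}{n}\right|\le 1$ for $n\ge\lceil s\rceil$ via the base case and the ratio $(n-s)/(n+1)<1$, and the geometric tail estimate $r^{\lceil s\rceil+1}\le r^{1+s}$ are all clean and give exactly the claimed bound with the claimed strictness. Your remark that the statement as written fails at $r=0$ (both sides equal $1$) and is not well-posed for $r<0$ (the exponent $1+s$ is generally non-integral) is accurate; the paper only invokes the lemma in the proof of Proposition~\ref{formal_real} with $r=|w/\alpha|\in(0,1)$, so your restriction to $0<r<1$ is the right reading.
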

\begin{proof}[proof of Proposition \ref{formal_real}]
We may assume that $f(\uz)= \sum_{\substack{s,\s \in \R
\\  s,\s \geq 0}} a_{s,\s}z^{s} \bar{z}^{\s}.$
Let $R \in \R_{>0}$ such that $\sum_{s,\s \in \R} |a_{s,\s}|R^{s+\s}$ is convergent.
Let $\al \in A_R$. We will show that $f(\uz)$ is
a real analytic function around $\al$.
By the above lemma,
for $w \in \C$ with $|w/\al|<1$ and $|w|+|\al|<R$,
\begin{align}
\sum_{\substack{s,\s \in \R
\\  s,\s \geq 0}}\sum_{n,m=0}^\infty 
\left|\binom{s}{n}\binom{\s}{m}\right|
|a_{s,\s}| |\al|^{s+\s}|w/\al|^{n+m}
< \sum_{\substack{s,\s \in \R
\\  s,\s \geq 0}}|a_{s,\s}|
\Bigl((|\al|+|w|)^s + 2\frac{|w|^{s+1}}{|\al|-|w|}\Bigr)
\Bigl((|\al|+|w|)^\s + 2\frac{|w|^{\s+1}}{|\al|-|w|}\Bigr) \label{eq_ene}
\end{align}
Since the right-hand-side of \eqref{eq_ene} is convergent
by the assumption,
the sum 
$$\sum_{\substack{s,\s \in \R
\\  s,\s \geq 0}}\sum_{n,m=0}^\infty 
\binom{s}{n}\binom{\s}{m}
a_{s,\s}\al^{s-n}\bar{\al}^{\s-m}w^n \bar{w}^m
$$
is absolutely convergent
to 
$\sum_{\substack{s,\s \in \R
\\  s,\s \geq 0}}a_{s,\s} (\al+w)^s\overline{(\al+w)^\s}$.
\end{proof}

Let $\mathrm{Conv}((z,\z,|z|^\R))$ 
the subspace of $\C((z,\z,|z|^\R))$
consisting of $f(\uz) \in \C((z,\z,|z|^\R))$ such that $f(\uz)$ is absolutely convergent around $0$.

Let $\mathrm{St}_0^{\text{real}}$ be the colimit of the space of real analytic functions on $\{z\in \C\;|\;0<|z|<R\}$ as $R \rightarrow 0$.
Then, we have a map 
$$\mathrm{Conv}((z,\z,|z|^\R))\rightarrow \mathrm{St}_0^{\text{real}}.$$
Then, the following lemma is clear:
\begin{lem}\label{unique_coefficient}
The above map $\mathrm{Conv}((z,\z,|z|^\R))\rightarrow \mathrm{St}_0^{\text{real}}$
is injective.
\end{lem}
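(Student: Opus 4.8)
The plan is to reduce the statement to a uniqueness-of-coefficients claim and then to separate the angular and radial behaviour of the function. By linearity of the map it suffices to show that if $f(\uz)=\sum_{s,\s}a_{s,\s}z^{s}\z^{\s}\in\mathrm{Conv}((z,\z,|z|^\R))$ represents the zero germ, i.e.\ converges to the constant function $0$ on some annulus $A_R$ (shrinking the radius of convergence if necessary), then $a_{s,\s}=0$ for all $s,\s$.

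Since $a_{s,\s}=0$ unless $s-\s\in\Z$, I would reindex the nonzero terms by the pair $(k,t)=(s-\s,\,s+\s)$ with $k\in\Z$ and $t\in\R$; writing $z=re^{i\theta}$ gives $z^{s}\z^{\s}=r^{t}e^{ik\theta}$, which is genuinely single-valued precisely because $k=s-\s\in\Z$. Hence $f(re^{i\theta})=\sum_{k\in\Z}\bigl(\sum_{t}a_{k,t}r^{t}\bigr)e^{ik\theta}$, where for each fixed $k$ the exponents $t$ form a discrete, bounded-below subset of $\R$ (inherited from the support condition on $f$). The estimate $\sum_{s,\s}|a_{s,\s}|r^{s+\s}<\infty$ from Section \ref{sec_convergent} gives absolute and uniform convergence on each circle $|z|=r<R$, which is exactly what the next step needs.

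First I would fix $r\in(0,R)$ and extract the Fourier coefficients of the continuous $2\pi$-periodic function $\theta\mapsto f(re^{i\theta})=0$: uniform convergence on the circle justifies integrating term by term against $e^{-ik\theta}$, yielding $g_k(r):=\sum_{t}a_{k,t}r^{t}=0$ for every $k\in\Z$ and every $r\in(0,R)$. It then remains to prove that a convergent real-exponent series vanishing on $(0,R)$ has all coefficients zero. Here I would enumerate the exponents as $t_0<t_1<t_2<\cdots$, factor out $r^{t_0}$, and let $r\to 0^{+}$; dominated convergence (the tail is bounded by $\sum_{j\geq 1}|a_{k,t_j}|(R')^{t_j-t_0}$ for any fixed $R'<R$) forces $a_{k,t_0}=0$, and induction on the enumeration gives $a_{k,t_j}=0$ for all $j$. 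Translating back through $(k,t)\mapsto(s,\s)$ shows $f(\uz)=0$ as a formal series.

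The computations are elementary, so the only point requiring care—the main obstacle, such as it is—is the justification of the term-by-term integration that isolates the Fourier modes and of the $r\to 0^{+}$ limit inside the radial series. Both are immediate consequences of the absolute convergence $\sum_{s,\s}|a_{s,\s}|r^{s+\s}<\infty$ established in Section \ref{sec_convergent}, which is why the lemma is essentially clear.
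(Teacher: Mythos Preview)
Your argument is correct: the Fourier decomposition in the angular variable followed by the leading-exponent induction in the radial variable is exactly the standard way to establish uniqueness of such generalized power-series expansions, and your justifications of term-by-term integration and of the limit $r\to 0^{+}$ via the absolute-convergence hypothesis are sound. The paper itself gives no proof---it simply declares the lemma ``clear''---so your write-up supplies precisely the details the paper omits, along the lines any reader would reconstruct.
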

The above lemma says the coefficients of convergent formal power series are uniquely determined.

We note that $\mathrm{Conv}((z,\z,|z|^\R))$ is a differential subalgebra of $\mathrm{St}_0^{\text{real}}$ (closed under derivations and products).
\begin{rem}
The product $(\sum_{n\in \Z} (z/\z)^n) \cdot (\sum_{n\in \Z} (z/\z)^n)$ is not well-defined.
\end{rem}

%
%\begin{proof}
%We assume that $f$ is absolutely convergent around $0$
%and consider the polar coordinates $|z|^{\frac{1}{2}}=\rho$ and $(\frac{z}{\z})^{\frac{1}{2}}
%=\exp(2\pi i \theta)$,
%or equivalently, $(z,\z)=(\rho \exp(2\pi i \theta),\rho \exp(-2\pi i \theta))$.
%Then, we have
%\begin{align*}
%\sum_{s,\s \in \R} a_{s,\s}z^{s} \bar{z}^{\s}&=
%\sum_{s,\s \in \R} a_{s,\s}
%|z|^{\frac{s+\s}{2}} (\frac{z}{\z})^{\frac{s-\s}{2}} \bar{z}^{\s}\\
%&=
%\sum_{r \in \R, k \in \Z}
%b_{r,k}
%\rho^{r}\exp(2\pi i k \theta),
%\end{align*}
%where $(r,k)=(s+\s,s-\s)$
% and $b_{r,k}=a_{\frac{r+k}{2},\frac{r-k}{2}}$.
%By Lemma \ref{Novikov},
%there exists $r_0 <r_1<\dots$ such that
%$$f(\uz)=\sum_{i=0}^\infty \sum_{k \in \Z}b_{i,k} \rho^{r_i}\exp(2\pi i k \theta).
%$$
%We may assume that $r_0=0$.
%For fixed $\theta \in S^1$,
%the limit $\rho \rightarrow 0$ exists and equal to
%$\sum_{k\in \Z} b_{0,k}\exp(2\pi i k \theta)$.
%Thus, if the image of $f(\uz)$ in $\mathrm{St}_0$ is zero,
%then all $\{b_{0,k}\}_{k\in \Z}$ are zero.
%Thus, $f(\uz)$ is equal to zero.
%\end{proof}

\subsection{Conformal singularity}\label{sec_singularity}
Let $\al_1,\dots,\al_n \in \C P^1$ and
$f$ be a $\C$-valued real analytic function on $\C P^1\setminus \{\al_1,\dots,\al_n\}$.
A chart $(\chi,\alpha)$ of $\C P^1$ at a point $\alpha \in \C P^1$ is a biholomorphism $\chi$
from an open subset $U$ of $\C P^1$ to an open subset of $\C$ such that $\alpha \in U$ and $\chi(\alpha)=0$.
We say that $f$ has a {\it conformal singularity} at $\alpha_i$
if for any chart $(\chi,\alpha_i)$ of $\C P^1$ at $\alpha_i$,
there exists a formal power series
\begin{align}
\sum_{s,\s \in \R} a_{s,\s} z^s \z^\s \in \text{Conv}((z,\z,|z|^\R)) \label{eq_CS2}
\end{align}
such that it is convergent to $f\circ \chi^{-1}(z)$ in the annulus $A_R$ for some $R \in \R_{>0}$.
It is clear that the above condition is independent of a choice of a chart and by Lemma \ref{unique_coefficient},
 the coefficients of the series are uniquely determined by the chart.
Let $f$ have a conformal singularity at $\al_i$.

Denote by $j(\chi, f) \in \text{Conv}((z,\z,|z|^\R))$ the formal power series
which is convergent to $f\circ \chi^{-1}(z)$,
and by $F_{0,1,\infty}$ the space of real analytic functions on $\C P^1 \setminus \{0,1,\infty \}$
with possible conformal singularities at $\{0,1,\infty\}$.

Examples of functions belonging to $\F$ are
$$|z|^r, |1-z|^r, z^n (1-z)^n, (1-\z)^n \in \F,$$
where $r \in \R$ and $n \in \Z$.
For instance, the expansions of $|1-z|^r$ are
\begin{align*}
j(z, |1-z|^r) &= \sum_{n,m=0}^\infty \binom{r}{n}\binom{r}{m}z^n\z^m,\\
j(1-z^{-1}, |1-z|^r) &= \sum_{n,m=0}^\infty \binom{-r}{n}\binom{-r}{m}z^{n+r} \z^{m+r},\\
j(z^{-1}, |1-z|^r) &= \sum_{n,m=0}^\infty \binom{r}{n}\binom{r}{m}z^{n-r} \z^{m-r},
\end{align*}
where $z,1-z^{-1},z^{-1}$ are charts of $0,1,\infty$, respectively.
In fact, $\F$ is a $\C[z^\pm,(1-z)^\pm,\z^\pm,(1-\z)^\pm,
|z|^\R,|1-z|^\R]$-module.

A non-trivial example of a function in $\F$ is 
\begin{align}
f_{\mathrm{Ising}}(z)=\frac{1}{2}\Bigl(\left|1-\sqrt{1-z}\right|^{1/2}+\left|1+\sqrt{1-z}\right|^{1/2}\Bigr),  \label{eq_Ising}
\end{align}
which appears in a four-point correlation function of the two-dimensional Ising model (see \cite{BPZ}).
The expansion of $f_{\mathrm{Ising}}(z)$ around $0$ with the chart $z$ is
\begin{align}
2+|z|^{1/2}/2-z/4-\z/4+|z|^{1/2}(z+\z)/16+ z\z/32-5z^2/64-5\z^2/64+\dots.%,\tag{z} \\
%&2+|z|^{1/2}/2-z/4-\z/4+|z|^{1/2}(z+\z)/16+ z\z/32-5z^2/64-5\z^2/64+\dots,\tag{1-z}\\
%.\tag{1/z}
\end{align}
Since $f_{\mathrm{Ising}}(z)$ satisfies the equations $f_{\mathrm{Ising}}(z)=f_{\mathrm{Ising}}(1-z)=
(z\z)^{1/4}f_{\mathrm{Ising}}(1/z)$,
the expansions around $1$ and $\infty$ are
also of the form \eqref{eq_CS2}.
Thus, $f_{\mathrm{Ising}}(z) \in \F$ (see \cite{M4} for the construction of the corresponding full vertex algebra).
More generally, a monodromy invariant combination of
solutions of (holomorphic and anti-holomorphic) KZ-equations
belongs to $\F$ (see also \cite{M5}).

Finally, we remark on the case that 
$f\in \F$ is a holomorphic function.
Recall that the ring of regular functions on the affine scheme
$\CPm$ is $\C[z^\pm,(1-z)^\pm]$.
It is easy to show that
a function in $\C[z^\pm,(1-z)^\pm]$ has conformal singularities at $\{0,1,\infty\}$.
 Thus, $\C[z^\pm,(1-z)^\pm]\subset \F$.
Conversely,
let $f \in \F$ satisfy $\frac{d}{d\z}f=\frac{1}{2}(\frac{d}{dx} + i\frac{d}{dy}) f=0$.
Then, by Lemma \ref{hol_formal},
$f$ is a holomorphic function on $\CPm$ with possible poles at
$\{0,1,\infty\}$, thus, a meromorphic function on $\CP$.
Hence, $f \in \C[z^\pm,(1-z)^\pm]$.
\begin{prop}\label{holomorphic_F}
If $f \in \F$ is a holomorphic function on $\C P^1 \setminus \{0,1,\infty \}$, then $f \in \C[z^\pm,(1-z)^\pm]$.
\end{prop}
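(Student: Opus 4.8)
The plan is to combine the two facts the excerpt has already assembled: that $\frac{d}{d\z}f = 0$ forces holomorphy, and that a holomorphic function with a conformal singularity at a puncture has only a pole there. First I would fix $f \in \F$ with the stated hypothesis and work locally at each of the three punctures in turn, using the definition of a conformal singularity. At a point $\al_i \in \{0,1,\infty\}$, choose a chart $(\chi,\al_i)$; by hypothesis there is a convergent series $j(\chi,f) = \sum_{s,\s} a_{s,\s} z^s \z^\s \in \text{Conv}((z,\z,|z|^\R))$ representing $f\circ\chi^{-1}$ on an annulus $A_R$. The key observation is that the differential condition $\frac{d}{d\z}f = 0$ transports to the formal level: since $\chi$ is a biholomorphism, composing with $\chi^{-1}$ preserves the Cauchy--Riemann operator (up to the nonvanishing holomorphic factor $(\chi^{-1})'$), so $j(\chi,f)$ is itself annihilated by the formal operator $\ddz$.

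Next I would invoke Lemma \ref{hol_formal}: a series in $\C((z,\z,|z|^\R))$ killed by $\frac{d}{d\z}$ lies in $\C((z))$, an ordinary formal Laurent series. Hence the local expansion of $f$ at each puncture is an honest Laurent series in the chart variable, with a pole of some finite order (the lower bound $N$ in the definition of $((z))$). This shows $f$ is meromorphic on $\CP$ with possible poles only at $\{0,1,\infty\}$ --- it is holomorphic on $\CPm$ by assumption, and has at worst a pole at each of the three removed points. A meromorphic function on the compact Riemann surface $\CP$ is rational, and being regular away from $\{0,1,\infty\}$ it must be a Laurent polynomial in $z$ and $(1-z)$, i.e.\ an element of the coordinate ring $\C[z^\pm,(1-z)^\pm]$ of the affine curve $\CPm$. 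This is exactly the reverse inclusion to the already-noted $\C[z^\pm,(1-z)^\pm]\subset \F$, so I obtain $f \in \C[z^\pm,(1-z)^\pm]$ as claimed.

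I expect the only genuinely delicate point to be verifying that the vanishing of $\frac{d}{d\z}f$ as an operator on the actual real-analytic function translates to the vanishing of the \emph{formal} $\ddz$ applied to $j(\chi,f)$. This requires knowing that the convergent-series-to-function map $\text{Conv}((z,\z,|z|^\R)) \to \mathrm{St}_0^{\text{real}}$ intertwines the formal and genuine derivatives, which follows from Proposition \ref{formal_derivation} (derivatives of convergent series converge) together with the injectivity in Lemma \ref{unique_coefficient} (so that a function annihilated by $\frac{d}{d\z}$ forces its unique formal expansion to be annihilated by $\ddz$). The behaviour at the chart $\infty$, where the coordinate is $z^{-1}$, needs a small check that the pole condition is the right one, but this is routine given the explicit charts listed in the text. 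Everything else is bookkeeping across the three charts.
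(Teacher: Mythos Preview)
Your proposal is correct and follows essentially the same route as the paper: use Lemma~\ref{hol_formal} on the local expansion at each puncture to see that $f$ has at worst a pole there, conclude $f$ is meromorphic on $\CP$, hence rational and in $\C[z^\pm,(1-z)^\pm]$. The paper's argument is terser and glosses over exactly the point you flag as delicate---that the analytic condition $\frac{d}{d\z}f=0$ passes to the formal expansion---so your version is if anything more complete.
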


\subsection{Generalized two-point correlation function}\label{sec_gen}
%We will also consider their combinations, e.g.,
%$
%W((y/x,\y/\x,|y/x|^\R))[x^\pm,\x^{\pm},|x|^\R],
%$
%which is spanned by
%$$\sum_{i=1}^k \sum_{n,m=-l}^l\sum_{s,\s \in \R}
%a_{n,m,r,s}^i x^{n+r_i} \x^{m+r_i} (y/x)^s(\y/\x)^\s$$
%for some $k,l \in \Z_{>0}$ and $r_i \in \R$ and $a_{n,m,s,\s}^i \in W$
%such that $a_{n,m,s,\s}^i=0$ unless $s-\s\in\Z$
%and there exists $N$ such that $a_{n,m,s,\s}^i=0$ unless
%$s\geq N$ and $\s \geq N$
%and $\{(s,\s) \in \R\;|\; a_{n,m,s,\s}^i \neq 0 \text{ and }s+\s \leq H \}$ is finite for any $H\in \R$.
This section is devoted to defining and studying a space of generalized two-point functions.
Set $$U(y,z)=
\C((z/y,\z/\y,|z/y|^\R))[y^\pm,\y^\pm,|y|^\R].
$$
%and $$Y_2=\{(z_1,z_2) \in \C^2\;|\; z_1\neq z_2, z_1 \neq 0, z_2 \neq 0 \}.$$
%and $$S_2=\C[z_2^\pm ,z_{23}^\pm,z_3^\pm,\z_2^\pm ,\z_{23}^\pm,\z_3^\pm,|z_2|^\R,|z_3|^\R,|z_{23}|^\R].$$
Let %$C^\omega(Y_2)$ be the space of real analytic function on $Y_2$
 $\eta(z_1,z_2):Y_2 \rightarrow \CPm$ be the real analytic function defined by
$\eta(z_1,z_2)=\frac{z_2}{z_1}$.
For $f \in \F$, $f \circ \eta$ is a real analytic function on $Y_2$.
Denote by $\GCor_2$ the space of real analytic functions on $Y_2$
spanned by
\begin{align}
z_1^{\al} \z_1^{\be} f\circ \eta(z_1,z_2), \label{eq_GCO}
\end{align}
%and
%$$z_1^{\al_1} z_2^{\al_2} (z_1-z_2)^{\al_{12}} \bar{z}_1^{\be_1} \bar{z}_2^{\be_2} (\bar{z}_1-
%\bar{z}_2)^{\be_{12}},$$
where $f\in \F$ and $\al,\be \in \R$ satisfy
$\al-\be \in \Z$.

It is clear that $\GCor_2$ is closed under the product
and the derivations $\frac{d}{dz_1},\frac{d}{d\z_1},\frac{d}{dz_2},\frac{d}{d\z_2}$.
Since $(z_1\frac{d}{dz_1}+z_2\frac{d}{dz_2})  z_1^{\al} \z_1^{\be} f\circ \eta(z_1,z_2)
=\al  z_1^{\al} \z_1^{\be} f\circ \eta(z_1,z_2)$
and $(\z_1\frac{d}{d\z_1}+\z_2\frac{d}{d\z_2})  z_1^{\al} \z_1^{\be} f\circ \eta(z_1,z_2)
=\be  z_1^{\al} \z_1^{\be} f\circ \eta(z_1,z_2)$, by using a formal calculus,
we have:
\begin{lem}
\label{generalized_limit}
Let $\mu \in \GCor_2$ satisfy $(z_1\frac{d}{dz_1}+z_2\frac{d}{dz_2})\mu= \al \mu$
and $(\z_1\frac{d}{d\z_1}+\z_2\frac{d}{d\z_2})\mu= \be \mu$ for some $\al,\be \in \R$.
Then, there exists unique $f \in \F$ such that
$\mu(z_1,z_2)=z_1^\al\z_1^\be f(\frac{z_1}{z_2})$.
\end{lem}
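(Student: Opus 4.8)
The plan is to regard the two differential operators $D:=z_1\frac{d}{dz_1}+z_2\frac{d}{dz_2}$ and $\D:=\z_1\frac{d}{d\z_1}+\z_2\frac{d}{d\z_2}$ as a commuting, simultaneously diagonalizable pair on $\GCor_2$, and to read off $\mu$ as lying in a single joint eigenspace. The computation recorded just before the statement shows that each spanning element $z_1^{a}\z_1^{b}f\circ\eta$ (with $a-b\in\Z$ and $f\in\F$) is a joint eigenvector, with $D$-eigenvalue $a$ and $\D$-eigenvalue $b$: indeed $f\circ\eta$ is annihilated by both operators, since it depends only on the scale-invariant ratio $\eta=z_2/z_1$ and its conjugate, while $z_1^{a}\z_1^{b}$ contributes the eigenvalues $(a,b)$. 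So I would first establish that $\GCor_2$ is graded by these eigenvalues,
\[
\GCor_2=\bigoplus_{\substack{a,b\in\R\\ a-b\in\Z}} z_1^{a}\z_1^{b}\,(\F\circ\eta),
\]
the summands being exactly the joint eigenspaces of $(D,\D)$.

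The key technical point---and the main obstacle---is the \emph{directness} of this sum, i.e.\ that the monomials $z_1^{a}\z_1^{b}$ for distinct pairs $(a,b)$ are linearly independent over the ring of functions of $\eta$ alone. I would prove this by fixing a value $\eta=\eta_0$ and viewing a vanishing finite combination $\sum_j z_1^{a_j}\z_1^{b_j}f_j(\eta_0)=0$ as an identity in $z_1\in\C^\times$; writing $z_1=re^{i\theta}$ turns the $j$-th term into $f_j(\eta_0)\,r^{a_j+b_j}e^{i(a_j-b_j)\theta}$, and since distinct $(a_j,b_j)$ give distinct $(a_j+b_j,\,a_j-b_j)$, these are linearly independent functions of $(r,\theta)$ (separate the angular Fourier modes first, then the radial powers). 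Hence $f_j(\eta_0)=0$ for all $j$, and as $\eta_0$ is arbitrary each $f_j\equiv 0$. The same one-term computation shows that a single product $z_1^{a}\z_1^{b}f\circ\eta$ determines $(a,b)$ and $f$ uniquely, which yields the uniqueness clause of the lemma. This linear-independence statement is precisely the ``uniqueness of coefficients'' phenomenon packaged as formal calculus (cf.\ Lemma \ref{unique_coefficient}).

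With the grading in hand the conclusion is quick. Decomposing $\mu=\sum_j z_1^{a_j}\z_1^{b_j}f_j\circ\eta$ into distinct graded pieces and applying $D\mu=\al\mu$ gives $\sum_j(a_j-\al)z_1^{a_j}\z_1^{b_j}f_j\circ\eta=0$, so by directness $(a_j-\al)f_j=0$ and hence $a_j=\al$ whenever $f_j\neq 0$; likewise $\D\mu=\be\mu$ forces $b_j=\be$. Since the indices are distinct, at most one term survives, so $\mu=z_1^{\al}\z_1^{\be}\,g\circ\eta$ for a single $g\in\F$ (take $g=0$ if $\mu=0$), and in particular $\al-\be\in\Z$ when $\mu\neq 0$. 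Finally, the lemma asks for $f$ evaluated at $z_1/z_2=1/\eta$ rather than at $\eta$, so I would invoke that $\F$ is invariant under the substitution $w\mapsto 1/w$---the map $w\mapsto 1/w$ is a chart at $\infty$ permuting $\{0,1,\infty\}$, and $\F$ consists of functions with conformal singularities at all three of these points---and set $f(w):=g(1/w)\in\F$, giving $\mu=z_1^{\al}\z_1^{\be}f(z_1/z_2)$. Uniqueness of $f$ follows from the one-term uniqueness noted above.
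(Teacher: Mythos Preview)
Your proof is correct and is essentially a detailed unpacking of what the paper means by ``by using a formal calculus'': the paper records the eigenvector computation for each spanning element and then leaves the passage to a single eigenspace implicit, whereas you make the directness of the eigenspace decomposition explicit via the polar-coordinate linear-independence argument, and you also handle the $z_2/z_1$ versus $z_1/z_2$ bookkeeping that the paper glosses over. No gaps.
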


Let $\mu(z_1,z_2)=z_1^{\al} \bar{z}_1^{\be} f\circ \eta(z_1,z_2)$ in \eqref{eq_GCO}.
The expansions of $\mu(z_1,z_2)$ in $\{|z_1|>|z_2|\}$
and $\{|z_2|>|z_1|\}$ are respectively
given by
\begin{align*}
z_1^{\al}\bar{z}_1^{\be}&\lim_{z \to z_2/z_1} j(z,f)\\
z_1^{\al}\bar{z}_1^{\be}&\lim_{z \to z_1/z_2} j(z^{-1},f),
\end{align*}
which define maps
$$|_{|z_1|>|z_2|}:\GCor_2 \rightarrow U(z_1,z_2),\; \mu(z_1,z_2) \mapsto \mu(z_1,z_2)|_{|z_1|>|z_2|}$$
and
$$|_{|z_2|>|z_1|}:\GCor_2 \rightarrow U(z_2,z_1),\; \mu(z_1,z_2) \mapsto \mu(z_1,z_2)|_{|z_2|>|z_1|}.$$
Since
$f(\frac{z_2}{z_1})=f(\frac{z_2}{z_2+(z_1-z_2)})$,
the expansions of $\mu$ in $\{|z_2|>|z_1-z_2|\}$
is given by
\begin{align*}
z_2^{\al} \bar{z}_2^{\be}
\sum_{i,j \geq 0}\binom{\al}{i}\binom{\be}{j}
(z_0/z_2)^i(\z_0/\z_2)^j &\lim_{z \to -z_0/z_2} j(1-z^{-1},f),
\end{align*}
where $z_0=z_1-z_2$.
We denote it by
$$|_{|z_2|>|z_1-z_2|}:\GCor_2 \rightarrow U(z_2,z_0),
\mu(z_1,z_2) \mapsto \mu(z_1,z_2)|_{|z_2|>|z_1-z_2|}.
$$
Then, we have:
\begin{lem}
For $f\in \F$,
\begin{align*}
f\circ \eta |_{|z_1|>|z_2|} &= \lim_{z \to z_1/z_2} j(z,f),\\
f\circ \eta |_{|z_2|>|z_1|} &= \lim_{z \to z_2/z_1} j(z^{-1},f),\\
f\circ \eta |_{|z_2|>|z_1-z_2|} &= \lim_{z \to -z_0/z_2} j(1-z^{-1},f).
\end{align*}
\end{lem}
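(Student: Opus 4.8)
The plan is to obtain all three identities as the specialization to $\al=\be=0$ of the expansion formulas for a general spanning element $\mu(z_1,z_2)=z_1^{\al}\z_1^{\be}\,f\circ\eta(z_1,z_2)$ that were computed immediately above, where $\eta(z_1,z_2)=z_2/z_1$ and the maps $|_{|z_1|>|z_2|}$, $|_{|z_2|>|z_1|}$, $|_{|z_2|>|z_1-z_2|}$ were defined on the span of $\GCor_2$ by those formulas. Since $f\circ\eta$ is exactly the spanning element with $\al=\be=0$, the content is to check that in this case the monomial prefactors $z_1^{\al}\z_1^{\be}$ (resp.\ $z_2^{\al}\z_2^{\be}$) become $1$ and the binomial double sum collapses, and that the resulting substitutions into the appropriate $j$-series are legitimate elements of the target $U$-spaces. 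The geometric picture organizing the three cases is that the regions single out the three marked points of the ratio $\eta$: on $\{|z_1|>|z_2|\}$ it sits near $0$, on $\{|z_2|>|z_1|\}$ near $\infty$, and on $\{|z_2|>|z_1-z_2|\}$, writing $z_0=z_1-z_2$ and $\eta=(1+z_0/z_2)^{-1}$, it approaches $1$ as $z_0\to 0$. Because $f\in\F$ has conformal singularities at $\{0,1,\infty\}$ and the defining condition is independent of the chart, $f$ admits a conformal-singularity expansion in the charts $z\mapsto z$ at $0$, $z\mapsto z^{-1}$ at $\infty$, and $z\mapsto 1-z^{-1}$ at $1$, giving $j(z,f)$, $j(z^{-1},f)$, and $j(1-z^{-1},f)$ respectively; the last chart is admissible precisely by this chart-independence.

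For the first two cases the prefactor is trivial and nothing must be re-expanded. In $\{|z_1|>|z_2|\}$ I would substitute into $j(z,f)$ the chart value $w$ for which $\chi^{-1}(w)=\eta$; since $j(z,f)\in\mathrm{Conv}((z,\z,|z|^\R))$ is absolutely convergent on an annulus $A_R$ and $|z_2/z_1|$ can be made smaller than $R$, this substitution produces a well-defined element of $U(z_1,z_2)=\C((z/y,\z/\y,|z/y|^\R))[y^{\pm},\y^{\pm},|y|^\R]$ at $(y,z)=(z_1,z_2)$ that converges to $f\circ\eta$ there. The case $\{|z_2|>|z_1|\}$ is identical with the chart at $\infty$ and $j(z^{-1},f)$, landing in $U(z_2,z_1)$. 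In both cases the equality of the formal substitution with the genuine real-analytic region-expansion of $f\circ\eta$ is forced by the uniqueness of coefficients, Lemma \ref{unique_coefficient}: two absolutely convergent series on the same annulus representing the same function have identical coefficients.

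The third case carries the only real computation. Setting $\al=\be=0$ collapses the double sum $\sum_{i,j\geq 0}\binom{\al}{i}\binom{\be}{j}(z_0/z_2)^i(\z_0/\z_2)^j$ to its single $i=j=0$ term, leaving $\lim_{z\to -z_0/z_2}j(1-z^{-1},f)$. I would verify that the chart $z\mapsto 1-z^{-1}$ composes with $\eta$ correctly: its inverse sends $w$ to $(1-w)^{-1}$, and $(1-w)^{-1}=\eta=z_2/z_1$ solves to $w=1-z_1/z_2=-z_0/z_2$, so the substituted value indeed reproduces $f(\eta)$. Expanding $j(1-z^{-1},f)$ at $w=-z_0/z_2$ means composing the absolutely convergent series $j(1-z^{-1},f)$ with the analytic change of variable $z_0/z_2\mapsto \eta=(1+z_0/z_2)^{-1}$ and re-expanding the geometric factors in $z_0/z_2$; one must check that the outcome is bounded below and discrete in its $(s,\s)$-exponents, hence a genuine element of $U(z_2,z_0)$.

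I expect the diagonal case to be the main obstacle: one must confirm that absolute convergence of $j(1-z^{-1},f)$ on an annulus is preserved under composition with $\eta=(1+z_0/z_2)^{-1}$ for $|z_0/z_2|$ small, and that the double series obtained after expanding may be reordered into the standard normal form of $U(z_2,z_0)$. This is exactly the type of majorant estimate established in the convergence subsection (cf.\ Propositions \ref{formal_derivation} and \ref{formal_real}), where absolute convergence on $A_R$ was shown to survive analytic substitutions and term-by-term operations. Once convergence and the rearrangement are secured, Lemma \ref{unique_coefficient} again identifies the formal expansion with the honest real-analytic expansion of $f\circ\eta$ on $\{|z_2|>|z_1-z_2|\}$, which completes the proof.
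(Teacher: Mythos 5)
Your proposal is correct and is essentially the paper's own (implicit) argument: the paper gives no separate proof because the three maps $|_{|z_1|>|z_2|}$, $|_{|z_2|>|z_1|}$, $|_{|z_2|>|z_1-z_2|}$ are \emph{defined} on the spanning elements $z_1^{\al}\z_1^{\be}\,f\circ\eta$ by exactly these substitution formulas, so the lemma is just the case $\al=\be=0$, and the convergence/uniqueness points you raise are already supplied by the definition of $j(\chi,f)$ together with Lemma \ref{unique_coefficient} (in particular, with the chart $1-z^{-1}$ the diagonal case is a purely monomial substitution $w\mapsto -z_0/z_2$, so no re-expansion of geometric factors is actually needed). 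One caveat: your derivation from $\eta=z_2/z_1$ correctly yields $\lim_{z\to z_2/z_1}j(z,f)$ on $\{|z_1|>|z_2|\}$ and $\lim_{z\to z_1/z_2}j(z^{-1},f)$ on $\{|z_2|>|z_1|\}$, which agrees with the displayed definition of the expansion maps (and with Lemma \ref{borcherds}) but not with the lemma's literal wording, in which the two ratios appear interchanged --- an internal typo of the paper rather than an error on your part.
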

The following lemma connects a full vertex algebra (real analytic) and a vertex algebra (holomorphic):
\begin{lem}
\label{hol_generalized}
Let $\mu(z_1,z_2) \in \GCor_2$ satisfies $\frac{d}{d\z_1} \mu=0$,
 $(z_1\frac{d}{dz_1}+z_2\frac{d}{dz_2})\mu=\al \mu$
and  $(\z_1\frac{d}{d\z_1}+\z_2\frac{d}{d\z_2})\mu=\be \mu$ for some $\al,\be\in \R$.
Then, $\mu(z_1,z_2)\in \C[z_1^\pm,(z_1-z_2)^\pm,z_2^\pm,\z_2^\pm,|z_2|^\R]$.
Furthermore, if $\frac{d}{d\z_2} \mu=0$, then $\mu(z_1,z_2) \in \GCor_2^{\text{hol}}=\C[z_1^\pm,z_2^\pm,(z_1-z_2)^\pm]$.
\end{lem}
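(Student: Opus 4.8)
The plan is to use the two eigenvalue (homogeneity) hypotheses to collapse $\mu$ to a single cross-ratio variable, to convert the holomorphicity hypothesis $\frac{d}{d\z_1}\mu=0$ into an Euler equation that completely determines the anti-holomorphic dependence, and then to reduce to the holomorphic classification already established in Proposition \ref{holomorphic_F}.

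First I would invoke Lemma \ref{generalized_limit}: since $\mu\in\GCor_2$ is an eigenvector of $z_1\frac{d}{dz_1}+z_2\frac{d}{dz_2}$ and of $\z_1\frac{d}{d\z_1}+\z_2\frac{d}{d\z_2}$ with eigenvalues $\al,\be$, there is a unique $f\in\F$ with $\mu(z_1,z_2)=z_1^\al\z_1^\be f(w)$, where $w=z_1/z_2$ and I write $\w=\z_1/\z_2$ for the conjugate coordinate on $\CPm$. Next I would impose $\frac{d}{d\z_1}\mu=0$. Using $\frac{\partial w}{\partial\z_1}=0$ and $\frac{\partial\w}{\partial\z_1}=\w/\z_1$, the chain rule gives
\[
\frac{d}{d\z_1}\mu=z_1^\al\z_1^{\be-1}\bigl(\be f+\w\,\partial_{\w}f\bigr),
\]
so the hypothesis is equivalent to the Euler equation $\w\,\partial_{\w}f=-\be f$ on $\CPm$. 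I would then set $h:=(w\w)^\be f=|w|^\be f$, which is single-valued and real analytic on $\CPm$ because $|w|^\be$ is, and check by a one-line computation that $\partial_{\w}h=w^\be\w^{\be-1}(\w\,\partial_{\w}f+\be f)=0$; thus $h$ is holomorphic on $\CPm$.

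The remaining point is to see that $h$ again lies in $\F$, i.e. has conformal singularities at $0,1,\infty$, so that Proposition \ref{holomorphic_F} applies and yields $h\in\C[w^\pm,(1-w)^\pm]$. This holds because multiplying by $|w|^\be$ preserves the conformal-singularity class at each of the three points: in the chart $w$ at $0$ the factor is $|w|^\be$, in the chart $t=1-w$ at $1$ it equals $|1-t|^\be$, analytic near $t=0$, and in the chart $v=w^{-1}$ at $\infty$ it equals $|v|^{-\be}$. In each case the factor is a real power of a modulus, which shifts the $z$- and $\z$-exponents by the same real amount and so keeps the expansion in $\mathrm{Conv}((z,\z,|z|^\R))$. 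Substituting $f=|w|^{-\be}h$ and $|w|^{-\be}=(z_1\z_1)^{-\be}|z_2|^\be$ back then gives
\[
\mu=z_1^{\al-\be}\,|z_2|^\be\,h\!\left(\tfrac{z_1}{z_2}\right).
\]
Since every generator of $\GCor_2$ has $\al-\be\in\Z$, the factor $z_1^{\al-\be}$ lies in $\C[z_1^\pm]$, while $h(z_1/z_2)\in\C[z_1^\pm,z_2^\pm,(z_1-z_2)^\pm]$ because $w=z_1z_2^{-1}$ and $1-w=-(z_1-z_2)z_2^{-1}$. Hence $\mu\in\C[z_1^\pm,z_2^\pm,(z_1-z_2)^\pm,|z_2|^\R]$, which is contained in the asserted module. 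For the last claim I would simply impose $\frac{d}{d\z_2}\mu=0$ on this expression: as $z_1^{\al-\be}$ and $h(z_1/z_2)$ are holomorphic, only $|z_2|^\be$ contributes, giving $\frac{d}{d\z_2}\mu=\be\,\mu/\z_2$, whence $\be=0$ and $\mu=z_1^\al h(z_1/z_2)\in\C[z_1^\pm,z_2^\pm,(z_1-z_2)^\pm]$ with $\al=\al-\be\in\Z$.

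I expect the main obstacle to be the third step, namely verifying that $h=|w|^\be f$ still has conformal singularities at all three points, and in particular handling the chart at $1$, where $|w|^\be$ is not a power of the local coordinate; the formal manipulations, by contrast, are routine once the Euler equation is in hand.
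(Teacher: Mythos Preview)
Your proposal is correct and follows essentially the same strategy as the paper: reduce via Lemma~\ref{generalized_limit} to a single function of the cross-ratio in $\F$, turn $\partial_{\z_1}\mu=0$ into a holomorphicity statement, and invoke Proposition~\ref{holomorphic_F}. The paper's proof is shorter only because it uses the equivalent factorization $\mu=z_2^\al\z_2^\be f(z_1/z_2)$ rather than the $z_1^\al\z_1^\be$ form you take literally from the lemma; since that prefactor is independent of $\z_1$, the hypothesis $\partial_{\z_1}\mu=0$ immediately gives $\partial_{\bar w}f=0$, so Proposition~\ref{holomorphic_F} applies to $f$ itself and no Euler equation or auxiliary $h=|w|^\be f$ is needed. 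Your $h$ and the paper's $f$ differ only by the holomorphic Laurent factor $w^{\al-\be}$ (recall $\al-\be\in\Z$), so the two arguments are the same computation seen through different normalizations; the obstacle you flag at $w=1$ simply does not arise in the paper's version.
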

\begin{proof}
By Lemma \ref{generalized_limit}, there exits $f \in \F$ such that
$\mu(z_1,z_2)=z_2^\al \z_2^\be f(z_1/z_2)$.
By $\frac{d}{d\z_1}\mu=0$, $f$ is holomorphic and by Proposition \ref{holomorphic_F},
$f \in \C[z^\pm,(1-z)^\pm]$.
Thus, $\mu \in \C[z_1^\pm,(z_1-z_2)^\pm, z_2^\pm,\z_2^\pm,|z_2|^\R]$.
If $\frac{d}{d\z_2}\mu=0$, then $\be=0$ and $\al \in \Z$.
Hence, the assertion holds.
\end{proof}
%The space of holomorphic generalized two-point correlation functions
%is denoted by $\GCor_2^{\text{hol}}$,
%that is,
%$$
%\GCor_2^{\text{hol}}=\C[z_1^\pm,z_2^\pm,(z_1-z_2)^\pm].
%$$

\section{Full vertex algebra}\label{sec_full_vertex}
In this section, we introduce the notion of a full vertex algebra,
which is a generalization of a $\Z$-graded vertex algebra.
This formulation is strongly influenced by the formulation of a $\Z$-graded vertex algebra in \cite{FB} and \cite{LL}, as described in Section \ref{sec_def_vertex}.

\subsection{Definition of full vertex algebra}\mbox{}
For an $\R^2$-graded vector space $F=\bigoplus_{h,\h \in \R^2} F_{h,\h}$, 
set $F^\vee=\bigoplus_{h,\h \in \R^2} F_{h,\h}^*$,
where $F_{h,\h}^*$ is the dual vector space of $F_{h,\h}$.
Define linear maps $L(0),\Ld(0)\in \End\,F$ by
 $L(0)|_{F_{h,\h}}=h\,\mathrm{id}_{F_{h,\h}}$ and $\Ld(0)|_{F_{h,\h}} = \h\,\mathrm{id}_{F_{h,\h}}$ for any $h,\h\in \R$.

A full vertex algebra is an $\R^2$-graded $\C$-vector space
$F=\bigoplus_{h,\h \in \R^2} F_{h,\h}$ equipped with a linear map 
$$Y(-,\uz):F \rightarrow \End (F)[[z^\pm,\z^\pm,|z|^\R]],\; a\mapsto Y(a,\uz)=\sum_{r,s \in \R}a(r,s)z^{-r-1}\z^{-s-1}$$
and an element $\va \in F_{0,0}$ %and a $\R^2$-graded subspace $F^\vee=\bigoplus_{r,s} F_{r,s}^\vee$ of the dual vector space $Hom_\C(F,\C)$ 
satisfying the following conditions:
%
% F^\vee is not nesesary?
\begin{enumerate}
\item[FV1)]
%There exists $N \in \R$ such that $F_{h,\h}=0$ for any $h \geq N$ or $\h \geq N$.
For any $a,b \in F$, $Y(a,\uz)b \in F((z,\z,|z|^\R))$;
\item[FV2)]
$F_{h,\h}=0$ unless $h-\h \in \Z$;
%For any $a \in F$ and $u \in F^\vee$, there exists $N \in \R$ such that
%$u(a(-r,-s)-)=0$ for any $r \geq N$ or $s \geq N$.
\item[FV3)]
For any $a \in F$, $Y(a,\uz)\va \in F[[z,\z]]$ and $\lim_{z \to 0}Y(a,\uz)\va = a(-1,-1)\va=a$;
\item[FV4)]
$Y(\va,\uz)=\mathrm{id}_F \in \End F$;
\item[FV5)]
%convergence
For any $a,b,c \in F$ and $u \in F^\vee$, there exists $\mu(z_1,z_2) \in \GCor_2$ such that
\begin{align*}
u(Y(a,\uz_1)Y(b,\uz_2)c) &= \mu|_{|z_1|>|z_2|}, \\
u(Y(Y(a,\uz_0)b,\uz_2)c) &= \mu|_{|z_2|>|z_1-z_2|},\\
u(Y(b,\uz_2)Y(a,\uz_1)c)&=\mu|_{|z_2|>|z_1|},
\end{align*}
where $z_0=z_1-z_2$;
\item[FV6)]
For any $a \in F$,
\begin{align}
\begin{split}
z\frac{d}{dz}Y(a,\uz) &= [L(0),Y(a,\uz)]-Y(L(0)a,\uz),\\
\z \frac{d}{d\z}Y(a,\uz) &=[\Ld(0),Y(a,\uz)]-Y(\Ld(0)a,\uz).
\end{split}
\label{eq_cov}
\end{align}
%\item[FV6)]
%$F^\vee=\bigoplus_{n,m \in \R^2} Hom_\C (F_{n,m},\C) \cap F^\vee$
%finiteness of fusion
%\item[FV8)]
%For $n,m,n',m' \in \R$, there exists $l \in \Z_{>0}$ and $n_1,m_1,\dots,n_l,m_l \in \R$ such that 
%the subspace spanned by $\{a(r,s)b\}$
%for all $a \in F_{n,m}, b\in F_{n',m'}, r,s\in \R$ is contained in $\bigoplus_{i=1}^l F_{n_i,m_i}.$
\end{enumerate}

%\begin{rem}
%\label{axiom}
%The real analytic function $\mu \in \GCor_2$ is unique (see Remark \ref{unique_expansion})
%By (FV1) and (FV6), a full vertex algebra $F$ satisfies the following properties:
%\begin{enumerate}
%\item
%For any $a,b \in F$, there exists $N \in \R$ such that $a(r,s)b=0$
%for any $r \geq N$ or $s \geq N$.
%\item
%For any $a \in F$ and $u \in F^\vee$, there exists $N \in \R$ such that
%$u(a(-r,-s)-)=0$ for any $r \geq N$ or $s \geq N$.
%\end{enumerate}
%\end{rem}

\begin{rem}
Physically, the energy and the spin of a state in $F_{h,\h}$ are $h+\h$ and $h-\h$.
Thus, the condition (FV2) implies that we only consider the particles whose spin is an integer,
that is, we consider only bosons and not fermions.
The notion of a full vertex superalgebra can be defined by modifying (FV5) and (FV2).
%We consider the condition (FV8) because of Remark \ref{rationality}
\end{rem}
\begin{rem}\label{L0_operator}
%Define the linear map $L(0),\Ld(0)\in \End\,F$ by
% $L(0)|_{F_{h,\h}}=h$ and $\Ld(0)|_{F_{h,\h}} = \h$ for any $h,\h\in \R$.
The condition (FV6) is equivalent to the the following condition:
\begin{align*}
F_{h,\h}(r,s)F_{h',\h'} \subset F_{h+h'-r-1,\h+\h'-s-1}\quad \text{ for any } r,s,h,h',\h,\h' \in \R.
\end{align*}

%For any $h,\h\in \R$ and $a \in F_{h,\h}$,
%\begin{align}
%\begin{split}
%[L(0),Y(a,\uz)]=(z\frac{d}{dz}+h)Y(a,\uz),\\
%[\Ld(0),Y(a,\uz)]=(\z \frac{d}{d\z}+\h)Y(a,\uz).
%\end{split}
%\label{eq_conformal_full_add}
%\end{align}
\end{rem}

Since $V((z)) \subset V((z,\z,|z|^\R))$ and $\GCor_2^{\text{hol}} \subset \GCor_2$,
by \cite{LL,FB},
we have (see Remark \ref{rem_vertex}):
\begin{prop}\label{graded_vertex}
A $\Z$-graded vertex algebra is a full vertex algebra.
\end{prop}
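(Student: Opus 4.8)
The plan is to verify that any $\Z$-graded vertex algebra $V=\bigoplus_{n\in\Z}V_n$ satisfies all six axioms (FV1)--(FV6) after reinterpreting it as an $\R^2$-graded object. First I would fix the grading: declare $F_{h,\h}=V_n$ when $h=\h=n/2$ (so that the energy $h+\h$ equals the conformal weight $n$ and the spin $h-\h$ vanishes), and set $F_{h,\h}=0$ for all other $(h,\h)\in\R^2$. This choice is forced by the fact that a chiral theory carries no anti-holomorphic dependence, so every state should have zero spin; it immediately gives (FV2), since $h-\h=0\in\Z$. The distinguished vector $\va\in V_0$ lands in $F_{0,0}$ as required.

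Next I would compare the two vertex operator maps. The holomorphic vertex operator $Y(a,z)=\sum_{n\in\Z}a(n)z^{-n-1}$ must be recast as a full vertex operator $Y(a,\uz)=\sum_{r,s\in\R}a(r,s)z^{-r-1}\z^{-s-1}$; the natural dictionary sets $a(r,s)=a(r)$ when $s=-1$ (the coefficient of $\z^0$) and $a(r,s)=0$ otherwise, i.e.\ the full vertex operator is $\z$-independent. With this, (FV4) translates $Y(\va,z)=\mathrm{id}_V$ directly into $Y(\va,\uz)=\mathrm{id}_F$, and (FV3) follows from (V2) because $Y(a,z)\va\in V[[z]]\subset V[[z,\z]]$ and the limit $\lim_{z\to 0}Y(a,\uz)\va=a(-1,-1)\va=a(-1)\va=a$ holds verbatim. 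For (FV1), the key observation is that $V((z))\subset V((z,\z,|z|^\R))$, which is already noted in the excerpt just before Proposition \ref{graded_vertex}: a Laurent series in $z$ alone is bounded below and has finite-dimensional graded pieces, so it lies in the larger space of convergent real-analytic formal series. Finally (FV6) follows from (V5) by unwinding the grading identification: the bilinear map $V_n(r)V_m\subset V_{n+m-r-1}$ becomes $F_{h,\h}(r,s)F_{h',\h'}\subset F_{h+h'-r-1,\h+\h'-s-1}$ once one checks that only $s=-1$ contributes and the weights add correctly on both the $h$- and $\h$-slots.

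The main step, and the one I expect to be slightly subtle rather than hard, is (FV5): the associativity/locality axiom. The inclusion $\GCor_2^{\mathrm{hol}}=\C[z_1^\pm,z_2^\pm,(z_1-z_2)^\pm]\subset\GCor_2$ (established at the end of Section \ref{sec_gen}) furnishes, for any $a,b,c\in V$ and $u\in V^\vee$, the meromorphic function $\mu(z_1,z_2)\in\GCor_2^{\mathrm{hol}}$ guaranteed by (V4). I would then argue that the three holomorphic expansions appearing in (V4) --- in the regions $|z_1|>|z_2|$, $|z_2|>|z_1-z_2|$, and $|z_2|>|z_1|$ --- coincide with the three full-vertex-algebra expansions required in (FV5), because for a holomorphic $\mu$ the expansion maps $|_{|z_1|>|z_2|}$ etc.\ of Section \ref{sec_gen} restrict to the ordinary geometric-series expansions of rational functions, and the full vertex operators $Y(a,\uz_i)$ reduce to $Y(a,z_i)$ under the dictionary above. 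The one point demanding care is matching the dual pairing: $u\in V^\vee$ must be reinterpreted as an element of $F^\vee$, and one checks the identification $F_{h,\h}^*=V_n^*$ is compatible, so that $u(Y(a,\uz_1)Y(b,\uz_2)c)$ computed in $F$ equals $u(Y(a,z_1)Y(b,z_2)c)$ computed in $V$. With that matching in place, the same $\mu$ witnesses (FV5), completing the proof.
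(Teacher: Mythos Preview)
Your grading choice $F_{h,\h}=V_n$ for $h=\h=n/2$ is incorrect and makes (FV6) fail. With your dictionary $a(r,s)=a(r)$ for $s=-1$ and $0$ otherwise, take $a\in V_n=F_{n/2,n/2}$ and $b\in V_m=F_{m/2,m/2}$. Axiom (FV6) forces
\[
a(r,-1)b\in F_{n/2+m/2-r-1,\;n/2+m/2-(-1)-1}=F_{(n+m)/2-r-1,\;(n+m)/2},
\]
while $a(r)b\in V_{n+m-r-1}$, which under your grading sits in $F_{(n+m-r-1)/2,\;(n+m-r-1)/2}$. These two graded pieces coincide only when $r=-1$, so (FV6) is violated for every other mode. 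The physical intuition that a chiral theory has ``spin zero'' is misleading here: in a holomorphic theory the anti-holomorphic weight $\h$ vanishes, so the spin $h-\h=h$ equals the conformal weight, not zero.

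The correct identification is $F_{n,0}=V_n$ and $F_{h,\h}=0$ otherwise; this is the grading the paper uses implicitly (compare the discussion after Proposition \ref{vertex_algebra}, where $\ker\D\subset\bigoplus_n F_{n,0}$, and Corollary \ref{chiral_tensor}). With this grading (FV2) holds since $h-\h=n\in\Z$, and (FV6) becomes exactly (V5) because $a(r,-1)b\in F_{n+m-r-1,\,0}$. Once you fix the grading, the rest of your verification is fine and matches the paper's one-line argument: the inclusions $V((z))\subset V((z,\z,|z|^\R))$ and $\GCor_2^{\mathrm{hol}}\subset\GCor_2$ transfer (V1)--(V5) to (FV1)--(FV6) verbatim.
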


Let $F$ be an $\R^2$-graded vector space.
The set $\{(h,\h)\in \R^2\;|\;F_{h,\h}\neq 0\}$ is called
a spectrum.
The spectrum of $F$ is said to be
{\it bounded below} if
there exists $N\in \R$ such that
$F_{h,\h}=0$ for any $h \leq N$ or $\h\leq N$
and {\it discrete} if for any $H \in \R$, $\sum_{h+\h < H} \dim F_{h,\h}$ is finite
and {\it compact} if it is both bounded below and discrete.
A full vertex algebra with a compact spectrum is called a {\it compact full vertex algebra}.
Many interesting models in conformal field theory (e.g., rational conformal field theory and its deformation) have a compact spectrum.
% The definition of $\C((z,\z,|z|^\R))$ is motivated by the following proposition:
%\begin{prop}\label{formal_singularity}
%Let $F$ be an $\R^2$-graded vector space
%with a compact spectrum and $F_{h,\h}=0$ unless $h-\h\in\Z$.
%Assume that a linear map $Y(-,\uz):F\rightarrow \End F[[z,\z,|z|^\R]]$
%satisfy (FV6).
%%the following condition:
%%\begin{enumerate}
%%\item[]
%%For any $h,\h\in \R$ and $a \in F_{h,\h}$,
%%\begin{align}
%%[L(0),Y(a,\uz)]=(z\frac{d}{dz}+h)Y(a,\uz), \nonumber \\
%%[\Ld(0),Y(a,\uz)]=(\z\frac{d}{d\z}+\h)Y(a,\uz). \label{eq_cov}
%%\end{align}
%%\end{enumerate}
%Then, $Y(a,\uz)b \in F((z,\z,|z|^\R))$ for any $a,b\in F$.
%\end{prop}
%\begin{proof}
%Set $Y(a,\uz)b=\sum_{r,s \in \R^2}v_{r,s}z^{r}\z^{s}$
%where $v_{r,s} \in F$.
%Since the spectrum is bounded below,
%by Remark \ref{L0_operator},
%there exists $N \in \R$ such that $v_{r,s}=0$ unless $r,s \geq N$.
%Similarly, Since the spectrum is discrete,
%$\#\{(r,s)\in \R^2\;|\;v_{r,s}\neq 0 \text{ and } r+s\leq H \}$
%is finite for any $H \in \R$.
%\end{proof}

%\begin{rem}
%By the above proposition,
%a full vertex algebra is a formulation of a
%compact two-dimensional conformal field theory
%on $\R^2$.
%\end{rem}

For vertex algebras, we can consider the notions of modules, homomorphisms, and ideals (see, for example, \cite{FLM,LL}).
We will consider these full analogues.
Let $(F^1,Y^1,\va^1)$ and $(F^2,Y^2,\va^2)$ be full vertex algebras.
A full vertex algebra homomorphism from $F^1$ to $F^2$ is a linear map 
$f:F^1\rightarrow F^2$ such that
\begin{enumerate}
\item
$f(\va^1)=\va^2$;
\item
$f(Y^1(a,\uz)b)=Y^2(f(a),\uz)f(b)$ for any $a,b\in F^1$.
\end{enumerate}

A module of a full vertex algebra $F$ is an $\R^2$-graded $\C$-vector space
$M=\bigoplus_{h,\h \in \R^2} M_{h,\h} $ equipped with a linear map 
$$Y_M(-,\uz):F \rightarrow \End (M)[[z^\pm,\z^\pm,|z|^\R]],\; a\mapsto Y_M(a,z)=\sum_{r,s \in \R}a(r,s)z^{-r-1}\z^{-s-1}$$
%and a subspace $M^\vee$ of the dual vector space 
%$Hom_\C(M,\C)$
 satisfying the following conditions:
%A vertex algebra $(V,Y,\va)$ is a vector space $V$ and 
%with a non-zero distinguished vector $\mathbf{1}$ satisfying the following conditions:
\begin{enumerate}
\item[FM1)]
For any $a \in F$ and $m \in M$, $Y(a,\uz)m \in M((z,\z,|z|^\R))$;
\item[FM2)]
$Y_M(\va,\uz)=\mathrm{id} \in \End M$;
\item[FM3)]
%convergence
For any $a,b \in F$, $m \in M$ and $u \in M^\vee$, there exists $\mu \in \GCor_2$ such that
\begin{align*}
u(Y_M(a,\uz_1)Y_M(b,\uz_2)m) &= \mu|_{|z_1|>|z_2|}, \\
u(Y_M(Y(a,\uz_0)b,\uz_2)m) &= \mu|_{|z_2>|z_1-z_2|},\\
u(Y_M(b,\uz_2)Y_M(a,\uz_1)m)&=\mu|_{|z_2|>|z_1|};
\end{align*}
%\item[FV6)]
% For $m \in M$, if $u(m)=0$ for any $u \in M^\vee$,
%then $m=0$.
\item[FM4)]
$F_{h,\h}(r,s)M_{h',\h'} \subset M_{h+h'-r-1,\h+\h'-s-1}$ for any $r,s,h,h',\h,\h' \in \R$.
%\item
%$M^\vee$ is graded.
%\item[FM7)]
%For $n,m,n',m' \in \R$, there exists $l \in \Z_{>0}$ and $n_1,m_1,\dots,n_l,m_l \in \R$ such that 
%the subspace spanned by $\{a(r,s)b\}$
%for all $a \in F_{n,m}, b\in F_{n',m'}, r,s\in \R$ is contained in $\bigoplus_{i=1}^l F_{n_i,m_i}.$
\end{enumerate}
%Recall that 
%\begin{align*}
%U(z_1,z_2)&=\C((z_2/z_1,\z_2/\z_1,|z_2/z_1|^\R))
%[z_1^\pm,\z_1^\pm,|z_1|^\R,z_2^\pm,\z_2^\pm,|z_2|^\R].
%\end{align*}

As a consequence of (FM1) and (FM3), we have:
\begin{lem}\label{generalized_formal}
Let $h_i,\h_i\in \R$, $a_i \in F_{h_i,\h_i}$, ($i=1,2$), $m \in M_{h_3,\h_3}$ and $u \in M_{h_0,\h_0}^*$.\\
Then, $u(Y_M(a_1,\uz_1)Y_M(a_{2},\uz_{2})m) \in z_1^{h_0-h_1-h_2-h_3}\z_1^{\h_0-\h_1-\h_2-\h_3}
\C((z_2/z_1,\z_2/\z_1,|z_2/z_1|^\R))$.
\end{lem}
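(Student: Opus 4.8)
The plan is to represent the three-point expression by a single genuine function $\mu\in\GCor_2$, to pin down its homogeneity degrees under the two Euler operators directly from the grading, and then to read off the shape of its expansion. First, by (FM3) there exists $\mu(z_1,z_2)\in\GCor_2$ with $u(Y(a_1,\uz_1)Y(a_2,\uz_2)m)=\mu|_{|z_1|>|z_2|}$, which by (FM1) already places the left-hand side in $U(z_1,z_2)=\C((z_2/z_1,\z_2/\z_1,|z_2/z_1|^\R))[z_1^\pm,\z_1^\pm,|z_1|^\R]$. The only thing left to determine is the total $z_1$- and $\z_1$-exponent.

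Next I would extract the homogeneity from the grading. Writing $Y_M(a_i,\uz_i)=\sum_{r_i,s_i}a_i(r_i,s_i)z_i^{-r_i-1}\z_i^{-s_i-1}$, the coefficient of $z_1^{-r_1-1}\z_1^{-s_1-1}z_2^{-r_2-1}\z_2^{-s_2-1}$ in the left-hand side is $u(a_1(r_1,s_1)a_2(r_2,s_2)m)$. By (FM4), $a_2(r_2,s_2)m\in M_{h_2+h_3-r_2-1,\,\h_2+\h_3-s_2-1}$ and hence $a_1(r_1,s_1)a_2(r_2,s_2)m\in M_{h_1+h_2+h_3-r_1-r_2-2,\,\h_1+\h_2+\h_3-s_1-s_2-2}$, so this coefficient vanishes unless the target grade is $(h_0,\h_0)$, i.e. unless $r_1+r_2=h_1+h_2+h_3-h_0-2$ and $s_1+s_2=\h_1+\h_2+\h_3-\h_0-2$. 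Every surviving monomial therefore has joint $(z_1,z_2)$-degree $(-r_1-1)+(-r_2-1)=h_0-h_1-h_2-h_3=:\al$ and joint $(\z_1,\z_2)$-degree $\h_0-\h_1-\h_2-\h_3=:\be$. Equivalently, the left-hand side is an eigenvector of $z_1\frac{d}{dz_1}+z_2\frac{d}{dz_2}$ with eigenvalue $\al$ and of $\z_1\frac{d}{d\z_1}+\z_2\frac{d}{d\z_2}$ with eigenvalue $\be$; the same conclusion can be reached more invariantly from the module analogue of Remark \ref{L0_operator}, by commuting $L(0)$ and $\Ld(0)$ through the two vertex operators and using $u\circ L(0)=h_0u$, $L(0)m=h_3m$ (and the barred versions).

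Then I would transfer these eigenvalue equations from the expansion back to $\mu$. The expansion map $|_{|z_1|>|z_2|}$ commutes with the Euler operators, since these act as pure scalings on a function and on its expansion alike, and it is injective, because a nonzero element of $\GCor_2$ is real analytic and hence does not vanish identically on the open region $\{|z_1|>|z_2|\}$ on which its expansion converges to it. Thus $\mu$ itself satisfies $(z_1\frac{d}{dz_1}+z_2\frac{d}{dz_2})\mu=\al\mu$ and $(\z_1\frac{d}{d\z_1}+\z_2\frac{d}{d\z_2})\mu=\be\mu$. By Lemma \ref{generalized_limit} (equivalently, by isolating the homogeneous component of the spanning expression (\ref{eq_GCO})) there is a unique $f\in\F$ with $\mu=z_1^{\al}\z_1^{\be}\,f\circ\eta$. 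Finally, expanding in $\{|z_1|>|z_2|\}$, the factor $(f\circ\eta)|_{|z_1|>|z_2|}$ is the convergent series associated to $j(z,f)\in\mathrm{Conv}((z,\z,|z|^\R))$ and therefore lies in $\C((z_2/z_1,\z_2/\z_1,|z_2/z_1|^\R))$, so $u(Y(a_1,\uz_1)Y(a_2,\uz_2)m)=\mu|_{|z_1|>|z_2|}\in z_1^{\al}\z_1^{\be}\,\C((z_2/z_1,\z_2/\z_1,|z_2/z_1|^\R))$, which is exactly the assertion.

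The application of (FM3) and the grading bookkeeping via (FM4) are routine; the one point that needs genuine care is the transfer of the Euler-homogeneity from the formal expansion to the honest function $\mu\in\GCor_2$, namely the equivariance and injectivity of the expansion map. This is the step where one really exploits that elements of $\GCor_2$ are real analytic functions and not merely formal power series, and it is what allows Lemma \ref{generalized_limit} to be invoked to produce the single term $z_1^{\al}\z_1^{\be}f\circ\eta$.
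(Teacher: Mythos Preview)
Your proof is correct, but it takes a longer route than the paper's. Both arguments share the grading bookkeeping via (FM4): the coefficient of $z_1^{s_1}\z_1^{\s_1}z_2^{s_2}\z_2^{\s_2}$ vanishes unless $s_1+s_2=\al$ and $\s_1+\s_2=\be$, so the expression factors as $z_1^{\al}\z_1^{\be}$ times a series in $z_2/z_1,\z_2/\z_1$. The difference is in how one concludes that this remaining series lies in $\C((z_2/z_1,\z_2/\z_1,|z_2/z_1|^\R))$. The paper simply observes that the exponents $(s_2,\s_2)$ are exactly those appearing in $Y(a_2,\uz_2)m$, and (FM1) says directly that this lies in $M((z_2,\z_2,|z_2|^\R))$; the integrality, lower bound, and discreteness conditions are then immediate. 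No appeal to (FM3), to the analytic function $\mu$, or to Lemma~\ref{generalized_limit} is needed.

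Your route through $\mu\in\GCor_2$ is not wrong, but it front-loads analytic machinery that the statement does not require: the lemma is a purely formal assertion about where a certain power series lives, and the paper proves it formally. In fact, your argument essentially establishes Lemma~\ref{borcherds} along the way (which the paper derives \emph{from} Lemma~\ref{generalized_formal} together with Lemma~\ref{generalized_limit}), so you have inverted the logical order of the two lemmas. A minor point: in your first paragraph the phrase ``which by (FM1) already places the left-hand side in $U(z_1,z_2)$'' is slightly off --- membership in $U(z_1,z_2)$ follows from $\mu\in\GCor_2$ and the definition of the expansion map, not from (FM1).
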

\begin{proof}
Set
$$\sum_{s_1,\s_1,\s_2,\s_2\in \R}c_{s_1,\s_1,\s_2,\s_2}z_1^{s_1}\z_1^{\s_1}z_{2}^{s_{2}}\z_{2}^{s_{2}}=u(Y_M(a_1,\uz_1)Y_M(a_2,\uz_2)m).$$
Then, $$c_{s_1,\s_1,s_{2},\s_{2}}=
u(a_1(-s_1-1,-\s_1-1)a_2(-s_2-1,-\s_2-1)m).$$
%By (FVO4), there exists $N \in \Z$ such
%that $F_{h,\h}=0$ for any $h < N$ or $\h < N$.
By (FM4),
 $a_1(-s_1-1,-\s_1-1)a_2(-s_2-1,-\s_2-1)m\in 
M_{h_1+h_2+h_3+s_1+s_2,\h_1+\h_2+\h_3+\s_1+\s_2}$.
Hence,
$c_{s_1,\s_1,s_{2},\s_{2}}=0$
unless
%\begin{align*}
$h_0=h_1+h_2+h_3+s_1+s_2$ and
$\h_0=\h_1+\h_2+\h_3+\s_1+\s_2$.
%\end{align*}
Thus, we have 
\begin{align*}
u(Y_M(a_1,\uz_1)Y_M(a_2,\uz_2)m)
=z_1^{h_0-h_1-h_2-h_3}\z_1^{\h_0-\h_1-\h_2-\h_3}
\sum_{s_2,\s_2 \in \R}c_{s_1,\s_1,\s_2,\s_2}(z_{2}/z_1)^{s_{2}}
(\z_{2}/\z_1)^{s_{2}},
\end{align*}
where $s_1=h_0-(h_1+h_2+h_3+s_2)$
and $\s_1=\h_0-(\h_1+\h_2+\h_3+\s_2)$.
By (FM1), the assertion holds.
\end{proof}

By Lemma \ref{generalized_formal} and Lemma \ref{generalized_limit},
we have:
\begin{lem}\label{borcherds}
Let $h_i,\h_i\in \R$, $a_i \in F_{h_i,\h_i}$ ($i=1,2$), $m\in M_{h_3,\h_3}$ and $u \in M_{h_0,\h_0}^*$, there exists $f \in \F$ such that
\begin{align*}
z_2^{-h_0+h_1+h_2+h_3}\z_2^{-\h_0+\h_1+\h_2+\h_3}
u(Y_M(a,\uz_1)Y_M(b,\uz_2)m) &=\lim_{z \to z_2/z_1} j(z,f), \\
z_2^{-h_0+h_1+h_2+h_3}\z_2^{-\h_0+\h_1+\h_2+\h_3}u(Y_M(Y(a,\uz_0)b,\uz_2)m) &= \lim_{z \to -z_0/z_2} j(1-z^{-1},f),\\
z_2^{-h_0+h_1+h_2+h_3}\z_2^{-\h_0+\h_1+\h_2+\h_3}u(Y_M(b,\uz_2)Y_M(a,\uz_1)m)&=\lim_{z \to z_1/z_2} j(1/z,f).
\end{align*}
\end{lem}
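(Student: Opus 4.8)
The plan is to construct a single $\mu\in\GCor_2$ from the module axiom, to pin it down up to a unique $f\in\F$ by its scaling weights, and then to read off the three identities as the three region-expansions of this $\mu$. First I would apply (FM3) to $a=a_1$, $b=a_2$, the vector $m$ and the functional $u$, obtaining $\mu(z_1,z_2)\in\GCor_2$ whose expansions in $\{|z_1|>|z_2|\}$, $\{|z_2|>|z_1-z_2|\}$ and $\{|z_2|>|z_1|\}$ are exactly the three iterated products appearing on the left-hand sides of the asserted formulas.

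The key step is to determine the homogeneity of $\mu$. By Lemma \ref{generalized_formal}, the product $u(Y(a_1,\uz_1)Y(a_2,\uz_2)m)$ lies in $z_1^{h_0-h_1-h_2-h_3}\z_1^{\h_0-\h_1-\h_2-\h_3}\C((z_2/z_1,\z_2/\z_1,|z_2/z_1|^\R))$. Every monomial occurring there is an eigenvector of $z_1\frac{d}{dz_1}+z_2\frac{d}{dz_2}$ with eigenvalue $\al:=h_0-h_1-h_2-h_3$ and of $\z_1\frac{d}{d\z_1}+\z_2\frac{d}{d\z_2}$ with eigenvalue $\be:=\h_0-\h_1-\h_2-\h_3$, so the expansion $\mu|_{|z_1|>|z_2|}$ is itself such an eigenvector. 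Since the expansion map $|_{|z_1|>|z_2|}:\GCor_2\to U(z_1,z_2)$ commutes with these formal differential operators (the region-expansion is a convergent series that may be differentiated term by term) and is injective by the uniqueness of coefficients of convergent series (Lemma \ref{unique_coefficient}), I conclude that $\mu$ itself satisfies $(z_1\frac{d}{dz_1}+z_2\frac{d}{dz_2})\mu=\al\mu$ and $(\z_1\frac{d}{d\z_1}+\z_2\frac{d}{d\z_2})\mu=\be\mu$.

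With this homogeneity in hand, Lemma \ref{generalized_limit} produces a unique $f\in\F$ with $\mu(z_1,z_2)=z_1^\al\z_1^\be\,(f\circ\eta)(z_1,z_2)$. It then remains to expand $z_1^\al\z_1^\be\,(f\circ\eta)$ in each of the three regions. Applying the expansion formulas for $f\circ\eta$ established just above this lemma, together with (FM3), yields the right-hand sides involving $j(z,f)$, $j(1-z^{-1},f)$ and $j(z^{-1},f)=j(1/z,f)$; rewriting the common prefactor $z_1^\al\z_1^\be$ as $z_2^{-(-h_0+h_1+h_2+h_3)}\z_2^{-(-\h_0+\h_1+\h_2+\h_3)}$ up to a factor that can be absorbed into the choice of $f$ (legitimate since $\F$ is stable under $z\mapsto z^{-1}$ and under multiplication by $|z|^\R$) produces precisely the normalization with the $z_2^{-h_0+h_1+h_2+h_3}$, $\z_2^{-\h_0+\h_1+\h_2+\h_3}$ prefactors in the statement.

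The main obstacle is the middle step: transferring the homogeneity, computed on the formal expansion, back to $\mu$ as a genuine real-analytic function on $Y_2$. This hinges on the compatibility of the Euler operators $z_1\frac{d}{dz_1}+z_2\frac{d}{dz_2}$ and $\z_1\frac{d}{d\z_1}+\z_2\frac{d}{d\z_2}$ with the region-expansion maps and on the injectivity of those maps, both of which are available from the constructions of Section \ref{sec_gen} and from Lemma \ref{unique_coefficient}; once they are assembled the eigenvalue transfer is immediate and Lemma \ref{generalized_limit} does the rest. The remaining matching of $j(z,f)$, $j(1-z^{-1},f)$, $j(z^{-1},f)$ to the three regions and the prefactor normalization is routine bookkeeping.
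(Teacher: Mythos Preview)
Your approach is correct and is essentially the same as the paper's: the paper's entire proof is the single line ``By Lemma \ref{generalized_formal} and Lemma \ref{generalized_limit}'', and you have filled in precisely those two steps (homogeneity of $\mu$ from Lemma \ref{generalized_formal}, then extraction of $f$ via Lemma \ref{generalized_limit}) together with the routine bookkeeping of matching the three region-expansions. Your extra care about transferring the Euler-operator eigenvalues from the formal expansion back to $\mu$, and about absorbing the $z_1^\al\z_1^\be$ versus $z_2^\al\z_2^\be$ prefactor into a modified $f\in\F$ (using $\al-\be\in\Z$ from (FV2) so that $(z_1/z_2)^\al(\z_1/\z_2)^\be\in\F$), is exactly what the paper leaves implicit.
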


Let $M,N$ be a $F$-module.
A $F$-module homomorphism from $M$ to $N$ is a linear map $f:M\rightarrow N$
such that $f(Y_M(a,\uz)m)=Y_N(a,\uz)f(m)$ for any $a\in F$ and $m\in M$.

Let $M$ be a $F$-module.
A vector $v \in M$ is said to be a {\it vacuum-like vector}
if $Y(a,\uz)v \in M[[z,\z]]$ for any $a\in F$.

The notion of a vacuum-like vector was introduced in \cite{Li} to study invariant bilinear forms on a vertex operator algebra. 
The space of invariant bilinear forms on a full vertex algebra can be defined and studied similarly in \cite{M2}.
The following lemma is an important property about vacuum-like vectors,
which is an analogue of Proposition 3.4 in \cite{Li}.

\begin{lem}
\label{vacuum}
Let $v \in M$ be a vacuum-like vector and $a,b \in F$ and $u \in M^\vee$ and 
$\mu \in \GCor_2$ satisfy $u(Y_M(a_1,\uz_1)Y_M(a_2,\uz_2)v)=\mu|_{|z_1|>|z_2|}$.
Then, $\mu(z_1,z_2) \in \C[z_2^\pm, \z_2^\pm, (z_1-z_2)^\pm, (\z_1-\z_2)^\pm, |z_1-z_2|^\R]$.
Furthermore, the linear function $F_v: F \rightarrow M$ defined by
$a \mapsto a(-1,-1)v$ is a $F$-module homomorphism.
\end{lem}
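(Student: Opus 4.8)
The plan is to reduce to homogeneous data and then read off the explicit shape of $\mu$ from the \emph{iterate} (middle) expansion, where the vacuum-like hypothesis is most effective. First I would note that each $\R^2$-homogeneous component of a vacuum-like vector is again vacuum-like: for homogeneous $a$ the coefficient of a fixed power $z^{-r-1}\z^{-s-1}$ in $Y(a,\uz)v$ is $\sum_{h_3,\h_3}a(r,s)v_{h_3,\h_3}$, whose summands lie in distinct graded subspaces of $M$ by (FM4), so the vanishing of the singular part forces each $Y(a,\uz)v_{h_3,\h_3}\in M[[z,\z]]$. Hence by multilinearity I may assume $a_1\in F_{h_1,\h_1}$, $a_2\in F_{h_2,\h_2}$, $v\in M_{h_3,\h_3}$ and $u\in M_{h_0,\h_0}^*$ are homogeneous. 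Then $\mu$ is a joint eigenvector of $z_1\frac{d}{dz_1}+z_2\frac{d}{dz_2}$ and $\z_1\frac{d}{d\z_1}+\z_2\frac{d}{d\z_2}$, so Lemma \ref{generalized_limit} gives $\mu(z_1,z_2)=z_1^{\al}\z_1^{\be}f(z_2/z_1)$ with $f\in\F$, $\al=h_0-h_1-h_2-h_3$, $\be=\h_0-\h_1-\h_2-\h_3$, and $\al-\be\in\Z$ by integrality of spins.

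The heart of the argument is the middle expansion $\mu|_{|z_2|>|z_1-z_2|}=u(Y(Y(a_1,\uz_0)a_2,\uz_2)v)$ with $z_0=z_1-z_2$. Writing $Y(a_1,\uz_0)a_2=\sum_{p,q}(a_1(p,q)a_2)z_0^{-p-1}\z_0^{-q-1}$, the vacuum-like hypothesis gives $Y(a_1(p,q)a_2,\uz_2)v\in M[[z_2,\z_2]]$, while (FV6) and grading force $u(Y(w,\uz_2)v)$ to be a single monomial for homogeneous $w$ (only one bidegree pairs nontrivially with $u$). For $w=a_1(p,q)a_2\in F_{h_1+h_2-p-1,\,\h_1+\h_2-q-1}$ this monomial is $C_{p,q}\,z_2^{\al+p+1}\z_2^{\be+q+1}$, and being vacuum-like forces $\al+p+1,\be+q+1\in\Z_{\geq0}$ whenever $C_{p,q}\neq0$. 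Finally (FV1), i.e. boundedness below of $Y(a_1,\uz_0)a_2$ in $z_0,\z_0$, bounds $p,q$ from above, so only finitely many $(p,q)$ contribute. Reindexing by $n=\al+p+1\geq0$, $m=\be+q+1\geq0$ yields the finite expression $\mu=\sum_{n,m\geq0}C'_{n,m}\,z_2^{n}\z_2^{m}(z_1-z_2)^{\al-n}(\z_1-\z_2)^{\be-m}$; since $(\al-n)-(\be-m)\in\Z$, each summand lies in $\C[z_2^\pm,\z_2^\pm,(z_1-z_2)^\pm,(\z_1-\z_2)^\pm,|z_1-z_2|^\R]$, which proves the first claim.

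For the second claim I would exploit that this finite form is regular at $z_2=0$ (all powers of $z_2,\z_2$ are $\geq0$), so $\mu(z_1,0)$ is a well-defined function of $z_1$ readable from either expansion. Extracting the coefficient of $z_2^0\z_2^0$ from $\mu|_{|z_1|>|z_2|}=u(Y(a_1,\uz_1)Y(a_2,\uz_2)v)$ and using $\lim_{z_2\to0}Y(a_2,\uz_2)v=a_2(-1,-1)v=F_v(a_2)$ gives $\mu(z_1,0)=u(Y(a_1,\uz_1)F_v(a_2))$. Extracting the coefficient of $z_2^0\z_2^0$ from the middle expansion and using that the $z_2$-constant term of $Y(Y(a_1,\uz_0)a_2,\uz_2)v$ is $(Y(a_1,\uz_0)a_2)(-1,-1)v=F_v(Y(a_1,\uz_0)a_2)$ gives, after renaming $z_0$ to $z_1$, that $\mu(z_1,0)=u(F_v(Y(a_1,\uz_1)a_2))$. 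Equating the two values and letting $u$ range over $M^\vee$ yields $F_v(Y(a_1,\uz_1)a_2)=Y(a_1,\uz_1)F_v(a_2)$; multilinearity in $a_1,a_2,v$ then gives $(a(r,s)b)(-1,-1)v=a(r,s)(b(-1,-1)v)$ for all modes, i.e. $F_v$ is an $F$-module homomorphism.

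I expect the delicate point to be the simultaneous orchestration of the three inputs in the second paragraph — grading (to collapse each mode to a single monomial in $z_2$), the vacuum-like property (to push the $z_2$-exponents into $\Z_{\geq0}$), and (FV1) (to bound the modes and force a finite sum) — together with the bookkeeping identifying each surviving summand as an element of the target ring. In the second part, the subtlety is justifying that the coefficient of $z_2^0\z_2^0$ can be computed consistently from two different expansions of the same $\mu$, which rests precisely on the regularity of $\mu$ at $z_2=0$ established in the first part.
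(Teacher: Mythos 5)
Your proof is correct and follows essentially the same route as the paper: the first part is the paper's appeal to Lemma \ref{generalized_formal} (homogeneity plus the vacuum-like condition plus (FV1) forcing the iterate expansion $p(z_0,z_2)$ to lie in $\C[z_0^\pm,\z_0^\pm,|z_0|^\R,z_2,\z_2]$), and your extraction of the $z_2^0\z_2^0$-coefficient from both expansions is exactly the paper's step of applying $\exp(-z_2\frac{d}{dz_0}-\z_2\frac{d}{d\z_0})$ and then taking $\lim_{z_2\to 0}$, justified in both cases by the regularity of $\mu$ at $z_2=0$.
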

\begin{proof}
By (FM3), $u(Y_M(Y(a_1,\uz_0)a_2,\uz_2)v)=\mu|_{|z_1|>|z_1-z_2|}$.
Since $v$ is a vacuum like vector, by Lemma \ref{generalized_formal}
 $p(z_0,z_2)=\mu|_{|z_1|>|z_1-z_2|}
 \in \C[z_0^{\pm},\z_0^{\pm},|z_0|^\R, z_2,\z_2]\subset U(z_2,z_0)$,
which proves the first part of the lemma.
It suffices to show that $F_v(Y(a_1,\uz_0)a_2)=Y_M(a_1,\uz_0)F_v(a_2)$.
Since
\begin{align*}
u(Y_M(a_1,\uz_1)Y_M(a_2,\uz_2)v)&=\mu|_{|z_1|>|z_2|}\\
&=\lim_{z_0\to (z_1-z_2)|_{|z_1|>|z_2|}}p(z_0,z_2),
\end{align*}
we have
\begin{align}
u(Y_M(a_1,\uz_0)Y_M(a_2,\uz_2)v)=\exp(-z_2 \frac{d}{dz_0}-\z_2 \frac{d}{d\z_0})u(Y_M(Y(a_1,\uz_0)a_2,\uz_2)v).
\end{align}
Thus,
\begin{align*}
Y_M(a_1,\uz_0)F_v(a_2)&=\lim_{z_2 \mapsto 0} u(Y_M(a_1,\uz_0)Y_M(a_2,\uz_2)v)\\
&=\lim_{z_2 \mapsto 0}\exp(-z_2 \frac{d}{dz_0}-\z_2 \frac{d}{d\z_0})u(Y_M(Y(a_1,\uz_0)a_2,\uz_2)v)\\
&=F_v(Y(a_1,\uz_0)a_2).
\end{align*}
\end{proof}

Let $F$ be a full vertex algebra
and $D$ and $\D$ denote the endomorphism of $F$
defined by $Da=a(-2,-1)\bm{1}$ and $\D a=a(-1,-2)$ for $a\in F$,
i.e., $$Y(a,z)\va=a+Daz+\D a\z+\dots.$$
Define $Y(a,-\uz)$ by
$Y(a,-\uz)=\sum_{r,s}(-1)^{r-s} a(r,s)z^r \z^s$,
where we used $a(r,s)=0$ for $r-s \notin \Z$,
which follows from (FV2) and (FV6).

For a vertex algebra, the operator $D$ can also be defined and satisfies important properties such as {\it translation invariance} and 
{\it skew-symmetry}. The following proposition is its full analogue (see \cite[Section 3]{LL} for a proof in a vertex algebra case).

\begin{prop}
\label{translation}
For $a \in F$, the following properties hold:
\begin{enumerate}
\item
$Y(Da,\uz)=\dz Y(a,\uz)$ and $Y(\D a,\uz)=\ddz Y(a,\uz)$;
\item
$D\va=\D\va=0$;
\item
$[D,\D]=0$;
\item
$Y(a,\uz)b=\exp(zD+\z\D)Y(b,-\uz)a$;
\item
$Y(\D a,\uz)=[\D,Y(a,\uz)]$ and $Y(Da,\uz)=[D,Y(a,\uz)]$.
\end{enumerate}
\end{prop}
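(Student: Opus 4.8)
The plan is to prove the five parts in a rearranged order---(2), (1), (3), then the creation property, then (5), and finally (4)---since each later part feeds on the earlier ones. Part (2) is immediate: by (FV4) the field $Y(\va,\uz)=\mathrm{id}_F$ is constant in $\uz$, so $\va(r,s)=0$ for $(r,s)\neq(-1,-1)$; hence $D\va=\va(-2,-1)\va=0$ and $\D\va=\va(-1,-2)\va=0$. For part (1) I would feed the triple $(a,\va,c)$ into the associativity relation of (FV5), for arbitrary $c\in F$ and $u\in F^\vee$. Because $Y(\va,\uz_2)=\mathrm{id}$, the correlation function does not depend on the second point, so $\mu(z_1,z_2)=u(Y(a,\uz_1)c)$; expanding the associated middle term $u(Y(Y(a,\uz_0)\va,\uz_2)c)$ with $z_0=z_1-z_2$, using $Y(a,\uz_0)\va=\sum_{p,q\geq 0}a(-p-1,-q-1)\va\, z_0^p\z_0^q$ (from (FV3), as $\va$ is a vacuum-like vector), and matching it against the Taylor expansion of $\mu(z_0+z_2,z_2)$ in $z_0,\z_0$, I obtain
\begin{align*}
u(Y(a(-p-1,-q-1)\va,\uz_2)c)=\frac{1}{p!\,q!}\,\partial_{z_2}^{p}\partial_{\z_2}^{q}\,u(Y(a,\uz_2)c)\qquad(p,q\geq 0).
\end{align*}
The cases $(p,q)=(1,0)$ and $(0,1)$ read $u(Y(Da,\uz)c)=u(\dz Y(a,\uz)c)$ and $u(Y(\D a,\uz)c)=u(\ddz Y(a,\uz)c)$; since $F^\vee$ separates points and $c$ is arbitrary, these give the operator identities of part (1).

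Part (3) then follows formally: iterating (1) gives $Y(D\D a,\uz)=\partial_z\partial_{\z}Y(a,\uz)=\partial_{\z}\partial_z Y(a,\uz)=Y(\D Da,\uz)$, and the map $a\mapsto Y(a,\uz)$ is injective (if $Y(a,\uz)=0$ then $a=\lim_{z\to0}Y(a,\uz)\va=0$ by (FV3)), so $D\D a=\D Da$. Iterating (1) together with (3) also yields $Y(D^p\D^q a,\uz)=\partial_z^p\partial_{\z}^q Y(a,\uz)$; applying this to $\va$ and letting $z\to 0$ identifies the coefficients of $Y(a,\uz)\va$ as $a(-p-1,-q-1)\va=\tfrac{1}{p!q!}D^p\D^q a$, which is exactly the creation property $Y(a,\uz)\va=\exp(zD+\z\D)a$.

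For part (5) I would again use the triple $(a,b,\va)$ and compare the two expansions of its correlation function $\mu$. By the creation property the domain $|z_1|>|z_2|$ gives $Y(a,\uz_1)Y(b,\uz_2)\va=Y(a,\uz_1)\exp(z_2D+\z_2\D)b$, while the associativity domain gives $Y(Y(a,\uz_0)b,\uz_2)\va=\exp(z_2D+\z_2\D)Y(a,\uz_0)b$ with $z_0=z_1-z_2$. These are expansions of one and the same $\mu\in\GCor_2$, so they agree after continuation; differentiating in $z_2$ and setting $z_2=0$ (and separately in $\z_2$) produces $Y(a,\uz)Db=DY(a,\uz)b-\dz Y(a,\uz)b$, i.e.\ $[D,Y(a,\uz)]=\dz Y(a,\uz)=Y(Da,\uz)$ by (1), which is part (5). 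Finally, for the skew-symmetry (4) I would exponentiate (5) into the finite translation covariance $\exp(z_2D+\z_2\D)Y(a,\uz_0)=Y(a,\uz_1)\exp(z_2D+\z_2\D)$ (with $z_0=z_1-z_2$), apply it to both commutativity expansions of $\mu$ for $(a,b,\va)$---rewriting $Y(a,\uz_1)\exp(z_2D+\z_2\D)b$ in $|z_1|>|z_2|$ and $Y(b,\uz_2)\exp(z_1D+\z_1\D)a$ in $|z_2|>|z_1|$---equate the two continued functions, and cancel $\exp(z_2D+\z_2\D)$ to arrive at $Y(a,\uz)b=\exp(zD+\z\D)Y(b,-\uz)a$ with $z=z_1-z_2$.

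The main obstacle is the repeated step ``two formal expansions of $\mu$ agree, hence the operator-valued real-analytic functions agree and may be manipulated (differentiated, multiplied by $\exp(z_2D+\z_2\D)$, and re-expanded).'' Making this rigorous in the real-power, real-analytic setting is exactly where the analytic groundwork is needed: one must use that $\mu$ is a genuine element of $\GCor_2$, invoke the uniqueness of convergent expansions (Lemma \ref{unique_coefficient}) to pass between the regions $|z_1|>|z_2|$ and $|z_2|>|z_1-z_2|$, and justify term-by-term differentiation (as in Proposition \ref{formal_derivation}) together with the action of the exponential translation operators on the relevant series. The purely algebraic parts (2), (3) and the creation property are routine once (1) is in hand; the genuine work lies in the covariance and analytic-continuation bookkeeping underlying (5) and (4).
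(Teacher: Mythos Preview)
Your plan is essentially correct and uses the same two key correlators as the paper---the triple $(a,\va,c)$ for (1) and the triple $(a,b,\va)$ for (4)/(5). Parts (2), (1), (3), and the creation property go through exactly as you sketch.

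The difference is in how the ``main obstacle'' you flag at the end is dispatched, and this reverses the order of (4) and (5). The paper does \emph{not} need any genuine analytic continuation or convergence argument here: by Lemma~\ref{vacuum} (the vacuum-like vector lemma), once the third input is $\va$ the correlator $\mu$ for $(a,b,\va)$ satisfies
\[
p(z_0,z_2):=\mu|_{|z_2|>|z_0|}=u\bigl(Y(Y(a,\uz_0)b,\uz_2)\va\bigr)\in\C[z_0^{\pm},\z_0^{\pm},|z_0|^{\R},\,z_2,\z_2],
\]
i.e.\ it is a genuine \emph{polynomial} in $z_2,\z_2$. This collapses all three regions into a single polynomial identity, so the matching-of-regions step you worry about is trivial. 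From this polynomial form the paper reads off (4) directly: $u(Y(a,\uz_0)b)=p(z_0,0)$, while the commutativity expansion gives $u(Y(Y(b,-\uz_0)a,\uz_1)\va)=p(z_0,z_1-z_0)|_{|z_1|>|z_0|}$; translating by $\exp(z_0D+\z_0\D)$ and sending $z_1\to 0$ yields skew-symmetry. Then (5) falls out of (4) by differentiating $Y(a,\uz)b=\exp(zD+\z\D)Y(b,-\uz)a$ in $z$. Your order (5) then (4) is also viable, but to make your ``differentiate in $z_2$ and set $z_2=0$'' step for (5) rigorous you must invoke exactly this polynomiality in $z_2$; otherwise the expansion in $U(z_2,z_0)$ does not admit evaluation at $z_2=0$. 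So the missing ingredient in your sketch is not analytic but algebraic: cite Lemma~\ref{vacuum} and the obstacle disappears.
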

\begin{proof}
Let $u \in F^\vee$ and $a, b \in F$ and $\mu_1,\mu_2\in \GCor_2$
satisfy 
\begin{align*}
u(Y(a,\uz_1)Y(\va,\uz_2)b)=\mu_1|_{|z_1|>|z_2|},
%u(Y(a_1,z_1)Y(\va,z_2)a_2)=\mu_2|_{z_1>z_2},
u(Y(a,\uz_1)Y(b,\uz_2)\va)=\mu_2 |_{|z_1|>|z_2|}.
\end{align*}
By (FV4) and (FV5), $p_1(z_1)=\mu_1|_{|z_1|>|z_2|} \in \C[z_1^\pm,\z_1^\pm,|z_1|^\R]$.
Then,
\begin{align*}
u(Y(Y(a,\uz_0)\va,\uz_2)b)=\mu_1|_{|z_2|>|z_1-z_2|}
= \lim_{z_1\rightarrow z_2} \exp(z_0\frac{d}{dz_1}) \exp(\z_0\frac{d}{d\z_1})p_1(z_1).
\end{align*}
Thus, $u(Y(Da,\uz_2)b)=\lim_{z_1\rightarrow z_2} \frac{d}{dz_1} p_1(z_1)=
\frac{d}{dz_2}u(Y(a,z_2)b)$,
which implies that $Y(Da,\uz)=\frac{d}{dz}Y(a,\uz)$ and similarly $Y(\D a,\uz)=\frac{d}{d\z}Y(a,\uz)$.

By (FV4), $Y(D\va,\uz)=\frac{d}{dz}Y(\va,\uz)=0$.
Thus, by (FV3), $D\va=\D\va=0$.
Since $Y(D\D a,\uz)=\frac{d}{dz}\frac{d}{d\z}Y(a,\uz)=\frac{d}{d\z}\frac{d}{dz}Y(a,\uz)=Y(\D Da,\uz)$,
we have $[D,\D]=0$.

By Lemma \ref{vacuum}, $\mu_2|_{|z_2|>|z_1-z_2|} \in \C[z_2,\z_2][z_0^{\pm},\z_0^{\pm},|z_0|^\R]$.
Set $p(z_0,z_2)=\mu_2|_{|z_2|>|z_1-z_2|}=u(Y(Y(a,\uz_0)b,\uz_2)\va)$.
Since 
$u(Y(Y(b,-\uz_0)a,\uz_1)\va) = p(z_0,z_1-z_0)|_{|z_1|>|z_0|}$,
we have
\begin{align*}
u(Y(a,\uz_0)b)&=p(z_0,0)= \lim_{z_1\to 0} \exp(z_0\frac{d}{dz_1}+\z_0\frac{d}{d\z_1})p(z_0,z_1-z_0)\\
&=\lim_{z_1\to 0} \exp(z_0\frac{d}{dz_1}+\z_0\frac{d}{d\z_1})u(Y(Y(b,-\uz_0)a,\uz_1)\va) \\
&=\lim_{z_1\to 0} u(Y(\exp(z_0D+\z_0\D) Y(b,-\uz_0)a,\uz_1)\va) \\
&=u(\exp(z_0D+\z_0\D) Y(b,-\uz_0)a).
\end{align*}

Finally,
\begin{align*}
\frac{d}{dz}Y(a,\uz)b&=\frac{d}{dz}\exp(Dz+\D\z)Y(b,-\uz)a \\
&= D\exp(Dz+\D\z)Y(b,-\uz)a -\exp(Dz+\D\z)Y(Db,-\uz)a\\
&=DY(a,\uz)b-Y(a,\uz)Db.
\end{align*}
\end{proof}

The notion of a subalgebra can be defined in the usual way.
For a full vertex algebra $F$, a subspace $I \subset F$ is called an ideal if
$Y(a,\uz)m \in I((z,\z,|z|^\R))$ for any $a\in F$ and $m\in I$.

\begin{rem}
\label{rem_ideal}
The notion of an ideal for a fulll vertex algebra resembles the notion of a left ideal for an associative algebra:
As in \cite[Remark 3.9.8]{LL}, in the view of (4) in Proposition \ref{translation},
under the condition that $DI,\D I \subset I$,
the left-ideal condition, $Y(a,\uz)m \in I((z,\z,|z|^\R))$ for $a\in F$, $m \in I$,
is equivalent to the right ideal condition: $Y(m,\uz)a \in I((z,\z,|z|^\R))$ for $a\in F$, $m \in I$.
Also, if $F$ is a conformal vertex algebra introduced in Section \ref{sec_def_full_conformal},
$DI,\D I \subset I$ is automatically satisfied for both left ideals and right ideals
since $Da=a(-2,-1)\va = \om(0,-1)a$ and $\D a=a(-1,-2)\va = \omb(-1,0)a$.
Therefore, as with a vertex algebra, there is basically no need to distinguish between left and right ideals in the case of a full vertex algebra.
\end{rem}

A simple full vertex algebra is 
a full vertex algebra which contains no proper ideals.
We will use the following lemma, which is an analogue of \cite[Proposition 11.9]{DL}:
\begin{lem}\label{zero_divisor}
Let $F$ be a simple full vertex algebra and $a,b\in F$.
If $Y(a,\uz)b = 0$, then $a=0$ or $b=0$.
\end{lem}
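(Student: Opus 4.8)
The plan is to reduce to the case $b\neq 0$ (the case $b=0$ being trivial) and to show that the annihilator
$$
\mathrm{Ann}(b)=\{a\in F\mid Y(a,\uz)b=0\}
$$
is a \emph{proper} left ideal of $F$. Simplicity then forces $\mathrm{Ann}(b)=0$, and since $a\in\mathrm{Ann}(b)$ by hypothesis, this yields $a=0$.

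The main step will be to prove that $\mathrm{Ann}(b)$ is closed under the left action, i.e.\ that $c(r,s)a\in\mathrm{Ann}(b)$ for every $c\in F$, $a\in\mathrm{Ann}(b)$ and $r,s\in\R$. To this end I would fix $u\in F^\vee$ and invoke the associativity axiom (FV5) for the triple $(c,a,b)$: it produces a single $\mu\in\GCor_2$ with
$$
u(Y(c,\uz_1)Y(a,\uz_2)b)=\mu|_{|z_1|>|z_2|}, \qquad u(Y(Y(c,\uz_0)a,\uz_2)b)=\mu|_{|z_2|>|z_1-z_2|}.
$$
Since $Y(a,\uz_2)b=0$, the first equality shows that the expansion $\mu|_{|z_1|>|z_2|}$ is the zero series. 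The crux is then to deduce $\mu=0$ itself: because $\mu$ is a genuine real analytic function on the connected set $Y_2$ whose values on the nonempty open region $\{|z_1|>|z_2|\}$ are recovered from this (vanishing) expansion, the uniqueness of such expansions (Lemma \ref{unique_coefficient}, i.e.\ the identity theorem for real analytic functions) gives $\mu\equiv 0$. Consequently the associativity expansion $\mu|_{|z_2|>|z_1-z_2|}$ also vanishes, so $u(Y(Y(c,\uz_0)a,\uz_2)b)=0$ for every $u\in F^\vee$; as $F^\vee$ separates points this means $Y(Y(c,\uz_0)a,\uz_2)b=0$. Extracting the coefficient of $z_0^{-r-1}\z_0^{-s-1}$ gives $Y(c(r,s)a,\uz_2)b=0$, that is $c(r,s)a\in\mathrm{Ann}(b)$, as required.

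Finally I would check properness, which is immediate from (FV4): $Y(\va,\uz)b=b\neq 0$, so $\va\notin\mathrm{Ann}(b)$ and hence $\mathrm{Ann}(b)\neq F$. Being a left ideal distinct from $F$, it must be $0$ by simplicity, and $a\in\mathrm{Ann}(b)$ then forces $a=0$. I expect the only delicate point to be the implication $\mu|_{|z_1|>|z_2|}=0\Rightarrow\mu=0$: it is essential here that the three expansions in (FV5) arise from one and the same function $\mu\in\GCor_2$, so that the vanishing of a single expansion propagates to the others through real-analyticity rather than by any purely formal manipulation. (When $b$ is homogeneous one checks directly from (FV6) that $\mathrm{Ann}(b)$ is a graded subspace, so the argument respects whatever gradedness is demanded of an ideal; alternatively the skew-symmetry of Proposition \ref{translation}(4), which gives $Y(a,\uz)b=0\iff Y(b,\uz)a=0$, can be used to organize the same computation.)
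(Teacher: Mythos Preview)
Your proof is correct, and it takes a route that is dual to the paper's. The paper argues on the other side: it fixes $a$ and shows that $Y(a,\uz)$ annihilates every element of the left ideal $(b)$ generated by $b$, by an induction on the length of products $c_1(r_1,s_1)\cdots c_k(r_k,s_k)b$; the inductive step uses the \emph{commutativity} clause of (FV5) (passing from $u(Y(c_1,\uz_2)Y(a,\uz_1)\cdots b)=0$ to $u(Y(a,\uz_1)Y(c_1,\uz_2)\cdots b)=0$), and simplicity then gives $\va\in(b)$, so $a=a(-1,-1)\va=0$ by (FV3). You instead fix $b$, form $\mathrm{Ann}(b)$, and use the \emph{associativity} clause of (FV5) to show it is a left ideal. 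Both arguments hinge on the same analytic point you single out, namely that a function $\mu\in\GCor_2$ vanishing in one expansion region vanishes identically; your version has the minor advantage of avoiding the induction, while the paper's version avoids any discussion of gradedness of the ideal since it never needs $\mathrm{Ann}(b)$ to be one.
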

\begin{proof}
Assume that $Y(a,\uz)b =0$ and $b\neq 0$.
Let $(b)$ be the ideal generated by $b$,
that is, $(b)=\{c_1(r_1,s_1)c_2(r_2,s_2)\dots c_k(r_k,s_k)b \}$.
We will show that $u(Y(a,z)c_1(r_1,s_1)c_2(r_2,s_2)\dots c_k(r_k,s_k)b)=0$ for any $u\in F^\vee$,
$k\in \Z_{\geq 0}$, $c_i \in F$ and $r_i,s_i \in \R$ ($i=1,\dots,k$) by induction on $k$.
For $k=0$, the assertion is clear.
For $k\geq 1$,
by the induction assumption, $u(Y(c_1,\uz_2) Y(a,\uz_1)c_2(r_2,s_2)\dots c_k(r_k,s_k)b)=0$.
Thus, by (FV5), $u(Y(a,\uz_1)Y(c_1,\uz_2)c_2(r_2,s_2)\dots c_k(r_k,s_k)b)=0$.
Hence, the assertion holds. Since $F$ is simple and $b\neq 0$, $\va \in (b)$. Thus, $a$ must be $0$ by (FV3).
\end{proof}

Let $(F,Y,\va)$ be a full vertex algebra.
Set $\bar{F}=F$ and
$\bar{F}_{h,\h}=F_{\h,h}$ for $h,\h\in \R$.
Define $\bar{Y}(-,\uz):\bar{F} \rightarrow \End (\bar{F})[[z,\z,|z|^\R]]$
by $\bar{Y}(a,\uz)=\sum_{s,\s \in \R}a(s,\s)\z^{-s-1}z^{-\s-1}$.
Let $C:Y_2\rightarrow Y_2$ be the conjugate map
$(z_1,z_2)\mapsto (\z_1,\z_2)$ for $(z_1,z_2)\in Y_2$.
For $u\in \bar{F}^\vee$ and $a,b,c\in \bar{F}$,
let $\mu \in \GCor_2$ satisfy
$u(Y(a,\uz_1)Y(b,\uz_2)c)=\mu(z_1,z_2)|_{|z_1|>|z_2|}$.
Then, 
$u(\bar{Y}(a,\uz)\bar{Y}(b,\uz)c)=\mu\circ C(z_1,z_2)$.
Since $\mu \circ C \in \GCor_2$, we have:
\begin{prop}\label{conjugate}
$(\bar{F},\bar{Y},\va)$ is a full vertex algebra.
\end{prop}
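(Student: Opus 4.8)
The plan is to verify axioms (FV1)--(FV6) for the triple $(\bar F,\bar Y,\va)$, exploiting throughout that $\bar Y$ is obtained from $Y$ by the formal interchange $z\leftrightarrow \z$ of the two variables (indeed, swapping $z\leftrightarrow\z$ in $Y(a,\uz)=\sum_{r,s}a(r,s)z^{-r-1}\z^{-s-1}$ and relabelling reproduces $\bar Y$), that the regrading is $\bar F_{h,\h}=F_{\h,h}$, and that on correlation functions this interchange is realized by the conjugation $C\colon(z_1,z_2)\mapsto(\z_1,\z_2)$ on $\GCor_2$. The axioms not involving $\GCor_2$ are formal. For (FV2), $\bar F_{h,\h}=F_{\h,h}$ vanishes unless $\h-h\in\Z$, equivalently $h-\h\in\Z$. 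For (FV6), the $(r,s)$-mode of $\bar Y$ is the $(s,r)$-mode of $Y$, so if $a\in\bar F_{h,\h}=F_{\h,h}$ and $b\in\bar F_{h',\h'}=F_{\h',h'}$, then (FV6) for $F$ gives $a(s,r)b\in F_{\h+\h'-s-1,\,h+h'-r-1}=\bar F_{h+h'-r-1,\,\h+\h'-s-1}$, as required. Axioms (FV3) and (FV4) hold because the interchange $z\leftrightarrow\z$ fixes $F[[z,\z]]$, the constant term, and the mode $a(-1,-1)$. Finally, (FV1) holds because the defining conditions on $F((z,\z,|z|^\R))$ (boundedness below, finiteness of $\{(s,\s):s+\s\le H\}$, and $s-\s\in\Z$) are symmetric in $s$ and $\s$, so this space is invariant under $z\leftrightarrow\z$; since $\bar Y(a,\uz)b$ is the image of $Y(a,\uz)b$ under the interchange, it stays in the space.

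The substance is (FV5). Given $a,b,c\in\bar F=F$ and $u\in\bar F^\vee=F^\vee$, I would apply (FV5) for $F$ to get $\mu\in\GCor_2$ realizing the three expansions for $Y$, and set $\bar\mu=\mu\circ C$. First one checks $\bar\mu\in\GCor_2$: a spanning element $z_1^\al\z_1^\be\,f\circ\eta$ (with $f\in\F$, $\al-\be\in\Z$) is sent by $C$ to $z_1^\be\z_1^\al\,\bar f\circ\eta$, where $\bar f(w)=f(\w)$, using $(f\circ\eta)\circ C=\bar f\circ\eta$. The new prefactor satisfies $\be-\al\in\Z$, and $\bar f\in\F$ since complex conjugation of the domain fixes $\{0,1,\infty\}$ (and the real charts $z,1-z,z^{-1}$) and the defining expansions of a conformal singularity are symmetric in $z$ and $\z$. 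Hence $\GCor_2$ is closed under $C$, as asserted in the paragraph preceding the statement.

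It then remains to match the three expansions. The key point is that each region $\{|z_1|>|z_2|\}$, $\{|z_2|>|z_1|\}$, $\{|z_2|>|z_1-z_2|\}$ is invariant under $C$, because $|w|=w\w$ and $|z_1-z_2|=(z_1-z_2)(\z_1-\z_2)$ are unchanged by $z\leftrightarrow\z$. Since $\bar Y$ is the interchange of $Y$, the series $u(\bar Y(a,\uz_1)\bar Y(b,\uz_2)c)$ is the interchange of $u(Y(a,\uz_1)Y(b,\uz_2)c)=\mu|_{|z_1|>|z_2|}$, and similarly for the other two orderings. The main obstacle is to show that this interchange of an \emph{expanded} series equals the expansion $\bar\mu|_{|z_1|>|z_2|}$ of the \emph{conjugated} function, i.e.\ that expanding-then-conjugating agrees with conjugating-then-expanding. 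This reduces to the compatibility $j(\chi,\bar f)=$ (interchange of $j(\chi,f)$), which follows from the explicit expansion formulas given before the statement; the associativity region additionally uses $z_0=z_1-z_2\mapsto\z_0$ under $C$. Granting this compatibility, the three $\bar Y$-correlators coincide with $\bar\mu|_{|z_1|>|z_2|}$, $\bar\mu|_{|z_2|>|z_1-z_2|}$, $\bar\mu|_{|z_2|>|z_1|}$, which establishes (FV5) and hence the proposition.
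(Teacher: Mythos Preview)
Your proof is correct and follows the same approach as the paper: both use the conjugation map $C:(z_1,z_2)\mapsto(\z_1,\z_2)$ and the observation that $\mu\circ C\in\GCor_2$ to handle (FV5). The paper's argument is a one-line sketch in the paragraph preceding the proposition, whereas you have carefully verified all of (FV1)--(FV6) and in particular checked that the expansion maps $|_{|z_1|>|z_2|}$ etc.\ commute with $C$, which the paper leaves implicit.
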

We call it a {\it conjugate full vertex algebra} of $(F,Y,\va)$.

% consequences of the axioms
% vacuum is translation invariant since 4-point function is 
% translation invariant!!.

% S_3 symmetry and 3-point correlation functions

% S_4 symmetry to S_3 symmetry implies that ...

\subsection{Chiral and anti-chiral vectors}
Let $F$ be a full vertex algebra.
A vector $a \in F$ is said to be a {\it chiral vector} (resp. an {\it anti-chiral vector})
if $\D a=0$ (resp. $D a=0$).
Let $a \in \ker \D$.
Then, since $0=Y(\D a,\uz)=\ddz Y(a,\uz)$,
we have $a(r,s)=0$ unless $s=-1$.
Hence, $Y(a,\uz)=\sum_{n \in \Z} a(n,-1) z^{-n-1}$.

\begin{lem}
\label{hol_commutator}
Let $a,b\in F$.
If $\D a=0$,
then for any $n\in \Z$,
\begin{align*}
[a(n,-1),Y(b,\uz)]&= \sum_{i \geq 0} \binom{n}{i} Y(a(i,-1)b,\uz)z^{n-i},\\
Y(a(n,-1)b,\uz)&= 
\sum_{i \geq 0} \binom{n}{i}(-1)^i a(n-i,-1)z^{i}Y(b,\uz)
-Y(b,\uz)\sum_{i \geq 0} \binom{n}{i}(-1)^{i+n} a(i,-1)z^{n-i}.
\end{align*}
\end{lem}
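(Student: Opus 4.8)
The plan is to deduce both identities from the generalized two-point function attached to $a$ and $b$ by (FV5), using the holomorphy of $a$ to make that function rational in the first variable. So fix $a$ with $\D a=0$, vectors $b,c\in F$ and $u\in F^\vee$, and let $\mu(z_1,z_2)\in\GCor_2$ be the function of (FV5), so that $u(Y(a,\uz_1)Y(b,\uz_2)c)=\mu|_{|z_1|>|z_2|}$, $u(Y(b,\uz_2)Y(a,\uz_1)c)=\mu|_{|z_2|>|z_1|}$ and $u(Y(Y(a,\uz_0)b,\uz_2)c)=\mu|_{|z_2|>|z_1-z_2|}$ (the variable $z$ of the statement being $z_2$). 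Since $\D a=0$ forces $a(r,s)=0$ unless $s=-1$, the first series carries no $\z_1$, so $\frac{d}{d\z_1}\mu=0$; splitting $\mu$ into $L(0)$- and $\Ld(0)$-eigencomponents (Remark \ref{L0_operator}) and applying Lemma \ref{hol_generalized} to each gives $\mu\in\C[z_1^\pm,(z_1-z_2)^\pm,z_2^\pm,\z_2^\pm,|z_2|^\R]$. Read as a function of $z_1$ over the coefficient ring $\C[z_2^\pm,\z_2^\pm,|z_2|^\R]$, this $\mu$ is rational with poles only at $z_1=0$ and $z_1=z_2$, so it has a finite partial-fraction expansion $\mu=P(z_1)+\sum_{k\ge 1}A_k z_1^{-k}+\sum_{k\ge 1}B_k(z_1-z_2)^{-k}$. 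Because $a$ is holomorphic, $Y(a,\uz_0)b=\sum_i(a(i,-1)b)z_0^{-i-1}$, hence $Y(Y(a,\uz_0)b,\uz_2)=\sum_i Y(a(i,-1)b,\uz_2)z_0^{-i-1}$; in $\mu|_{|z_2|>|z_1-z_2|}$ only the $B_k(z_1-z_2)^{-k}$ terms carry negative powers of $z_0=z_1-z_2$, so comparing the coefficient of $z_0^{-i-1}$ for $i\ge 0$ gives $u(Y(a(i,-1)b,\uz_2)c)=B_{i+1}$.

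For the commutator formula, $u([a(n,-1),Y(b,\uz_2)]c)$ is the coefficient of $z_1^{-n-1}$ in $\mu|_{|z_1|>|z_2|}-\mu|_{|z_2|>|z_1|}$. In this difference the monomials $P$ and $A_k z_1^{-k}$ cancel, since their two expansions coincide, leaving $\sum_{k\ge 1}B_k\big((z_1-z_2)^{-k}|_{|z_1|>|z_2|}-(z_1-z_2)^{-k}|_{|z_2|>|z_1|}\big)$. A direct formal computation gives the delta-function identity
\[
(z_1-z_2)^{-k}\big|_{|z_1|>|z_2|}-(z_1-z_2)^{-k}\big|_{|z_2|>|z_1|}=\frac{1}{(k-1)!}\Big(\frac{d}{dz_2}\Big)^{k-1}\sum_{m\in\Z}z_2^m z_1^{-m-1},
\]
whose coefficient of $z_1^{-n-1}$ equals $\binom{n}{k-1}z_2^{n-k+1}$. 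Substituting $B_{i+1}=u(Y(a(i,-1)b,\uz_2)c)$ and writing $i=k-1$ yields the first identity, the sum being finite because $Y(a,\uz_0)b\in F((z,\z,|z|^\R))$ is bounded below, so $a(i,-1)b=0$ for $i\gg 0$.

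For the iterate formula I would rewrite both sides as formal residues in $z_1$. Using $a(m,-1)=\mathrm{Res}_{z_1}z_1^m Y(a,\uz_1)$, the two summands of the right-hand side equal $\mathrm{Res}_{z_1}\big[(z_1-z_2)^n|_{|z_1|>|z_2|}Y(a,\uz_1)\big]Y(b,\uz_2)$ and $Y(b,\uz_2)\mathrm{Res}_{z_1}\big[(z_1-z_2)^n|_{|z_2|>|z_1|}Y(a,\uz_1)\big]$, while the left-hand side is $\mathrm{Res}_{z_0}\big[z_0^n Y(Y(a,\uz_0)b,\uz_2)\big]$. Pairing with $u(\,\cdot\,c)$ reduces the claim to the purely formal statement
\[
\mathrm{Res}_{z_0}\big[z_0^n\,\mu|_{|z_2|>|z_1-z_2|}\big]=\mathrm{Res}_{z_1}\big[(z_1-z_2)^n|_{|z_1|>|z_2|}\,\mu|_{|z_1|>|z_2|}\big]-\mathrm{Res}_{z_1}\big[(z_1-z_2)^n|_{|z_2|>|z_1|}\,\mu|_{|z_2|>|z_1|}\big],
\]
which I would verify termwise on each of $P(z_1)$, $A_k z_1^{-k}$ and $B_k(z_1-z_2)^{-k}$ by elementary binomial-coefficient identities; it is the formal residue theorem asserting that the residues of $(z_1-z_2)^n\mu$ at $0$, $z_2$, $\infty$ sum to zero.

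I expect the main obstacle to be this last identity. Unlike the commutator, it genuinely mixes all three expansion regions and cannot be obtained by a contour argument, the objects being formal series rather than functions on a common domain; it must instead be established by explicit termwise expansion, and the bookkeeping of the binomial identities across the cases $n\ge 0$ and $n<0$ is the only delicate part. Along the way one must also confirm that the $\z_2$- and $|z_2|$-dependence is an inert coefficient throughout, so that all manipulations take place over $\C[z_2^\pm,\z_2^\pm,|z_2|^\R]$, and that every sum is finite, which again follows from $Y(a,\uz_0)b\in F((z,\z,|z|^\R))$ being bounded below together with (FV3).
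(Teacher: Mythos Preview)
Your argument is correct and follows the same route as the paper: use (FV5) together with $\D a=0$ to see that $\mu$ is holomorphic in $z_1$, invoke Lemma~\ref{hol_generalized} (after reducing to $L(0)$-, $\Ld(0)$-homogeneous vectors) to conclude $\mu\in\C[z_1^\pm,(z_1-z_2)^\pm,z_2^\pm,\z_2^\pm,|z_2|^\R]$, and then extract both identities by residue calculus in $z_1$. The paper compresses your partial-fraction and formal-residue computations into the single phrase ``by the Cauchy integral formula''; your verification of the three-term residue identity is exactly what that phrase stands for, and it is standard rather than a genuine obstacle.
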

\begin{proof}
For any $u\in F^\vee$ and $c\in F$,
there exists $\mu \in \GCor_2$ such that (FV5) holds.
Since $\D a=0$, by Proposition \ref{translation},
$d/d\z_1 \mu(z_1,z_2)=0$.
Then, by Lemma \ref{hol_generalized}, $\mu \in \C[z_1^\pm,(z_1-z_2)^\pm,z_2^\pm,\z_2^\pm,|z_2|^\R]$.
Thus, by the Cauchy integral formula, the assertion holds.
\end{proof}

By Proposition \ref{translation}, $\D Y(a,\uz)b =Y(\D a,\uz)b+Y(a,\uz)\D b=0$.
Thus, the restriction of $Y$ on $\ker \D$ define a linear map $Y(-,z): \ker \D \rightarrow \End \ker \D[[z^\pm]]$.
By the above Lemma and Lemma \ref{hol_generalized}, we have:
\begin{prop}\label{vertex_algebra}
$\ker \D$ is a vertex algebra and $F$ is a $\ker \D$-module.
\end{prop}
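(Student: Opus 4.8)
The plan is to verify the axioms (V1)--(V5) for the triple $(\ker\D, Y|_{\ker\D}, \va)$ and then the module axioms for $F$, by restricting the full vertex algebra axioms (FV1)--(FV6) and exploiting holomorphicity to kill all $\z$- and $|z|^\R$-dependence. The groundwork is already in place: for $a\in\ker\D$ one has $Y(a,\uz)=\sum_{n\in\Z}a(n,-1)z^{-n-1}$, where the second index is forced to $-1$ by $\ddz Y(a,\uz)=Y(\D a,\uz)=0$ and the first index lies in $\Z$ because $a(r,s)=0$ for $r-s\notin\Z$; moreover $\ker\D$ is closed under all products, since $\D\,Y(a,\uz)b=Y(\D a,\uz)b+Y(a,\uz)\D b=0$ by Proposition \ref{translation}. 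Thus $Y|_{\ker\D}$ takes values in $\End(\ker\D)[[z^\pm]]$ with only integer powers of $z$, which is exactly the shape of a $\Z$-graded vertex operator.

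First I would dispose of the easy axioms. For (V1), $Y(a,\uz)b\in F((z,\z,|z|^\R))$ by (FV1) and $\ddz Y(a,\uz)b=0$, so Lemma \ref{hol_formal} places it in $F((z))$, and closure puts it in $(\ker\D)((z))$. Axioms (V2) and (V3) are just the restrictions of (FV3) and (FV4). For the grading (V5), I would grade $\ker\D$ by the $L(0)$-eigenvalue; then (FV6) gives $F_{h,\h}(n,-1)F_{h',\h'}\subset F_{h+h'-n-1,\h+\h'}$, so the modes shift the weight by $h-n-1$ exactly as required, and (FV2) organizes the homogeneous pieces. This grading bookkeeping is routine once rationality is settled.

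The real content is (V4). Given homogeneous $a,b,c\in\ker\D$ (reducing to the homogeneous case by linearity) and $u\in(\ker\D)^\vee$, I would take the $\mu\in\GCor_2$ supplied by (FV5). Since $\D a=\D b=0$, Proposition \ref{translation}(1) gives $\ddz[\,u(Y(a,\uz_1)Y(b,\uz_2)c)\,]=0$ in both $\z_1$ and $\z_2$, and because the expansion maps commute with the derivations this forces $\frac{d}{d\z_1}\mu=\frac{d}{d\z_2}\mu=0$. The $L(0)$- and $\Ld(0)$-covariance from Remark \ref{L0_operator} yields $(z_1\frac{d}{dz_1}+z_2\frac{d}{dz_2})\mu=\al\mu$ and $(\z_1\frac{d}{d\z_1}+\z_2\frac{d}{d\z_2})\mu=\be\mu$ with $\al,\be$ read off from the weights of $a,b,c,u$. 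Lemma \ref{hol_generalized} then forces $\mu\in\C[z_1^\pm,z_2^\pm,(z_1-z_2)^\pm]=\GCor_2^{\mathrm{hol}}$, which is precisely (V4); the three expansions of this single rational $\mu$ in the required regions are exactly the commutator and iterate identities of Lemma \ref{hol_commutator}, closing the verification.

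Finally, for the module statement I would rerun the same argument with $c$ replaced by an arbitrary $m\in F$ and $u\in F^\vee$. The operators $Y(a,\uz_1),Y(b,\uz_2)$ with $a,b\in\ker\D$ still contribute only integer powers of $z_1,z_2$, so $u(Y(a,\uz_1)Y(b,\uz_2)m)$ is again free of $\z_1,\z_2,|z_i|$, and Lemma \ref{hol_generalized} again produces a $\mu\in\GCor_2^{\mathrm{hol}}$; this gives the associativity axiom, while the remaining module axioms are the restrictions of (FV1), (FV4) and (FV6). I expect the main obstacle to be precisely the passage from the real-analytic correlation function in $\GCor_2$ to a genuine rational function in $\GCor_2^{\mathrm{hol}}$ --- that is, checking the hypotheses of Lemma \ref{hol_generalized}, namely the vanishing of both anti-holomorphic derivatives together with the Euler homogeneity --- since this is the one step where the ``full'' real-analytic structure is converted into the chiral holomorphic one; the grading and the remaining axioms are then straightforward bookkeeping.
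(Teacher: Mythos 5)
Your proposal is correct and its decisive step is the same as the paper's: both reduce everything to Lemma \ref{hol_generalized}, applied to the correlator $\mu$ of (FV5), whose anti-holomorphic derivatives vanish because $\D a=\D b=0$ and whose Euler homogeneity comes from (FV6); this forces $\mu\in\C[z_1^\pm,z_2^\pm,(z_1-z_2)^\pm]$. The difference is in how that conclusion is packaged. You read it off directly as axiom (V4) of the paper's definition of a $\Z$-graded vertex algebra, whereas the paper converts it into weak commutativity (clearing the finite-order pole along $z_1=z_2$ using $a(n,-1)b=0$ for $n\gg0$) and then cites Goddard's axioms and \cite[Proposition 4.4.3]{LL} to obtain the vertex algebra structure on $\ker\D$ and the $\ker\D$-module structure on $F$ simultaneously. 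Your route is more self-contained with respect to the paper's own axioms, but carries one caveat: verifying (V5) literally requires the $L(0)$-spectrum of $\ker\D$ to be integral, which you defer as bookkeeping and which is not automatic for a general full vertex algebra --- the paper establishes $\ker\D\subset\bigoplus_{n\in\Z}F_{n,0}$ only in the presence of an energy-momentum tensor. This is presumably why the statement says ``vertex algebra'' rather than ``$\Z$-graded vertex algebra'' and why the paper's proof goes through the ungraded Goddard axioms. If you either drop the grading claim or verify the locality form of the axioms (which your computation of $\mu$ already yields), your argument closes; the substantive content, namely the passage from $\GCor_2$ to $\GCor_2^{\text{hol}}$, is exactly right and is where you correctly located the difficulty.
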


\begin{proof}
In order to prove that $\ker \D$ is a vertex algebra,
it suffices to show that $\ker \D$ satisfies the Goddard's axioms \cite[Theorem 5.6.1]{LL}.
Since $[D,\D]=0$, $D$ acts on $\ker \D$.
By Proposition \ref{translation}, it suffices to show that $Y(a,z)$ and $Y(b,w)$ are mutually local for any $a,b \in \ker \D$.
Let $a,b \in \ker \D$ and $v \in F$, $u\in F^\vee$ and $\mu \in \GCor_2$ satisfy $u(Y(a,z_1)Y(a_2,z_2)v) = \mu|_{|z_1|>|z_2|}$.
By Lemma \ref{hol_generalized},
$\mu$ is a polynomial in $\C[z_1^\pm,z_2^\pm,(z_1-z_2)^{-1}]$.
Since $\mu|_{|z_2|>|z_1-z_2|}=u(Y(Y(a_1,z_0)a_2,z_2)v)$ and $a_1(n,-1)a_2=0$ for sufficiently large $n\in \Z$,
there exists $N \in \Z_{\geq 0}$ such that $(z_1-z_2)^N \mu(z_1,z_2)  \in \C[z_1^\pm,z_2^\pm]$.
Thus, $(z_1-z_2)^N u(Y(a,z_1)Y(a_2,z_2)v)=(z_1-z_2)^N u(Y(a_2,z_2)Y(a_1,z_1)v)$ for any $v \in F$ and $u \in F^\vee$,
which implies that $Y(a_1,z_1)$ and $Y(a_2,z_2)$ are mutually local
and $F$ is a $\ker \D$-module (see, for example, \cite[Proposition 4.4.3]{LL}).
\end{proof}

\begin{lem}
\label{hol_commute}
Let $a\in F$ be a chiral vector
and $b \in F$ an anti-chiral vector.
Then, $[Y(a,z),Y(b,\bar{w})]=0$, that is,
$[a(n,-1),b(-1,m)]=0$ and $a(k,-1)b=0$ for any $n,m \in \Z$ and $k \in \Z_{\geq 0}$.
\end{lem}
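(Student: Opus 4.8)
The plan is to reduce the statement to two facts about the modes: that $[a(n,-1),b(-1,m)]=0$ for all $n,m\in\Z$, and that $a(k,-1)b=0$ for all $k\geq 0$. By bilinearity I may assume $a\in F_{h_1,\h_1}$, $b\in F_{h_2,\h_2}$, $c\in F_{h_3,\h_3}$ and $u\in F_{h_0,\h_0}^*$ are homogeneous; the homogeneous components of a holomorphic (resp. anti-holomorphic) vector are again holomorphic (resp. anti-holomorphic), since $D$ and $\D$ preserve the bigrading. Because $\D a=0$ we have $Y(a,\uz)=\sum_{n\in\Z}a(n,-1)z^{-n-1}$, with only integer powers of $z$ and no $\z$, and dually $Db=0$ gives $Y(b,\uz)=\sum_{m\in\Z}b(-1,m)\z^{-m-1}$.

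For the commutator I would take $\mu\in\GCor_2$ as provided by (FV5) for the triple $a,b,c$ and the functional $u$. The key point is that the grading (FV6) pins down a single surviving mode: comparing weights, $u(a(n,-1)b(-1,m)c)$ can be nonzero only when $(n,m)=(n_0,m_0)$ with $n_0=h_1+h_2+h_3-h_0-1$ and $m_0=\h_1+\h_2+\h_3-\h_0-1$, and exactly the same pair is forced for the opposite ordering $u(b(-1,m)a(n,-1)c)$. Hence each of the two expansions of $\mu$ is a single monomial $Cz_1^{\al}\z_2^{\be}$ with $\al=-n_0-1,\ \be=-m_0-1\in\Z$ and $C\in\C$. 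Since a Laurent monomial has the very same expansion in the regions $\{|z_1|>|z_2|\}$ and $\{|z_2|>|z_1|\}$ (there is nothing to expand across $|z_1|=|z_2|$), the two orderings produce identical series, so $u([a(n,-1),b(-1,m)]c)=0$ for every $n,m$; as $u$ and $c$ are arbitrary this gives $[a(n,-1),b(-1,m)]=0$, i.e. $[a(n,-1),Y(b,\uz)]=0$.

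With the commutator in hand I would feed $[a(n,-1),Y(b,\uz)]=0$ into Lemma \ref{hol_commutator} (applicable because $a$ is holomorphic), obtaining $\sum_{i\geq 0}\binom{n}{i}Y(a(i,-1)b,\uz)z^{n-i}=0$ for every $n\in\Z$. I would then induct on $n\geq 0$: the case $n=0$ reads $Y(a(0,-1)b,\uz)=0$, whence $a(0,-1)b=0$ (apply to $\va$ and let $z\to 0$, using (FV3)); for $n\geq 1$ the inductive hypothesis annihilates every term with $i<n$, leaving $Y(a(n,-1)b,\uz)=0$ and hence $a(n,-1)b=0$. Combining the two parts yields $[Y(a,z),Y(b,\bar w)]=0$ together with $a(k,-1)b=0$ for $k\geq 0$.

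The main obstacle is the step where I claim the single function $\mu$ coincides with the monomial in both regions, i.e. that $\mu$ really has no singularity along $z_1=z_2$. I would justify this by the injectivity of the expansion map $|_{|z_1|>|z_2|}:\GCor_2\to U(z_1,z_2)$ (the power series in a region determines a real analytic function on the connected set $Y_2$): $\mu$ must equal $Cz_1^{\al}\z_2^{\be}$ on the nose, so its other expansion is the same monomial. This is precisely the mechanism by which the holomorphic/anti-holomorphic separation forces commutativity. Alternatively, one can observe that $\D a=0$ forces $\partial_{\z_1}\mu=0$ and $Db=0$ forces $\partial_{z_2}\mu=0$, and then invoke Lemma \ref{generalized_limit} to see directly that the associated $f\in\F$ is a pure power $\bar\eta^{\be}$, regular at the point corresponding to $z_1=z_2$; but the monomial argument above is the most economical route.
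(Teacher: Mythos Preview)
Your argument is correct, but it inverts the order of the paper's proof. The paper first establishes $a(k,-1)b=0$ for $k\geq 0$ by a direct recursion: from $Db=0$ and Proposition \ref{translation}(5) one gets $DY(a,z)b=[D,Y(a,z)]b=\frac{d}{dz}Y(a,z)b$, hence $Da(n,-1)b=-na(n-1,-1)b$; by (FV1) there is $N$ with $a(n,-1)b=0$ for $n\geq N$, and the recursion then propagates this down to $a(0,-1)b=0$. The commutator statement then falls out of Lemma \ref{hol_commutator} immediately. Your route instead proves the commutator first, via the monomial/analytic--continuation argument on $\mu\in\GCor_2$, and only afterwards extracts $a(k,-1)b=0$ by feeding the vanishing commutator back into Lemma \ref{hol_commutator} and inducting on $n$. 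The paper's approach is a bit more economical --- it never touches the structure of $\GCor_2$ and needs only the mode recursion and boundedness from (FV1) --- while yours has the virtue of making explicit exactly why the holomorphic/anti-holomorphic split kills the singularity along $z_1=z_2$.
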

\begin{proof}
By Lemma \ref{hol_commutator},
it suffices to show that $a(k,-1)b=0$ for any $k \geq 0$.
Since $DY(a,z)b=[D,Y(a,z)]b+Y(a,z)Db=\dz Y(a,z)b$,
we have $Da(n,-1)b=-na(n-1,-1)b$ for any $n \in \Z$.
Thus,  the assertion follows from (FV1).
\end{proof}

\subsection{Tensor product of full vertex algebras}
A tensor product of vertex algebras was considered in \cite{B} (see also \cite{LL,FHL}).
In this section, we consider its full analogue and study the subalgebra of a full vertex algebra
generated by chiral and anti-chiral vectors.
%
%Let $F$ be a full vertex algebra.
%For $h,\h \in \R$, the energy and spin of a vector in $F_{h,\h}$ are $h+\h$ and $h-\h$ and
%the set $\{(h,\h)\in \R^2\;|\;F_{h,\h}\neq 0\}$ is called
%a spectrum.
%The spectrum of $F$ is said to be
%{\it bounded below} if
%there exists $N\in \R$ such that
%$F_{h,\h}=0$ for any $h \leq N$ or $\h\leq N$
%and {\it discrete} if for any $H \in \R$, $\sum_{h+\h < H} \dim F_{h,\h}$ is finite.
%
%\begin{lem}\label{finite_product}
%If the spectrum of $F$ is discrete,
%then for any $N \in \Z_{>0}$ and $a,b \in F$,
%the number of the set 
%$$\{(s,\s)\in \R^2 \;|\; a(s,\s)b \neq 0, -N \leq s,\s \leq N  \}
%$$
%is finite.
%\end{lem}
%\begin{proof}
%We may assume that $a \in F_{h,\h}$ and $b \in F_{h',\h'}$.
%Since $a(s,\s)b \in F_{h+h'-s-1,\h+\h'-\s-1}$,
%the energies of vectors $\{a(s,\s)b \;|\;  -N \leq s,\s \leq N  \}$ are bounded by $h+h'+\h+\h'+2N-2$.
%Thus, the assertion holds.
%\end{proof}
%
Let $(F^1,Y^1,\va^1)$ and $(F^2,Y^2,\va^2)$  be full vertex algebras
and assume that the spectrum of $F^1$ is discrete
and the spectrum of $F^2$ is bounded below.
Define the linear map $Y(-,\uz):F^1 \otimes F^2 \rightarrow \End F^1 \otimes F^2[[z,\z,|z|^\R ]]$ by 
$Y(a\otimes b,\uz)=Y^1(a,\uz)\otimes Y^2(b,\uz)$ for $a\in F^1$ and $b \in F^2$.
Then, for $a,c \in F^1$ and $b,d \in F^2$,
$$Y(a\otimes b, \uz)c\otimes d
=\sum_{s,\s,r,\bar{r} \in \R}a(s,\s)c\otimes b(r,\bar{r})d\,z^{-s-r-2}\z^{-\s-\bar{r}-2}.$$
By (FV1), the coefficient of $z^k\z^{\bar{k}}$ is a finite sum for any $k,\bar{k}\in \R$.
Thus, $Y(-,\uz)$ is well-defined.
For any $h_0,\h_0 \in \R$,
set $(F^1\otimes F^2)_{h_0,\h_0}=\bigoplus_{a,\bar{a} \in \R} F_{a,\bar{a}}^1\otimes F_{h_0-a,\h_0-\bar{a}}^2$.
Since the spectrum of $F^2$ is bounded below, there exists $N\in \R$ such that
$(F^1\otimes F^2)_{h_0,\h_0}=\bigoplus_{a,\bar{a} \leq N} F_{a,\bar{a}}^1\otimes F_{h_0-a,\h_0-\bar{a}}^2$.
Since the spectrum of $F^1$ is discrete, the sum is finite.
Thus, $(F^1\otimes F^2)_{h_0,\h_0}^*=\bigoplus_{a,\bar{a} \in \R} (F_{a,\bar{a}}^1)^*\otimes (F_{h_0-a,\h_0-\bar{a}}^2)^*$,
which implies that $F^\vee=(F^1)^\vee\otimes (F^2)^\vee$.
Let $u_i \in (F^i)^\vee$ and $a_i,b_i,c_i \in F^i$ for $i=1,2$.
Since
$$
u_1\otimes u_2(Y(a_1\otimes a_2,\uz_1)Y(b_1\otimes b_2,\uz_2)c_1\otimes c_2)
=u_1(Y(a_1,\uz_1)Y(b_1,\uz_2)c_1)u_2(Y(a_2,\uz_1)Y(b_2,\uz_2)c_2),
$$
we have:
\begin{prop}
\label{tensor}
Let $(F^1,Y^1,\va^1)$ and $(F^2,Y^2,\va^2)$ be full vertex algebras.
If the spectrum of $F^1$ is discrete
and the spectrum of $F^2$ is bounded below,
then $(F^1 \otimes F^2, Y^1\otimes Y^2 ,\va^1 \otimes \va^2)$ is a full vertex algebra.
Furthermore, if the spectrum of $F^1$ and $F^2$ are bounded below (resp. discrete),
then the spectrum of $F^1\otimes F^2$ is also bounded below (resp. discrete).
\end{prop}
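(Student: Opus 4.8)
The plan is to verify the axioms (FV1)--(FV6) for $(F^1\otimes F^2,\,Y^1\otimes Y^2,\,\va^1\otimes\va^2)$, using the factorization of correlation functions and the identification $F^\vee=(F^1)^\vee\otimes(F^2)^\vee$ already established just above. Four of the axioms are immediate bookkeeping and I would dispatch them first. For (FV2), a nonzero summand $F^1_{a,\bar a}\otimes F^2_{h-a,\h-\bar a}$ of $(F^1\otimes F^2)_{h,\h}$ forces $a-\bar a\in\Z$ and $(h-a)-(\h-\bar a)\in\Z$, whose sum is $h-\h$. For (FV4), $Y(\va^1\otimes\va^2,\uz)=Y^1(\va^1,\uz)\otimes Y^2(\va^2,\uz)=\mathrm{id}\otimes\mathrm{id}$. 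For (FV3), $Y(a\otimes b,\uz)(\va^1\otimes\va^2)=Y^1(a,\uz)\va^1\otimes Y^2(b,\uz)\va^2$ lies in $(F^1\otimes F^2)[[z,\z]]$ by (FV3) for each factor, with limit $a\otimes b$ as $z\to 0$. For (FV6), I would use $(a\otimes b)(r,s)=\sum_{r_1+r_2=r-1,\,s_1+s_2=s-1}a(r_1,s_1)\otimes b(r_2,s_2)$ together with (FV6) for $F^1$ and $F^2$. The substance then lies in (FV1) and (FV5).

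For (FV1) I would show that the product of two series in $F^1((z,\z,|z|^\R))$ and $F^2((z,\z,|z|^\R))$ again lies in $(F^1\otimes F^2)((z,\z,|z|^\R))$; well-definedness of the coefficients is already noted above, so only the boundedness-below and discreteness conditions remain. Writing $Y^1(a,\uz)c=\sum f_{r,s}z^r\z^s$ and $Y^2(b,\uz)d=\sum g_{t,u}z^t\z^u$ with exponents $\ge N_1$ and $\ge N_2$ respectively, the product has all exponents $\ge N_1+N_2$, giving condition (2). For discreteness, if the $(k,l)$-coefficient of the product is nonzero with $k+l\le H$, it is a sum over $r+t=k$, $s+u=l$ with $f_{r,s},g_{t,u}\ne 0$; then $r+s\le H-2N_2$ and $t+u\le H-2N_1$, so the contributing $(r,s)$ lie in the finite set $\{(r,s):f_{r,s}\ne 0,\ r+s\le H-2N_2\}$ and the $(t,u)$ in the finite set $\{(t,u):g_{t,u}\ne 0,\ t+u\le H-2N_1\}$; hence only finitely many $(k,l)$ with $k+l\le H$ occur. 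This finiteness argument, exploiting exactly the boundedness-below-plus-discreteness built into (FV1) for each factor, is the technical heart.

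For (FV5) I would first treat simple tensors. Given $a=a_1\otimes a_2$, $b=b_1\otimes b_2$, $c=c_1\otimes c_2$ and $u=u_1\otimes u_2$, I choose $\mu^i\in\GCor_2$ realizing (FV5) for $(F^i,Y^i)$ and set $\mu=\mu^1\mu^2$, which lies in $\GCor_2$ since that space is closed under products. The given factorization of correlation functions, together with its two analogues for the iterated ordering $Y(Y(a,\uz_0)b,\uz_2)c$ and the opposite ordering $Y(b,\uz_2)Y(a,\uz_1)c$, reduces each of the three required identities to the multiplicativity of the expansion maps $|_{|z_1|>|z_2|}$, $|_{|z_2|>|z_1-z_2|}$, $|_{|z_2|>|z_1|}$; this multiplicativity holds because each map sends a function to its convergent power-series expansion in the prescribed region, and the Cauchy product of two such convergent expansions converges to the expansion of the product. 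I would then pass from simple tensors to arbitrary $a,b,c\in F^1\otimes F^2$ and $u\in F^\vee$ by multilinearity, invoking $F^\vee=(F^1)^\vee\otimes(F^2)^\vee$ and the fact that $\GCor_2$ is a vector space, so a general correlation function is a finite linear combination of products $\mu^1\mu^2$ and hence again lies in $\GCor_2$.

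For the final spectral assertion, if both spectra are bounded below by $N$ then every nonzero $(F^1\otimes F^2)_{h,\h}$ has $h,\h\ge 2N$, so the tensor product is bounded below. For discreteness I would first observe that a discrete spectrum automatically has its energies $h+\h$ bounded below, since otherwise infinitely many states would lie below energy $0$; writing these bounds as $e_1\ge N_1'$ and $e_2\ge N_2'$, any tensor state of energy $<H$ arises from an $F^1$-state of energy $<H-N_2'$ and an $F^2$-state of energy $<H-N_1'$, of which there are finitely many each by discreteness, so $\sum_{h+\h<H}\dim(F^1\otimes F^2)_{h,\h}$ is finite. The main obstacle throughout is organizational rather than deep: maintaining the finiteness bookkeeping in (FV1) and in the discreteness claim, and justifying the multiplicativity of the three expansion maps before reducing (FV5) to simple tensors.
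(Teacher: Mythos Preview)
Your proposal is correct and follows the same approach as the paper, which merely sketches the argument in the paragraph preceding the proposition: the paper establishes well-definedness of the coefficients, the identification $F^\vee=(F^1)^\vee\otimes(F^2)^\vee$, and the factorization of correlation functions, and then states the proposition without further detail. You have simply filled in the routine verifications (FV1)--(FV6) and the spectral claims that the paper leaves to the reader, using exactly the same ingredients (closure of $\GCor_2$ under products, multiplicativity of the expansion maps, and the finiteness bookkeeping coming from the discreteness and bounded-below hypotheses).
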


\begin{cor}\label{tensor_symmetric}
Let $(F^1,Y^1,\va^1)$ and $(F^2,Y^2,\va^2)$ be compact full vertex algebras.
Then, $(F^1 \otimes F^2, Y^1\otimes Y^2 ,\va^1 \otimes \va^2)$ is a compact full vertex algebra.
\end{cor}

\begin{rem}
\label{rem_well_tensor}
Although there exist cases in which the tensor product can be defined even if the assumption on the spectrum of Proposition 
\ref{tensor} is not satisfied, this assumption is necessary in general.
%
%The assumption on the spectrum in Proposition \ref{tensor} is necessary.
In the case of a vertex algebra $V$, (V4) is essentially replaced by the following condition:
\begin{itemize}
\item[(locality)]
For any $a,b \in V$, there exists $N\in \Z_{\geq 0}$ such that 
$(z-w)^N [Y(a,z),Y(b,w)]=0$.
\end{itemize}
However, in the case of a full vertex algebra, such an algebraic definition cannot be obtained and must be defined analytically by
the correlation functions. In order to guarantee the convergence of the correlation functions of the tensor product of full vertex algebras, 
$(F_1\otimes F_2)^\vee=F_1^\vee \otimes F_2^\vee$ must hold, and for this we make the assumption of Proposition \ref{tensor}.
\end{rem}

By Proposition \ref{graded_vertex} and Proposition \ref{conjugate},
we have:
\begin{cor}\label{chiral_tensor}
Let $V, W$ be a $\Z_{\geq 0}$-graded vertex algebras
such that $\dim V_n$ and $\dim W_n$ are finite for any $n\in \Z_{\geq 0}$.
Then, $V\otimes \bar{W}$ is a compact full vertex algebra,
where $\bar{W}$ is the conjugate full vertex algebra.
\end{cor}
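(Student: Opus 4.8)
The statement to prove is Corollary \ref{chiral_tensor}: if $V,W$ are $\Z_{\geq 0}$-graded vertex algebras with finite-dimensional graded pieces, then $V\otimes \bar{W}$ is a full vertex algebra with a discrete spectrum. My approach is to simply invoke the machinery already assembled in the excerpt, checking that all the hypotheses of Proposition \ref{tensor} are met by the two factors $V$ and $\bar{W}$. The skeleton is: (i) upgrade each $\Z_{\geq 0}$-graded vertex algebra to a full vertex algebra, (ii) pass to the conjugate full vertex algebra on the $W$ side, (iii) verify the spectrum hypotheses needed to apply the tensor product construction, and (iv) read off discreteness from the finiteness assumption.

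First I would apply Proposition \ref{graded_vertex}, which states that any $\Z$-graded vertex algebra is a full vertex algebra; in particular $V$ and $W$, being $\Z_{\geq 0}$-graded, become full vertex algebras via the embedding $V((z))\subset V((z,\z,|z|^\R))$ and $\GCor_2^{\mathrm{hol}}\subset \GCor_2$. Next, by Proposition \ref{conjugate}, the conjugate $\bar{W}$ (with $\bar{W}_{h,\h}=W_{\h,h}$ and the swapped vertex operator $\bar{Y}$) is again a full vertex algebra. Now I need to confirm the spectrum conditions required by Proposition \ref{tensor}, which asks that the first factor have a discrete spectrum and the second be bounded below. Since $V$ is $\Z_{\geq 0}$-graded as a chiral vertex algebra, its image as a full vertex algebra is concentrated on the diagonal $\{(n,0)\}_{n\geq 0}$ (only the holomorphic variable appears), and the finiteness of $\dim V_n$ gives that $\sum_{h+\h<H}\dim F_{h,\h}$ is finite for every $H$, i.e.\ $V$ has a discrete spectrum; likewise $V$ is bounded below since $n\geq 0$. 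For $\bar{W}$, the conjugation places its spectrum on $\{(0,n)\}_{n\geq 0}$, which is bounded below (indeed also discrete). Thus $F^1=V$ is discrete and $F^2=\bar{W}$ is bounded below, exactly the hypotheses of Proposition \ref{tensor}, so $V\otimes \bar{W}$ is a full vertex algebra.

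Finally, for the discreteness of the spectrum of the tensor product, I would invoke the last sentence of Proposition \ref{tensor}: if both $F^1$ and $F^2$ are discrete (respectively bounded below), then so is $F^1\otimes F^2$. Here both $V$ and $\bar{W}$ are discrete (the finiteness of $\dim V_n$ and $\dim W_n$ forces, for each bound $H$, only finitely many pairs $(m,n)$ with $m+n\leq H$ contributing, each with finite-dimensional weight space), so $V\otimes \bar{W}$ has a discrete spectrum. This completes the argument.

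I do not expect any genuine obstacle here, since the corollary is a direct packaging of two already-proved propositions. The one point that deserves a sentence of care is the bookkeeping of the grading: one must be explicit that a $\Z_{\geq 0}$-graded chiral vertex algebra sits inside the $\R^2$-graded world along the $\h=0$ axis, and that conjugation reflects this onto the $h=0$ axis, so that the ``discrete $\otimes$ bounded-below'' hypothesis of Proposition \ref{tensor} is literally satisfied. Once that identification is made, the rest is an immediate citation.
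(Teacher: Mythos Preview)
Your proposal is correct and follows exactly the paper's approach: the corollary is stated immediately after Proposition~\ref{tensor} with the one-line justification ``By Proposition~\ref{graded_vertex} and Proposition~\ref{conjugate}, we have,'' and your argument simply unpacks this citation, verifying the spectrum hypotheses and invoking the discreteness clause of Proposition~\ref{tensor}.
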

Let $F$ be a full vertex algebra.
By Proposition \ref{vertex_algebra},
$\ker \D$ and $\ker D$ are subalgebras of $F$.
Let $\ker \D \otimes \ker D$ be the tensor product full vertex algebra.
Define the linear map $t:\ker \D \otimes \ker D \rightarrow F$
by $(a\otimes b)\mapsto a(-1,-1)b$ for $a\in \ker \D$ and $b \in \ker D$.
Then, we have:
\begin{prop}\label{ker_hom}
Let $F$ be a full vertex algebra.
Then, $t:\ker \D \otimes \ker D \rightarrow F$ is a full vertex algebra homomorphism.
\end{prop}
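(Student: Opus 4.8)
The plan is to check the two axioms of a full vertex algebra homomorphism: preservation of the vacuum, and compatibility with the vertex operators. The vacuum is immediate, since $t(\va\otimes\va)=\va(-1,-1)\va=\va$ by (FV3). Everything else rests on the following factorization, which I would isolate first: for a holomorphic vector $x\in\ker\D$ and an anti-holomorphic vector $y\in\ker D$,
\[
Y(x(-1,-1)y,\uz)=Y(x,\uz)Y(y,\uz). \qquad (*)
\]
To prove $(*)$, I would specialize the second identity of Lemma \ref{hol_commutator} to $n=-1$; since $\binom{-1}{i}=(-1)^i$, it becomes
\[
Y(x(-1,-1)y,\uz)=\Bigl(\sum_{i\ge0}x(-1-i,-1)z^i\Bigr)Y(y,\uz)+Y(y,\uz)\Bigl(\sum_{i\ge0}x(i,-1)z^{-1-i}\Bigr),
\]
that is, the creation part of $Y(x,\uz)$ on the left of $Y(y,\uz)$, plus $Y(y,\uz)$ on the left of the annihilation part of $Y(x,\uz)$. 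Because $x$ is holomorphic and $y$ anti-holomorphic, their modes commute by Lemma \ref{hol_commute}, so $Y(y,\uz)$ commutes with the annihilation part of $Y(x,\uz)$, and the two terms recombine into $Y(x,\uz)Y(y,\uz)$.

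Granting $(*)$, I would run the homomorphism check as follows. Applying $(*)$ with $x=a$ and $y=b$ gives $Y(t(a\otimes b),\uz)=Y(a,\uz)Y(b,\uz)$, so that, using once more that $c(-1,-1)$ commutes with the anti-holomorphic operator $Y(b,\uz)$ (Lemma \ref{hol_commute}),
\[
Y(t(a\otimes b),\uz)\,t(c\otimes d)=Y(a,\uz)Y(b,\uz)\bigl(c(-1,-1)d\bigr)=\sum_{m}Y(a,\uz)\bigl(c(-1,-1)(b(-1,m)d)\bigr)\z^{-m-1}.
\]
Writing $e=b(-1,m)d$, which lies in the anti-holomorphic subalgebra $\ker D$, the whole statement reduces to the chiral identity
\[
Y(a,\uz)\bigl(c(-1,-1)e\bigr)=\sum_{n}(a(n,-1)c)(-1,-1)e\;z^{-n-1}\qquad(a,c\in\ker\D,\ e\in\ker D), \qquad (\star\star)
\]
since summing $(\star\star)$ against $\z^{-m-1}$ reproduces $t$ applied to $\bigl(Y(a,\uz)\otimes Y(b,\uz)\bigr)(c\otimes d)$, i.e. to the vertex operator of $\ker\D\otimes\ker D$ evaluated on $c\otimes d$.

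The remaining point, and the one I expect to require the most care, is $(\star\star)$. It asserts that $x\mapsto x(-1,-1)e$ intertwines the action of the vertex algebra $\ker\D$ (Proposition \ref{vertex_algebra}) on itself with its action on the module $F$. The crucial observation is that $e$ is a vacuum-like vector for the $\ker\D$-module $F$: for every $x\in\ker\D$ one has $x(k,-1)e=0$ for $k\ge0$ by Lemma \ref{hol_commute}, hence $Y(x,\uz)e\in F[[z]]$. Therefore Lemma \ref{vacuum} (equivalently \cite{Li}), applied with the vertex algebra $\ker\D$ in place of $F$ and the module $F$ in place of $M$, yields that the map $x\mapsto x(-1,-1)e$ is a $\ker\D$-module homomorphism, which is exactly $(\star\star)$. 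The main obstacle is thus to justify that Lemma \ref{vacuum} applies verbatim in this chiral situation: one must check that $F$ is a module over the full vertex algebra $\ker\D$ in the required sense (this is the content of Proposition \ref{vertex_algebra}, the relevant correlation functions lying in $\GCor_2^{\text{hol}}\subset\GCor_2$) and that $e$ is genuinely vacuum-like, both of which follow from the commutativity and the truncation $x(k,-1)e=0$ for $k\ge0$ of Lemma \ref{hol_commute}.
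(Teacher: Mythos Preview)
Your proof is correct and, up to the factorization $(*)$, parallels the paper's argument. Where you diverge is in the verification of what you call $(\star\star)$: the paper does not pass through Lemma \ref{vacuum} but instead expands $(a(n,-1)c)(-1,-1)e$ directly via the second (iterate) formula of Lemma \ref{hol_commutator}, obtaining a sum of terms $a(n-i,-1)c(-1+i,-1)e$ and $c(-1+n-i,-1)a(i,-1)e$; since $e\in\ker D$, Lemma \ref{hol_commute} kills $a(i,-1)e$ and $c(-1+i,-1)e$ for $i\ge0$, leaving only $a(n,-1)c(-1,-1)e$. Your route is more conceptual---recognizing $(\star\star)$ as the assertion that $x\mapsto x(-1,-1)e$ is a $\ker\D$-module map and invoking the vacuum-like machinery---while the paper's is a two-line computation that avoids any subtlety about transporting Lemma \ref{vacuum} to the chiral setting. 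Both ultimately rest on the same input, namely $a(k,-1)e=0$ for $k\ge0$ from Lemma \ref{hol_commute}.
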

\begin{proof}
Let $a,c \in \ker \D$, $b,d \in \ker D$.
By Lemma \ref{hol_commute} and Lemma \ref{hol_commutator},
$$Y(a(-1,-1)b,\uz)=Y(a,z)Y(b,\z)=Y(b,\z)Y(a,z).$$
Thus, it suffices to show that
$t(a\otimes b(n,m)c\otimes d)=t(a\otimes b)(n,m)t(c\otimes d)$
for any $n,m\in \Z$.
By Lemma \ref{hol_commutator}
\begin{align*}
t(a\otimes b(n,m)c\otimes d)&=t(a(n,-1)c \otimes b(-1,m)d)\\
&=(a(n,-1)c)(-1,-1)b(-1,m)d\\
&=\sum_{i=0}\binom{n}{i}(-1)^i (a(n-i,-1)c(-1+i,-1)+c(-1+n-i,-1)a(i,-1))b(-1,m)d.
\end{align*}
Since $b(-1,m)d \in \ker D$, by Lemma \ref{hol_commute},
$t(a\otimes b(n,m)c\otimes d)=a(n,-1)c(-1,-1)b(-1,m)d
=a(n,-1)b(-1,m)c(-1,-1)d=t(a\otimes b)(n,m)t(c\otimes d).$
Thus, the assertion holds.
\end{proof}

We remark that if $\ker \D\otimes \ker D$ is simple, then the above map is injective.

\subsection{Full conformal vertex algebra}\label{sec_def_full_conformal}
In this section, we introduce the notion of a full conformal vertex algebra,
which is a generalization of a vertex operator algebra and a conformal vertex algebra \cite{B,FLM}.
An {\it energy-momentum tensor}
of a full vertex algebra is a pair of vectors
$\om \in F_{2,0}$ and $\omb\in F_{0,2}$ such that
\begin{enumerate}
\item
$\D \om=0$ and $D \omb=0$;
\item
There exist scalars $c, \bar{c} \in \C$ such that
$\om(3,-1)\om=\frac{c}{2} \va$,
$\omb(-1,3)\omb=\frac{\bar{c}}{2} \va$ and
$\om(k,-1)\om=\omb(-1,k)\omb=0$
for any $k=2$ or $k\in \Z_{\geq 4}$.
\item
$\om(0,-1)=D$ and $\omb(-1,0)=\D$;
\item
$\om(1,-1)|_{F_{t,\td}}=t$ and
$\omb(-1,1)|_{F_{t,\td}}=\td$ for any $t,\td \in \R$.
\end{enumerate}
We remark that $\{ \om(n,-1)\}_{n\in \Z}$ and $\{ \omb(-1,n)\}_{n\in \Z}$
satisfy the commutation relation of Virasoro algebra by Lemma \ref{hol_commutator}.
A {\it full conformal vertex algebra} is a pair of a full vertex algebra and its energy momentum tensor.

Let $(F,\om,\omb)$ be a full conformal vertex algebra
and $a \in \ker \D$.
Then, by Lemma \ref{hol_commute},
$\omb(1)a =0$. Thus, $\ker \D \subset \bigoplus_{n \in \Z} F_{n,0}$.
Since $\om \in \ker \D$, we have:
\begin{prop}
If $(F,\om,\omb)$ is a full conformal vertex algebra,
then $(\ker \D,\om)$ is a $\Z$-graded conformal vertex algebra.
\end{prop}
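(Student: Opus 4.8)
The plan is to verify the defining data of a $\Z$-graded conformal vertex algebra for $(\ker\D,\om)$ one item at a time, drawing the underlying vertex algebra structure from Proposition \ref{vertex_algebra} and the conformal data from the conditions on the energy-momentum tensor $(\om,\omb)$. All of this is bookkeeping except for a single genuine step: that the $\Z$-grading is the right one, i.e.\ that $\ker\D$ sits in anti-holomorphic weight $0$.

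First I would prove $\ker\D\subset\bigoplus_{n\in\Z}F_{n,0}$. Since $\D a=a(-1,-2)\va$ and (FV6) gives $F_{h,\h}(-1,-2)\va\subset F_{h,\h+1}$, the operator $\D$ has bidegree $(0,1)$, so $\ker\D$ is a graded subspace and it suffices to treat a homogeneous holomorphic vector $a\in\ker\D\cap F_{h,\h}$. By condition (4) of the energy-momentum tensor, $\Ld(0)=\omb(-1,1)$ acts on $F_{h,\h}$ as multiplication by $\h$, and $\Ld(0)\va=0$ since $\va\in F_{0,0}$. Writing $a=a(-1,-1)\va$ by (FV3) and using that $a$ is holomorphic while $\omb$ is anti-holomorphic, Lemma \ref{hol_commute} yields $[\omb(-1,1),a(-1,-1)]=0$, so that
\begin{align*}
\h\, a=\Ld(0)a=\omb(-1,1)a(-1,-1)\va=a(-1,-1)\omb(-1,1)\va=a(-1,-1)\Ld(0)\va=0.
\end{align*}
Hence $\h=0$, and (FV2) forces $h\in\Z$; therefore $\ker\D=\bigoplus_{n\in\Z}(\ker\D\cap F_{n,0})$ is $\Z$-graded.

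Next I would assemble the conformal structure. By Proposition \ref{vertex_algebra}, $\ker\D$ is a vertex algebra, and with the grading just obtained it is a $\Z$-graded vertex algebra, the grading compatibility $V_m(r)V_n\subset V_{m+n-r-1}$ being the restriction of (FV6). Since $\D\om=0$ and $\om\in F_{2,0}$, we have $\om\in(\ker\D)_2$. By the remark preceding the proposition (which rests on Lemma \ref{hol_commutator} and condition (2)), the modes $\{\om(n,-1)\}_{n\in\Z}$ satisfy the Virasoro relations with central charge $c$. Condition (3) gives $\om(0,-1)=D$, and by Proposition \ref{translation}(1) this $D$ is the translation operator, $Y(Da,z)=\dz Y(a,z)$; condition (4) gives $\om(1,-1)|_{F_{n,0}}=n$, so $\om(1,-1)$ is the grading operator on $\ker\D$. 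These are exactly the axioms of a $\Z$-graded conformal vertex algebra, so $(\ker\D,\om)$ is one.

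The only real obstacle is the grading step; once one sees that $\Ld(0)$ annihilates holomorphic vectors — which follows cleanly from the normalization $\Ld(0)\va=0$ and the commutativity of holomorphic and anti-holomorphic modes in Lemma \ref{hol_commute} — everything else is immediate from the energy-momentum conditions and the already-established vertex algebra structure of $\ker\D$.
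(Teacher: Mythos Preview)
Your proof is correct and follows essentially the same approach as the paper: the key step is showing $\ker\D\subset\bigoplus_{n\in\Z}F_{n,0}$ via Lemma~\ref{hol_commute}, after which Proposition~\ref{vertex_algebra} and the energy-momentum axioms finish the job. Your derivation of $\omb(-1,1)a=0$ via the commutator $[\omb(-1,1),a(-1,-1)]=0$ and the vacuum is a slightly more explicit unpacking of what the paper cites directly from Lemma~\ref{hol_commute}, but the content is the same.
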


\section{Generalized full vertex algebras}\label{sec_generalized}
In this section, we define and study a generalized full vertex algebra,
which is a ``full'' analogy of the notion of a generalized vertex algebra
introduced in \cite{DL}.

\subsection{Definition of generalized vertex algebra}
We first recall the notion of generalized vertex algebra
introduced in \cite{DL}. 
We remark that in the original definition in \cite{DL} they use the Borcherds identity, however, 
in order to generalize it to non-chiral CFT we need to use generalized two-point correlation function (see Remark \ref{rem_vertex}).

For $\al_1,\al_2,\al_{12} \in \R$,
set
\begin{align}
z_1^{\al_1}z_2^{\al_2}(z_1-z_2)^{\al_{12}}|_{|z_1|>|z_2|}
&=z_1^{\al_1+\al_{12}}z_2^{\al_2}\sum_{i\geq 0} (-z_2/z_1)^i, \nonumber \\
z_1^{\al_1}z_2^{\al_2}(z_2-z_1)^{\al_{12}}|_{|z_2|>|z_1|}
&=z_1^{\al_1}z_2^{\al_2+\al_{12}} \sum_{i\geq 0} (-z_1/z_2)^i, \nonumber \\
(z_2+z_0)^{\al_1}z_2^{\al_2}z_0^{\al_{12}}|_{|z_2|>|z_0|}
&=z_0^{\al_{12}}z_2^{\al_2+\al_{1}} \sum_{i\geq 0}(z_0/z_2)^i, \label{eq_branch}
\end{align}
which are formal power series in $\C[[z_2/z_1]][z_1^\R,z_2^\R]$,
$\C[[z_1/z_2]][z_1^\R,z_2^\R]$ and $\C[[z_0/z_2]][z_0^\R,z_2^\R]$, respectively.

\begin{rem}\label{rem_single}
These notations do not conflict with the notation introduced in section \ref{sec_gen},
which represents series expansion in some regions.
In fact, if $\al_1,\al_2,\al_{12} \in \Z$,
then $z_1^{\al_1}z_2^{\al_2}(z_1-z_2)^{\al_{12}} \in \GCor_2$ and both notations give the same formal power series.
However, unless $\al_1,\al_2,\al_{12} \in \Z$,
$z_1^{\al_1}z_2^{\al_2}(z_1-z_2)^{\al_{12}}$ is not a single valued function.
Thus, in order to expand it, we have to choose a branch.
%In fact, for example, anti-clockwise rotation of $z_2$ around $z_1$
%will result in
%$$\exp(2\pi i (\al_1,\al_2)) z_1^{(\al_1,\al_3)}z_2^{(\al_2,\al_3)}(z_1-z_2)^{(\al_1,\al_2)}.
We decide to choose the branch given in \eqref{eq_branch}.
\end{rem}

A generalized vertex algebra is a real finite-dimensional vector space
$H$ equipped with a non-degenerate symmetric bilinear form 
$$(-,-):H\times H \rightarrow \R$$
and an $\R \times H$-graded $\C$-vector space
$\Om=\bigoplus_{t \in \R,\al \in H} \Om_{t}^\al$ equipped with a linear map 
$$\hY(-,z):\Om \rightarrow \End \Om[[z^\R]],\; a\mapsto \hY(a,z)=\sum_{r \in \R}a(r)z^{-r-1}$$
and an element $\va \in \Om_{0}^0$
satisfying the following conditions:
%
% F^\vee is not nesesary?
\begin{enumerate}
\item[GV1)]
For any $\al,\be \in H$ and $a \in \Om^\al$, $b \in \Om^\be$,
$z^{(\al,\be)}\hY(a,z)b \in \Om((z))$;
%For any $a,b \in \Om$,
%there exists $N\in \R$ such that $a(r,s)b=0$ for any $r \geq N$ or $s \geq N$;
\item[GV2)]
$\Om_{t}^\al=0$ unless $(\al,\al)/2+t \in \Z$;
\item[GV3)]
For any $a \in \Om$, $\hY(a,\uz)\va \in \Om[[z,\z]]$ and $\lim_{z \to 0}\hY(a,\uz)\va = a(-1,-1)\va=a$;
\item[GV4)]
$\hY(\va,\uz)=\mathrm{id} \in \End \Om$;
\item[GV5)]
For any $\al_i \in M_\Om$ and  $a_i \in \Omega^{\al_i}$ ($i=1,2,3$) and $u \in \Omega^\vee=\bigoplus_{t \in \R, \al \in H}
(\Om_{t}^\al)^*$,
 there exists $\mu(z_1,z_2) \in \GCor_2^{\text{hol}}$ such that
\begin{align*}
(z_1-z_2)^{(\al_1,\al_2)}z_1^{(\al_1,\al_3)}z_2^{(\al_2,\al_3)}|_{|z_1|>|z_2|}
u(\hY(a_1,z_1)\hY(a_2,z_2)a_3) &= \mu(z_1,z_2)|_{|z_1|>|z_2|},\\
z_0^{(\al_1,\al_2)}(z_2+z_0)^{(\al_1,\al_3)}z_2^{(\al_2,\al_3)}|_{|z_2|>|z_0|}
u(\hY(\hY(a_1,z_0)a_2,z_2)a_3) &= \mu(z_0+z_2,z_2)|_{|z_2|>|z_0|},\\
(z_2-z_1)^{(\al_1,\al_2)}z_1^{(\al_1,\al_3)}z_2^{(\al_2,\al_3)}|_{|z_2|>|z_1|}
u(\hY(a_2,z_2)\hY(a_1,z_1)a_3) &= \mu(z_1,z_2)|_{|z_2|>|z_1|};
\end{align*}
\item[GV6)]
$\Om_{t}^\al(r)\Om_{t'}^\be \subset \Om_{t+t'-r-1}^{\al+\be}$ for any $r,t,t' \in \R$
and $\al,\be \in H$;
\item[GV7)]
For any $\al \in H$, there exists $N_\al \in \R$ such that $\Om_t^\al=0$ for any $t \leq N_\al$.
%\item[FV6)]
%$F^\vee=\bigoplus_{n,m \in \R^2} Hom_\C (F_{n,m},\C) \cap F^\vee$
%finiteness of fusion
%\item[FV8)]
%For $n,m,n',m' \in \R$, there exists $l \in \Z_{>0}$ and $n_1,m_1,\dots,n_l,m_l \in \R$ such that 
%the subspace spanned by $\{a(r,s)b\}$
%for all $a \in F_{n,m}, b\in F_{n',m'}, r,s\in \R$ is contained in $\bigoplus_{i=1}^l F_{n_i,m_i}.$
\end{enumerate}

\begin{rem}
As remarked in \ref{rem_single},
a generalized correlation function $u(\hY(a_1,z_1)\hY(a_2,z_2)a_3)$ 
is not a single-valued holomorphic function and no longer an analytic continuation of \\
$u(\hY(a_2,z_2)\hY(a_1,z_1)a_3)$ along any path. But, the monodromy is controlled by the $H$-grading.
\end{rem}
\begin{rem}
In the original definition in \cite{DL} they use an $\R/2\Z$-valued bilinear form $H \times H \rightarrow \R/2\Z$
instead of $H \times H \rightarrow \R$.
In fact, for the definition, we only need this $\R/2\Z$-valued bilinear form (see for example Lemma \ref{even_omega}), however, for our purpose it is convenient to define a generalized vertex algebra in this way.
We also remark that (GV7) is assumed for the sake of simplicity (It seems that all generalized vertex algebras which naturally arise satisfy (GV7)). We may drop it and it is not assumed in the original definition.
\end{rem}

\subsection{Definition of generalized full vertex algebra}\mbox{}
It is now straight forward to generalize the definition of a generalized vertex algebra to 
a  (non-chiral) generalized full vertex algebra.

%Let $H$ be a real finite dimensional vector space equipped with a non-degenerate bilinear form $(-,-)$.
%For an $\R^2 \times H$-graded vector space $\Omega=\bigoplus_{t,\td \in \R^2, \al \in H} \Omega_{t,\td}^\al$, 
%set $$\Omega^\vee=\bigoplus_{t,\td \in \R,\al \in H} (\Omega_{t,\td}^\al)^*,$$
%where $(\Om_{t,\td}^\al)^*$ is the dual vector space of $\Om_{t,\td}^\al$,
%and for $\al \in H$,
%$$\Om^\al=\bigoplus_{t,\td\in\R} \Om_{t,\td}^\al.
%$$

A {\it generalized full vertex algebra} is a real finite-dimensional vector space
$H$ equipped with a non-degenerate symmetric bilinear form 
$$(-,-):H\times H \rightarrow \R$$
and an $\R^2\times H$-graded $\C$-vector space
$\Om=\bigoplus_{t,\td \in \R,\al \in H} \Om_{t,\td}^\al$ equipped with a linear map 
$$\hY(-,\uz):\Om \rightarrow \End \Om[[z^\R,\z^\R]],\; a\mapsto \hY(a,\uz)=\sum_{r,s \in \R}a(r,s)z^{-r-1}\z^{-s-1}$$
and an element $\va \in \Om_{0,0}^0$ %and a $\R^2$-graded subspace $F^\vee=\bigoplus_{r,s} F_{r,s}^\vee$ of the dual vector space $Hom_\C(F,\C)$ 
satisfying the following conditions:
%
% F^\vee is not nesesary?
\begin{enumerate}
\item[GFV1)]
For any $\al,\be \in H$ and $a \in \Om^\al$, $b \in \Om^\be$,
$z^{(\al,\be)}\hY(a,z)b \in \Om((z,\z,|z|^\R))$;
%For any $a,b \in \Om$,
%there exists $N\in \R$ such that $a(r,s)b=0$ for any $r \geq N$ or $s \geq N$;
\item[GFV2)]
$\Om_{t,\td}^\al=0$ unless $(\al,\al)/2+t-\td \in \Z$;
\item[GFV3)]
For any $a \in \Om$, $\hY(a,\uz)\va \in \Om[[z,\z]]$ and $\lim_{z \to 0}\hY(a,\uz)\va = a(-1,-1)\va=a$;
\item[GFV4)]
$\hY(\va,\uz)=\mathrm{id} \in \End \Om$;
\item[GFV5)]
For any $\al_i \in H$ and  $a_i \in \Omega^{\al_i}$ ($i=1,2,3$) and $u \in \Omega^\vee=\bigoplus_{t,\td\in \R,\al\in H}(\Om_{t,\td}^\al)^*$,  there exists $\mu(z_1,z_2) \in \GCor_2$ such that
\begin{align*}
(z_1-z_2)^{(\al_1,\al_2)}z_1^{(\al_1,\al_3)}z_2^{(\al_2,\al_3)}|_{|z_1|>|z_2|}
u(\hY(a_1,\uz_1)\hY(a_2,\uz_2)a_3) &= \mu(z_1,z_2)|_{|z_1|>|z_2|},\\
z_0^{(\al_1,\al_2)}(z_2+z_0)^{(\al_1,\al_3)}z_2^{(\al_2,\al_3)}|_{|z_2|>|z_0|}
u(\hY(\hY(a_1,\uz_0)a_2,\uz_2)a_3) &= \mu(z_0+z_2,z_2)|_{|z_2|>|z_0|},\\
(z_2-z_1)^{(\al_1,\al_2)}z_1^{(\al_1,\al_3)}z_2^{(\al_2,\al_3)}|_{|z_2|>|z_1|}
u(\hY(a_2,\uz_2)\hY(a_1,\uz_1)a_3) &= \mu(z_1,z_2)|_{|z_2|>|z_1|};
\end{align*}
\item[GFV6)]
$\Om_{t,\td}^\al(r,s)\Om_{t',\td'}^\be \subset \Om_{t+t'-r-1,\td+\td'-s-1}^{\al+\be}$ for any $r,s,t,\td,t',\td' \in \R$
and $\al,\be \in H$;
\item[GFV7)]
For any $\al \in H$, there exists $N_\al \in \R$ such that $\Om_{t,\td}^\al=0$ for any $t \leq N_\al$ or
$\td \leq N_\al$.
\end{enumerate}

Let $(\Om,H)$ be a generalized full vertex algebra
and set $$\Om^\al=\bigoplus_{t,\td\in\R} \Om_{t,\td}^\al.
$$ for $\al \in H$,
and $M_\Om$ be a subgroup of $H$ generated by $\{\al \in H\;|\; \Om^\al \neq 0  \}$.
Let $D$ and $\D$ denote the endomorphism of $\Om$
defined by $Da=a(-2,-1)\bm{1}$ and $\D a=a(-1,-2)\va$ for $a\in \Om$,
i.e., $$\hY(a,z)\va=a+Daz+\D a\z+\dots \in \Om[[z,\z]].$$
Let $a \in \Om^\al$ and $b \in \Om^\be$ for $\al,\be \in M_\Om$.
Since $z^{(\al,\be)}\hY(a,\uz)b \in \Om((z,\z,|z|^\R))$,
$\lim_{z \to -z} z^{(\al,\be)}Y(a,\uz)b$ is well-defined.
Then, similarly to the case of full vertex algebras, we have:
%skew-symmetry
\begin{prop}\label{skew_symmetry}
Let $\Om$ be a generalized full vertex algebra.
For $v \in \Om$ and $\al,\be\in M_\Om$,
 $a \in \Om^\al$, $b \in \Om^\be$, the following properties hold:
\begin{enumerate}
\item
$\hY(Dv,\uz)=\dz \hY(v,\uz)$ and $\hY(\D v,\uz)=\ddz \hY(v,\uz)$;
\item
$D\va=\D\va=0$;
\item
$[D,\D]=0$;
\item
$z^{(\al,\be)}\hY(a,\uz)b=\exp(zD+\z\D)\lim_{z \to -z} z^{(\al,\be)}\hY(b,\uz)a$;
\item
$\hY(\D v,\uz)=[\D,\hY(v,\uz)]$ and $\hY(Dv,\uz)=[D,\hY(v,\uz)]$.
\end{enumerate}
\end{prop}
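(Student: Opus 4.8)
The plan is to follow the proof of Proposition \ref{translation} for full vertex algebras line by line, using the vacuum $\va$ as the distinguished test vector in the three-point axiom (GFV5) and reducing throughout to homogeneous $u\in\Om^\vee$. The only genuinely new ingredient is the holomorphic branch factor that the $H$-grading attaches to each correlation function: with $a\in\Om^\al$, $b\in\Om^\be$ and a vacuum in one slot the prefactors in (GFV5) collapse to a single power such as $z_1^{(\al,\ga)}$ or $z_0^{(\al,\be)}$, and I must carry these powers through every re-expansion. I would therefore establish (1), then the immediate corollaries (2) and (3), then the skew-symmetry (4), and finally the $D$-covariance (5).

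For (1), take $a\in\Om^\al$, $c\in\Om^\ga$ and a homogeneous $u$, and apply (GFV5) with the middle state $\va$. Because $\hY(\va,\uz_2)=\mathrm{id}$ by (GFV4) and $\va$ has grade $0$, the prefactor of the first line is just $z_1^{(\al,\ga)}$, and grading forces $u(\hY(a,\uz_1)c)$ to be a single monomial in $z_1,\z_1$; hence $\mu(z_1,z_2)=z_1^{(\al,\ga)}u(\hY(a,\uz_1)c)$ depends on $z_1$ alone. The iterate line then reads $(z_2+z_0)^{(\al,\ga)}u(\hY(\hY(a,\uz_0)\va,\uz_2)c)=\mu(z_0+z_2,z_2)|_{|z_2|>|z_0|}$, and inserting $\hY(a,\uz_0)\va=a+Daz_0+\D a\z_0+\cdots$ from (GFV3) and comparing the coefficients of $z_0$ and of $\z_0$ (the branch factors exactly absorbing the shift in the exponent) yields $u(\hY(Da,\uz_2)c)=\dz u(\hY(a,\uz_2)c)$ and $u(\hY(\D a,\uz_2)c)=\ddz u(\hY(a,\uz_2)c)$, which is (1). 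Statements (2) and (3) are then immediate: (1) applied to $\va$ with (GFV4) gives $\hY(D\va,\uz)=\dz\,\mathrm{id}=0$, so $D\va=\D\va=0$ by (GFV3), and $\hY(D\D a,\uz)=\dz\ddz\hY(a,\uz)=\ddz\dz\hY(a,\uz)=\hY(\D Da,\uz)$ forces $[D,\D]=0$.

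The core is the skew-symmetry (4), obtained by putting $\va$ in the last slot. For $a\in\Om^\al$, $b\in\Om^\be$ and $a_3=\va$ the three lines of (GFV5) carry the factors $(z_1-z_2)^{(\al,\be)}$, $z_0^{(\al,\be)}$ and $(z_2-z_1)^{(\al,\be)}$ respectively. Since $\va$ is vacuum-like, $\hY(\,\cdot\,,\uz_2)\va\in\Om[[z_2,\z_2]]$, so the generalized analogue of Lemma \ref{vacuum} shows that $p(z_0,z_2):=z_0^{(\al,\be)}u(\hY(\hY(a,\uz_0)b,\uz_2)\va)$ lies in $\C[z_2,\z_2]\otimes\C[z_0^{\pm},\z_0^{\pm},|z_0|^\R]$, and that letting $z_2\to 0$ recovers $z_0^{(\al,\be)}u(\hY(a,\uz_0)b)$. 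The decisive exchange relation identifies the $(b,a)$-iterate at separation $-z_0$ with a re-expansion of the single $\mu$, namely $u(\hY(\hY(b,-\uz_0)a,\uz_1)\va)=p(z_0,z_1-z_0)|_{|z_1|>|z_0|}$ up to the branch factor, exactly as in the proof of Proposition \ref{translation}; here the substitution $z_0\mapsto -z_0$ is what forces the branch prescribed by the notation $\lim_{z\to-z}z^{(\al,\be)}\hY(b,\uz)a$. Letting $z_1\to0$ and reinstating the translation operator $\exp(z_0D+\z_0\D)$ then gives $z^{(\al,\be)}\hY(a,\uz)b=\exp(zD+\z\D)\lim_{z\to-z}z^{(\al,\be)}\hY(b,\uz)a$.

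Finally, (5) follows by differentiating (4) and using (1): applying $\dz$ to both sides, the contributions of the common factor $z^{(\al,\be)}$ match and one is left with $\hY(Da,\uz)b=D\hY(a,\uz)b-\hY(a,\uz)Db$, i.e. $\hY(Dv,\uz)=[D,\hY(v,\uz)]$, and the $\D$-case is identical. I expect the one real obstacle to be the bookkeeping in (4): one must verify that the factor $z_0^{(\al,\be)}$ attached to the iterate and the $(-z_0)^{(\al,\be)}$ produced by the exchange combine into precisely the branch fixed by $\lim_{z\to-z}$, so that the half-monodromy $\exp(\pi i(\al,\be))$ is correctly accounted for and no spurious phase survives. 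This is the non-chiral counterpart of the branch choice \eqref{eq_branch}; everything else is the formal-calculus transcription of the full vertex algebra arguments already in the excerpt.
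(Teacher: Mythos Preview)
Your proposal is correct and follows essentially the same route as the paper: for (1)--(3) the paper simply says ``the proof of Proposition \ref{translation} also works'', which is exactly the transcription you describe (with the branch factors $z_1^{(\al,\ga)}$ and $(z_2+z_0)^{(\al,\ga)}$ collapsing because the $\va$ slot has grade $0$); for (4) the paper also puts $\va$ in the last slot, extracts the polynomial $p(z_0,z_2)=z_0^{(\al,\be)}u(\hY(\hY(a,\uz_0)b,\uz_2)\va)\in\C[z_2,\z_2][z_0^{\pm},\z_0^{\pm},|z_0|^\R]$, applies (GFV5) to the swapped triple $(b,a,\va)$ to identify $z_0'^{(\al,\be)}u(\hY(\hY(b,\uz_0')a,\uz_1)\va)$ with $p(-z_0',z_1+z_0')|_{|z_1|>|z_0'|}$, and then lets $z_1\to0$ with the translation $\exp(z_0D+\z_0\D)$; and (5) is obtained by differentiating (4). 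Your anticipated obstacle---that the branch $\lim_{z\to-z}z^{(\al,\be)}$ must absorb exactly the phase coming from $z_0'\mapsto -z_0$---is precisely the point the paper handles by writing the substitution as $\lim_{z_0'\to-z_0}z_0'^{(\al,\be)}$ rather than as a naive sign, so your bookkeeping matches.
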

\begin{proof}
The proof of Proposition \ref{translation} also works
for (1), (2), (3).
Thus, we only prove (4) and (5).

Let $u \in \Om^\vee$ and $a \in \Om^\al$ and $b\in \Om^\be$
and $\mu(z_1,z_2)\in \GCor_2$
satisfy 
\begin{align*}
(z_1-z_2)^{(\al,\be)}u(\hY(a,\uz_1)\hY(b,\uz_2)\va)=\mu(z_1,z_2) |_{|z_1|>|z_2|}.
\end{align*}
Since
\begin{align*}
\mu(z_0+z_2,z_2)|_{|z_2|>|z_0|} 
&=z_0^{(\al,\be)}u(\hY(\hY(a,\uz_0)b,\uz_2)\va) \in U(z_2,z_0)
\end{align*}
and the right-hand-side contains only the positive power of $z_2$ and $\z_2$,
$z_0^{(\al,\be)}u(\hY(\hY(a,\uz_0)b,\uz_2)\va) \in 
\C[z_2,\z_2][z_0^{\pm},\z_0^{\pm},|z_0|^\R]$.
Set $p(z_0,z_2)=z_0^{(\al,\be)}u(\hY(\hY(a,\uz_0)b,\uz_2)\va)$.
By (GFV5) and setting $z_0'=z_2-z_1$, we have
$$u(\hY(\hY(b,\uz_0')a,\uz_1)\va) = \mu(z_1,z_0'+z_1)|_{|z_1|>|z_0'|}
=p(-z_0',z_1+z_0')|_{|z_1|>|z_0'|}.$$
Thus,
\begin{align*}
z_0^{(\al,\be)}u(\hY(a,\uz_0)b)&=p(z_0,0)= \lim_{z_1\to 0} \exp(z_0\frac{d}{dz_1}+\z_0\frac{d}{d\z_1})p(z_0,z_1-z_0)\\
&=\lim_{z_1\to 0} \exp(z_0\frac{d}{dz_1}+\z_0\frac{d}{d\z_1})
\lim_{z_0'\to -z_0} z_0'^{(\al,\be)}
u(\hY(\hY(b,\uz_0')a,\uz_1)\va) \\
&=\lim_{\substack{z_1\to 0\\z_0'\to-z_0}} u(\hY(\exp(-z_0'D-\z_0'\D) z_0'^{(\al,\be)}Y(b,z_0')a,\uz_1)\va) \\
&=u(\exp(z_0D+\z_0\D)\lim_{z_0'\to -z_0} z_0'^{(\al,\be)}Y(b,\uz_0')a).
\end{align*}

Finally,
\begin{align*}
z^{(\al,\be)}\hY&(Da,\uz)b+(\al,\be)z^{(\al,\be)-1}\hY(a,\uz)b\\
&=\frac{d}{dz}z^{(\al,\be)}\hY(a,\uz)b\\
&=\frac{d}{dz}\exp(Dz+\D\z)(-z)^{(\al,\be)}\hY(b,-\uz)a \\
&= D\exp(Dz+\D\z)(-z)^{(\al,\be)}\hY(b,-\uz)a \\
&+(\al,\be)z^{-1}\exp(Dz+\D\z)(-z)^{(\al,\be)}Y(b,-\uz)a
-\exp(Dz+\D\z)(-z)^{(\al,\be)}Y(Db,-\uz)a\\
&=z^{(\al,\be)}D\hY(a,\uz)b-z^{(\al,\be)}\hY(a,\uz)Db
+(\al,\be)z^{(\al,\be)-1}\hY(a,\uz)b.
\end{align*}
Thus, the assertion holds.
\end{proof}

%Let $(\Om_1,\hY_1,\va_1,H_1)$ and $(\Om_2,\hY_2,\va_2,H_2)$ be generalized full vertex algebras.
A  homomorphism from a generalized full vertex algebra $(\Om_1,\hY_1,\va_1,H_1)$ to
a generalized full vertex algebra $(\Om_2,\hY_2,\va_2,H_2)$ is
a pair of a linear map $\psi :\Om_1\rightarrow \Om_2$ and an $\R$-linear isomorphism
$\psi':H_1\rightarrow H_2$
such that:
\begin{enumerate}
\item
$\psi'$ is isometric;
\item
$\psi((\Om_1)_{t,\td}^\al)\subset (\Om_2)_{t,\td}^{\psi'(\al)}$ for any $t,\td\in \R$ and $\al \in M_{\Om_1}$;
\item
$\psi(\va_1)=\va_2$;
\item
$\psi(\hY_1(a,\uz)b)=\hY_2(\psi(a),\uz)\psi(b)$
for any $a,b\in \Om_1$.
\end{enumerate}

A subalgebra of a generalized full vertex algebra $\Om$
is an $\R^2\times H$-graded subspace $\Om' \subset \Om$
such that $\va \in \Om'$ and $a(r,s)b \in \Om'$ for any $r,s\in \R$
and $a,b \in \Om'$.
\begin{lem}\label{subgroup}
Let $\Om$ be a generalized full vertex algebra.
Then, for a subgroup $A \subset M_\Om$,
$\Om^A = \bigoplus_{\al \in A} \Om^\al$ is a subalgebra of $\Om$.
\end{lem}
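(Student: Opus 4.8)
The plan is to verify directly the three defining conditions of a subalgebra for $\Om^A=\bigoplus_{\al\in A}\Om^\al$. First, $\Om^A$ is by construction an $\R^2\times H$-graded subspace, since each $\Om^\al=\bigoplus_{t,\td\in\R}\Om_{t,\td}^\al$ is itself graded, so the $\R^2\times H$-grading of $\Om$ restricts to $\Om^A$. Second, the vacuum lies in $\Om^A$: because $A\subset M_\Om$ is a \emph{subgroup}, it contains the identity $0\in H$, and $\va\in\Om_{0,0}^0\subset\Om^0\subset\Om^A$. These two points are immediate from the definitions.

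The substantive step is closure under the products $a(r,s)b$. First I would reduce to homogeneous elements by linearity: it suffices to treat $a\in\Om_{t,\td}^\al$ and $b\in\Om_{t',\td'}^\be$ with $\al,\be\in A$. Then the grading axiom (GFV6) gives
\[
a(r,s)b\in\Om_{t+t'-r-1,\,\td+\td'-s-1}^{\al+\be}
\]
for all $r,s\in\R$. The decisive observation is that, since $A$ is closed under addition, $\al+\be\in A$, whence $a(r,s)b\in\Om^{\al+\be}\subset\Om^A$. Extending over arbitrary (finite) sums of homogeneous pieces, I obtain $a(r,s)b\in\Om^A$ for all $a,b\in\Om^A$, which is exactly the closure condition.

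The main point to keep in mind is that \emph{all} of the work is done by the single axiom (GFV6) together with the subgroup hypothesis on $A$: (GFV6) records that the $H$-charge of a product of charges $\al,\be$ is exactly $\al+\be$, and the subgroup property is precisely what guarantees this charge stays inside $A$. There is no genuine analytic obstacle here, since the series-level conditions (GFV1), convergence, etc.\ play no role in the grading computation; the only thing to be careful about is not to weaken ``subgroup'' to ``subset,'' as closure under addition of charges is essential for the argument. Hence $\Om^A$ is a subalgebra of $\Om$.
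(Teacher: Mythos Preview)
Your proof is correct. The paper does not give a proof of this lemma at all --- it is stated without argument, presumably because it is regarded as immediate from the definitions --- and your verification via (GFV6) together with the subgroup property of $A$ is exactly the natural way to see it.
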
 

The following lemma is clear from the definition:
\begin{lem}
\label{even_omega}
Let $\al_i \in M_\Om$ and $a_i\in \Om^\al_i$ for $i=1,2,3$.
Suppose that $(\al_i,\al_j) \in \Z$ for $i\neq j$.
Then, for any $u \in \Om^\vee$
 there exists $\mu(z_1,z_2)\in \GCor_2$ such that
\begin{align*}
u(\hY(a_1,\uz_1)\hY(a_2,\uz_2)a_3) &= \mu(z_1,z_2)|_{|z_1|>|z_2|}\\
u(\hY(\hY(a_1,\uz_0)a_2,\uz_2)a_3) &= \mu(z_0+z_2,z_2)|_{|z_2|>|z_0|}\\
(-1)^{(\al_1,\al_2)}u(\hY(a_2,\uz_2)\hY(a_1,\uz_1)a_3) &= \mu(z_1,z_2)|_{|z_2|>|z_1|}.
\end{align*}
In particular, 
if a subgroup $A \subset M_\Om$ satisfies $(\al,\al') \in 2\Z$ for any $\al,\al'\in A$,
then $\Om^A = \bigoplus_{\al \in A} \Om^\al$ is a full vertex algebra.
\end{lem}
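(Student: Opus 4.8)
The plan is to reduce all three identities to axiom (GFV5) by dividing out the multivalued prefactors, which become honest Laurent polynomials precisely because every $(\al_i,\al_j)$ is now an integer. Set
$$g(z_1,z_2)=z_1^{(\al_1,\al_3)}z_2^{(\al_2,\al_3)}(z_1-z_2)^{(\al_1,\al_2)}.$$
Since the three exponents are integers by hypothesis, $g\in \C[z_1^\pm,z_2^\pm,(z_1-z_2)^\pm]=\GCor_2^{\text{hol}}\subset \GCor_2$, and $g$ is invertible there, with $g^{-1}\in\GCor_2^{\text{hol}}\subset\GCor_2$ as well. Let $\mu_0\in\GCor_2$ be the function supplied by (GFV5) for the triple $a_1,a_2,a_3$ and the functional $u$, and put $\mu=g^{-1}\mu_0$. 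Because $\GCor_2$ is closed under products, $\mu\in\GCor_2$; this is the candidate correlation function.

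Next I would verify the three identities by expanding in each region and using that $|_{|z_1|>|z_2|}$, $|_{|z_2|>|z_1|}$ and $|_{|z_2|>|z_1-z_2|}$ are ring homomorphisms on $\GCor_2$, so in particular $g^{-1}|_{\bullet}=(g|_{\bullet})^{-1}$ is a unit. The crucial input is Remark \ref{rem_single}: since the exponents are integral, the branch-expanded prefactors in (GFV5) coincide with the single-valued $\GCor_2$-expansions of $g$. For the first identity, (GFV5) reads $g|_{|z_1|>|z_2|}\cdot u(\hY(a_1,\uz_1)\hY(a_2,\uz_2)a_3)=\mu_0|_{|z_1|>|z_2|}$; multiplying by the unit $g^{-1}|_{|z_1|>|z_2|}$ yields $u(\hY(a_1,\uz_1)\hY(a_2,\uz_2)a_3)=(g^{-1}\mu_0)|_{|z_1|>|z_2|}=\mu|_{|z_1|>|z_2|}$. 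The iterate identity follows identically after the substitution $z_1=z_0+z_2$ and expanding in $|z_2|>|z_0|$. For the third identity the same division produces the factor $(-1)^{(\al_1,\al_2)}$, coming from $(z_2-z_1)^{(\al_1,\al_2)}=(-1)^{(\al_1,\al_2)}(z_1-z_2)^{(\al_1,\al_2)}$ for integral exponent, and this sign lands exactly where the statement places it.

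For the final claim, let $A\subset M_\Om$ be a subgroup with $(\al,\al')\in 2\Z$ for all $\al,\al'\in A$. By Lemma \ref{subgroup}, $\Om^A=\bigoplus_{\al\in A}\Om^\al$ is a subalgebra, which I grade by $F_{h,\h}=\bigoplus_{\al\in A}\Om^\al_{h,\h}$. Then (FV1), (FV3), (FV4) are immediate from (GFV1) (integrality of $(\al,\be)$ makes the factor $z^{(\al,\be)}$ harmless), (GFV3) and (GFV4); (FV6) is exactly (GFV6) since $A$ is closed under addition; and (FV2) holds because $(\al,\al)\in 2\Z$ combined with (GFV2) forces $h-\h\in\Z$. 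Finally (FV5) is precisely the first part of this lemma specialized to $\al_i\in A$: here $(\al_1,\al_2)\in 2\Z$, so $(-1)^{(\al_1,\al_2)}=1$ and the three displayed identities reduce to the three lines of (FV5). I expect the only real obstacle to be bookkeeping rather than conceptual, namely keeping the branch conventions of \eqref{eq_branch} aligned with the single-valued $\GCor_2$-expansions in the integral case and tracking the sign $(-1)^{(\al_1,\al_2)}$ correctly; once those are pinned down, everything is an application of the ring-homomorphism property of expansion.
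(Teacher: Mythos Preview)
Your proposal is correct and matches the paper's intent: the paper states the lemma is ``clear from the definition'' and gives no proof, and your argument---divide (GFV5) by the now single-valued prefactor $g\in\GCor_2^{\text{hol}}$, track the sign from $(z_2-z_1)^{(\al_1,\al_2)}=(-1)^{(\al_1,\al_2)}(z_1-z_2)^{(\al_1,\al_2)}$, and read off (FV1)--(FV6) for $\Om^A$ from (GFV1)--(GFV6) once the even-integrality kills the monodromy---is exactly the unwinding one would expect.
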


Let $a\in \Om^0$ satisfy $\D a=0$.
Since $\hY(\D a,\uz)=\ddz \hY(a,\uz)=0$,
$\hY(a,\uz)=\sum_{n\in \Z}a(n,-1)z^{-n-1}$.
Thus, similarly to the proof of Lemma \ref{hol_commutator}
and Lemma \ref{hol_commute},
we have:
\begin{lem}\label{zero_commutator}
Let $a\in \Om^0$ satisfy $\D a=0$.
Then, for any $b \in \Om$,
$$[a(n,-1),\hY(b,\uz)]
=\sum_{i \geq 0} \binom{n}{i}
\hY(a(i,-1)b,\uz)z^{n-i}.
$$
Furthermore,
if $D b=0$,
then $a(i,-1)b=0$ for any $i\geq 0$.
\end{lem}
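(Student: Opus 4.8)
The plan is to mirror the proofs of Lemma \ref{hol_commutator} and Lemma \ref{hol_commute}, exploiting the hypothesis $a\in\Om^0$ to kill the monodromy that is otherwise present in a generalized full vertex algebra; once the monodromy is gone, the correlation functions behave exactly as in the untwisted full vertex algebra setting and the residue/Cauchy argument applies verbatim.

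For the commutator formula, I would fix homogeneous $b\in\Om^\be$, $v\in\Om^\ga$ and $u\in\Om^\vee$, and take $\mu(z_1,z_2)\in\GCor_2$ as supplied by (GFV5) for the triple $(a,b,v)$. Since the charge of $a$ is $0$, the branching prefactors in (GFV5) satisfy $(z_1-z_2)^{(\al_1,\al_2)}=z_1^{(\al_1,\al_3)}=1$ (both exponents vanish), and only the factor $z_2^{(\be,\ga)}$, which does not involve $z_1$, survives; in particular the sign $(-1)^{(\al_1,\al_2)}=1$, so the three identities of (GFV5) exhibit the three correlation functions as honest single-valued expansions of $z_2^{-(\be,\ga)}\mu$ in the regions $|z_1|>|z_2|$, $|z_2|>|z_0|$ and $|z_2|>|z_1|$. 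Because $\D a=0$, Proposition \ref{skew_symmetry} gives $\ddz\hY(a,\uz)=\hY(\D a,\uz)=0$, so $u(\hY(a,\uz_1)\hY(b,\uz_2)v)$ contains no $\z_1$, whence $\frac{d}{d\z_1}\mu=0$. Together with the homogeneity eigenvalue conditions coming from the $\R^2$-grading, Lemma \ref{hol_generalized} then places $\mu$ in $\C[z_1^\pm,(z_1-z_2)^\pm,z_2^\pm,\z_2^\pm,|z_2|^\R]$, i.e.\ $\mu$ is rational in $z_1$ with poles only at $z_1=0$ and $z_1=z_2$. Writing $a(n,-1)=\oint z_1^n\hY(a,\uz_1)\,\frac{dz_1}{2\pi i}$, the difference of the $|z_1|>|z_2|$ and $|z_2|>|z_1|$ expansions is the residue of $z_1^n\mu$ at $z_1=z_2$; expanding $z_1^n=((z_1-z_2)+z_2)^n$ there and applying the Cauchy integral formula reproduces $\sum_{i\ge 0}\binom{n}{i}\hY(a(i,-1)b,\uz_2)z_2^{n-i}$, with the $z_2^{-(\be,\ga)}$ factor matching on both sides.

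For the furthermore part, assume in addition $Db=0$. By Proposition \ref{skew_symmetry}, $[D,\hY(a,\uz)]=\hY(Da,\uz)=\dz\hY(a,\uz)$, so $D\hY(a,\uz)b=\dz\hY(a,\uz)b+\hY(a,\uz)Db=\dz\hY(a,\uz)b$; comparing coefficients yields $Da(n,-1)b=-na(n-1,-1)b$ for all $n\in\Z$. Since $a\in\Om^0$, (GFV1) says simply $\hY(a,\uz)b\in\Om((z,\z,|z|^\R))$, so $a(n,-1)b=0$ for $n\gg 0$. Feeding this into the recursion and descending $n$ (the coefficient $-n$ never vanishing for $n\ge 1$) forces $a(k,-1)b=0$ for every $k\ge 0$.

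The main obstacle I anticipate is the first step: one must verify carefully that the charge-$0$ hypothesis genuinely trivializes every branch-cut factor in (GFV5), so that the generalized (multivalued) correlation functions reduce to ordinary single-valued expansions and the transfer from Lemma \ref{hol_commutator} is legitimate. Once the absence of monodromy is established and $\mu$ is seen to be rational in $z_1$, the remaining contour computation is routine.
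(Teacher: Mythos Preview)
Your proof is correct and follows precisely the approach the paper intends: the paper's proof is the single line ``similarly to the proof of Lemma \ref{hol_commutator} and Lemma \ref{hol_commute}'', and you have correctly identified that the hypothesis $a\in\Om^0$ makes the monodromy prefactors $(z_1-z_2)^{(\al_1,\al_2)}$, $z_1^{(\al_1,\al_3)}$, $(z_2-z_1)^{(\al_1,\al_2)}$ in (GFV5) all equal to $1$, leaving only the common factor $z_2^{(\be,\ga)}$, so that the argument of Lemma \ref{hol_commutator} (via Lemma \ref{hol_generalized} and the Cauchy integral formula) and Lemma \ref{hol_commute} (via the recursion $Da(n,-1)b=-na(n-1,-1)b$ and (GFV1)) transfers verbatim.
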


A {\it generalized full conformal vertex algebra} is a generalized full vertex algebra
$\Om$ with distinguished vectors $\om \in \Om_{2,0}^0$ and 
$\omb\in \Om_{0,2}^0$ such that
\begin{enumerate}
\item
$\D \om=0$ and $D \omb=0$;
\item
There exist scalars $c, \bar{c} \in \C$ such that
$\om(3,-1)\om=\frac{c}{2} \va$,
$\omb(-1,3)\omb=\frac{\bar{c}}{2} \va$ and
$\om(k,-1)\om=\omb(-1,k)\omb=0$
for any $k=2$ or $k\in \Z_{\geq 4}$;
\item
$\om(0,-1)=D$ and $\omb(-1,0)=\D$;
\item
$\om(1,-1)|_{\Om_{t,\td}^\al}=t\,\mathrm{id}_{\Om_{t,\td}^\al}$ and
$\omb(-1,1)|_{\Om_{t,\td}^\al}=\td\,\mathrm{id}_{\Om_{t,\td}^\al}$ for any $t,\td \in \R$ and $\al \in M_\Om$.
\end{enumerate}
We remark that $\{ \om(n,-1)\}_{n\in \Z}$ and $\{ \omb(-1,n)\}_{n\in \Z}$
satisfy the commutation relation of the Virasoro algebra by Lemma \ref{zero_commutator}.
The pair $(\om,\omb)$ is called an energy-momentum tensor of the generalized full conformal vertex algebra in this paper.

\subsection{Locality of generalized full vertex algebra}
The most difficult part in the construction of a generalized full vertex algebra is to verify the condition (GFV5).
In the following proposition, (GFV5) is replaced by the conditions (GFL1), (GFL2) and (GFL3), which are easier to prove.
\begin{prop}\label{general_locality}
Let $(\Om,\hY,\va,H)$ satisfy (GFV1), (GFV2), (GFV3), (GFV4), (GFV6) and (GFV7)
and $D,\D \in \End \,\Om$ be linear maps.
We assume that the following conditions hold:
\begin{enumerate}
\item[GFL1)]
$[D,\D]=0$ and $D\va=\D\va=0$;
\item[GFL2)]
$[D,\hY (a,\uz)]=\dz \hY(a,\uz)$ and $[\D,\hY(a,\uz)]=\ddz \hY(a,\uz)$ for any
$a\in \Om$;
\item[GFL3)]
For any $\al_i \in M_\Om$ and  $a_i \in \Omega^{\al_i}$ ($i=1,\dots,3$)
and $u \in \Om^\vee$,
there exists $\mu(z_1,z_2) \in \GCor_2$ such that
\begin{align*}
(z_1-z_2)^{(\al_1,\al_2)}z_1^{(\al_1,\al_3)}z_2^{(\al_2,\al_3)}|_{|z_1|>|z_2|}
u(\hY(a_1,\uz_1)\hY(a_2,\uz_2)a_3) = \mu(z_1,z_2)|_{|z_1|>|z_2|},\\
(z_2-z_1)^{(\al_1,\al_2)}z_1^{(\al_1,\al_3)}z_2^{(\al_2,\al_3)}|_{|z_2|>|z_1|}
u(\hY(a_2,\uz_2)\hY(a_1,\uz_1)a_3) = \mu(z_1,z_2)|_{|z_2|>|z_1|}.
\end{align*}
%\item[GFL4)]
%For any $\al \in H$, there exits $N\in \R$
%such that $\Om_{t,\td}^\al=0$ unless $t \geq N$ and $\td \geq N$.
\end{enumerate} 
Then, $\Om$ is a generalized full vertex algebra.
\end{prop}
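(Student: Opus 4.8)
The only axiom left to verify is (GFV5), since (GFV1)--(GFV4), (GFV6), (GFV7) are among the hypotheses. The key observation is that (GFL3) already produces a single $\mu(z_1,z_2)\in\GCor_2$ whose expansions in $\{|z_1|>|z_2|\}$ and $\{|z_2|>|z_1|\}$ reproduce the two ``product'' correlation functions of (GFV5); what remains is to identify the third (iterate) expansion $\mu(z_0+z_2,z_2)|_{|z_2|>|z_0|}$ with $z_0^{(\al_1,\al_2)}(z_2+z_0)^{(\al_1,\al_3)}z_2^{(\al_2,\al_3)}|_{|z_2|>|z_0|}u(\hY(\hY(a_1,\uz_0)a_2,\uz_2)a_3)$. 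In other words, the entire content of the proposition is to derive the associativity channel from the commutativity channel (GFL3), using only the translation data (GFL1), (GFL2) and the vacuum (GFV3), (GFV4), and \emph{without} invoking the very identity (GFV5) we are proving.

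First I would record the consequences of (GFL1), (GFL2), (GFV3): exactly as in the proof of Proposition \ref{translation}, the creation property gives $\hY(x,\uz)\va=\exp(zD+\z\D)x$ for every $x\in\Om$ (extended coefficientwise to formal series), and the bracket relation (GFL2) upgrades to the shift covariance $\exp(wD+\bar{w}\D)\hY(a,\uz)\exp(-wD-\bar{w}\D)=\hY(a,\underline{z+w})$. Next I would establish skew-symmetry,
$$z^{(\al,\be)}\hY(a,\uz)b=\exp(zD+\z\D)\lim_{z\to -z}z^{(\al,\be)}\hY(b,\uz)a \qquad (a\in\Om^\al,\ b\in\Om^\be),$$
by specialising (GFL3) to $a_3=\va$. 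The essential simplification is that for the vacuum the iterate is forced by creation alone, $\hY(\hY(a,\uz_0)b,\uz_2)\va=\exp(z_2D+\z_2\D)\hY(a,\uz_0)b$, so that combining this with the shift covariance rewrites the $\{|z_1|>|z_2|\}$ expansion of the $a_3=\va$ function as a genuine function of $z_0=z_1-z_2$ and $z_2$. Consequently the argument of Proposition \ref{skew_symmetry}(4) now applies verbatim, except that the iterate-with-vacuum identity which that proof extracted from (GFV5) is here supplied for free; Lemma \ref{unique_coefficient} lets me read off skew-symmetry, breaking the apparent circularity.

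With skew-symmetry in hand I would run the standard ``ratchet'' to obtain the general iterate relation. Applying skew-symmetry to the outer vertex operator (the inner object $\hY(a_1,\uz_0)a_2$ carries $H$-grading $\al_1+\al_2$) converts $z_2^{(\al_1+\al_2,\al_3)}\hY(\hY(a_1,\uz_0)a_2,\uz_2)a_3$ into $\exp(z_2D+\z_2\D)$ applied to the reversed product $\hY(a_3,\cdot)\hY(a_1,\uz_0)a_2$; applying (GFL3) to this product swaps $a_3$ past $a_1$, and a second use of skew-symmetry together with the shift covariance returns the configuration, after setting $z_1=z_0+z_2$, to $u(\hY(a_1,\uz_1)\hY(a_2,\uz_2)a_3)$, i.e. to the function $\mu$ of (GFL3). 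Matching homogeneities via (GFV6) and Lemma \ref{generalized_limit} then shows that the iterate is precisely the expansion of this same $\mu$ in $\{|z_2|>|z_0|\}$, which is the middle line of (GFV5).

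I expect the main obstacle to be the bookkeeping of the multivalued branch factors $z_1^{(\al_1,\al_3)}z_2^{(\al_2,\al_3)}(z_1-z_2)^{(\al_1,\al_2)}$ through these manipulations. Unlike the integral-powered situation of Proposition \ref{translation}, these prefactors are genuinely branched, so each application of skew-symmetry, of the commutativity (GFL3), and of the region-restriction maps $|_{|z_1|>|z_2|}$, $|_{|z_2|>|z_0|}$ from Section \ref{sec_gen} must be performed on a consistently chosen branch, following the conventions fixed in (\ref{eq_branch}). A further delicate point is to confirm, again without circularity, that the reversed products produced by the ratchet lie in the domains where (GFL3) is applicable and that the limits $\lim_{z\to -z}$ are well defined as formal series; here (GFV1) and the boundedness-below built into (GFV7) are what guarantee that all the intermediate series are in the Novikov-type spaces of Section \ref{sec_formal} and that the coefficient-by-coefficient identifications are legitimate.
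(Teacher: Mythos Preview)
Your proposal is correct and follows essentially the same route as the paper: first derive skew-symmetry by specialising (GFL3) to $a_3=\va$ (using creation and (GFL2) to identify $\hY(x,\uz)\va=\exp(zD+\z\D)x$), then obtain the iterate channel by combining two applications of skew-symmetry with one swap from (GFL3). The only cosmetic difference is that you run the ratchet from the iterate side toward the product, whereas the paper starts from the product $u(\hY(a_1,\uz_1)\hY(a_2,\uz_2)a_3)$, applies skew-symmetry to the inner factor $\hY(a_2,\uz_2)a_3$, uses shift covariance, swaps via (GFL3), and then applies skew-symmetry once more to reach the iterate; the underlying mechanism and the branch bookkeeping you flag are the same.
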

\begin{proof}
Let $a_i \in \Om_{t_i,\td_i}^{\al_i}$ and $u \in (\Om_{t_0,\td_0}^{\al_0})^*$
for $\al_i \in M_\Om$, $t_i,\td_i \in \R$ and $i=0,1,2$.

First, we prove the skew-symmetry,
that is,
$$z^{(\al_1,\al_2)}Y(a_2,\uz)a_1=\exp(Dz+\D \z) \lim_{z \to -z} z^{(\al_1,\al_2)}Y(a_1,\uz)a_2.
$$
Since $DY(a_2,z)\va=\dz Y(a_2,z)\va$, we have $Y(a_2,z)\va=\exp(Dz+\D\z)a_2$,
which implies that $Da_2=a_2(-2,-1)\va\in F_{t_2+1,\td_2}$
and thus $D\Om_{t,\td}^\al \subset \Om_{t+1,\td}^\al$ and $\D \Om_{t,\td}^\al \subset \Om_{t,\td+1}^\al$ for any $t,\td\in \R$ and $\al \in M_\Om$.
Then,
\begin{align*}
u(\hY(a_1,\uz_1)\hY(a_2,\uz_2)\va)=u(\hY(a_1,\uz_1)\exp(Dz_2+\D\z_2)a_2)\\
=\lim_{z_{12} \to (z_1-z_2)|_{|z_1|>|z_2|}} u(\exp(Dz_2+\D\z_2)\hY(a_1,\uz_{12})a_2).
\end{align*}
Set $t=t_1+t_2-t_0$ and $\td=\td_1+\td_2-\td_0$.
Then, by (GFV6),
\begin{align*}
u(\exp(Dz_2+\D\z_2)\hY(a_1,\uz_{12})a_2)
&=\sum_{s_1,\s_1 \in \R}\sum_{n,\n \in \Z_{\geq 0}}\frac{1}{n!\n!}
u(D^n\D^\n a_1(s_1,\s_1)a_2)z_{12}^{-\s_1-1}\z_{12}^{-\s_1-1}z_2^n \z_2^\n \\
&=\sum_{n,\n \in \Z_{\geq 0}}\frac{1}{n!\n!}
u(D^n\D^\n a_1(h+n-1,\h+\n-1)a_2)z_{12}^{-t-n}\z_{12}^{-\td-\n}z_2^n \z_2^\n.
\end{align*}
By (GFV1), there exists an integer $N$ such that $a_1(t+n-1,\td+\n-1)a_2=0$ for any $n \geq N$
or $\n \geq N$.
Thus,
$z_{12}^{N+t}\z_{12}^{N+\td}  u(\exp(Dz_2+\D\z_2)a_2)\hY(a_1,\uz_{12})a_2) \in \C[z_{12},z_2,\z_{12},\z_2]$.
By (GFV1), we may assume that
$(\al_1,\al_2)-t+\td \in \Z$.
Set 
$$
p(z_{12},z_2)=z_{12}^{(\al_1,\al_2)}u(\exp(Dz_2+\D\z_2)a_2)\hY(a_1,\uz_{12})a_2),
$$
which is a polynomial in $\C[z_{12}^\pm,\z_{12}^\pm,|z_{12}|^\R,z_2,\z_2]$
by $z_{12}^{t}\z_{12}^{\td}=
 z_{12}^{t-\td} |z_{12}|^{\td}$.
Then, by (GFL3), $p(z_{12},z_2)$ satisfies
\begin{align*}
\lim_{z_{12} \to (z_1-z_2)|_{|z_1|>|z_2|}}p(z_{12},z_2)
&=(z_1-z_2)^{(\al_1,\al_2)}u(\hY(a_1,\uz_1)\hY(a_2,\uz_2)\va)\\
\lim_{z_{12} \to (-z_2+z_1) |_{|z_2|>|z_1|}}p(z_{12},z_2)
&=(z_2-z_1)^{(\al_1,\al_2)}u(\hY(a_2,\uz_2)\hY(a_1,\uz_1)\va).
\end{align*}
By taking $z_1 \rightarrow 0$,
we have
\begin{align*}
z_2^{(\al_1,\al_2)}u(\hY(a_2,\uz_2)a_1)
=p(-z_2,z_2)=\lim_{z_{12}\to -z_2}z_{12}^{{(\al_1,\al_2)}}u(\exp(Dz_2+\D\z_2)\hY(a_1,\uz_{12})a_2).
\end{align*}
Thus, the skew-symmetry holds.

Now, we will show (GFV5).
%Let $a_i \in \Omega^{\al_i}$ for $\al_i \in M_\Om$ ($i=1,2,3$) and $u \in \Om^\vee$.
By the assumption, there exists $\mu(z_1,z_2) \in \GCor_2$
such that
\begin{align*}
(z_1-z_2)^{(\al_1,\al_2)}z_1^{(\al_1,\al_3)}z_2^{(\al_2,\al_3)}|_{|z_1|>|z_2|}
u(\hY(a_1,\uz_1)\hY(a_2,\uz_2)a_3) = \mu(z_1,z_2)|_{|z_1|>|z_2|}.
\end{align*}
By the skew-symmetry,
\begin{align*}
(z_1-&z_2)^{(\al_1,\al_2)}z_1^{(\al_1,\al_3)}z_2^{(\al_2,\al_3)}|_{|z_1|>|z_2|}
u(\hY(a_1,\uz_1)\hY(a_2,\uz_2)a_3) \\
&=(z_1-z_2)^{(\al_1,\al_2)}z_1^{(\al_1,\al_3)}|_{|z_1|>|z_2|}
u(\hY(a_1,\uz_1)\exp(Dz_2+\D\z_2)\lim_{\uz'_2\to -\uz_2} {z'}_2^{(\al_2,\al_3)}\hY(a_3,\uz'_2)a_2)\\
&=
\lim_{\substack{z_{12}\to (z_1-z_2)|_{|z_1|>|z_2|} \\ {z'}_2\to -z_2}}
z_{12}^{(\al_1,\al_2)}(z_{12}-z_2')^{(\al_1,\al_3)}
u(\exp(-Dz'_2-\D\z'_2) \hY(a_1,\uz_{12}){z'}_2^{(\al_2,\al_3)}\hY(a_3,\uz'_2)a_2).
\end{align*}
%where $\lim_{z_{12} \to z_1-z_2}$ means 
%that we substitute $\sum_{k \geq 0} z_1^{r-k}z_2^{-k}$ into $z_{12}^r$ for $r\in \R$.
Since $\Om_{t,\td}^\al=0$ for sufficiently small $t$ or $\td$,
$u(\exp(-Dz'_2-\D\z'_2)-)$ is in $\Om^\vee[z_2',\z_2']$, i.e., a finite sum.
Since
$$\mu(z_{12}-z_2', -z'_2)|_{|z_{12}|>|z_2'|}=(z_{12}-z_2')^{(\al_1,\al_3)} z_{12}^{(\al_1,\al_2)}
u(\exp(-Dz'_2-\D\z'_2) \hY(a_1,\uz_{12}){z'}_2^{(\al_2,\al_3)}\hY(a_3,\uz'_2)a_2),$$
by (GFL3) and the skew-symmetry,
we have
\begin{align*}
\mu(z_{12}-&z_2', -z'_2)|_{|z_2'|>|z_{12}|}\\
&=(z_2'-z_{12})^{(\al_1,\al_3)} z_{12}^{(\al_1,\al_2)}
u(\exp(-Dz'_2-\D\z'_2)\hY(a_3,\uz'_2)\hY(a_1,\uz_{12}){z'}_2^{(\al_2,\al_3)}a_2)\\
&=(1-z_{12}/z_2')^{(\al_1,\al_3)} z_{12}^{(\al_1,\al_2)}
u(\exp(-Dz'_2-\D\z'_2){z'}_2^{(\al_2+\al_1,\al_3)} \hY(a_3,\uz'_2)\hY(a_1,\uz_{12})a_2)\\
&=(1-z_{12}/z_2')^{(\al_1,\al_3)} z_{12}^{(\al_1,\al_2)}
\lim_{\uz_2\to -\uz_2'} u({z}_2^{(\al_2+\al_1,\al_3)}\hY(\hY(a_1,\uz_{12})a_2),\uz_2)a_3)\\
&=\lim_{\uz_2\to -\uz_2'}(z_2+z_{12})^{(\al_1,\al_3)} z_{12}^{(\al_1,\al_2)}{z}_2^{(\al_2,\al_3)}|_{|z_2|>|z_{12}|}
u(\hY(\hY(a_1,\uz_{12})a_2),\uz_2)a_3).
\end{align*}
Thus, we have (GFV5).
\end{proof}

\subsection{Standard construction}\label{sec_standard}
From a lattice, an example of a generalized vertex algebra is  constructed in \cite{DL},
which is a generalization of the construction in \cite{B,FLM}.
They call it a {\it generalized lattice vertex algebra}.
In this section, we generalize it to non-chiral setting, which plays an essential role in this paper.

Let $H$ be a real finite-dimensional vector space equipped with a non-degenerate symmetric bilinear form 
$$(-,-)_\lat:H\times H \rightarrow \R.$$
%an abelian group with a bilinear form $(-,-)_\lat:L\times L \rightarrow \R$.
%We assume that $L$ is embedded into a real finite-dimensional vector space
%$H$ with a non-degenerate bilinear form
%$(-,-)_\lat : H \times \Hf \rightarrow \R$,
%that is, $i:L \rightarrow H$ is an abelian group homomorphism such that $(\al,\be)_\lat=(i(\al),i(\be))_\lat$ for any $\al,\be \in L$.
Let $P(H)$ be a set of $\R$-linear maps $p\in \End H$ such that:
\begin{enumerate}
\item[P1)]
$p^2=p$, that is, $p$ is a projection;
\item[P2)]
The subspaces $\ker(1-p)$ and $\ker(p)$ are orthogonal to each other.
\end{enumerate}
Let $P_>(H)$ be a subset of $P(H)$
consisting of $p \in P(H)$ such that:
\begin{enumerate}
\item[P3)]
$\ker(1-p)$ is positive-definite and $\ker(p)$ is negative-definite.
\end{enumerate}
For $p\in P(H)$, set $\p=1-p$ and $H_l=\ker(\p)$ and $H_r=\ker(p)$.
%The pair $(H,p)$ with $p \in P(H)$ is called an Hp pair.
We will construct a generalized full vertex algebra $G_{H,p}$ for each $p\in P(H)$.

Let $p\in P(H)$.
Define the new bilinear forms $(-,-)_p: H\times H \rightarrow \R$
by 
\begin{align}
(h,h')_p=(ph,ph')_\lat-(\p h,\p h')_\lat
\end{align}
for $h,h' \in H$.
By (P1) and (P2), $(-,-)_p$ is non-degenerate.
Let $\hat{H}^p=\bigoplus_{n \in \Z} H\otimes t^n \oplus \C c$ be the affine Heisenberg Lie algebra associated with $({H},(-,-)_p)$
and $\hat{H}_{\geq 0}^p=\bigoplus_{n \geq 0} H\otimes t^n \oplus \C c$ a subalgebra of $\hat{H}^p$.
Define the action of $\hat{H}_{\geq 0}^p$ on the group algebra of $H$, 
$\C[H]=\bigoplus_{\al \in H} \C e_\al$, by
\begin{align*}
c e_\alpha&=e_\al \\
h\otimes t^n e_\alpha &=
\begin{cases}
0, &n \geq 1,\cr
(h,\al)_p e_\alpha, &n = 0
\end{cases}
\end{align*}
for $\al \in H$.
Let $G_{H,p}$ be the $\hat{H}^p$-module induced from $\C[H]$.
Denote by $h(n)$ the action of $h \otimes t^n$ on $G_{H,p}$ for $n \in \Z$.
For $h \in H$, set 
\begin{align*}
h(\uz) &= \sum_{n \in \Z}( (ph)(n)z^{-n-1}+ (\p h)(n)\z^{-n-1}) \in
\End G_{H,p}[[z^\pm,\z^\pm]]\\
h^+(\uz) &=\sum_{n \geq 0}( (ph)(n)z^{-n-1}+ (\p h)(n)\z^{-n-1}) \\
h^-(\uz) &=\sum_{n \geq 0}( (ph)(-n-1)z^{n}+ (\p h)(-n-1)\z^{n}).\\
E^+(h,\uz)&=
\exp\biggl(-\sum_{n\geq 1}(\frac{p h(n)}{n}z^{-n}+\frac{\p h(n)}{n}\z^{-n})
\biggr)\\
E^-(h,\uz)&=\exp\biggl(\sum_{n\geq 1}(\frac{p h(-n)}{n}z^{n} +\frac{\p h(-n)}{n}\z^{n})
\biggr).
\end{align*}
For $h_r \in H_r$ and $h_l \in H_l$,
$h_r(\uz)$ and $h_l(\uz)$ are denoted by $h_l(z)$ and $h_r(\z)$, respectively.

Then, similarly to the case of a lattice vertex algebra \cite{FLM}, we have:
\begin{lem}\label{lattice_commutator}
For any $h_1, h_2 \in H$,
$$E^+(h_1,\uz_1)E^-(h_2,\uz_2)
= \biggl( \sum_{n,\n \geq 0} \binom{(ph_1,ph_2)_p}{n}\binom{(\p h_1,\p h_2)_p}{\n} (z_2/z_1)^{n}(\z_2/\z_1)^{\n}\biggr) E^-(h_2,\uz_2) E^+(h_1,\uz_1).$$
\end{lem}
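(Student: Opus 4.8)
The plan is to prove the commutator identity by reducing it, in the standard way, to the scalar exponential formula $\exp(A)\exp(B) = \exp([A,B])\exp(B)\exp(A)$ valid whenever $[A,B]$ is central. Here the two operators are

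\[
A = -\sum_{n\geq 1}\Bigl(\frac{ph_1(n)}{n}z_1^{-n}+\frac{\p h_1(n)}{n}\z_1^{-n}\Bigr),
\qquad
B = \sum_{m\geq 1}\Bigl(\frac{ph_2(-m)}{m}z_2^{m}+\frac{\p h_2(-m)}{m}\z_2^{m}\Bigr),
\]

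so that $E^+(h_1,\uz_1)=\exp(A)$ and $E^-(h_2,\uz_2)=\exp(B)$. The key structural fact is that $[A,B]$ is a scalar (a multiple of the central element $c$, which acts as the identity), so the Baker--Campbell--Hausdorff formula truncates after the first commutator and we get exactly $\exp(A)\exp(B)=\exp([A,B])\exp(B)\exp(A)$.

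First I would compute $[A,B]$ from the Heisenberg commutation relations for $\hat H^p$. The bracket in the affine Heisenberg algebra reads $[h(n),h'(m)]=n(h,h')_p\,\delta_{n+m,0}\,c$, and the holomorphic modes $ph_i(n)$ commute with the anti-holomorphic modes $\p h_j(m)$ (they pair trivially under $(-,-)_p$ since $H_l$ and $H_r$ are orthogonal). Hence $[A,B]$ splits into a holomorphic and an anti-holomorphic contribution. Summing the holomorphic piece,
\[
\sum_{n\geq 1}\frac{1}{n}\,(ph_1,ph_2)_p\,(z_2/z_1)^n
= -(ph_1,ph_2)_p\,\log\!\bigl(1-z_2/z_1\bigr),
\]
and the anti-holomorphic piece gives the analogous $-(\p h_1,\p h_2)_p\log(1-\z_2/\z_1)$. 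Therefore
\[
[A,B] = -(ph_1,ph_2)_p\log(1-z_2/z_1)-(\p h_1,\p h_2)_p\log(1-\z_2/\z_1),
\]
and exponentiating yields
\[
\exp([A,B]) = (1-z_2/z_1)^{(ph_1,ph_2)_p}\,(1-\z_2/\z_1)^{(\p h_1,\p h_2)_p}.
\]

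Finally I would expand each factor by the binomial series in $z_2/z_1$ and $\z_2/\z_1$ (these are formal power series in the region $|z_1|>|z_2|$, consistent with the module structure), obtaining
\[
\sum_{n,\n\geq 0}\binom{(ph_1,ph_2)_p}{n}\binom{(\p h_1,\p h_2)_p}{\n}(z_2/z_1)^n(\z_2/\z_1)^{\n},
\]
which is precisely the prefactor claimed in the lemma, while $\exp(B)\exp(A)=E^-(h_2,\uz_2)E^+(h_1,\uz_1)$ is the desired reordering.

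The only genuine obstacle is bookkeeping rather than conceptual: one must verify that $[A,B]$ is really central (so that no higher BCH terms survive) and that the formal logarithmic sums are manipulated as honest formal power series in $z_2/z_1,\z_2/\z_1$ with no convergence issues. Both follow from the grading structure---the holomorphic/anti-holomorphic orthogonality guarantees the cross terms vanish, and the inner products $(ph_1,ph_2)_p,(\p h_1,\p h_2)_p$ are just scalars---so the main step is simply to assemble the logarithm identity carefully and expand. This is entirely parallel to the chiral computation in \cite{FLM} for lattice vertex algebras, with the single modification that the anti-holomorphic modes in $\z$ are carried along in tandem.
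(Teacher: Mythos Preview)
Your approach is correct and is precisely the standard computation the paper defers to when it writes ``similarly to the case of a lattice vertex algebra \cite{FLM}'': the paper gives no independent proof, and your Baker--Campbell--Hausdorff argument with central $[A,B]$ is exactly the FLM method, carried through for the holomorphic and anti-holomorphic modes in parallel. One minor remark: your intermediate line for $[A,B]$ drops a sign (the minus in front of $A$ makes the holomorphic contribution $-\sum_{n\geq 1}\frac{(ph_1,ph_2)_p}{n}(z_2/z_1)^n=(ph_1,ph_2)_p\log(1-z_2/z_1)$, not its negative), and a second slip in exponentiating cancels it, so your final prefactor $(1-z_2/z_1)^{(ph_1,ph_2)_p}(1-\z_2/\z_1)^{(\p h_1,\p h_2)_p}$ is right---just tidy the signs in the write-up.
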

We remark that the formal power series $\sum_{n,\n \geq 0} \binom{(ph_1,ph_2)_p}{n}\binom{(\p h_1,\p h_2)_p}{\n} (z_2/z_1)^{n}(\z_2/\z_1)^{\n}$ is equal to $(1-z_2/z_1)^{(ph_1,ph_2)_p} (1-\z_2/\z_1)^{(\p h_1,\p h_2)_p}|_{|z_1|>|z_2|}.$

Let $\al \in H$.
Denote by $l_{e_\al} \in \End \C[H]$ the left multiplication by ${e_\al}$ and define the linear map
$z^{p\alpha} \z^{\p \alpha} :\C[H] \rightarrow  \C[H][z^\R,\z^\R]$
by $z^{p\alpha} \z^{\p \alpha}e_\be= z^{(p\alpha,p\beta)_p}\z^{(\p\al,\p\be)_p}e_\be$ 
for $\beta \in H$.
%Since $z^{(p\alpha,p\beta)_p}\z^{(\p\al,\p\be)_p}=|z|^{(p\alpha,p\beta)_p}\z^{(p\alpha,p\beta)_p-
%(p\al,p\be)_p}$ and ${(p\alpha,p\beta)_p}-(p\al,p\be)_p=-(\al,\be) \in \Z$,
%we have $z^{p\alpha} \z^{\p \alpha}e_\be \in \C[{L}][z,\z,|z|^\R]$.
Then, set
\begin{align*}
e_\al(\uz)&=E^-(\al,\uz)E^+(\al,\uz)l_{e_\al} z^{p \alpha} \z^{\p \alpha} \in \End\, G_{H,p}[[z^\pm,\z^\pm]][z^\R,\z^\R].
\end{align*}

By Poincar{\'e}-Birkhoff-Witt theorem,
$G_{H,p}$ is spanned by 
$$\{h_l^1(-n_1-1)\dots h_l^l(-n_l-1)h_r^1(-\n_1-1)\dots h_r^k(-\n_k-1)e_\al \},
$$
where $h_l^i \in H_l$, $n_i \in \Z_{\geq 0}$ and $h_r^j \in H_r$, $\n_j \in \Z_{\geq 0}$
 for any $1\leq i \leq l$ and $1\leq j \leq k$ and $\al \in H$.
Then, a map
$\hY:G_{H,p} \rightarrow \End\, G_{H,p}[[z^\pm,\z^\pm]][z^\R,\z^\R]$ is defined inductively as follows:
For $\al \in H$,
define $\hY({e_\al},\uz)$ by $\hY({e_\al},\uz)={e_\al}(\uz)$.
Assume that $\hY(v,\uz)$ is already defined for $v \in G_{H,p}$.
Then, for $h_r \in H_r$ and $h_l \in H_l$ and $n,\n \in \Z_{\geq 0}$,
$\hY(h_l(-n-1)v,\uz)$ and $\hY(h_r(-\n-1)v,\uz)$ is defined by 
\begin{align*}
\hY(h_l(-n-1)v,\uz)&=\Bigl(\frac{1}{n!}\frac{d}{dz}^n h_l^-(z)\Bigr)\hY(v,\uz)+\hY(v,\uz)\Bigl(\frac{1}{n!}\frac{d}{dz}^n h_l^+(z)\Bigr) \\
\hY(h_r(-\n-1)v,\uz)&=\Bigl(\frac{1}{\n!}\frac{d}{d\z}^n h_r^-(\z)\Bigr)\hY(v,\uz)+\hY(v,\uz)\Bigl(\frac{1}{\n!}\frac{d}{d\z}^n h_r^+(\z)\Bigr).
\end{align*}
\begin{comment}

By the direct calculus, we can easily prove the following lemma:
\begin{lem}
For any $v \in F_{L,H,p}$ and $h_l \in H_l$ and $h_r \in H_r$,
\begin{align*}
[h_l(n),Y(v,\uz)]&=\sum_{i\geq 0} \binom{n}{i}Y(h_l(i)v,\uz)z^{n-i},\\
[h_r(n),Y(v,\uz)]&=\sum_{i\geq 0} \binom{n}{i}Y(h_r(i)v,\uz)\z^{n-i}.
\end{align*}
\end{lem}
By the above lemma,
\begin{align*}
[h_l^+(z_1),Y(v,\uz_2)]&=\sum_{i\geq 0,n \geq 0} \binom{n}{i}Y(h_l(i)v,\uz_2)z_1^{-n-1} z_2^{n-i}\\
&=\sum_{i \geq 0}Y(h_l(i)v,\uz_2)1/i! (\frac{d}{dz_2})^i  \sum_{n \geq 0} z_1^{-n-1} z_2^{n} \\
&=\sum_{i \geq 0} Y(h_l(i)v,\uz_2) 1/(z_1-z_2)^{i+1}|_{|z_1|>|z_2|}.
\end{align*}
Thus, we have:
\begin{lem}\label{intertwiner_lattice}
For any $v \in F_{L,H,p}$ and $h_l \in H_l$,
\begin{align*}
[h_l^+(z_1),Y(v,\uz_2)]&=\sum_{i \geq 0} Y(h_l(i)v,\uz_2) \frac{1}{(z_1-z_2)^{i+1}}|_{|z_1|>|z_2|},\\
[Y(v,\uz_1), h_l^-(z_2)]&= \sum_{i\geq 0} Y(h_l(i)v,\uz) \frac{(-1)^{i+1}}{(z_1-z_2)^{i+1}}|_{|z_1|>|z_2|}.
\end{align*}
\end{lem}
\end{comment}

Set
\begin{align*} 
\va&=1\otimes e_0, \\
\omega_{H_l}&  = \frac{1}{2}\sum_{i=1}^{\dim H_l} h_l^i (-1)h_l^i,\\
\bar{\om}_{H_r}& = \frac{1}{2} \sum_{j=1}^{\dim H_r} h_r^j (-1)h_r^j,
\end{align*}
where $h_l^i$ and $h_r^j$ is an orthonormal basis of $H_l\otimes_{\R} \C$ and $H_r\otimes_{\R} \C$ with respect to the bilinear form $(-,-)_p$.
Set $G=G_{H,p}$ and $$G_{t,\td}^\al=\{v \in G\;|\; \omega(1,-1)v=t v, \bar{\om}(-1,1)v=\td v, h(0)v=(\al,h)_p v \fora h \in H \}$$
for $t,\td \in \R$ and $\al \in H$.

For $\al \in H$ and $n,m\in \Z_{\geq 0}$,
it is easy to show that 
$G_{\frac{1}{2}(p\al,p\al)_p+n,\frac{1}{2}(\p\al,\p\al)_p+m}^\al$
is spanned by
$\{h_l^1(-i_1)\dots h_l^k(-i_k)h_r^1(j_1)\dots h_r^l(j_l)e_\al\}$,
where $k,l \in \Z_{\geq 0}$, $h_l^a \in H_l$, $h_r^b \in H_r$,
 $i_a,j_b \in \Z_{\geq 1}$,
$i_1+\dots+i_k=n$ and $j_1+\dots+j_l=m$ for any $a=1,\dots,k$ and $b=1,\dots,l$.
Then,
$$G=\bigoplus_{\al\in H}\bigoplus_{n,m \in \Z_{\geq 0}}\Om_{\frac{1}{2}(p\al,p\al)_p+n,\frac{1}{2}(\p\al,\p\al)_p+m}^\al
$$
and
$$G_{\frac{1}{2}(p\al,p\al)_p,\frac{1}{2}(\p\al,\p\al)_p}^\al=\C e_\al.
$$
Let $a^* \in \C[H]^\vee=\bigoplus_{\al \in H} (\C e_\al)^*$
and $\langle a^*,-\rangle$ be the linear map $\Om \rightarrow \C$ defined by
the composition of the projection
$G = \C[H] \oplus \bigoplus_{\substack{n,m\in \Z_{\geq 0}\\(n,m)\neq (0,0)}} 
G_{\frac{1}{2}(p\al,p\al)_p+n,\frac{1}{2}(\p\al,\p\al)_p+m}^\al \rightarrow \C[H]
$ and $a^*: \C[H]\rightarrow \C$.
Then, it is easy to verify $\langle a^*,-\rangle$ is a highest weight vector,
that is, $\langle a^*,h(-n)-\rangle=0$ for any $n \geq 1$ and $h \in H$.
Thus, for any $\al \in H$,
we have:
\begin{align*}
E^+(\al,\uz)\va&=\va,\\
\langle a^*, E^-(\al,\uz) - \rangle &= \langle a^*, - \rangle.
\end{align*}

Thus, by using the above fact and Lemma \ref{lattice_commutator},
for $\al_i \in H$ ($i=1,2,3$) and $a^* \in \C[H]^\vee$, we have
\begin{align*}
\langle a^*, Y(e_{\al_1},\uz_1)Y(e_{\al_2},\uz_2)e_{\al_3}\rangle
&=z_1^{(p\al_1,p\al_3)_p}\z_1^{(\p\al_1,\p\al_3)_p}
z_2^{(p\al_2,p\al_3)_p}\z_2^{(\p\al_2,\p\al_3)_p}\\
&(z_1-z_2)^{(p\al_1,p\al_2)_p}(\z_1-\z_2)^{(\p\al_1,\p\al_2)_p}|_{|z_1|>|z_2|} \langle a^*, e_{\al_1}e_{\al_2}e_{\al_3}\rangle.
\end{align*}
Since 
\begin{align*}
(z_i-z_j)^{(p\al_i,p\al_j)_p}(\z_i-\z_j)^{(\p\al_i,\p\al_j)_p}
&=|(\z_i-\z_j)|^{(\p\al_i,\p\al_j)_p}(z_i-z_j)^{(p\al_i,p\al_j)_p-(\p\al_i,\p\al_j)_p}\\
&=|(\z_i-\z_j)|^{(\p\al_i,\p\al_j)_p}(z_i-z_j)^{(\al_i,\al_j)_\lat},
\end{align*}
the formal power series
\begin{align*}
z_1^{-(\al_1,\al_3)_\lat}z_2^{-(\al_2,\al_3)_\lat}
(z_1-z_2)^{-(\al_1,\al_2)_\lat}|_{|z_1|>|z_2|}
\langle a^*, Y(e_{\al_1},\uz_1)Y(e_{\al_2},\uz_2)e_{\al_3}\rangle
\end{align*}
is a single-valued real analytic function in $\GCor_2$.
Then, similarly to the proof of Proposition 5.1 in \cite{M2} 
with Proposition \ref{general_locality},
we have:
\begin{prop}\label{standard}
For $p \in P(H)$,
$(G_{H,p},\hY,\va,H, -(-,-)_\lat, \omega_{H_l},\omb_{H_r})$ is a generalized full conformal vertex algebra.
\end{prop}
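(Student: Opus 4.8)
The plan is to apply Proposition \ref{general_locality}, which reduces the hardest axiom (GFV5) to the two-term locality (GFL3) together with the translation-covariance conditions (GFL1) and (GFL2). So I first introduce the translation operators. I define $D,\D \in \End\, G_{H,p}$ by $D\va=\D\va=0$, $D e_\al = (p\al)(-1)e_\al$, $\D e_\al = (\p\al)(-1)e_\al$, and the commutation rules $[D,h(n)] = -n(ph)(n-1)$, $[\D,h(n)] = -n(\p h)(n-1)$ for $h\in H$, $n\in\Z$; thus $D$ sees only the $H_l$-modes and the $z$-variable while $\D$ sees only the $H_r$-modes and the $\z$-variable. With these definitions $[D,\D]=0$ and $D\va=\D\va=0$ are immediate, giving (GFL1); and differentiating each factor of the generating field $e_\al(\uz)=E^-(\al,\uz)E^+(\al,\uz)l_{e_\al}z^{p\al}\z^{\p\al}$ yields $[D,e_\al(\uz)]=\dz e_\al(\uz)$ and $[\D,e_\al(\uz)]=\ddz e_\al(\uz)$, which propagates through the inductive definition of $\hY$ to give (GFL2) for all vectors.

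Next I would verify the remaining ``bookkeeping'' axioms directly from the construction, using the bilinear form $-(-,-)_\lat$ of the statement. Axiom (GFV4) is clear since $e_0(\uz)=\mathrm{id}$, and (GFV3) follows from $E^+(\al,\uz)\va=\va$ together with the creation property of $E^-$. For (GFV2) note that a spanning vector of $G_{\frac12(p\al,p\al)_p+n,\frac12(\p\al,\p\al)_p+m}^\al$ has $t-\td=\tfrac12(\al,\al)_\lat+(n-m)$, while $(\al,\al)/2=-\tfrac12(\al,\al)_\lat$, so $(\al,\al)/2+t-\td=n-m\in\Z$; (GFV6) is the analogous weight count and (GFV7) holds because $t\geq\tfrac12(p\al,p\al)_p$ and $\td\geq\tfrac12(\p\al,\p\al)_p$ are bounded below for fixed $\al$. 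Finally (GFV1): for $e_\al,e_\be$ the factor $z^{(\al,\be)}=z^{-(\al,\be)_\lat}$ combines with the $z^{(p\al,p\be)_\lat}\z^{-(\p\al,\p\be)_\lat}$ coming from $z^{p\al}\z^{\p\al}$ into $|z|^{-(\p\al,\p\be)_\lat}$ times a series with integer exponents, and Lemma \ref{lattice_commutator} shows this series lies in $G((z,\z,|z|^\R))$; the general case descends from this by the commutator formula of Lemma \ref{zero_commutator}.

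The main obstacle is (GFL3), the two-term locality. The correlation function $\langle a^*,\hY(e_{\al_1},\uz_1)\hY(e_{\al_2},\uz_2)e_{\al_3}\rangle$ has already been computed in the excerpt, and the displayed factorization shows that
\[
z_1^{-(\al_1,\al_3)_\lat}z_2^{-(\al_2,\al_3)_\lat}(z_1-z_2)^{-(\al_1,\al_2)_\lat}|_{|z_1|>|z_2|}\,\langle a^*,\hY(e_{\al_1},\uz_1)\hY(e_{\al_2},\uz_2)e_{\al_3}\rangle
\]
is a single-valued element $\mu\in\GCor_2$; since $-(-,-)_\lat$ is the bilinear form of the statement, this is exactly the prefactor demanded in (GFL3), and Lemma \ref{lattice_commutator} guarantees that the opposite ordering $\hY(e_{\al_2},\uz_2)\hY(e_{\al_1},\uz_1)e_{\al_3}$ expands the same $\mu$ in the region $|z_2|>|z_1|$. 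To pass from the generators $e_\al$ to arbitrary vectors of $G_{H,p}$, I would run the same reduction as in \cite[Proposition 5.1]{M2}: every vector is obtained from some $e_\al$ by applying modes $h_l(-n)$, $h_r(-n)$, and the commutator formulas for $h_l^{\pm}(z)$, $h_r^\pm(\z)$ with $\hY(-,\uz)$ express the correlation functions of general vectors as derivatives of the $e_\al$-correlation functions multiplied by rational functions of the $z_i,\z_i$. Because $\GCor_2$ is closed under products and under $\dz,\ddz$ (Section \ref{sec_gen}), the resulting objects again lie in $\GCor_2$ and expand correctly in both regions, so (GFL3) holds for all vectors; the only change from the holomorphic case of \cite{M2} is that $\GCor_2^{\mathrm{hol}}$ is replaced by the real-analytic space $\GCor_2$, the combinatorial bookkeeping being identical. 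Proposition \ref{general_locality} then yields (GFV5), so $(G_{H,p},\hY,\va,H,-(-,-)_\lat)$ is a generalized full vertex algebra.

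It remains to check that $(\omega_{H_l},\omb_{H_r})$ is an energy-momentum tensor. By construction $\omega_{H_l}\in G_{2,0}^0$ and $\omb_{H_r}\in G_{0,2}^0$, and since $\omega_{H_l}$ involves only $H_l$-modes one has $\D\omega_{H_l}=0$, and dually $D\omb_{H_r}=0$, giving condition (1). The standard free-field computation with the orthonormal bases $h_l^i,h_r^j$ of $H_l\otimes_\R\C$, $H_r\otimes_\R\C$ and the commutation relation of Lemma \ref{zero_commutator} gives $\omega_{H_l}(3,-1)\omega_{H_l}=\tfrac{\dim H_l}{2}\va$ and $\omb_{H_r}(-1,3)\omb_{H_r}=\tfrac{\dim H_r}{2}\va$ with the higher singular modes vanishing, so $c=\dim H_l$ and $\bar c=\dim H_r$, which is condition (2). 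For (3) one computes that $\omega_{H_l}(0,-1)$ acts on $e_\al$ as $(p\al)(-1)$ and satisfies $[\omega_{H_l}(0,-1),h(n)]=-n(ph)(n-1)$, hence equals our operator $D$, and likewise $\omb_{H_r}(-1,0)=\D$; for (4) the zero modes $\omega_{H_l}(1,-1)$ and $\omb_{H_r}(-1,1)$ act as the eigenvalues $t$ and $\td$ defining the grading. This completes the verification that $G_{H,p}$ is a generalized full conformal vertex algebra.
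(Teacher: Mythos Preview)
Your approach is correct and is essentially the same as the paper's: the paper too invokes Proposition \ref{general_locality}, reduces (GFL3) to the displayed computation of $\langle a^*,\hY(e_{\al_1},\uz_1)\hY(e_{\al_2},\uz_2)e_{\al_3}\rangle$ on the generators, and then defers the extension to arbitrary vectors to the argument of \cite[Proposition 5.1]{M2}. You simply spell out the bookkeeping axioms and the energy-momentum verification that the paper leaves implicit.
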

We remark that  in the above proposition the minus sign $-(-,-)_\lat$ appears in our convention.

We end this section by studying generalized full vertex algebra homomorphisms among $G_{H,p}$.
Let $(H,(-,-))$ and $(H',(-,-)')$ be real finite-dimensional vector spaces with non-degenerate symmetric bilinear forms
and $p \in P(H)$ and $\si: H \rightarrow H'$ be an 
isometric isomorphism.
Set $\si \cdot p=\si \circ p \circ \si^{-1} \in P(H')$.
Then,
\begin{align}
(\si h_1,\si h_2)_{\si \cdot p}'&=((\si \cdot p) \si h_1,(\si \cdot p) \si h_2)'-((\si \cdot \p) \si h_1,(\si \cdot \p) \si h_2)'\nonumber \\
&=(\si p h_1, \si p h_2)'-(\si \p h_1, \si \p h_2)' \nonumber \\
&=(p h_1,p h_2)-(\p h_1,\p h_2) \label{eq_p} \\
&=(h_1,h_2)_p.\nonumber
\end{align}
Thus, $\si$ induces an isometry from $(H,(-,-)_p)$ to $(H',(-,-)_{\si\cdot p}')$
and an isomorphism of Lie algebras $\si_{Lie}: \hat{H}^p \rightarrow \hat{H'}^{\si\cdot p}$,
where $\hat{H}^p$ (resp. $\hat{H'}^{{\si\cdot p}}$) is the Heisenberg Lie algebra associated with
$(H,(-,-)_p)$ (resp. $(H',(-,-)_{\si\cdot p}')$).
Let $\si_{alg}:\C[H] \rightarrow \C[H']$ be the linear map defined by
$e_\al \rightarrow e_{\si(\al)}$ for $\al \in H$.
Then, $\si_{alg}:\C[H] \rightarrow \C[H']$ is a $\hat{H}_{\geq 0}^p$-module homomorphism, where $\C[H']$ is regarded as a ${\hat{H}}_{\geq 0}^p$-module by $\si_{Lie}$.
Thus, we have an $\hat{H}^p$-module homomorphism $\tilde{\si}: G_{H,p} \rightarrow G_{H',\si\cdot p}$.
Since $\si_{alg}:C[H] \rightarrow \C[H']$ is a $\C$-algebra homomorphism,
it is easy to show that $(\tilde{\si},\si)$ is a generalized full vertex algebra homomorphism.
Thus, we have:
\begin{lem}\label{translation_p}
For $p \in P(H)$ and an isometry $\si: H\rightarrow H'$,
$(\tilde{\si},\si):G_{H,p} \rightarrow G_{H',\si\cdot p}$ is
an isomorphism of generalized full vertex algebras.
%Furthermore, if $\si(L)=L'$, then $(\tilde{\si},\si)$ is an isomorphism.
\end{lem}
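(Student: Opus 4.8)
The plan is to verify the four defining conditions of a homomorphism of generalized full vertex algebras for the pair $(\tilde{\si},\si)$, taking as given everything established just above the lemma. Recall that $\si\cdot p\in P(H')$, that the computation \eqref{eq_p} exhibits $\si$ as an isometry $(H,(-,-)_p)\to(H',(-,-)'_{\si\cdot p})$, and that $\tilde{\si}$ is the $\hat{H}^p$-module homomorphism induced by the $\C$-algebra isomorphism $\si_{alg}:\C[H]\to\C[H']$, $e_\al\mapsto e_{\si(\al)}$, so that $\tilde{\si}(h(n)v)=(\si h)(n)\tilde{\si}(v)$ for all $h\in H$, $n\in\Z$. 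Since $\si$ is an $\R$-linear isomorphism, $\si_{alg}$ is bijective and $\tilde{\si}$ is a linear isomorphism with inverse $\widetilde{\si^{-1}}$ (note $\si^{-1}\cdot(\si\cdot p)=p$, so the target is indeed $G_{H,p}$). Condition (1) (that $\si$ be isometric for the grading forms $-(-,-)_\lat$ and $-(-,-)'_\lat$) is exactly the hypothesis, and condition (3) is immediate: $\tilde{\si}(\va)=\tilde{\si}(1\otimes e_0)=1\otimes e_{\si(0)}=\va$.

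The heart of the matter is condition (4), namely $\tilde{\si}(\hY(a,\uz)b)=\hY'(\tilde{\si}(a),\uz)\tilde{\si}(b)$ for all $a,b$, where $\hY'$ denotes the vertex operator of $G_{H',\si\cdot p}$; I would prove it by induction following the inductive definition of $\hY$. For the base case $a=e_\al$, I would check that $\tilde{\si}$ intertwines each factor of $e_\al(\uz)=E^-(\al,\uz)E^+(\al,\uz)l_{e_\al}z^{p\al}\z^{\p\al}$ with the corresponding factor of $e'_{\si\al}(\uz)$. Since $E^\pm(\al,\uz)$ is an exponential in the modes $(p\al)(n),(\p\al)(n)$ and $\tilde{\si}$ sends $(p\al)(n)\mapsto(\si p\al)(n)=((\si\cdot p)\si\al)(n)$ (and likewise for $\p$), one gets $\tilde{\si}E^\pm(\al,\uz)=E'^\pm(\si\al,\uz)\tilde{\si}$; the relation $\tilde{\si}\,l_{e_\al}=l_{e_{\si\al}}\tilde{\si}$ is just that $\si_{alg}$ is an algebra map. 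The single computation requiring \eqref{eq_p} is the multivalued factor $z^{p\al}\z^{\p\al}$: on $e_\be$ it is the scalar $z^{(p\al,p\be)_p}\z^{(\p\al,\p\be)_p}$, whereas the corresponding operator of $G_{H',\si\cdot p}$ acts on $e_{\si\be}$ by $z^{(\si p\al,\si p\be)'_{\si\cdot p}}\z^{(\si\p\al,\si\p\be)'_{\si\cdot p}}$, and these exponents agree by \eqref{eq_p}. Composing the four intertwining relations gives $\tilde{\si}\,e_\al(\uz)=e'_{\si\al}(\uz)\,\tilde{\si}$.

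For the inductive step, assume $\tilde{\si}(\hY(v,\uz)c)=\hY'(\tilde{\si}(v),\uz)\tilde{\si}(c)$ for all $c$ and a fixed $v$, and deduce the same for $a=h_l(-n-1)v$ (the case $h_r(-\n-1)v$ being symmetric). Applying $\tilde{\si}$ to the recursion
\begin{align*}
\hY(h_l(-n-1)v,\uz)b=\Bigl(\frac{1}{n!}\frac{d}{dz}^n h_l^-(z)\Bigr)\hY(v,\uz)b+\hY(v,\uz)\Bigl(\frac{1}{n!}\frac{d}{dz}^n h_l^+(z)\Bigr)b,
\end{align*}
and using $\tilde{\si}\,h_l^\pm(z)=(\si h_l)^\pm(z)\,\tilde{\si}$ together with the induction hypothesis, the right-hand side turns into the defining recursion for $\hY'$ applied to $(\si h_l)(-n-1)\tilde{\si}(v)$. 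Since $h_l\in H_l=\ker(1-p)$ forces $\si h_l\in\ker(1-\si\cdot p)$ and $\tilde{\si}(h_l(-n-1)v)=(\si h_l)(-n-1)\tilde{\si}(v)$, this equals $\hY'(\tilde{\si}(a),\uz)\tilde{\si}(b)$. As such elements span $G_{H,p}$, condition (4) follows.

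Finally, condition (2) follows from (4): the $H$-grading is read off from the zero modes $h(0)$, and $\tilde{\si}(h(0)v)=(\si h)(0)\tilde{\si}(v)$ with \eqref{eq_p} sends the $\al$-component to the $\si(\al)$-component; the conformal weights are eigenvalues of $\omega_{H_l}(1,-1)$ and $\omb_{H_r}(-1,1)$, and since \eqref{eq_p} carries an orthonormal basis of $(H_l,(-,-)_p)$ to one of $(\ker(1-\si\cdot p),(-,-)'_{\si\cdot p})$ we get $\tilde{\si}(\omega_{H_l})=\om'$ and $\tilde{\si}(\omb_{H_r})=\omb'$, so by (4) $\tilde{\si}$ intertwines these zero modes and preserves $(t,\td)$. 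There is no genuine obstacle here; the work was front-loaded into the construction of $\si_{Lie}$, $\si_{alg}$ and the identity \eqref{eq_p}. The only point demanding care is the bookkeeping of branches in the base case, where \eqref{eq_p} is exactly what is needed to match the multivalued factor $z^{p\al}\z^{\p\al}$ between source and target; the compatibility of the expansion conventions $|_{|z_1|>|z_2|}$ on both sides is automatic because $\tilde{\si}$ acts purely by relabeling $\al\mapsto\si\al$.
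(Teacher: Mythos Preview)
Your proof is correct and follows the same approach as the paper. The paper constructs $\si_{Lie}$, $\si_{alg}$, and $\tilde{\si}$, establishes \eqref{eq_p}, and then simply asserts that ``since $\si_{alg}:\C[H]\rightarrow\C[H']$ is a $\C$-algebra homomorphism, it is easy to show that $(\tilde{\si},\si)$ is a generalized full vertex algebra homomorphism''; your inductive verification along the recursive definition of $\hY$ is exactly the natural way to cash this out, and your identification of \eqref{eq_p} as the key input matching the exponents in $z^{p\al}\z^{\p\al}$ is precisely the point.
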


\subsection{Tensor product}
Similarly to full vertex algebras,
the spectrum of a generalized full vertex algebra $\Om$ is said to be discrete
if for any $\al \in M_\Om$ and $H \in \R$,
$\sum_{t+\td<H}\dim \Om_{t,\td}^\al$ is finite. (The bounded below condition is already included in the definition).

Let $(\Omega_1,\hY_1,\va_1,H_1,(-,-)_1)$ and 
$(\Omega_2,\hY_2,\va_2,H_2,(-,-)_2,)$  be generalized full vertex algebras
%and $A \subset M_{\Om_1}\oplus M_{\Om_2}$ an abelian subgroup
and assume that
the spectrum of $\Om_1$ is discrete
and the spectrum of $\Om_2$ is bounded below.
Set $H=H_1 \oplus H_2$ and
 $\Omega_{t,\td}^{\al_1,\al_2}=\bigoplus_{s,\s \in \R}(\Omega_1)_{s,\s}^{\al_1} \otimes 
(\Omega_2)_{t-s,\td-\s}^{\al_2}$ for $(\al_1,\al_2) \in M_{\Om_1}\oplus M_{\Om_2} \subset H_1\oplus H_2$
and $\Omega= \bigoplus_{\al \in H, t,\td\in \R} \Omega_{t,\td}^\al$ and $\va=\va_1 \otimes \va_2$.

Define the linear map $\hY: \Om \rightarrow \Om[[z^\R,\z^\R]]$ by 
$\hY(a\otimes b,\uz)=\hY_1(a,\uz)\otimes \hY_2(b,\uz)$ for $a\in \Om_1$ and $b \in \Om_2$.
%for $(\al_1,\al_2) \in H_1\oplus H_2$.
Then, for $a,c \in \Om_1$ and $b,d \in \Om_2$,
$$\hY(a\otimes b, \uz)c\otimes d
=\sum_{s,\s,r,\bar{r} \in \R}a(s,\s)c\otimes b(r,\bar{r})dz^{-s-r-2}\z^{-\s-\bar{r}-2}.$$
By (GFV1),
the coefficient of $z^k\z^{\bar{k}}$ is a finite sum for any $k,\bar{k}\in \R$.
Thus, $\hY$ is well-defined.
%For any $h_0,\h_0 \in \R$,
%set $(F^1\otimes F^2)_{h_0,\h_0}=\bigoplus_{a,\bar{a} \in \R} F_{a,\bar{a}}^1\otimes F_{h_0-a,\h_0-\bar{a}}$.
Since the spectrum of $\Om_2$ is bounded below,
there exists $N \in \R$ such that
$\Om_{t_0,\td_0}^{(\al_1,\al_2)}=
\bigoplus_{t,\td \leq N} (\Om_1)_{t,\td}^{\al_1} \otimes (\Om_2)_{t_0-t,\td_0-\td}^{\al_2}$.
Since the spectrum of $\Om_1$ is discrete, the sum is finite.
Thus, $\bigl( \Om_{t_0,\td_0}^{(\al_1,\al_2)} \bigr)^*=\bigoplus_{t,\td \in \R}
 \bigl((\Om_1)_{t,\td}^{\al_1}\bigr)^* \otimes \bigl( (\Om_2)_{t_0-t,\td_0-\td}^{\al_2}\bigr)^*$.
Define the bilinear form on $H$ by
$((\al_1,\al_2),(\be_1,\be_2))=(\al_1,\be_1)_1+(\al_2,\be_2)_2$
for $\al_i,\be_i \in H_i$ ($i=1,2$).
Then, we have:
\begin{prop}\label{tensor_Om}
$(\Omega,\hY,\va,H,(-,-))$ defined above is a generalized full vertex algebra.
Furthermore, if both $\Om_1$ and $\Om_2$ have energy-momentum tensors,
then $\Om$ is a generalized full conformal vertex algebra.
\end{prop}
The subalgebra of $\Om_1\otimes \Om_2$ associated with a subgroup $A \subset H_1\oplus H_2$
is denoted by $\Om_1\otimes_A \Om_2$ (see Lemma \ref{subgroup}).

\subsection{Cancellation of monodromy}
Let $(\Om,\hY,\va,H)$ be a generalized full vertex algebra and $p \in P(H)$.
%which is denoted by $(\Om,H,p)$.
The following lemma follows from the construction:
\begin{lem}
The spectrum of the generalized full vertex algebra $G_{H,p}$
constructed in Proposition \ref{standard} is discrete and bounded below.
\end{lem}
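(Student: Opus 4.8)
The plan is to read both properties off the explicit description of the homogeneous subspaces of $G_{H,p}$ obtained in the construction preceding Proposition \ref{standard}; no genuinely new input is needed. Recall from there the decomposition
\[
G_{H,p}=\bigoplus_{\al\in H}\ \bigoplus_{n,m\in\Z_{\geq 0}}
G_{\frac{1}{2}(p\al,p\al)_p+n,\ \frac{1}{2}(\p\al,\p\al)_p+m}^\al,
\]
together with the fact that each summand is spanned by the monomials obtained from $e_\al$ by applying Heisenberg creation operators of total ``holomorphic degree'' $n$ in $H_l$ and total ``anti-holomorphic degree'' $m$ in $H_r$. The single observation I would isolate first is its immediate corollary: for a fixed $\al\in H$ the space $G_{t,\td}^\al$ vanishes unless $t-\frac{1}{2}(p\al,p\al)_p\in\Z_{\geq 0}$ and $\td-\frac{1}{2}(\p\al,\p\al)_p\in\Z_{\geq 0}$.

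Bounded below is then immediate from (GFV7): fixing $\al$ and putting $N_\al=\min\bigl\{\tfrac{1}{2}(p\al,p\al)_p,\tfrac{1}{2}(\p\al,\p\al)_p\bigr\}-1$, any pair $(t,\td)$ with $t\leq N_\al$ or $\td\leq N_\al$ violates the integrality-and-positivity constraint just recorded, so $G_{t,\td}^\al=0$. For discreteness, fix $\al\in M_{G_{H,p}}=H$ and $R\in\R$ and set $C_\al=\frac{1}{2}\bigl((p\al,p\al)_p+(\p\al,\p\al)_p\bigr)$. Whenever $G_{t,\td}^\al\neq 0$ one has $t+\td=C_\al+n+m$ with $n,m\in\Z_{\geq 0}$, so the constraint $t+\td<R$ leaves only the finitely many integer pairs $(n,m)$ with $n+m<R-C_\al$. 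It remains to observe that each piece is finite dimensional: since $H_l$ and $H_r$ are finite dimensional, there are only finitely many of the spanning monomials of holomorphic degree $n$ (a count of $(\dim H_l)$-coloured partitions of $n$) and finitely many of anti-holomorphic degree $m$, whence $\dim G_{\frac{1}{2}(p\al,p\al)_p+n,\frac{1}{2}(\p\al,\p\al)_p+m}^\al<\infty$. Summing over the finitely many admissible $(n,m)$ gives $\sum_{t+\td<R}\dim G_{t,\td}^\al<\infty$, which is exactly discreteness.

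There is no real obstacle here; the argument is pure bookkeeping on the grading, and the lemma is correctly flagged in the text as following from the construction. The only point worth guarding against is the temptation to invoke positivity of the form: the lemma is asserted for every $p\in P(H)$, not merely for $p\in P_>(H)$, so $(-,-)_\lat$ may be indefinite and the ground energies $\frac{1}{2}(p\al,p\al)_p$, $\frac{1}{2}(\p\al,\p\al)_p$ may be negative. The argument survives because $\al$ is held fixed throughout, making these ground energies mere constants, while every Heisenberg excitation shifts $(t,\td)$ by a non-negative integer; finite dimensionality of $H$ is precisely what forces each graded piece to be finite dimensional.
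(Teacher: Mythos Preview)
Your proposal is correct and matches the paper's intent: the paper offers no proof beyond ``follows from the construction,'' and you have simply unpacked that construction via the decomposition $G=\bigoplus_{\al}\bigoplus_{n,m\geq 0}G_{\frac{1}{2}(p\al,p\al)_p+n,\frac{1}{2}(\p\al,\p\al)_p+m}^\al$ together with the finite-dimensionality of each summand. One minor redundancy: the bounded-below part is already (GFV7), which is part of the definition of a generalized full vertex algebra and hence already established in Proposition~\ref{standard} (the paper notes this parenthetically just before introducing discreteness), so only the discreteness argument is actually new content---but your treatment of it is sound, including the caution about not assuming positivity.
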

Assume that the spectrum of $\Om$ is bounded below.
We consider the tensor product of generalized full vertex algebras $\Om$ and $G_{H,p}$.
Set $\Delta H=\{(\al,\al)\in H \oplus H\}_{\al \in H}$,
which is a subgroup of $H\oplus H$.
Then, by Lemma \ref{subgroup}, 
$(G_{H,p}\otimes_{\Delta H} \Om,H\oplus H)$ is a generalized full vertex algebra.
We denote it by $F_{\Om,H,p}$.
Since the inner product of 
 $(\al,\al),(\be,\be)\in \Delta H \subset H\oplus H$
is $((\al,\al),(\be,\be))=(\al,\be)-(\al,\be)=0$ by the minus sign in Proposition \ref{standard},
$F_{\Om,H,p}$ is a full vertex algebra by Lemma \ref{even_omega}.
Thus, we have:
\begin{thm}
\label{construction}
For a generalized full vertex algebra $(\Om,\hY,\va,H)$ and $p \in P(H)$,
$F_{\Om,H,p}$ is a full vertex algebra.
% and
%$F_{\Om,H,p}$ is isomorphic to $M_{H,p}\otimes \Om$ as an $\hat{H_p}$-module.
Furthermore, if $\Om$ has an energy-momentum tensor,
then $F_{\Om,H,p}$ is a full conformal vertex algebra.
\end{thm}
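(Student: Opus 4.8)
The plan is to deduce the theorem by combining the tensor-product construction (Proposition \ref{tensor}) with the monodromy-cancellation criterion (Lemma \ref{even_omega}), exactly along the lines of the definition of $F_{\Om,H,p}$ given just above the statement. The one genuine input is the isotropy of the diagonal grading, which is forced by the sign convention of Proposition \ref{standard}; everything else is bookkeeping.

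First I would check the hypotheses needed to form the tensor product. By Proposition \ref{standard} the pair $(G_{H,p},H)$ is a generalized full (conformal) vertex algebra whose grading form is $-(-,-)_\lat$, and by the lemma immediately preceding the theorem its spectrum is discrete and bounded below. Assuming, as in the construction, that the spectrum of $\Om$ is bounded below, the two objects meet the hypotheses of Proposition \ref{tensor} with the discrete factor $G_{H,p}$ and the bounded-below factor $\Om$. Hence $G_{H,p}\otimes \Om$ is a generalized full vertex algebra graded by $H\oplus H$, carrying the orthogonal-sum form whose restriction to the first summand is $-(-,-)_\lat$ and to the second is $(-,-)_\lat$.

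Next I would restrict to the diagonal subgroup $\Delta H=\{(\al,\al)\}_{\al\in H}\subset H\oplus H$. Lemma \ref{subgroup} shows that the associated graded subspace $G_{H,p}\otimes_{\Delta H}\Om=F_{\Om,H,p}$ is again a generalized full vertex algebra. The decisive computation is the value of the grading form on $\Delta H$: for $(\al,\al),(\be,\be)\in\Delta H$,
\[
((\al,\al),(\be,\be))=-(\al,\be)_\lat+(\al,\be)_\lat=0,
\]
the cancellation being precisely the purpose of the minus sign recorded in Proposition \ref{standard}. Since this form takes values in $2\Z$ on $\Delta H$ (indeed it vanishes identically), Lemma \ref{even_omega} guarantees that every $U(1)$-monodromy in the three-point functions is trivial, so $F_{\Om,H,p}$ is an honest full vertex algebra.

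Finally, for the conformal assertion I would invoke the conformal half of Proposition \ref{tensor}: if $\Om$ carries an energy-momentum tensor, then so does $G_{H,p}\otimes\Om$, its stress tensor being the usual sum of the stress tensor $(\omega_{H_l},\omb_{H_r})$ of $G_{H,p}$ and that of $\Om$. Both the holomorphic and the anti-holomorphic components lie in the degree-$(0,0)$ piece of the $H\oplus H$-grading, and since $(0,0)\in\Delta H$ these vectors survive the restriction to $F_{\Om,H,p}$, making it a full conformal vertex algebra. I do not anticipate a substantial obstacle here: once the sign of Proposition \ref{standard} is in place the isotropy of $\Delta H$ is immediate, and the only care required is the routine verification of the discrete/bounded-below hypotheses feeding into Proposition \ref{tensor} together with the identification of the orthogonal-sum form.
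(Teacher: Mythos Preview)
Your proposal is correct and follows essentially the same route as the paper: form the tensor product $G_{H,p}\otimes\Om$ via Proposition \ref{tensor} (using that $G_{H,p}$ has discrete spectrum and $\Om$ is bounded below), restrict to the diagonal $\Delta H$ via Lemma \ref{subgroup}, observe that the orthogonal-sum form vanishes on $\Delta H$ because of the minus sign in Proposition \ref{standard}, and conclude via Lemma \ref{even_omega}. The conformal assertion likewise follows from the conformal clause of Proposition \ref{tensor} together with the fact that the stress tensors sit in degree $(0,0)\in\Delta H$, exactly as you outline.
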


%we call it a generalized full vertex algebra with charges.
%The category of the pairs $(\Om,H,p)$
%is denoted by $$.

\section{Categorical aspects}
In this section, we introduce the notion of a full $\mathcal{H}$-vertex algebra
and show that the vacuum space of a full $\mathcal{H}$-vertex algebra is a generalized full vertex algebra.
%normally ordered product
% If there is some heisenberg algebra, we could construct vertex opeartor
%between \Omega \rgihtarrow \Omega satisfy certain axioms
%If L(-1)=D, then \Omega is an associative skew-symmetric algebra.
%i.e., vertex operator is constant
% Equivalence of category between full $\mathcal{H}$-vertex algebra and OHp triple.

\subsection{Full $\mathcal{H}$-vertex algebras to generalized full vertex algebra}\label{sec_FHp}
Let $H_l$ and $H_r$ be real finite-dimensional vector spaces equipped with
non-degenerate symmetric bilinear forms $(-,-)_l:H_l \times H_l \rightarrow \R$ and $(-,-)_l:H_r \times H_r \rightarrow \R$.
Let $M_{H_l}(0)$ and $M_{H_r}(0)$ be affine Heisenberg vertex algebras
associated with $(H_l,(-,-)_l)$ and $(H_r,(-,-)_r)$.
Set $H=H_l \oplus H_r$ and let $p, \overline{p}:H \rightarrow H$
 be the projections onto $H_l$ and $H_r$
and 
$$
M_{H,p}=M_{H_l}(0)\otimes \overline{M_{H_r}(0)},
$$
the tensor product of the vertex algebra $M_{H_l}(0)$
and the conjugate vertex algebra $\overline{M_{H_r}(0)}$
(see Proposition \ref{conjugate} and Corollary \ref{chiral_tensor}).

In this section, we consider a class of a full vertex algebra
which is an $M_{H,p}$-module (like an algebra over a ring).
More precisely, let $F$ be a full vertex algebra
and assume that $M_{H,p}$ is a subalgebra of $F$, $M_{H,p} \subset F.$
Then, since $H_l \subset (M_{H,p})_{1,0}$ and $H_r \subset (M_{H,p})_{0,1}$,
$H_l \subset F_{1,0}$ and $H_r \subset F_{0,1}$.

We note that the subspaces $H_l$ and $H_r$ satisfy the following conditions:
For any $h_l,h_l' \in H_l$ and $h_r,h_r' \in H_r$,
\begin{enumerate}
\item[H1)]
$H_l \subset F_{1,0}$ and $H_r \subset F_{0,1}$;
\item[H2)]
 $\D H_l=0$ and $D H_r=0$;
\item[H3)]
$h_l(1,-1)h_l'=(h_l,h_l')_l \va$, $h_r(-1,1)h_r'=(h_r,h_r')_r \va$;
\item[H4)]
$h_l(n,-1)h_l'=0$, $h_r(-1,n)h_r'=0$
 for any $n=0$ or $n \in \Z_{\geq 2}$.
\end{enumerate}

In fact, these conditions characterize the existence of a subalgebra $M_{H,p} \subset F$:
\begin{prop}
If subspaces $H_l$ and $H_r$ of a full vertex algebra $F$ satisfy (H1) -- (H4),
then $H_l$ and $H_r$ generate a subalgebra which is isomorphic to $M_{H,p}$
as a full vertex algebra.
\end{prop}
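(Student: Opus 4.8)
The plan is to produce an injective full vertex algebra homomorphism $\Phi\colon M_{H,p}\to F$ whose image is exactly the subalgebra generated by $H_l$ and $H_r$; the proposition then follows. The first step is to read off the commutation relations of the modes. Fix $h_l,h_l'\in H_l$. Since $\D h_l=0$ by (H2), Lemma \ref{hol_commutator} gives
$$[h_l(n,-1),Y(h_l',\uz)]=\sum_{i\geq 0}\binom{n}{i}Y(h_l(i,-1)h_l',\uz)z^{n-i}.$$
By (H3) and (H4) the vector $h_l(i,-1)h_l'$ vanishes except for $i=1$, where it equals $(h_l,h_l')_l\va$; using $Y(\va,\uz)=\mathrm{id}$ from (FV4) and comparing coefficients I obtain the Heisenberg relation $[h_l(n,-1),h_l'(m,-1)]=n(h_l,h_l')_l\,\delta_{n+m,0}\,\mathrm{id}_F$. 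The conjugate computation (via Lemma \ref{zero_commutator}, or Lemma \ref{hol_commutator} applied in $\bar F$) yields $[h_r(-1,n),h_r'(-1,m)]=n(h_r,h_r')_r\,\delta_{n+m,0}\,\mathrm{id}_F$, while Lemma \ref{hol_commute} shows the $H_l$-modes and $H_r$-modes commute. Together with $h_l(n,-1)\va=0$ and $h_r(-1,n)\va=0$ for $n\geq 0$ (immediate from (FV3)), this exhibits $F$ as a module over the direct sum $\hat{H}_l\oplus\hat{H}_r$ of affine Heisenberg Lie algebras associated with $(H_l,(-,-)_l)$ and $(H_r,(-,-)_r)$, on which the central elements act as the identity and $\va$ is a highest-weight vector of weight $0$.

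Next I construct $\Phi$. By Proposition \ref{vertex_algebra}, $\ker\D$ is a vertex algebra and $H_l\subset\ker\D$; the Heisenberg relations above say precisely that the fields $\{Y(h_l,z)\}_{h_l\in H_l}$ are mutually local with the Heisenberg operator product expansion, so by the reconstruction theorem (see \cite{LL}) they generate a vertex subalgebra of $\ker\D$ and there is a vertex algebra homomorphism $\iota_l\colon M_{H_l}(0)\to\ker\D$ sending $h_l(-1)\va\mapsto h_l$. Concretely, $\iota_l$ intertwines the recursion defining the vertex operators of $M_{H_l}(0)$ (the normally ordered formula recalled in Section \ref{sec_standard}) with the identity of Lemma \ref{hol_commutator} in $\ker\D$, proceeding by induction on the number of modes. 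The conjugate construction gives an anti-holomorphic homomorphism $\iota_r\colon\overline{M_{H_r}(0)}\to\ker D$. Since $M_{H_l}(0)$ has discrete spectrum and $\overline{M_{H_r}(0)}$ is bounded below, $\iota_l\otimes\iota_r\colon M_{H,p}\to\ker\D\otimes\ker D$ is a full vertex algebra homomorphism, and composing with the homomorphism $t\colon\ker\D\otimes\ker D\to F$ of Proposition \ref{ker_hom} produces the desired $\Phi\colon M_{H,p}\to F$ with $\Phi(\va)=\va$, restricting to the inclusions of $H_l$ and $H_r$.

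It remains to prove that $\Phi$ is injective and to identify its image. Because $\Phi$ is a full vertex algebra homomorphism fixing $\va$ and sending $h_l\mapsto h_l$, $h_r\mapsto h_r$, comparing the modes of $Y(h_l,z)$ and $Y(h_r,\z)$ shows that $\Phi$ is a homomorphism of $\hat{H}_l\oplus\hat{H}_r$-modules. Since $(-,-)_l\oplus(-,-)_r$ is non-degenerate on $H=H_l\oplus H_r$, the Fock module $M_{H,p}$ is irreducible over $\hat{H}_l\oplus\hat{H}_r$; as $\Phi\neq 0$, it is therefore injective. Finally, the image of $\Phi$ is a subalgebra containing $H_l$ and $H_r$, hence contains the subalgebra they generate; conversely it is spanned by vectors of the form $h_l^1(-n_1,-1)\cdots h_r^1(-1,-m_1)\cdots\va$, each obtained from $H_l$, $H_r$ and $\va$ by applying modes, so it is contained in that subalgebra. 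Thus $\Phi$ is an isomorphism of $M_{H,p}$ onto the subalgebra generated by $H_l$ and $H_r$.

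I expect the main obstacle to be the reconstruction step establishing that $\iota_l$ (and $\iota_r$) is a vertex algebra homomorphism: one must carefully match the inductively defined normally ordered vertex operators of the Heisenberg vertex algebra with the operator product expansion in $\ker\D$ supplied by Lemma \ref{hol_commutator}, keeping track of the normal-ordering combinatorics, and confirm that the two fields agree on the whole generated subalgebra rather than merely on the generators. The verification of the commutation relations and the module-theoretic injectivity argument are, by contrast, routine given the lemmas already available.
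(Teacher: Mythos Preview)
Your proof is correct and follows essentially the same route as the paper: both arguments pass through Proposition~\ref{vertex_algebra} to identify the subalgebra generated by $H_l$ inside $\ker\D$ with $M_{H_l}(0)$ (and dually for $H_r$), then apply the homomorphism $t\colon\ker\D\otimes\ker D\to F$ of Proposition~\ref{ker_hom}. The only cosmetic difference is that for injectivity the paper invokes simplicity of $M_{H,p}$ as a full vertex algebra, whereas you use irreducibility of $M_{H,p}$ as an $\hat{H}_l\oplus\hat{H}_r$-module; these are the same fact, since any proper left ideal would be a proper Heisenberg submodule. Your explicit reconstruction of $\iota_l$ merely spells out what the paper abbreviates to ``the full vertex algebra generated by $H_l$ is isomorphic to $M_{H_l}(0)$''.
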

\begin{proof}
By Proposition \ref{vertex_algebra},
the full vertex algebra generated by
$H_l \subset \ker \D$ (resp. $H_r \subset \ker D$)
is isomorphic to $M_{H_l}(0)$ (resp. $M_{H_r}(0)$).
By Proposition \ref{ker_hom},
the full vertex algebra generated by $H_l$ and $H_r$
is in the image of $M_{H,p} \subset \ker \D \otimes \ker D$.
Since $M_{H,p}$ is simple,
the assertion follows.
\end{proof}

Since $h_l \in H_l$ is a chiral vector, by Lemma \ref{hol_commutator},
$Y(h_l,\uz)=\sum_{n\in \Z} h_l(n,-1)z^{-n-1}$.
Hereafter, we will use a shorthand notation for $h_l \in H_l$, $h_r \in H_r$
and $n\in \Z$,
$h_l(n)=h(n,-1)$ and $h_r(n)=h_r(-1,n)$.
Set $h_l(z)=Y(h_l,\uz)=\sum_{n \in \Z}h_l(n)z^{-n-1}$ and $h_r(\z)= Y(h_r,\uz)=\sum_{n \in \Z} h_r(n)\z^{-n-1}$.
By Lemma \ref{hol_commutator} and Lemma \ref{hol_commute},
\begin{align}
[h_l(n), h_l'(m)]&=(h_l,h_l')_l n\delta_{n+m,0} \nonumber \\
[h_r(n), h_r'(m)]&=(h_r,h_r')_r n\delta_{n+m,0} \nonumber \\
[h_l(n), h_r(m)]&=0 \nonumber,
\end{align}
for any $n,m \in \Z$ and $h_l,h_l' \in H$ and $h_r,h_r' \in H_r$.

\begin{comment}
Define a bilinear form on $H$ by 
$$(\al,\be)_p=(p\al,p\be)_l + (\p\al,\p\be)_r$$
for $\al,\be \in H$.
Let $\hat{H}^p$ be the Heisenberg Lie algebra associated with
$(H,(-,-)_p)$.
Then, the actions of $\{h_l(n),h_r(n)\}_{n\in \Z}$ for 
$h_l \in H_l$ and $h_r \in H_r$ give rise to an action of the Heisenberg Lie algebra $\hat{H}^p$ on $F$.
\end{comment}
For $\al \in H$ and $h,\h \in \R$, we let $\Omega_{F,H}^\al$ be the set of all vectors $v \in F$ satisfying the following conditions:
\begin{enumerate}
\item
$h_l(n)v=0$ and $h_r(n)v$ for any $h_l \in H_l$ and $h_r \in H_r$ and $n \geq 1$.
\item
$h_l(0)v=(h_l,p\al)_l v$ and $h_r(0)v=(h_r,\p\al)_r v$ for any $h_l \in H_l$ and $h_r \in H_r$.
\end{enumerate}
Set $$\Omega_{F,H}=\bigoplus_{\al \in H} \Omega_{F,H}^\al$$
and
$$(\Om_{F,H})_{t,\td}^{\al}
= F_{t+\frac{(p\al,p\al)_l}{2},\td+\frac{(\p\al,\p\al)_r}{2}}\cap \Om_{F,H}^\al
$$ for $\al \in H$ and $t,\td\in \R$.
This subspace $\Omega_{F,H} \subset F$ is called the {\it vacuum space} in the chiral setting in \cite[Section 1.7]{FLM}.

{\it A full $\mathcal{H}$-vertex algebra}, denoted by $(F,H,p)$, is a full vertex algebra $F$ with a subalgebra $M_{H,p}$
such that 
\begin{enumerate}
\item[FH1)]
$h_l(0)$ and $h_r(0)$ are semisimple on $F$ with real eigenvalues for any $h_l\in H_l$ and $h_r \in H_r$;
%\item[FH2)]
%$F$ is generated by the subspace $\Omega_{F,H}$
%as a module of the Heisenberg Lie algebra $\hat{H}^p$;
\item[FH2)]
For any $\al \in H$, there exists $N \in \R$ such that
$F_{t,\td}^\al=0$ for $t\leq N$ or $\td \leq N$.
\end{enumerate}

%We can drop the condition (FH3) by suitably modifying 
%the definitions of a full vertex algebra and a generalized full vertex algebra.
%But almost all interesting models appeared in physics seem
%to satisfy this condition. Thus, we adopt this condition.
%\end{rem}
Let $(F,H,p)$ be a full $\mathcal{H}$-vertex algebra.
By (FH1) and (FH2) and the representation theory of an affine Heisenberg Lie algebra (\cite[Theorem 1.7.3]{FLM}),
$F$ is isomorphic to $\bigoplus_{\al \in H} M_{H,p} \otimes \Omega_{F,H}^\al$ as an $M_{H,p}$-module.
In particular,
$F$ is generated by the subspace $\Omega_{F,H}$
as a module of the Heisenberg Lie algebra $\hat{H}$.

For $\al \in H$,
define $z^{(p\al)(0)}\z^{(\p\al)(0)} \in \End \Om_{F,H}[z^\R,\z^\R]$ by
$$z^{(p\al)(0)}\z^{(\p\al)(0)}v=z^{(p\al,p\be)_l}\z^{(\p\al,\p\be)_r}v$$ for $v \in \Omega_{F,H}^\be$.
For $\al \in H$, 
set \begin{align*}
E^-(\al,\uz)&= \exp\Bigl(\sum_{n \geq 1} \frac{p\al(-n)}{n}z^n+\frac{\p\al(-n)}{n}\z^n\Bigr)\\
E^+(\al,\uz)&= \exp\Bigl(\sum_{n \geq 1} \frac{p\al(n)}{-n} z^{-n}+\frac{\p\al(n)}{-n}\z^{-n} \Bigr).
\end{align*}
Then, for any $h_l \in H_l$ and $n>0$,
\begin{align}
[h_l(n),E^-(\al,\uz)]&=(h_l,\al)_l z^nE^-(\al,\uz) \nonumber \\
[h_l(-n),E^+(\al,\uz)]&=(h_l,\al)_l z^{-n}E^-(\al,\uz) \nonumber \\
[h_l(n),E^+(\al,\uz)]&=0 \nonumber \\
[h_l(-n),E^-(\al,\uz)]&=0 \nonumber \\
[h_l(0),E^\pm(\al,\uz)]&=0 \nonumber
\end{align}
hold (Similar results hold for $h_r \in H_r$.).

Let $v \in \Omega_{F,H}^\al$.
Set $$\hY(v,\uz)= 
E^-(-\al,\uz)Y(v,\uz)E^+(-\al,\uz)z^{(-p\al)(0)}\z^{(-\p\al)(0)}.$$

By Lemma \ref{hol_commutator},
\begin{align*}
[h_l(n), Y(v,\uz)]&=(h_l,\al)_l z^nY(v,\uz)\\
[h_r(n),Y(v,\uz)]&=(h_r,\al)_r \z^nY(v,\uz)
\end{align*}
 for any $h_l \in H_l$ and $h_r \in H_r$ and $n \in \Z$.
Hence, we have $[h(n), \hY(v,\uz)]=0$ and $[\h(n), \hY(v,\uz)]=0$ for any $0 \neq n \in \Z$ and $v \in \Om_{H,H}$, $h_l \in H_l$, $h_r \in H_r$.
Thus, $\hY(v,\uz)$ preserves $\Omega_{F,H}$,
that is $\hY(v,\uz) \in \End \Omega_{F,H}[[z^\R,\z^\R]]$,
which defines a product on $\Omega_{F,H}$.

Set \begin{align*}
\omega_{H_l}&=\frac{1}{2} \sum_{i}h_l^i(-1,-1)h_l^i \in F_{2,0} \\
\om_{H_r}&=\frac{1}{2} \sum_{i}h_r^i(-1,-1)h_r^i \in F_{0,2},
\end{align*}
where $\{h_l^i\}_i$ is an orthonormal basis of $H_l\otimes_\R \C$
and $\{h_r^i\}_i$ is an orthonormal basis of $H_r\otimes_\R \C$,
and
$$D_\Om=D-\omega_{H_l}(0,-1), \D_\Om=\D-\omega_{H_r}(-1,0)
$$
and
$$L_\Om(0)=L_F(0)-\om_{H_l}(1,-1),
\Ld_\Om(0)=\Ld_F(0)-\om_{H_r}(-1,1),
$$
where $L_F(0), \Ld_F(0) \in \End F$ are defined by
$L_F(0)|_{F_{t,\td}}=t$ and $\Ld_F(0)|_{F_{t,\td}}=\td$
for $t,\td\in \R$.
Then, we have:
\begin{lem}\label{conformal_FHp}
For any $v\in \Om^\al \cap F_{t,\td}$,
\begin{align*}
[D_\Om,\hY(v,\uz)]&=\dz \hY(v,\uz), \\ 
[\D_\Om,\hY(v,\uz)]&=\ddz \hY(v,\uz),\\
[L_\Om(0),\hY(v,\uz)]&=(z \dz+t-\frac{(p\al,p\al)_l}{2}) \hY(v,\uz),\\
[\Ld_\Om(0),\hY(v,\uz)]&=(\z \ddz+\td-\frac{(\p\al,\p\al)_r}{2}) \hY(v,\uz).
\end{align*}
\end{lem}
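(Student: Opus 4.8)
The plan is to reduce all four identities to a single cancellation by exploiting that the shifted operators $D_\Om,\D_\Om,L_\Om(0),\Ld_\Om(0)$ commute with every Heisenberg mode. Write $\hY(v,\uz)=E^-(-\al,\uz)\,Y(v,\uz)\,E^+(-\al,\uz)\,z^{(-p\al)(0)}\z^{(-\p\al)(0)}$ and abbreviate the four factors as $A,B,C,G$. First I would record, via Lemma \ref{hol_commutator}, Lemma \ref{hol_commute} and Remark \ref{L0_operator}, the action of $D,\omega_{H_l}(0,-1),L_F(0),\omega_{H_l}(1,-1)$ on the currents. On a holomorphic mode $h_l(n)$ both $D$ and $\omega_{H_l}(0,-1)$ send $h_l(n)\mapsto -n\,h_l(n-1)$, and both $L_F(0)$ and $\omega_{H_l}(1,-1)$ send $h_l(n)\mapsto -n\,h_l(n)$, while on an anti-holomorphic mode $h_r(n)$ all four act by $0$ (using $\D H_l=DH_r=0$ and that $\omega_{H_l}$ is holomorphic, so that $\omega_{H_l}(k,-1)h_r=0$ for $k\ge 0$). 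Subtracting, each of $D_\Om=D-\omega_{H_l}(0,-1)$ and $L_\Om(0)=L_F(0)-\omega_{H_l}(1,-1)$ commutes with all $h_l(n)$ and $h_r(n)$, hence with $A$ and $C$ (exponentials of Heisenberg modes with scalar coefficients) and with $G$ (diagonal for the $H$-grading, which these operators preserve since $[D_\Om,h(0)]=[L_\Om(0),h(0)]=0$).

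Consequently $[X,\hY(v,\uz)]=A\,[X,Y(v,\uz)]\,C\,G$ for $X\in\{D_\Om,L_\Om(0)\}$, and the problem reduces to computing $[X,Y(v,\uz)]$. For $X=D$, Proposition \ref{translation} gives $\dz Y(v,\uz)$; for $X=L_F(0)$, Remark \ref{L0_operator} gives $(z\dz+t)Y(v,\uz)$ since $v\in F_{t,\td}$. For the Heisenberg Virasoro pieces I would apply Lemma \ref{hol_commutator} to the holomorphic vector $\omega_{H_l}$, expressing $[\omega_{H_l}(0,-1),Y(v,\uz)]$ and $[\omega_{H_l}(1,-1),Y(v,\uz)]$ through $Y(\omega_{H_l}(i,-1)v,\uz)$. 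The key intermediate computation is that for a lowest weight vector $v\in\Om^\al$ one has $\omega_{H_l}(0,-1)v=(p\al)(-1)v$ and $\omega_{H_l}(1,-1)v=\tfrac12(p\al,p\al)_l\,v$; these follow by expanding $\omega_{H_l}=\tfrac12\sum_i h_l^i(-1,-1)h_l^i$ into modes and using $h_l(n)v=0$ for $n\ge 1$ together with $h_l(0)v=(h_l,p\al)_l v$. A second application of Lemma \ref{hol_commutator}, now to the holomorphic current $p\al$, turns $Y((p\al)(-1)v,\uz)$ into $(p\al)^-(z)Y(v,\uz)+Y(v,\uz)(p\al)^+(z)$, where $(p\al)^-(z)=\sum_{n\ge1}(p\al)(-n)z^{n-1}$ and $(p\al)^+(z)=\sum_{n\ge0}(p\al)(n)z^{-n-1}$ are the creation and annihilation parts of $(p\al)(z)$.

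The final step is a bookkeeping identity. Differentiating the exponentials gives $\dz A=-(p\al)^-(z)\,A$, $\dz C=-\bigl(\sum_{n\ge1}(p\al)(n)z^{-n-1}\bigr)C$ and $\dz G=-(p\al)(0)z^{-1}G$, so that, absorbing the zero mode, one obtains the crucial relation $(\dz C)\,G+C\,\dz G=-(p\al)^+(z)\,C\,G$. Substituting the commutators of the previous step, the identity $[D_\Om,\hY(v,\uz)]=\dz\hY(v,\uz)$ collapses: the correction $-A\,(p\al)^-(z)\,B\,C\,G$ is exactly $(\dz A)BCG$, and the correction $-A\,B\,(p\al)^+(z)\,C\,G$ is exactly $AB\bigl((\dz C)G+C\,\dz G\bigr)$, leaving $A(\dz B)CG=\dz\hY(v,\uz)$. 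The $L_\Om(0)$ identity follows by the same cancellation, now carrying an extra factor $z$ (so $\dz$ becomes $z\dz$) and the surviving scalar $t-\tfrac12(p\al,p\al)_l$, coming from $L_F(0)$ acting by $t$ and $\omega_{H_l}(1,-1)$ by $\tfrac12(p\al,p\al)_l$. The two remaining identities for $\D_\Om$ and $\Ld_\Om(0)$ are then obtained verbatim by interchanging the holomorphic and anti-holomorphic data $z\leftrightarrow\z$, $p\leftrightarrow\p$, $H_l\leftrightarrow H_r$, $\omega_{H_l}\leftrightarrow\om_{H_r}$.

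The main obstacle I anticipate is the lowest weight evaluation $\omega_{H_l}(0,-1)v=(p\al)(-1)v$ and $\omega_{H_l}(1,-1)v=\tfrac12(p\al,p\al)_l v$, which requires a careful normal ordering argument, together with keeping the operator orderings straight around the zero mode $(p\al)(0)$ when combining the derivatives of $C=E^+(-\al,\uz)$ and $G=z^{(-p\al)(0)}\z^{(-\p\al)(0)}$. Once these are in hand, every term is forced and the four relations drop out simultaneously.
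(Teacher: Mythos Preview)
Your proposal is correct and follows essentially the same route as the paper: reduce to $[X,Y(v,\uz)]$ by first showing $D_\Om,\D_\Om,L_\Om(0),\Ld_\Om(0)$ commute with all Heisenberg modes, then use the lowest-weight identities $\omega_{H_l}(0,-1)v=(p\al)(-1)v$, $\omega_{H_l}(1,-1)v=\tfrac12(p\al,p\al)_l v$ together with the normal-ordered expansion of $Y((p\al)(-1)v,\uz)$ from Lemma~\ref{hol_commutator}. Your write-up is actually more explicit than the paper's at the final step, spelling out how the $(p\al)^\pm$ corrections match the $z$-derivatives of $E^\pm(-\al,\uz)$ and of $z^{(-p\al)(0)}$, which the paper leaves to the reader.
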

\begin{proof}
It is easy to show that
$D_\Om, \D_\Om, L_\Om(0), L_\Om(0)$ commute with the action of
the Heisenberg Lie algebra $\hat{H}$.
Since
$[\om_{H_l}(0),Y(v,\uz)]=Y(\om_{H_l}(0)v,\uz)$
and $\om_{H_l}(0)=\sum_{i} \sum_{k \geq 0}h_i(-k-1)h_i(k)$,
we have $[\om_{H_l}(0),Y(v,\uz)]=Y((p\al)(-1,-1)v,\uz)$.
Since, by Lemma \ref{hol_commutator}, $$Y((p\al)(-1,-1)v,\uz)
=(p\al)^+(z)Y(v,\uz)+Y(v,\uz)(p\al)^-(z),$$
we have
\begin{align*}
[D_\Om,\hY(v,\uz)]
&=E^-(-\al,\uz)[D_\Om,Y(v,\uz)] E^+(-\al,\uz)z^{(-p\al)(0)}\z^{(-\p\al)(0)}\\
&=E^-(-\al,\uz)(\dz Y(v,\uz)-Y((p\al)(-1,-1)v,\uz)E^+(-\al,\uz)z^{(-p\al)(0)}\z^{(-\p\al)(0)}\\
&=\dz \hY(v,\uz)
\end{align*}
Since $\om_{H_l}(1,-1)=\sum_{i}\Bigl( 1/2 h_i(0)h_i(0) +\sum_{k \geq 1}h_i(-k)h_i(k)\Bigr)$,
we have
\begin{align*}
[\om_{H_l}(1,-1),Y(v,\uz)]&=Y(\om_{H_l}(0,-1)v,\uz)z+Y(\om_{H_l}(1,-1)v,\uz)\\
&=zY((p\al)(-1,-1)v,\uz)+\frac{(p\al,p\al)_l}{2} Y(v,\uz).
\end{align*}
Thus, similarly to the above,
\begin{align*}
[L_\Om(0),\hY(v,\uz)]
&=E^-(-\al,\uz)[L_\Om(0),Y(v,\uz)] E^+(-\al,\uz)z^{(-p\al)(0)}\z^{(-\p\al)(0)}\\
&=E^-(-\al,\uz) \Bigl(
\bigl(z \dz+h-\frac{(p\al,p\al)_l}{2}\bigr)Y(v,\uz)
-zY((p\al)(-1,-1)v,\uz)\Bigr)\\
&E^+(-\al,\uz)z^{(-p\al)(0)}\z^{(-\p\al)(0)}\\
&=(z\dz+(h-\frac{(p\al,p\al)_l}{2})) \hY(v,\uz).
\end{align*}
\end{proof}

Define a new bilinear form $(-,-)_\lat$ on $H$ by 
$$(\al,\be)_\lat=(p\al,p\be)_l - (\p\al,\p\be)_r$$
for $\al,\be \in H$.
The main result of this section is the following theorem:
\begin{thm}\label{vacuum_space}
For a full $\mathcal{H}$-vertex algebra $(F,H,p)$,
$(\Om_{F,H},\hY,\va,H,(-,-)_\lat)$ is a generalized full vertex algebra.
\end{thm}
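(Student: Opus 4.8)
The plan is to apply Proposition~\ref{general_locality}, which reduces the only genuinely hard axiom (GFV5) to the three conditions (GFL1), (GFL2), (GFL3), provided the grading axioms (GFV1)--(GFV4), (GFV6), (GFV7) are checked directly. I would take the translation operators on $\Om_{F,H}$ to be $D_\Om=D-\omega_{H_l}(0,-1)$ and $\D_\Om=\D-\omega_{H_r}(-1,0)$, already introduced before Lemma~\ref{conformal_FHp}. First I would dispose of the grading axioms, then record (GFL1) and (GFL2), and finally concentrate all the effort on the locality condition (GFL3).

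For the grading axioms: (GFV4) is immediate, since $E^\pm(0,\uz)$ and $z^0\z^0$ are trivial, so $\hY(\va,\uz)=Y(\va,\uz)=\mathrm{id}$ by (FV4). For (GFV3), the lowest-weight property of $\va$ gives $E^+(-\al,\uz)\va=\va$ and $z^{(-p\al)(0)}\z^{(-\p\al)(0)}\va=\va$, so $\hY(a,\uz)\va=E^-(-\al,\uz)Y(a,\uz)\va$, which is $\Om$-valued (as $\hY$ preserves $\Om$) and lies in $F[[z,\z]]$ with constant term $a$ by (FV3). Axiom (GFV2) is (FV2) applied to $F_{t+\frac12(p\al,p\al)_l,\,\td+\frac12(\p\al,\p\al)_r}$, because $\frac12(\al,\al)_\lat+t-\td\in\Z$ is exactly the integrality of the difference of these two weights; (GFV7) restates (FH2); and (GFV6) follows from the weight covariance of Lemma~\ref{conformal_FHp} together with the charge shift $a(r,s)\Om^\be\subset\Om^{\al+\be}$ for $a\in\Om^\al$, which comes from the $n=0$ term of Lemma~\ref{hol_commutator} governing the $h_l(0),h_r(0)$-eigenvalues. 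For (GFV1), using again that a lowest-weight $b\in\Om^\be$ satisfies $E^+(-\al,\uz)b=b$ and $z^{(-p\al)(0)}\z^{(-\p\al)(0)}b=z^{-(p\al,p\be)_l}\z^{-(\p\al,\p\be)_r}b$, one computes
$$z^{(\al,\be)_\lat}\hY(a,\uz)b=|z|^{-(\p\al,\p\be)_r}E^-(-\al,\uz)Y(a,\uz)b,$$
which lies in $\Om((z,\z,|z|^\R))$ because $Y(a,\uz)b\in F((z,\z,|z|^\R))$ by (FV1), $E^-(-\al,\uz)$ contributes only non-negative integral powers, and $\hY$ preserves $\Om$.

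Next, (GFL2) is precisely the first two identities of Lemma~\ref{conformal_FHp}, and for (GFL1) the equalities $D_\Om\va=\D_\Om\va=0$ are immediate while $[D_\Om,\D_\Om]=0$ follows from (GFL2) exactly as in Proposition~\ref{translation}(3). The heart of the proof is (GFL3). Here I would exploit the Heisenberg decomposition $F\cong\bigoplus_\al M_{H,p}\otimes\Om_{F,H}^\al$, under which the full vertex operator of a lowest-weight vector factorizes. Inverting the definition of $\hY$ gives, for $a_i\in\Om^{\al_i}$,
$$Y(a_i,\uz)=E^-(\al_i,\uz)\,\hY(a_i,\uz)\,z^{(p\al_i)(0)}\z^{(\p\al_i)(0)}\,E^+(\al_i,\uz).$$
Since $\hY(a_j,\uz)$ commutes with every nonzero Heisenberg mode it commutes with all $E^\pm(\al_i,\uz)$; commuting $z^{(p\al_i)(0)}\z^{(\p\al_i)(0)}$ past the charge-$\al_j$ operators produces the monomials $z_i^{(p\al_i,p\al_j)_l}\z_i^{(\p\al_i,\p\al_j)_r}$; and normal-ordering $E^+(\al_1,\uz_1)$ past $E^-(\al_2,\uz_2)$ via the Heisenberg commutation relations (cf. Lemma~\ref{lattice_commutator}) produces $(1-z_2/z_1)^{(p\al_1,p\al_2)_l}(1-\z_2/\z_1)^{(\p\al_1,\p\al_2)_r}|_{|z_1|>|z_2|}$. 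Using that $a_3$ and all $\Om$-valued vectors are annihilated by the $E^+$'s, these combine to
$$Y(a_1,\uz_1)Y(a_2,\uz_2)a_3=B(z_1,z_2)\,E^-(\al_1,\uz_1)E^-(\al_2,\uz_2)\hY(a_1,\uz_1)\hY(a_2,\uz_2)a_3,$$
where $B$ is the Heisenberg conformal block
$$B=(z_1-z_2)^{(p\al_1,p\al_2)_l}(\z_1-\z_2)^{(\p\al_1,\p\al_2)_r}z_1^{(p\al_1,p\al_3)_l}\z_1^{(\p\al_1,\p\al_3)_r}z_2^{(p\al_2,p\al_3)_l}\z_2^{(\p\al_2,\p\al_3)_r}|_{|z_1|>|z_2|}.$$
Composing with $u'=u\circ\pi_\Om\in F^\vee$, where $\pi_\Om:F\to\Om_{F,H}$ is the projection onto the lowest-weight summand, kills the Heisenberg descendants produced by the $E^-$'s and yields the scalar factorization $u'(Y(a_1,\uz_1)Y(a_2,\uz_2)a_3)=B\cdot u(\hY(a_1,\uz_1)\hY(a_2,\uz_2)a_3)$.

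Finally, by (FV5) there is $\nu\in\GCor_2$ with $u'(Y(a_1,\uz_1)Y(a_2,\uz_2)a_3)=\nu|_{|z_1|>|z_2|}$ and $u'(Y(a_2,\uz_2)Y(a_1,\uz_1)a_3)=\nu|_{|z_2|>|z_1|}$; and by the computation of the Standard construction (Proposition~\ref{standard}) the block factors as $B=S\cdot(z_1-z_2)^{(\al_1,\al_2)_\lat}z_1^{(\al_1,\al_3)_\lat}z_2^{(\al_2,\al_3)_\lat}|_{|z_1|>|z_2|}$ with single-valued multiplier $S=|z_1-z_2|^{(\p\al_1,\p\al_2)_r}|z_1|^{(\p\al_1,\p\al_3)_r}|z_2|^{(\p\al_2,\p\al_3)_r}$. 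Since $|1-z|^{\pm r},|z|^{\pm r}\in\F$, multiplication by $S^{-1}$ preserves $\GCor_2$, so $\mu:=\nu\cdot S^{-1}\in\GCor_2$ satisfies both identities of (GFL3). Proposition~\ref{general_locality} then gives (GFV5) and completes the proof. The main obstacle is exactly the operator factorization of the $F$-correlation function into the block $B$ times the $\hY$-correlation function: all the monodromy of the vacuum-space theory is concentrated in $B$, and isolating it cleanly requires the precise normal-ordering of the $E^\pm$ and momentum operators together with the lowest-weight property of $a_3$.
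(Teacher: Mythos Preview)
Your proposal is correct and follows essentially the same route as the paper: reduce to Proposition~\ref{general_locality}, read off the easy axioms (GFV1)--(GFV4), (GFV6), (GFV7) from the definition of $\hY$ and (FH2)/(FV2), take (GFL1)--(GFL2) from Lemma~\ref{conformal_FHp}, and prove (GFL3) by inverting $\hY$ to factor $u'(Y(a_1,\uz_1)Y(a_2,\uz_2)a_3)$ into the Heisenberg block $B$ times the $\hY$-correlator via normal ordering with Lemma~\ref{lattice_commutator}, then absorb the single-valued part $S=|z_1-z_2|^{(\p\al_1,\p\al_2)_r}|z_1|^{(\p\al_1,\p\al_3)_r}|z_2|^{(\p\al_2,\p\al_3)_r}$ into $\nu\in\GCor_2$. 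Your write-up is in fact slightly more careful than the paper's in a couple of places (you spell out (GFV2), (GFV3), and the role of $\pi_\Om$ explicitly), but there is no substantive difference in strategy.
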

\begin{proof}
We will show the assertion by using Proposition \ref{general_locality}.
(GFV2)-(GFV4) and (GFV7) are obvious.
For $\al,\be \in M_\Om$ and $a\in \Om^\al$ and $b \in \Om^\be$,
$\hY(a,\uz)b
=E^-(-\al,\uz)Y(a,\uz)z^{-(p\al,p\be)_l}\z^{-(\p\al,\p\be)_r}.$
Since $z^{-(p\al,p\be)_l}\z^{-(\p\al,\p\be)_r}
=z^{-(\al,\be)_\lat} |z|^{-(\p\al,\p\be)_r}$,
(GFV1) holds.
By Lemma \ref{conformal_FHp}, (GFV6) holds.
(GFL1) and  (GFL2) follow from Lemma \ref{conformal_FHp}.
It suffices to show that
(GFL3).
Let $a_i \in \Omega^{\al_i}$ for $i=1,2,3$
and $u \in \Om^\vee$.
We remark that $M_{H,p}$ is graded by $\om_{H_l}(1,-1)$ and $\om_{H_r}(-1,1)$.
Then, $(M_{H,p})_{0,0}=\C \va$ and $M_{H,p}=\bigoplus_{n,m \geq 0} 
(M_{H,p})_{n,m}$.
Set $M_{H,p}^+=\bigoplus_{(n,m)\neq (0,0)}(M_{H,p})_{n,m}$.
Denote by $\pi$ the projection of $F = M_{H,p}\otimes \Om=
\C\va\otimes \Om \oplus M_{H,p}^+ \otimes \Om$
to $\C\va \otimes \Om$.
Set $u'=u\circ \pi \in F^\vee$.
%to $\C\va$.
By the construction,
$u' (h(-n)-)=u'(\h(-n)-)=0$ for any $n\in \Z_{\geq 1}$.
Since
$$Y(a_i,\uz_i)=E^-(\al_i,\uz_i)\hY(a_i,\uz_i)  E^+(\al_i,\uz_i)
z_i^{(-p\al_i)(0)}\z_i^{(-\p\al_i)(0)}$$ for $i=1,2$,
we have
\begin{align*}
u'(&Y(a_1,\uz_1)Y(a_2,\uz_2) a_3)\\
&=
u'(\hY(a_1,\uz_1)E^+(\al_1,\uz_1)z_1^{(p\al_1)(0)}\z_1^{(\p\al_1)(0)}
E^-(\al_2,\uz_2)\hY(a_2,\uz_2)z_2^{(p\al_2)(0)}\z_2^{(\p\al_2)(0)}a_3)\\
&=z_1^{(p\al_1,p\al_2+p\al_3)_l}z_2^{(p\al_2,p\al_3)_l}
\z_1^{(\p\al_1,\p\al_2+\p\al_3)_r}\z_2^{(\p\al_2,\p\al_3)_r}
u(\hY(a_1,\uz_1)E^+(\al_1,\uz_1)E^-(\al_2,\uz_2)\hY(a_2,\uz_2)a_3).
\end{align*}
By Lemma \ref{lattice_commutator}
$$E^+(\al_1,\uz_1)E^-(\al_2,\uz_2)
=(1-z_2/z_1)^{(p\al_1,p\al_2)_l}(1-\z_2/\z_1)^{(\p\al_1,\p\al_2)_r}
E^-(\al_2,\uz_2)E^+(\al_1,\uz_1).
$$
Since $\{h(n),\h(n)\}_{n\neq 0, h\in H_l,\h\in H_r}$ commute with
$\hY(a_i,\uz_i)$,
we have
\begin{align*}
u'(&Y(a_1,\uz_1)Y(a_2,\uz_2) a_3)\\
&z_1^{(p\al_1,p\al_3)_l}z_2^{(p\al_2,p\al_3)_l}
\z_1^{(\p\al_1,\p\al_3)_r}\z_2^{(\p\al_2,\p\al_3)_r}
(z_1-z_2)^{(p\al_1,p\al_2)_l}(\z_1-\z_2)^{(\p\al_1,\p\al_2)_r}
u(\hY(a_1,\uz_1)\hY(a_2,\uz_2)a_3).
\end{align*}
Since
\begin{align*}
z_1^{(p\al_1,p\al_3)_l}z_2^{(p\al_2,p\al_3)_l}
\z_1^{(\p\al_1,\p\al_3)_r}\z_2^{(\p\al_2,\p\al_3)_r}
(z_1-z_2)^{(p\al_1,p\al_2)_l}(\z_1-\z_2)^{(\p\al_1,\p\al_2)_r}\\
=z_1^{(\al_1,\al_3)_\lat}z_2^{(\al_2,\al_3)_\lat}
(z_1-z_2)^{(\al_1,\al_2)_\lat}
|z_1|^{(\p\al_1,\p\al_3)_r}|z_2|^{(\p\al_2,\p\al_3)_r}
|(z_1-z_2)|^{(\p\al_1,\p\al_2)_r}
\end{align*}
and
$|z_1|^{(\p\al_1,\p\al_3)_r}|z_2|^{(\p\al_2,\p\al_3)_r}
|(z_1-z_2)|^{(\p\al_1,\p\al_2)_r} \in \GCor_2$,
(GFL3) follows from (FV5).
\end{proof}
\begin{rem}
\label{rem_DL_chiral}
Let $L_{\mathfrak{g},k}$ be the simple affine vertex operator algebra at level $k\in \Z_{\geq 0}$
and $H_{\mathfrak{g}} \subset \mathfrak{g}=(L_{\mathfrak{g},k})_1$ a Cartan subalgebra of the weight one subspace.
Then, $(L_{\mathfrak{g},k},H_{\mathfrak{g}})$ is a chiral $\mathcal{H}$-vertex algebra (see for the chiral setting Section \ref{sec_Remark_vertex}).
Dong and Lepowsky show that the vacuum space $\Om_{L_{\mathfrak{g},k},H_{\mathfrak{g}}}$ inherits a generalized vertex algebra structure \cite[Theorem 14.16]{DL}.
Theorem \ref{vacuum_space} generalizes this result to general $\mathcal{H}$-vertex algebras and full $\mathcal{H}$-vertex algebras (see also Proposition \ref{vacuum_vertex}).
\end{rem}

A full $\mathcal{H}$-conformal vertex algebra
is a pair of a full $\mathcal{H}$-vertex algebra and an energy-momentum tensor $(\om,\omb)$
such that $\om(n+2,-1)H_l=0$ and $\omb(-1,n+2)H_r=0$ for $n \in \Z_{\geq 0}$.
By Lemma \ref{conformal_FHp} and Theorem \ref{vacuum_space}, we have:
\begin{cor}\label{vac_omega}
Let $(F,H,p,\om,\omb)$ be a full $\mathcal{H}$-conformal vertex algebra.
Then, $(\om-\om_{H_l},\omb-\om_{H_r})$ is an energy-momentum tensor of the generalized full vertex algebra
$\Om_{F,H}$.
\end{cor}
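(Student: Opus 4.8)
The plan is to verify that $(\om-\om_{H_l},\omb-\om_{H_r})$ satisfies the four axioms of an energy-momentum tensor of the generalized full vertex algebra $\Om_{F,H}$, which is a generalized full vertex algebra by Theorem~\ref{vacuum_space}. Write $\nu=\om-\om_{H_l}$ and $\bar\nu=\omb-\om_{H_r}$; being differences of holomorphic (resp.\ anti-holomorphic) vectors of $F$, they satisfy $\D\nu=0$ and $D\bar\nu=0$. The first task is to show $\nu\in(\Om_{F,H})_{2,0}^{0}$ and $\bar\nu\in(\Om_{F,H})_{0,2}^{0}$. The crucial input is the full $\mathcal{H}$-conformal condition $\om(n+2,-1)H_l=0$ for $n\in\Z_{\geq 0}$, which says each $h_l\in H_l$ is an $\om$-primary of weight one and hence $\om(j,-1)h_l=\om_{H_l}(j,-1)h_l$ for every $j\geq 0$. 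Using the skew-symmetry of Proposition~\ref{translation}(4), I would compute the modes $h_l(n,-1)\nu$ and find $h_l(n)\nu=0$ for $n\geq 1$ and $h_l(0)\nu=0$; by Lemma~\ref{hol_commute} the anti-holomorphic modes $h_r$ annihilate the holomorphic vector $\nu$ as well. Thus $\nu$ is a charge-zero lowest-weight vector, and since its charge is $0$ its $\Om$-bidegree equals its $F$-bidegree $(2,0)$.

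Because $\nu$ has charge $0$, the operators $E^{\pm}(-\al,\uz)$ and $z^{(-p\al)(0)}\z^{(-\p\al)(0)}$ entering the definition of $\hY$ are the identity, so $\hY(\nu,\uz)=Y(\nu,\uz)|_{\Om_{F,H}}$ and the $\hY$-modes $\nu(r,s)$ are the restrictions to $\Om_{F,H}$ of the $Y$-modes. Axioms (3) and (4) then follow from the definitions of $D_\Om$ and $L_\Om(0)$ in Section~\ref{sec_FHp}: since $\om(0,-1)=D$ and $\om(1,-1)=L_F(0)$ for the energy-momentum tensor of $F$, we obtain $\nu(0,-1)=D-\om_{H_l}(0,-1)=D_\Om$ and $\nu(1,-1)=L_F(0)-\om_{H_l}(1,-1)=L_\Om(0)$. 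By Lemma~\ref{conformal_FHp} the operator $D_\Om$ satisfies $[D_\Om,\hY(a,\uz)]=\dz\hY(a,\uz)$, which also characterizes the intrinsic translation operator $D$ of $\Om_{F,H}$ (Proposition~\ref{skew_symmetry}); as their difference commutes with all $\hY(a,\uz)$ and kills $\va$, it vanishes, giving $\nu(0,-1)=D$. Likewise $L_\Om(0)$ acts on $(\Om_{F,H})_{t,\td}^{\al}$ as multiplication by $t$, so $\nu(1,-1)$ realizes the required $t$-grading. The anti-holomorphic statements $\bar\nu(-1,0)=\D$ and $\bar\nu(-1,1)=\Ld_\Om(0)$ are symmetric.

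For axiom (1) I must verify $\D\nu=0$ in $\Om_{F,H}$, where the intrinsic $\D$ is identified with $\D_\Om$ as in the previous paragraph. Since $\D\nu=0$ and $\D_\Om=\D-\om_{H_r}(-1,0)$, it suffices to see that the anti-holomorphic mode $\om_{H_r}(-1,0)$ annihilates the holomorphic vector $\nu$. I would obtain this by expanding the skew-symmetry identity $Y(\om_{H_r},\uz)\nu=\exp(zD+\z\D)Y(\nu,-\uz)\om_{H_r}$ of Proposition~\ref{translation}(4): by Lemma~\ref{hol_commute} the expression $Y(\nu,-\uz)\om_{H_r}$ involves only non-negative powers of $z$ and no $\z$, so the right-hand side has no $\z^{-1}$ term, forcing $\om_{H_r}(-1,0)\nu=0$. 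The statement $D\bar\nu=0$ is symmetric.

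The main work is axiom (2), the Virasoro relations, and this is where I expect the real obstacle. Set $\tilde L(n)=\nu(n+1,-1)=\om(n+1,-1)-\om_{H_l}(n+1,-1)$. Using Lemma~\ref{hol_commutator} together with the primary identity $\om(j,-1)h_l=\om_{H_l}(j,-1)h_l$ ($j\geq 0$), one shows $[\tilde L(n),Y(h_l,\uz)]=0$ for all $h_l\in H_l$, and by Lemma~\ref{hol_commute} also $[\tilde L(n),Y(h_r,\uz)]=0$; hence $\tilde L(n)$ commutes with every mode of the Heisenberg algebra $\hat H$, and in particular with the Sugawara modes $\om_{H_l}(m+1,-1)$, which are normally ordered expressions in those modes. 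Since $\{\om(n+1,-1)\}$ and $\{\om_{H_l}(n+1,-1)\}$ each satisfy the Virasoro relations with central charges $c$ and $\dim H_l$ respectively, this commuting yields $[\tilde L(m),\tilde L(n)]=(m-n)\tilde L(m+n)+\tfrac{c-\dim H_l}{12}m(m^2-1)\delta_{m+n,0}$. Evaluating $\tilde L(k-1)$ on $\nu=\tilde L(-2)\va$ then gives $\nu(3,-1)\nu=\tfrac{c-\dim H_l}{2}\va$ and $\nu(k,-1)\nu=0$ for $k=2$ or $k\geq 4$, which is axiom (2) with central charge $c-\dim H_l$; the anti-holomorphic side gives $\bar c-\dim H_r$. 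The hardest part is precisely this coset computation—verifying that $\tilde L$ commutes with the whole Heisenberg current algebra and extracting the shifted central charge—whereas the remaining axioms reduce quickly to the definitions and to Lemma~\ref{conformal_FHp}.
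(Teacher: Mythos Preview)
Your proof is correct and follows the route the paper indicates: the paper states the corollary as an immediate consequence of Lemma~\ref{conformal_FHp} and Theorem~\ref{vacuum_space}, leaving the coset (Goddard--Kent--Olive) verification of the Virasoro relations for $\om-\om_{H_l}$ and the identification of $D_\Om,\D_\Om,L_\Om(0),\Ld_\Om(0)$ with the intrinsic operators of $\Om_{F,H}$ as standard. You have simply spelled out those standard steps in full; there is no essential difference in approach.
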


\subsection{Equivalence between categories}\label{sec_equiv}
In this section, we show that
Theorem \ref{construction} and
Theorem \ref{vacuum_space} give an equivalence 
between a category of 
full $\mathcal{H}$-vertex algebras
and a category of
generalized full vertex algebras with an additional structure $p$.

We first define these categories.
A morphism from a full $\mathcal{H}$-vertex algebra
$(F_1,H_1,p_1)$ to a full $\mathcal{H}$-vertex algebra $(F_2,H_2,p_2)$
is a full vertex algebra homomorphism $\phi :F_1\rightarrow F_2$
such that $\phi(H_1) = H_2$.
We denote the category of full $\mathcal{H}$-vertex algebras
by $\FHp$.

Let $\OHp$ denote the following category:
The objects are pairs of a generalized full vertex algebra $(\Om,H)$ and $p\in P(H)$.
A morphism from $(\Om_1,H_1,p_1)$ to $(\Om_2,H_2,p_2)$
is a generalized full vertex algebra homomorphism $(\psi,\psi'):(\Om_1,H_1) \rightarrow
(\Om_2,H_2)$ satisfying $\psi' \circ p_1 = p_2 \circ \psi'$.
We call $p\in P(H)$ {\it a charge structure} of a generalized full vertex algebra.

Let $(F,H,p)$ be a full $\mathcal{H}$-vertex algebra.
Then, $p \in P(H)$. Thus, $(\Om_{F,H},H,p)$
is an object in $\OHp$.
\begin{lem}
The assignment $\Om:\FHp \rightarrow \OHp,\; (F,H,p)\mapsto (\Om_{F,H},H,p)$ is a functor.
\end{lem}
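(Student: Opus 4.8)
The plan is to note first that the object part of the assignment is already furnished: Theorem \ref{vacuum_space} gives that $(\Om_{F,H},\hY,\va,H,(-,-)_\lat)$ is a generalized full vertex algebra, and the paragraph preceding the lemma records that $p\in P(H)$, so $(\Om_{F,H},H,p)$ is indeed an object of $\OHp$. Thus all the work is in the morphism part and in functoriality. So let $\phi:(F_1,H_1,p_1)\to(F_2,H_2,p_2)$ be a morphism of full $\mathcal{H}$-vertex algebras, i.e.\ a full vertex algebra homomorphism with $\phi(H_1)=H_2$. The plan is to produce the required generalized full vertex algebra homomorphism by restriction: set $\psi'=\phi|_{H_1}$ and $\psi=\phi|_{\Om_{F_1,H_1}}$, and then check the four defining conditions together with $\psi'p_1=p_2\psi'$.

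First I would analyze $\psi'$. Comparing coefficients in $\phi(Y_1(a,\uz)\va_1)=Y_2(\phi(a),\uz)\va_2$ shows that $\phi$ commutes with $D$ and $\D$, hence $\phi(\ker\D)\subset\ker\D$ and $\phi(\ker D)\subset\ker D$. Since $(H_2)_l\subset\ker\D$ and $(H_2)_r\subset\ker D$ by (H2), while a vector of $H_2$ killed by $\D$ must lie in $(H_2)_l$ (by (H3) and non-degeneracy of $(-,-)_r$), one has $\ker\D\cap H_2=(H_2)_l$ and $\ker D\cap H_2=(H_2)_r$; therefore $\psi'$ carries $(H_1)_l$ into $(H_2)_l$ and $(H_1)_r$ into $(H_2)_r$. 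Injectivity of $\psi'$ follows from (H3) and non-degeneracy (if $\phi(h)=0$ then $h(1,-1)h'\mapsto 0$ forces $(h,h')=0$ for all $h'$), surjectivity is the hypothesis $\phi(H_1)=H_2$, and (H3) shows $\psi'$ preserves $(-,-)_l$ and $(-,-)_r$. Since $\psi'$ intertwines the projections, it is an isometry for $(-,-)_\lat=(p\,\cdot,p\,\cdot)_l-(\p\,\cdot,\p\,\cdot)_r$ and satisfies $\psi'p_1=p_2\psi'$.

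Next I would verify the conditions on $\psi$. Because $\phi$ is a homomorphism, $\phi(h(n)v)=\phi(h)(n)\phi(v)$ for $h\in H_1$; feeding this into the definition of $\Om_{F,H}^\al$ and using $\psi'p_1=p_2\psi'$ with the isometry property yields $\phi(\Om_{F_1,H_1}^\al)\subset\Om_{F_2,H_2}^{\psi'(\al)}$, so $\psi$ respects the $H$-grading. For the $(t,\td)$-grading I would use that $\phi$ respects the $\R^2$-grading of the full vertex algebras (it intertwines $L_F(0),\Ld_F(0)$) and that $\phi$ carries the Heisenberg conformal vectors $\om_{H_l},\om_{H_r}$ of $F_1$ to those of $F_2$ (since $\om_{H_l}=\tfrac12\sum_i h_l^i(-1,-1)h_l^i$ is built from an orthonormal basis that $\psi'$ maps to an orthonormal basis); hence $\phi$ intertwines $L_\Om(0)=L_F(0)-\om_{H_l}(1,-1)$ and $\Ld_\Om(0)$, giving condition (2). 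Condition (3) is $\psi(\va_1)=\phi(\va_1)=\va_2$, valid as $\va\in(\Om_{F,H})_{0,0}^0$. The step I expect to demand the most care is condition (4), compatibility with the dressed product; here I would use that for $v\in\Om_{F_1,H_1}^\al$,
\[
\hY_1(v,\uz)=E^-(-\al,\uz)\,Y_1(v,\uz)\,E^+(-\al,\uz)\,z^{(-p\al)(0)}\z^{(-\p\al)(0)},
\]
and that $\phi$ intertwines each factor separately: it intertwines $Y$ by hypothesis, it intertwines the Heisenberg modes $h(n)$ and hence carries $E^\pm(-\al,\uz)$ to $E^\pm(-\psi'\al,\uz)$, and since $\phi(\Om^\be)\subset\Om^{\psi'\be}$ with $\psi'$ isometric it intertwines the scalar operators $z^{(-p\al)(0)}\z^{(-\p\al)(0)}$. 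Composing these relations gives $\phi(\hY_1(v,\uz)w)=\hY_2(\psi(v),\uz)\psi(w)$, which is (4). Finally, functoriality is immediate: as $\psi$ and $\psi'$ are mere restrictions of $\phi$, the identity goes to the identity, and $\Om(\phi_2\circ\phi_1)=\Om(\phi_2)\circ\Om(\phi_1)$ because $\phi_1(\Om_{F_1,H_1})\subset\Om_{F_2,H_2}$.
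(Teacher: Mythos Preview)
Your proof is correct and follows the same approach as the paper: restrict $\phi$ to $H_1$ and to the vacuum space, use that $\phi$ commutes with $D,\D$ to see it preserves the decomposition $H=H_l\oplus H_r$ (hence intertwines the projections), and use (H3) to get the isometry. The paper's proof is considerably terser and does not spell out the verification of compatibility with the dressed operator $\hY$ (your condition (4)) or the $(t,\td)$-grading; your additional checks here are sound and make the argument more complete.
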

\begin{proof}
Let $\phi$ be a morphism from
a full $\mathcal{H}$-vertex algebra $(F_1,H_1,p_1)$ to
a full $\mathcal{H}$-vertex algebra $(F_2,H_2,p_2)$.
Since $\phi$ preserves the vacuum vector,
$\phi(\ker p_1)=\phi (H_1 \cap \ker D)=H_2 \cap \ker D = \ker p_2$.
Since $\phi(h_l(1,-1)h_l')=\phi(h_l)(1,-1)\phi(h_l')$
for any $h_l,h_l'\in (H_1)_l$,
$\phi$ is an isometric isomorphism between $H_1$ and $H_2$
and $\phi \circ p_1 = p_2 \circ \phi$.
Since the restriction of $\phi$ on the vacuum spaces gives a linear map
$\phi|_{\Om_{F_1,H_1}}: \Om_{F_1,H_1}\rightarrow \Om_{F_2,H_2}$,
the pair $(\phi|_{\Om_{F_1,H_1}},\phi|_{H_1})$ is a morphism 
of $\OHp$.
\end{proof}

Let $(\Om,H,p)$ be an object in $\OHp$.
Then, $F_{\Om,H,p}$ is a full vertex algebra.
Since $M_{H,p} = G_{H,p}^0 \otimes \C\va \subset 
G_{H,p}^0 \otimes\Om^0 \subset F_{\Om,H,p}$,
$F_{\Om,H,p}$ is naturally a full $\mathcal{H}$-vertex algebra.
\begin{lem}
The assignment $F:\OHp \rightarrow \FHp,\; (\Om,H,p)\mapsto (F_{\Om,H,p},H,p)$ is a functor.
\end{lem}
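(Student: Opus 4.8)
The plan is as follows. The object part of the assignment has already been verified in the discussion preceding the statement, so it remains to define $F$ on morphisms and to check functoriality. The key tool is Lemma \ref{translation_p}, which turns an isometry of the grading spaces into an isomorphism of the standard generalized full vertex algebras $G_{H,p}$; I would combine it with the componentwise behaviour of $\hY$ under tensor products (Proposition \ref{tensor}) and then restrict to the diagonal subalgebras.

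Given a morphism $(\psi,\psi'):(\Om_1,H_1,p_1)\to(\Om_2,H_2,p_2)$ in $\OHp$, recall that $\psi':H_1\to H_2$ is an isometric isomorphism and that the charge condition reads $\psi'\circ p_1=p_2\circ\psi'$, i.e. $\psi'\cdot p_1:=\psi'\circ p_1\circ \psi'^{-1}=p_2$. Hence Lemma \ref{translation_p} provides an isomorphism of generalized full vertex algebras $(\tilde{\psi'},\psi'):G_{H_1,p_1}\to G_{H_2,p_2}$. I would then form the tensor product map $\tilde{\psi'}\otimes\psi:G_{H_1,p_1}\otimes\Om_1\to G_{H_2,p_2}\otimes\Om_2$; since $\hY$ on a tensor product acts componentwise, and both $\tilde{\psi'}$ and $\psi$ intertwine their respective products, send vacuum to vacuum, and act isometrically on the gradings, $\tilde{\psi'}\otimes\psi$ is again a generalized full vertex algebra homomorphism, with underlying grading map $\psi'\oplus\psi'$ on $H_1\oplus H_1\to H_2\oplus H_2$.

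Next I restrict to the diagonal subalgebras. By construction $F_{\Om_i,H_i,p_i}=G_{H_i,p_i}\otimes_{\Delta H_i}\Om_i$ is the $\Delta H_i$-graded part, spanned by the pieces $G_{H_i,p_i}^\al\otimes\Om_i^\al$. Because $\tilde{\psi'}$ sends $G_{H_1,p_1}^\al$ into $G_{H_2,p_2}^{\psi'(\al)}$ and $\psi$ sends $\Om_1^\al$ into $\Om_2^{\psi'(\al)}$ through the \emph{same} map $\psi'$, the map $\psi'\oplus\psi'$ carries $\Delta H_1$ onto $\Delta H_2$, so $\tilde{\psi'}\otimes\psi$ restricts to $\phi:F_{\Om_1,H_1,p_1}\to F_{\Om_2,H_2,p_2}$. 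On these subalgebras the relevant bilinear form vanishes (this is the source of the monodromy cancellation in Lemma \ref{even_omega} and Theorem \ref{construction}), so $\hY=Y$ there and $\phi$ is a genuine full vertex algebra homomorphism. Finally, under the identifications $M_{H_i,p_i}=G_{H_i,p_i}^0\otimes\C\va$ with $H_i\subset G_{H_i,p_i}^0$ realized by $h\mapsto h(-1)e_0$, the map $\tilde{\psi'}$ is induced by $\psi'$, whence $\phi(H_1)=\psi'(H_1)=H_2$ and $\phi$ is a morphism in $\FHp$. I set $F(\psi,\psi')=\phi$.

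It then remains to check functoriality. The identity morphism induces $\widetilde{\mathrm{id}_{H}}=\mathrm{id}_{G_{H,p}}$, so $F(\mathrm{id})=\mathrm{id}$. For composition, the construction $\si\mapsto\tilde{\si}$ is itself functorial, since $\tilde{\si}$ is assembled from the Lie algebra isomorphism $\si_{Lie}$ and the group-algebra map $e_\al\mapsto e_{\si\al}$, both of which respect composition; hence $\widetilde{\psi_2'\circ\psi_1'}=\tilde{\psi_2'}\circ\tilde{\psi_1'}$, and tensoring with $\psi_2\circ\psi_1$ and restricting yields $F(g\circ f)=F(g)\circ F(f)$. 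I expect the main obstacle to be the bookkeeping that $\tilde{\psi'}\otimes\psi$ respects the $\Delta H$-gradings, so that its restriction to $F_{\Om_i,H_i,p_i}$ is well-defined and lands in the correct diagonal; this rests on confirming, from the explicit construction of $\tilde{\psi'}$, that the two grading actions run in parallel through the single isometry $\psi'$. Once this is in place, the homomorphism and functoriality properties are formal.
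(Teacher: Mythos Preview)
Your argument is correct and follows essentially the same route as the paper: invoke Lemma \ref{translation_p} to produce $\tilde{\psi'}:G_{H_1,p_1}\to G_{H_2,p_2}$ from the isometry $\psi'$ (using the charge condition $\psi'\circ p_1=p_2\circ\psi'$), tensor with $\psi$, and restrict to the diagonal subalgebra. Your write-up is in fact more complete than the paper's, which omits the explicit verification that the restriction respects the $\Delta H$-gradings, that $\phi(H_1)=H_2$, and that identities and compositions are preserved.
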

\begin{proof}
Let $(\Om_1,H_1,p_1)$ and $(\Om_2,H_2,p_2)$ be objects in $\OHp$
and $(\psi,\psi')$ be a morphism from $(\Om_1,H_1,p_1)$ to $(\Om_2,H_2,p_2)$.
Since $\psi'$ is an isometric isomorphism,
by Lemma \ref{translation_p},
we have an isomorphism of generalized full vertex algebras
$$
\tilde{\psi'}:G_{H_1,p_1} \rightarrow G_{H_2,p_2},
$$
where we used $\psi' \circ p_1= p_2 \circ \psi'$.
Then, we have a generalized full vertex algebra homomorphism
$$
\tilde{\psi'}\otimes \psi: G_{H_1,p_1}\otimes \Om_1 \rightarrow
G_{H_2,p_2}\otimes \Om_2.$$
The restriction of the homomorphism on
$G_{H_1,p_1}\otimes_{\Delta H_1} \Om_1\subset G_{H_1,p_1}\otimes \Om_1$ gives us a full $\mathcal{H}$-vertex algebra homomorphism
as desired.
\end{proof}

It is clear that the above functors are mutually inverse
equivalences.
Thus, we have:
\begin{thm}
\label{equivalence}
$\Om: \FHp \rightarrow \OHp$ and $F:\OHp \rightarrow \FHp$ gives an equivalence of categories.
\end{thm}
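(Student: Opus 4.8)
The plan is to exhibit mutually inverse natural isomorphisms $F\circ \Om \cong \mathrm{id}_{\OHp}$ and $\Om \circ F \cong \mathrm{id}_{\FHp}$, both of which are essentially dictated by the two reconstruction formulas already established in Section \ref{sec_FHp} and Proposition \ref{standard}. On underlying spaces the two round trips are identifications of $H$-graded vector spaces coming from the Fock-space decomposition of a Heisenberg module; the substance lies in checking that the vertex operator structures agree, i.e.\ that ``stripping off the Heisenberg factors'' (the functor $\Om$) and ``reattaching them via the standard construction'' (the functor $F$) are genuinely inverse at the level of the maps $Y$ and $\hY$, and that the bilinear forms and the charge structure $p$ are transported unchanged.

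For $F\circ \Om \cong \mathrm{id}_{\FHp}$, I would start from a full $\mathcal{H}$-vertex algebra $(F,H,p)$. By (FH1), (FH2) and the representation theory of the affine Heisenberg Lie algebra we have the decomposition \eqref{eq_decomposition}, $F \cong \bigoplus_{\al \in H} M_{H,p}(\al)\otimes \Om_{F,H}^\al$ as an $M_{H,p}$-module. On the other hand $G_{H,p}=\bigoplus_{\al\in H} M_{H,p}(\al)$ with lowest weight vector $e_\al \in M_{H,p}(\al)$, so the diagonal restriction gives $F_{\Om_{F,H},H,p}=G_{H,p}\otimes_{\Delta H}\Om_{F,H}=\bigoplus_{\al} M_{H,p}(\al)\otimes \Om_{F,H}^\al$ as well, and the resulting identity map preserves the vacuum, the subalgebra $M_{H,p}$, and the charge $p$. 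To see that it intertwines the full vertex operators I would compare the defining formula of $Y$ on $G_{H,p}\otimes_{\Delta H}\Om_{F,H}$ — in which $e_\al(\uz)=E^-(\al,\uz)E^+(\al,\uz)l_{e_\al}z^{p\al}\z^{\p\al}$ by the standard construction — with the relation defining $\hY$ in Section \ref{sec_FHp}, whose inverse reads $Y(v,\uz)=E^-(\al,\uz)\hY(v,\uz)E^+(\al,\uz)z^{(p\al)(0)}\z^{(\p\al)(0)}$ for $v\in \Om_{F,H}^\al$. Since both rebuild $Y$ from $\hY$ by attaching the same exponentials $E^{\pm}$ and the same charge-shift operators, the two full vertex algebra structures coincide, Lemma \ref{lattice_commutator} taking care of the normal ordering.

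For $\Om\circ F\cong \mathrm{id}_{\OHp}$, I would start from $(\Om,H,p)$ and form $F_{\Om,H,p}=G_{H,p}\otimes_{\Delta H}\Om$. The vectors of $G_{H,p}$ annihilated by all $h(n)$, $n\geq 1$, which are $h(0)$-eigenvectors with the prescribed charge are exactly $\C[H]=\bigoplus_\al \C e_\al$, so the vacuum space is $\Om_{F_{\Om,H,p},H}=\bigoplus_{\al} \C e_\al\otimes \Om^\al$, and $v\mapsto e_\al\otimes v$ for $v\in\Om^\al$ is an $H$-graded linear isomorphism onto it. The modified product $\hY$ on this vacuum space, obtained by conjugating $Y$ with $E^{\mp}$ and the charge-shift operators, precisely cancels the $E^{\pm}$ and $z^{p\al}\z^{\p\al}$ that the standard construction inserted into $e_\al(\uz)$, hence returns the original $\hY$ of $\Om$; this is the same cancellation as above, read in the opposite direction. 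The isomorphism is isometric because the vacuum-space form $(\al,\be)_\lat=(p\al,p\be)_l-(\p\al,\p\be)_r$ of Theorem \ref{vacuum_space}, computed for the Heisenberg forms arising from $G_{H,p}$, collapses to the original form of $\Om$ by the orthogonality condition (P2) defining $P(H)$.

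Finally, naturality of both isomorphisms follows because a morphism in either category is determined by its action on the vacuum space together with the induced isometry of $H$ (as $F$ is generated over the Heisenberg algebra by $\Om_{F,H}$), and both constructions are manifestly functorial in these data; compatibility with $p$ is automatic since $p$ is carried unchanged by $\Om$ and $F$. I expect the main obstacle to be the purely computational matching of the operator-valued series $E^{\pm}(\al,\uz)$ and of the charge-shift operators $z^{(p\al)(0)}\z^{(\p\al)(0)}$ between the two constructions, including the sign bookkeeping: the form on $G_{H,p}$ is $-(-,-)_\lat$ while that on $\Om_{F,H}$ is $(-,-)_\lat$, and it is exactly their cancellation on $\Delta H$ that makes $F_{\Om,H,p}$ a genuine full vertex algebra via Lemma \ref{even_omega}. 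Once these identities are verified the equivalence is immediate.
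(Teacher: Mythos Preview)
Your proposal is correct and follows the same approach as the paper, which simply asserts ``It is clear that the above functors are mutually inverse equivalences'' and then records the $F\circ\Om$ direction as Corollary~\ref{decomposition}. You have spelled out the details the paper leaves implicit---the Fock decomposition matching, the cancellation of $E^{\pm}$ and charge-shift operators, and the sign bookkeeping between $-(-,-)_\lat$ on $G_{H,p}$ and $(-,-)_\lat$ on $\Om_{F,H}$---which is exactly what underlies the claim.
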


\begin{cor}
\label{decomposition}
Let $(F,H,p)$ be a full $\mathcal{H}$-vertex algebra.
Then, $F$ is isomorphic to $F_{\Om_{F,H},H,p}=G_{H,p}\otimes_{\Delta H} \Om_{F,H}$
as a full $\mathcal{H}$-vertex algebra.
\end{cor}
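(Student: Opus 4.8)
The plan is to obtain this corollary directly from the equivalence of categories in Theorem \ref{equivalence}, without any further computation. By the definitions of the two functors, applying $\Om$ to the full $\mathcal{H}$-vertex algebra $(F,H,p)$ gives the object $(\Om_{F,H},H,p)$ of $\OHp$, and applying $F$ to the latter gives $(F_{\Om_{F,H},H,p},H,p)$ with $F_{\Om_{F,H},H,p}=G_{H,p}\otimes_{\Delta H}\Om_{F,H}$. Hence $F_{\Om_{F,H},H,p}$ is exactly the value of the composite functor $F\circ\Om$ at $(F,H,p)$.

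Since Theorem \ref{equivalence} asserts that $\Om$ and $F$ are mutually inverse equivalences, the composite $F\circ\Om$ is naturally isomorphic to the identity functor on $\FHp$. Evaluating the corresponding natural isomorphism at the object $(F,H,p)$ produces an isomorphism $F\xrightarrow{\sim}F_{\Om_{F,H},H,p}$ in $\FHp$, that is, a full vertex algebra isomorphism carrying the distinguished copy of $M_{H,p}$ to the distinguished copy of $M_{H,p}$. This is precisely the statement of the corollary, so nothing beyond Theorem \ref{equivalence} is needed.

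For a reader who prefers to see the isomorphism concretely, I would describe it as follows. By (FH1), (FH2) and the representation theory of the affine Heisenberg Lie algebra recalled in Section \ref{sec_FHp}, both $F$ and $G_{H,p}\otimes_{\Delta H}\Om_{F,H}$ decompose as $\bigoplus_{\al\in H}M_{H,p}\otimes\Om_{F,H}^\al$ as $\hat H$-modules, which yields a canonical $\hat H$-module isomorphism between them fixing the vacuum space. The one nontrivial point is that this map intertwines the full vertex operators $Y$ on the two sides; but this is exactly the compatibility built into the definition of $\hY$ on $\Om_{F,H}$ in Theorem \ref{vacuum_space} together with the monodromy-cancellation construction of Theorem \ref{construction}, so it follows by unwinding the definitions. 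The real work therefore lies not in this corollary but in Theorem \ref{equivalence} itself, whose nontrivial content is that the analytic axiom (FV5), transported through (GFV5), allows the full vertex operator structure of $F$ to be reconstructed from its vacuum-space data.
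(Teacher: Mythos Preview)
Your proposal is correct and matches the paper's approach: the corollary is stated immediately after Theorem \ref{equivalence} with no separate proof, and you correctly extract it as the evaluation of the natural isomorphism $F\circ\Om\cong\mathrm{id}_{\FHp}$ at the object $(F,H,p)$. Your additional paragraph unpacking the isomorphism concretely via the $\hat H$-module decomposition is accurate and goes slightly beyond what the paper spells out, but it is consistent with the argument implicit in the equivalence.
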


\subsection{Adjoint functor I -- generalized full vertex algebra and associative algebra}
In this section, we construct an adjoint functor 
from the category of generalized full vertex algebras to
some category of associative algebras.

We first recall that for a vertex algebra $V$,
$\ker D_V=\{v\in V\;|\; v(-2)\va=0\}$ is a commutative $\C$-algebra
(see for example \cite[Section 3.11]{LL}).
Conversely,
any commutative $\C$-algebra $A$ is a vertex algebra,
where the vertex operator is defined by
$Y(a,z)b=ab$ (consisting of only the constant term).
In fact, this correspondence gives an adjoint functor
between the category of vertex algebras
and the category of commutative $\C$-algebras.

%If $V$ is a simple vertex operator algebra, then $\ker D_V$ is $\C$ (see \cite[Proposition 3.11.4]{LL}).
%\begin{rem}
%This fact is related to the notion of c-number in physics.
%That is any field which is independent on the position (formal variable) is a scalar.
%\end{rem}

We generalize this result to a
generalized full vertex algebra based on the discussion in \cite{LL}.
Since a generalized full vertex algebra has the monodromies,
the corresponding $\C$-algebra is no longer commutative, which we call an {\it AH-pair}. We first recall the notion of AH pairs introduced in \cite{M1}, which is a commutative algebra object in some braided tensor category (see \cite{M2}, Section 5.3).

Let $H$ be a finite-dimensional vector space over $\mathbb{R}$ equipped with a non-degenerate symmetric bilinear form $(-,-)$ and $A$ a unital associative algebra over $\mathbb{C}$ with the unity $1$. 
Assume that $A$ is graded by $H$ as $A=\bigoplus_{\alpha\in H}A^\alpha$. 

We will say that such a pair $(A,H)$ is an {\it even AH pair} if the following conditions are satisfied:
{\leftmargini2.8em
\begin{enumerate}
\itemsep1ex\parskip0ex
\item[AH1)]
$1 \in A^0$ and $A^\alpha A^\beta\subset A^{\alpha+\beta}$ for any $\alpha,\beta\in H$;
\item[AH2)]
If $A^\alpha \neq 0$, then $(\alpha,\alpha)\in 2\mathbb{Z}$;
\item[AH3)]
For $v\in A^\alpha,w\in A^\beta$, $vw=(-1)^{(\alpha,\beta)}wv$;
\end{enumerate}}
\begin{rem}
Suppose that $A^\alpha A^\be \neq 0$ for $\al,\be\in H$.
Then, by (AH1) and (AH2), $(\al,\al), (\be,\be), (\al+\be,\al+\be)\in 2\Z$
and thus $(\al,\be) \in \Z$.
Hence, $(-1)^{(\al,\be)}$ is well-defined.
\end{rem}

Define an $\R^2\times H$-grading on $A$ by
$$
\begin{cases}
A_{t,\td}^\al=0 \text{ if }(t,\td) \neq (0,0),\cr
A_{0,0}^\al = A^\al
\end{cases}
$$
for any $\al \in H$
 %case..
and set $\hY(a,\uz)=l_a \in \End A$ for $a\in A$,
where $l_a$ is the left multiplication by $a$
and $\va=1$.
\begin{prop}\label{AH_general}
For an even AH pair $(A,H)$,
$(A,\hY,\va,H)$ is a generalized full vertex algebra.
Furthermore, $(A,H)$ is a generalized full conformal vertex algebra
with the energy-momentum tensor $(0,0)$.
\end{prop}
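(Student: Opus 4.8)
The plan is to verify the seven axioms (GFV1)--(GFV7) for the data $(A,\hY,\va,H)$ by exploiting the fact that $A$ is concentrated in $\R^2$-degree $(0,0)$, so that $\hY(a,\uz)=l_a$ carries no formal variable and every series collapses to its constant term. First I would note that since $A_{t,\td}^\al=0$ unless $(t,\td)=(0,0)$, the operator $\hY(a,\uz)b=ab$ lies in $A$ with no $z,\z$ dependence; hence (GFV1) is immediate because $z^{(\al,\be)}ab\in A((z,\z,|z|^\R))$ trivially (the series is a single monomial), (GFV3) holds as $\hY(a,\uz)\va=a1=a$ with $\lim_{z\to 0}=a$, and (GFV4) holds since $\hY(\va,\uz)=\hY(1,\uz)=\mathrm{id}$ by associativity and unitality. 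For (GFV2), when $A^\al\neq 0$ we have $(\al,\al)\in 2\Z$ by (AH2), so $(\al,\al)/2+t-\td=(\al,\al)/2\in\Z$, as required. The grading compatibility (GFV6) reduces to (AH1): $A^\al(r,s)A^\be\subset A^{\al+\be}$ in degree $(0,0)$, which matches $\Om_{0,0}^\al(r,s)\Om_{0,0}^\be\subset\Om_{0,0}^{\al+\be}$ since $t+t'-r-1=-r-1$ forces $r=-1$ (and likewise $s=-1$) to land in degree $0$. Finally (GFV7) is trivial since for each $\al$ the only nonzero degree is $(0,0)$.

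The substantive axiom is (GFV5), the associativity/commutativity of the three correlation-function expansions. I would take $a_i\in A^{\al_i}$ and $u\in A^\vee$, and compute all three expressions directly. Because $\hY$ is left multiplication with no formal variable, $u(\hY(a_1,\uz_1)\hY(a_2,\uz_2)a_3)=u(a_1a_2a_3)$, $u(\hY(\hY(a_1,\uz_0)a_2,\uz_2)a_3)=u((a_1a_2)a_3)$, and $u(\hY(a_2,\uz_2)\hY(a_1,\uz_1)a_3)=u(a_2a_1a_3)$ are all constants in $\C$ (no dependence on the $z_i$). The candidate generalized correlation function is then the single monomial
\begin{align*}
\mu(z_1,z_2)=z_1^{(\al_1,\al_3)}z_2^{(\al_2,\al_3)}(z_1-z_2)^{(\al_1,\al_2)}\,u(a_1a_2a_3),
\end{align*}
which lies in $\GCor_2$ by construction (a product of powers of $z_1,z_2,z_1-z_2$ with exponents differing by integers, using that $(\al_i,\al_j)\in\Z$ whenever the relevant products are nonzero, as in the Remark after (AH3)). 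The first and second lines of (GFV5) then hold by the definition of the branch expansions in $(\ref{eq_branch})$ together with associativity $(a_1a_2)a_3=a_1(a_2a_3)$; in particular the prefactors $(z_1-z_2)^{(\al_1,\al_2)}$, $z_0^{(\al_1,\al_2)}$ precisely cancel the corresponding factors, leaving the constants $u(a_1a_2a_3)=u((a_1a_2)a_3)$.

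The main obstacle, and the place requiring care, is the third line of (GFV5), which encodes the graded commutativity (AH3). Here the prefactor $(z_2-z_1)^{(\al_1,\al_2)}$ expanded in $\{|z_2|>|z_1|\}$ differs from $(z_1-z_2)^{(\al_1,\al_2)}$ expanded in $\{|z_1|>|z_2|\}$ precisely by the sign $(-1)^{(\al_1,\al_2)}$ arising from the chosen branch in $(\ref{eq_branch})$. Matching the two sides then forces $u(a_2a_1a_3)=(-1)^{(\al_1,\al_2)}u(a_1a_2a_3)$, which is exactly the relation (AH3) $a_2a_1=(-1)^{(\al_1,\al_2)}a_1a_2$ applied inside $a_3$. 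So the third equation holds if and only if the algebra is graded commutative in the sense of (AH3); this is where the hypothesis is genuinely used, and I would present the computation of the two branch expansions side by side to make the sign bookkeeping transparent. Once (GFV5) is established, the conformal statement is immediate: taking $\om=\omb=0$, conditions (1)--(4) in the definition of a generalized full conformal vertex algebra hold vacuously or trivially, since $L_\Om(0)$ and $\Ld_\Om(0)$ act as $0$ on the purely degree-$(0,0)$ space $A$, so $(0,0)$ is an energy-momentum tensor with $c=\bar c=0$.
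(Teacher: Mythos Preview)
Your proposal is correct and follows essentially the same approach as the paper's proof. The paper is terser—it singles out (GFV2) as following from (AH2), verifies (GFV5) via the one-line computation $u(a_1(a_2a_3))=(-1)^{(\al_1,\al_2)}u(a_2(a_1a_3))=u((a_1a_2)a_3)$ using associativity and (AH3), and declares the remaining axioms obvious—whereas you spell out the choice of $\mu$ and the branch-sign bookkeeping explicitly; but the substance is identical.
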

\begin{proof}
Since $(\al,\al) \in 2\Z$ for any $\al \in M_{A,H}$,
(GFV2) holds. Let $a_i \in A^{\al_i}$ and $u\in A^\vee$.
Then,
$u(a_1(a_2a_3))=(-1)^{(\al_1,\al_2)}u(a_2(a_1a_3))=u((a_1a_2)a_3)$ by (AH3),
which implies that (GFV5) holds. The rest is obvious.
\end{proof}

For even AH pairs $(A,H_A)$ and $(B,H_B)$,
a homomorphism of even AH pairs is a pair $(f,f')$ of maps $f:A \longrightarrow B$ and $f':H_A\longrightarrow H_B$ such that $f$ is an algebra homomorphism and $f'$ an isometry such that $f(A^\alpha)\subset B^{f'(\alpha)}$ for all $\alpha\in H_A$.
We denote by $\AH$ the category of even AH pairs.
Then, Proposition \ref{AH_general} gives a functor from the category of even AH pairs to the category of
generalized full vertex algebras,
denoted by $i: \AH \rightarrow \GFA$.
In the rest of this section, we construct an adjoint functor following \cite{M1}.

Let $(\Om,H)$ be a generalized full vertex algebra.
Set $A_\Om=\ker D \cap \ker \D \cap \Om_{0,0}$.
By Proposition \ref{skew_symmetry},
$D$ and $\D$ act as derivations of the algebra.
Thus, $\ker D \cap \ker \D$ is a subalgebra of $\Om$.
If $a\in \ker D \cap \ker \D$, by Proposition \ref{skew_symmetry} again,
$\hY(a,\uz)=a(-1,-1) \in \End \Om$,
that is, the vertex operator is independent of the position.
By (GFV6),  $A_\Om$ is a subalgebra of $\ker D \cap \ker \D$ and $\Om$.
%
%By Lemma \ref{general_commutator},
%$\om(1,-1)A_\Om=\omb(-1,1)A_\Om=0$.
%Thus,
%$A_\Om$ is an $H$-graded subspace of $\bigoplus_{\al \in H} \Om_{0,0}^\al$.
Set $A_\Om^\al=A_\Om \cap \Om_{0,0}^\al$ for $\al \in H$.
Define a product on $A_\Om$ by
$$a\cdot b=a(-1,-1)b,$$
for $a,b \in A_\Om$.
%
%
%We consider the following assumption on a generalized full vertex algebra $(\Om,H)$:
%\begin{enumerate}
%\item
%$A_{\Om}^0=\C\va$;
%\item
%
%\end{enumerate}
%A generalized full vertex algebra which satisfies the above conditions is called a good
%generalized full vertex algebra.
Then, we have:
\begin{prop}\label{omega_AH}
For a generalized full vertex algebra $(\Om,H)$,
$(A_\Om,H)$ is an even AH-pair.
\end{prop}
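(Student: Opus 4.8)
The plan is to verify that the pair $(A_\Om, H)$ satisfies the three axioms (AH1), (AH2) and (AH3) of an even AH pair, together with the underlying requirement that $A_\Om$ is a unital associative $\C$-algebra graded by $H$. The grading is already in place: we have set $A_\Om^\al = A_\Om \cap \Om_{0,0}^\al$, and the product is $a\cdot b = a(-1,-1)b$. I would first record the easy structural facts established just before the statement: by Proposition \ref{skew_symmetry}, $D$ and $\D$ act as derivations, so $\ker D \cap \ker \D$ is a subalgebra, and for $a \in \ker D \cap \ker \D$ the operator $\hY(a,\uz) = a(-1,-1)$ is position-independent. Hence the product $\cdot$ stays within $A_\Om$ by (GFV6) (which forces $A_\Om^\al(-1,-1)A_\Om^\be \subset A_\Om^{\al+\be} \cap \Om_{0,0}$), giving (AH1), with unit $\va = \bm{1} \in A_\Om^0$.

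For (AH2) I would argue that if $A_\Om^\al \neq 0$ then there is a nonzero $a \in \Om_{0,0}^\al$, so by (GFV2) applied with $t = \td = 0$ we get $(\al,\al)/2 \in \Z$, i.e. $(\al,\al) \in 2\Z$. The commutativity-up-to-sign axiom (AH3) is the heart of the matter and is where the monodromy of the generalized full vertex algebra enters. I would extract it from the skew-symmetry relation in Proposition \ref{skew_symmetry}(4): for $a \in \Om^\al$, $b \in \Om^\be$ one has $z^{(\al,\be)}\hY(a,\uz)b = \exp(zD + \z\D)\lim_{z\to -z} z^{(\al,\be)}\hY(b,\uz)a$. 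Since $a,b \in A_\Om = \ker D \cap \ker \D \cap \Om_{0,0}$, the operators $\hY(a,\uz) = a(-1,-1)$ and $\hY(b,\uz) = b(-1,-1)$ are constant in $\uz$, so the surviving factor $z^{(\al,\be)}$ must match on both sides and the branch choice in the formal limit $\lim_{z\to -z}$ produces exactly the sign $(-1)^{(\al,\be)}$; the exponential $\exp(zD+\z\D)$ collapses to the identity because $Da = \D a = 0$ and likewise for $b$. This yields $a(-1,-1)b = (-1)^{(\al,\be)} b(-1,-1)a$, which is (AH3).

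Associativity is not listed among (AH1)--(AH3) but is part of the assertion that $(A_\Om,H)$ is an (associative) algebra; I would deduce it from (GFV5), exactly as in the proof of Proposition \ref{AH_general} read backwards — the equality of the iterate and product expansions of the generalized correlation function, specialized to position-independent vertex operators, gives $(ab)c = a(bc)$. The main obstacle I anticipate is the careful bookkeeping in (AH3): one must confirm that the branch convention fixed in (\ref{eq_branch}) is precisely what converts the formal $z^{(\al,\be)}$ in the skew-symmetry identity into the scalar sign $(-1)^{(\al,\be)}$, and that no extra factors of $z$ or $\z$ survive once both vertex operators are constant. Here the observations that $(\al,\be) \in \Z$ (which follows from (AH1) and (AH2) as in the Remark preceding the statement) and that $D,\D$ annihilate elements of $A_\Om$ are exactly what make the manipulation clean, so I expect the argument to close once these are assembled in the right order.
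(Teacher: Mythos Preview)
Your proposal is correct and follows essentially the same approach as the paper: verify (AH1)--(AH3) and associativity one by one from the (GFV) axioms. The only cosmetic difference is that you derive (AH3) from the skew-symmetry identity in Proposition~\ref{skew_symmetry}(4) and obtain $(\al,\be)\in\Z$ via (GFV2), whereas the paper reads both (AH3) and associativity off the single relation $a(bc)=(-1)^{(\al,\be)}b(ac)=(ab)c$ coming from (GFV5) and extracts $(\al,\be)\in\Z$ from (GFV1) applied to the constant series $\hY(a,\uz)b$.
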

\begin{proof}
%Let $0\neq a \in A_\Om^\al$ and $0\neq b \in A_\Om^\be$.
%Since $a \in \Om_{0,0}^\al$,
%by (GFV2), $(\al,\al)\in 2\Z$.
By (GFV3) and (GFV4), $\va$ is unity and (AH1) holds.
If $\Om_{0,0}^{\al}\neq 0$, then by (GFV2) $(\al,\al) \in 2\Z$, which implies (AH2).
Assume that $a\cdot b\neq 0$.
Since $a\cdot b=\hY(a,\uz)b \in z^{(\al,\be)}\Om((z,\z,|z|))$,
$(\al,\be) \in \Z$.
By (GFV5), $a(bc)=(-1)^{(\al,\be)} b(ac)=(ab)c$ for any $c \in A_\Om$.
Thus, $A_\Om$ is an even AH pair.
\end{proof}

This correspondence
$$A: \GFA \rightarrow \AH,\; (\Om,H) \mapsto (A_\Om,H)
$$
is a functor since a morphism of generalized full vertex algebras preserves the vacuum vector $\va$,
thus, commutes with $D,\D$.
\begin{prop}
The above functor $A: \GFA \rightarrow \AH$ is right adjoint to the inclusion functor
$i:\AH \rightarrow \GFA$.
\end{prop}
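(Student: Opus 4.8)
The plan is to verify the adjunction by exhibiting a natural bijection
\[
\Hom_{\GFA}(i(A,H_A),(\Om,H_\Om)) \cong \Hom_{\AH}((A,H_A),(A_\Om,H_\Om)),
\]
natural in both arguments. First I would observe that any morphism in $\GFA$ consists of a pair $(\psi,\psi')$ where $\psi'$ is an isometry of the grading spaces and $\psi$ is a generalized full vertex algebra map; since $i(A,H_A)$ has grading concentrated in degree $(t,\td)=(0,0)$ and vertex operator $\hY(a,\uz)=l_a$ independent of position, I would show that $\psi$ automatically lands in the ``position-independent'' part of $\Om$. Concretely, because $\psi$ preserves $\va$ and intertwines the products, and because every element of $i(A,H_A)$ is killed by $D$ and $\D$ (the vertex operators are constant), the image $\psi(a)$ must satisfy $D\psi(a)=\psi(Da)=0$ and $\D\psi(a)=0$, and moreover $\psi(a)\in\Om_{0,0}^{\psi'(\al)}$ by grading-preservation. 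Thus $\psi(a)\in A_\Om$ for every $a\in A$, so $\psi$ factors through the inclusion $A_\Om\hookrightarrow \Om$.

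Next I would check that the corestricted map $(\psi|_A,\psi')\colon(A,H_A)\to(A_\Om,H_\Om)$ is a morphism of even AH pairs: $\psi'$ is an isometry by hypothesis, and $\psi|_A$ is an algebra homomorphism because $\psi(a\cdot b)=\psi(a(-1,-1)b)=\psi(a)(-1,-1)\psi(b)=\psi(a)\cdot\psi(b)$, using that $\psi$ intertwines $\hY$ and that the product on $A_\Om$ is defined through $a(-1,-1)b$ as in Proposition~\ref{omega_AH}. The grading condition $\psi(A^\al)\subset A_\Om^{\psi'(\al)}$ follows from the grading-preservation of $\psi'$. This defines the forward map of the claimed bijection.

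Conversely, given an AH-pair morphism $(f,f')\colon(A,H_A)\to(A_\Om,H_\Om)$, I would compose with the inclusion of even AH pairs $(A_\Om,H_\Om)\hookrightarrow i(\Om,H_\Om)$ under the functor $i$; since $i$ is a functor, $i(f,f')$ gives a generalized full vertex algebra morphism $i(A,H_A)\to i(A_\Om,H_\Om)$, and I would then postcompose with the canonical $\GFA$-morphism $i(A_\Om,H_\Om)\to(\Om,H_\Om)$ induced by the subalgebra inclusion $A_\Om\subset\Om$ (whose vertex operators on $A_\Om$ really are position-independent by Proposition~\ref{skew_symmetry}, so this is a valid morphism in $\GFA$). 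The two constructions are visibly mutually inverse, since both reduce to ``remember the underlying map on elements.''

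The main obstacle I anticipate is verifying that the counit $i(A_\Om,H_\Om)\to(\Om,H_\Om)$ is genuinely a morphism of generalized full vertex algebras, i.e.\ that the inclusion $A_\Om\subset\Om$ intertwines $\hY$ even though on $i(A_\Om)$ the operator $\hY$ is constant while on $\Om$ it is a priori a full power series. This is exactly where Proposition~\ref{skew_symmetry} is essential: for $a\in A_\Om=\ker D\cap\ker\D\cap\Om_{0,0}$ one has $\hY(a,\uz)=a(-1,-1)$ in $\End\Om$, with no higher modes, so the product in $\Om$ restricted to $A_\Om$ agrees with the constant product used to define $i(A_\Om,H_\Om)$. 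Once this identification is in place, naturality in both variables is a routine diagram chase, and the adjunction follows.
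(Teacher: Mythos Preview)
Your proposal is correct and follows essentially the same approach as the paper: both arguments show that any $\GFA$-morphism $i(A,H_A)\to(\Om,H_\Om)$ has image contained in $A_\Om=\ker D\cap\ker\D\cap\Om_{0,0}$ (using that morphisms commute with $D,\D$ and preserve the $\R^2$-grading), and then set up the natural bijection by corestriction and inclusion. The paper is terser and defers the remaining verifications to \cite[Theorem~3.1]{M1}, whereas you spell out the converse direction and correctly isolate the key point that the inclusion $A_\Om\hookrightarrow\Om$ is a $\GFA$-morphism because $\hY(a,\uz)=a(-1,-1)$ on $\Om$ for $a\in A_\Om$ (via Proposition~\ref{skew_symmetry}).
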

\begin{proof}
Let $(A,H)$ be an even AH pair
and $(\Om,H')$ a generalized full vertex algebra
and $(f,f'):(A,H) \rightarrow (\Om,H')$ a generalized full vertex algebra homomorphism.
Since $DA=\D A=0$ and $f(1)=\va$, thus, $f$ commutes with $D,\D$,
the image of $f$ is in $\ker D \cap \ker \D$.
Since $f$ preserves the $\R^2$-grading of the generalized full vertex algebras,
$f(A) \subset \ker D \cap \ker \D \cap \Om_{0,0}$.
Thus, the restriction gives a generalized full vertex algebra homomorphism $(f,f'): A \rightarrow A_\Om$, which is an even AH pair homomorphism.
Since the rest of the argument is completely similar
to the proof of \cite[Theorem 3.1]{M1},
the details are left to the reader.
\end{proof}

\subsection{Adjoint functor II -- Lattice full vertex algebra revisited}\label{sec_adjoint2}
A structure of AH pairs is studied in \cite{M1}. We briefly recall it.
Let $H$ be a finite-dimensional real vector space equipped with a non-degenerate bilinear form $(-,-): H\times H\rightarrow \R$.

A {\it good AH pair} is an even AH pair $(A,H)$ such that:
\begin{enumerate}
\item[GAH1)]
$A^0=\C \va$;
\item[GAH2)]
$ab \neq 0$ for any $\al,\be\in H$ and $a\in A^\al\setminus \{0\}$, $b\in A^\be \setminus \{0\}$.
\end{enumerate}
A {\it lattice pair} is a good AH pair such that
\begin{enumerate}
\item[LP)]
$A^{-\al} \neq 0$ if $A^{\al}\neq 0$ for $\al \in H$.
\end{enumerate}

For a good AH pair $(A,H)$,
set $M_{A,H}=\{\al \in H\;|\; A^\al \neq 0 \}$.
Then, by (GAH1) and (GAH2), $0 \in M_{A,H}$
and $\al+\be \in M_{A,H}$ for any $\al,\be\in M_{A,H}$.
Thus, $M_{A,H}$ is a submonoid of $H$.
A good AH pair $(A,H)$ is a lattice pair
if and only if $M_{A,H}$ is a subgroup of $H$.

We also introduce the notion of an even $H$-lattice (see section 2.2 in \cite{M1}).
An even $H$-lattice is a subgroup $L \subset H$
such that $(\al,\al)\in 2\Z$ for any $\al \in L$.
The subgroup $M_{A,H} \subset H$ for a lattice pair $(A,H)$
is an example of an even $H$-lattice by (AH4).

Conversely, Let $L\subset H$ be an even $H$-lattice
and $Z^2(L,\C^\times)$
the $\C^\times$-coefficient two-cocycles of 
the abelian group $L$.
It is not hard to show that there exists $\epsilon \in Z^2(L,\C^\times)$
such that $\epsilon(\al,0)=\epsilon(0,\al)=1$ and $\epsilon(\al,\be)\epsilon(\be,\al)=(-1)^{(\al,\be)}$ for any $\al,\be\in L$
(see \cite{FLM,M1}).
Then, define a new product on the group algebra $\C[L]=\bigoplus_{\al \in L} \C e_\al$ by
$$
e_\al e_\be =\epsilon(\al,\be)e_{\al+\be}.
$$
Since $\epsilon$ is a two-cocycle, the product is associative.
Denote by $\C[\hat{L}]$ the associative algebra.
By construction, $\C[\hat{L}]$ is a lattice pair,
which is a generalization of the twisted group algebra constructed in \cite{FLM}.
In fact, any lattice pair $(A,H)$ is isomorphic to $\C[\hat{M_{A,H}}]$.
More precisely, we have (see section 2.1 and 2.2 in \cite{M1}):
\begin{prop}\label{isom_lattice}
Let $(A,H)$ be a lattice pair and
$M_{A,H}$ be an even $H$-lattice associated with the lattice pair.
Then, $(A,H)$ is isomorphic to $(\C[\hat{M_{A,H}}],H)$ as even AH pairs.
\end{prop}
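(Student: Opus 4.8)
The plan is to prove that any lattice pair $(A,H)$ is isomorphic, as an even AH pair, to the twisted group algebra $(\C[\hat{M_{A,H}}],H)$ associated with its own support lattice $L = M_{A,H}$. The core idea is that a lattice pair is ``as free as possible'' modulo the cocycle ambiguity forced by the grading and by (AH3): the properties (GAH1) and (GAH2) guarantee that each graded piece $A^\al$ is at most one-dimensional and that products of nonzero homogeneous elements are nonzero, so the multiplication is completely determined by a cocycle, which the classification of $Z^2(L,\C^\times)$ pins down up to coboundary.

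First I would establish that each homogeneous component is one-dimensional. Since $(A,H)$ is a good AH pair, (GAH1) gives $A^0 = \C\va$. For $\al \in M_{A,H} = L$, pick a nonzero $v_\al \in A^\al$; by the lattice-pair condition (LP) there is a nonzero $w \in A^{-\al}$, and by (GAH2) the product $v_\al w \in A^0 = \C\va$ is nonzero, hence a nonzero scalar multiple of $\va$. This shows $v_\al$ is invertible in $A$, and a standard argument then forces $\dim_\C A^\al = 1$: if $v, v' \in A^\al$ were linearly independent, then $v w$ and $v' w$ would be linearly independent elements of the one-dimensional space $A^0$, a contradiction. So one may choose a basis $\{v_\al\}_{\al \in L}$ with $v_0 = \va$ and each $v_\al \neq 0$.

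Next I would extract the cocycle. For $\al,\be \in L$, the product $v_\al v_\be$ lies in $A^{\al+\be}$, which is spanned by $v_{\al+\be}$ and is nonzero by (GAH2); hence there is a unique $\epsilon(\al,\be) \in \C^\times$ with $v_\al v_\be = \epsilon(\al,\be)\,v_{\al+\be}$. Associativity of $A$ translates directly into the two-cocycle condition on $\epsilon$, and $v_0 = \va$ gives the normalization $\epsilon(\al,0) = \epsilon(0,\al) = 1$; meanwhile (AH3) yields $\epsilon(\al,\be)\epsilon(\be,\al) = (-1)^{(\al,\be)}$. Thus $\epsilon \in Z^2(L,\C^\times)$ is a normalized cocycle of exactly the type used to define $\C[\hat L]$. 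Defining $\phi: \C[\hat{M_{A,H}}] \to A$ by $e_\al \mapsto v_\al$ gives, by construction, an algebra isomorphism that respects the $H$-grading, with $\phi' = \mathrm{id}_H$ the (trivially isometric) isomorphism on the grading group; this is the desired isomorphism of even AH pairs.

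The main obstacle is not any single hard computation but rather the bookkeeping needed to confirm that the freedom in choosing the basis $\{v_\al\}$ and the freedom in the cocycle $\epsilon$ match up correctly with the construction of $\C[\hat L]$ in the paper. Concretely, one must check that the particular cocycle extracted from $A$ differs from the reference cocycle used to build $\C[\hat{M_{A,H}}]$ only by a coboundary, so that a rescaling $v_\al \mapsto \lambda_\al v_\al$ realizes the isomorphism; this is exactly the point where the result, as the excerpt notes, reduces to the twisted-group-algebra theory of \cite{M1} and \cite{FLM}. Since the statement explicitly cites sections 2.1 and 2.2 of \cite{M1}, I expect the proof to invoke those classification results for the cohomological step rather than rederiving them, and to carry out only the translation from the axioms (GAH1), (GAH2), (LP), and (AH3) into the cocycle data above.
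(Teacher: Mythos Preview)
Your proposal is correct and follows the standard approach: use (GAH2) and (LP) to show each $A^\al$ is one-dimensional (the multiplication-by-$w$ map $A^\al \to A^0$ is linear and injective by (GAH2), so $\dim A^\al \le \dim A^0 = 1$), extract the structure cocycle $\epsilon \in Z^2(L,\C^\times)$ from the products of chosen basis vectors, and then invoke the fact that for an abelian group $L$ with divisible coefficient group $\C^\times$ the cohomology class of a normalized two-cocycle is determined by its commutator pairing, which here is fixed as $(-1)^{(\al,\be)}$ by (AH3). The paper does not actually give a proof of this proposition---it simply cites sections~2.1 and~2.2 of \cite{M1}---so your sketch is in fact more detailed than what appears here, and it matches the argument one expects to find in \cite{M1}.
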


A category of good AH pairs (resp. a category of lattice pairs) is a full subcategory of $\AH$ whose objects are good AH pairs
(reps. lattice pairs), which is denoted by $\gAH$ (resp. $\LP$).
Let $i: \LP\rightarrow \gAH$ be the inclusion functor.
We will construct an adjoint functor $\lat: \gAH \rightarrow \LP$.
Let $(A,H)$ be a good AH pair
and set $L_{A,H} = \{\al \in M_{A,H}\;|\; -\al \in M_{A,H}  \} = M_{A,H} \cap (-M_{A,H})$.
Then, $L_{A,H}$ is a subgroup of $H$, thus, an even $H$-lattice.
Set $A^\lat =\bigoplus_{\al \in L_{A,H}} A^\al$.
Since $L_{A,H}$ is a subgroup, $A^\lat$ is a subalgebra of $A$ as an AH pair.
In fact, the correspondence $(A,H) \mapsto (A^\lat,H)$ define the functor
$\lat:\gAH\rightarrow \LP$.
Hence we have:
\begin{prop}[Proposition 2.5 in \cite{M1}]\label{lattice_functor}
The functor $\lat:\gAH\rightarrow \LP$ is right adjoint to the inclusion functor
$i: \LP\rightarrow \gAH$.
\end{prop}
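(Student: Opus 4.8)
The plan is to establish the adjunction through a natural bijection of hom-sets: for every lattice pair $(B,H_B)$ and every good AH pair $(A,H_A)$, I will exhibit a bijection between the set of $\gAH$-morphisms $i(B)\to A$ and the set of $\LP$-morphisms $(B,H_B)\to \lat(A)=(A^\lat,H_A)$, natural in both variables. Since $i$ is the inclusion of a full subcategory, producing such a natural hom-set isomorphism is exactly what it means for $\lat$ to be right adjoint to $i$, so this suffices (one could equivalently phrase it via the counit $\iota\colon i(\lat(A))\hookrightarrow A$ and the triangle identities, but the hom-set formulation is cleaner here).

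The crux, which I would isolate as a lemma, is that any $\gAH$-morphism $(f,f')\colon i(B)\to A$ has image inside $A^\lat$. To prove it, fix $\al\in M_{B,H_B}$. Because $B$ is a lattice pair, $M_{B,H_B}$ is a subgroup of $H_B$, so $-\al\in M_{B,H_B}$ and I may choose nonzero $v\in B^\al$ and $w\in B^{-\al}$. By (GAH2) the product $vw$ is nonzero, and by (GAH1) it lies in $B^0=\C\va$, hence $vw=c\va$ with $c\in\C^\times$. Applying the algebra homomorphism $f$ gives $f(v)f(w)=c\va\neq 0$, so $f(v)\neq 0$ and $f(w)\neq 0$. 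Since $f(v)\in A^{f'(\al)}$ and $f(w)\in A^{f'(-\al)}=A^{-f'(\al)}$, both $A^{f'(\al)}$ and $A^{-f'(\al)}$ are nonzero, i.e. $f'(\al)\in M_{A,H_A}\cap(-M_{A,H_A})=L_{A,H_A}$. Therefore $f(B^\al)\subset A^{f'(\al)}\subset A^\lat$ for every $\al\in M_{B,H_B}$, and as $B=\bigoplus_{\al\in M_{B,H_B}}B^\al$ we conclude $f(B)\subset A^\lat$.

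Granting the lemma, I would define the two maps of the bijection. Corestriction sends $(f,f')$ to $(\bar f,f')$, where $\bar f\colon B\to A^\lat$ is $f$ with codomain restricted to $A^\lat$; this is a $\LP$-morphism since $f'\colon H_B\to H_A$ remains an isometry and $\bar f(B^\al)\subset (A^\lat)^{f'(\al)}$ by the lemma. In the other direction, postcomposition with the inclusion $\iota\colon A^\lat\hookrightarrow A$ (whose $H$-component is $\mathrm{id}_{H_A}$, so that $(\iota,\mathrm{id})$ is a genuine $\gAH$-morphism) sends an $\LP$-morphism $(g,g')$ to $(\iota\circ g,g')$. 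These are mutually inverse: $\iota\circ\bar f=f$ because $f$ already factors through $A^\lat$, and $\overline{\iota\circ g}=g$ because $\iota\circ g$ takes values in $A^\lat$.

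Finally I would check naturality, which is formal bookkeeping: for a $\gAH$-morphism $A\to A'$ the functor $\lat$ acts by restricting the underlying algebra map to $A^\lat\to(A')^\lat$ (well-defined by the same argument as the lemma) and leaving the $H$-data untouched, so the naturality squares in $A$ and in $B$ both reduce to the identity ``corestrict and compose in either order.'' The only genuine obstacle is the lemma: everything else is routine. I expect the real content to lie in forcing $f(B^\al)\neq 0$ via the nondegenerate pairing $B^\al\times B^{-\al}\to B^0=\C\va$; this is precisely where the lattice-pair hypothesis (that $M_{B,H_B}$ is a group, so $-\al$ is available) together with the nondegeneracy axiom (GAH2) are indispensable, and where the analogue for a general good AH pair would fail.
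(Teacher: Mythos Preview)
Your proof is correct. The paper does not give its own proof of this proposition but simply cites \cite[Proposition~2.5]{M1}; your argument via the hom-set bijection, with the key lemma that any $\gAH$-morphism out of a lattice pair lands in $A^\lat$ (forced by pairing $B^\al$ against $B^{-\al}$ through (GAH2) and $B^0=\C\va$), is precisely the standard and expected approach.
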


As an application, we give examples of full vertex algebras constructed from even lattices.
An {\it integral lattice} of rank $n \in \mathbb{N}$ is a rank $n$ free abelian group $L$
equipped with a $\mathbb{Z}$-valued symmetric bilinear form
$$( \,\;,\; ):L \times L \rightarrow \mathbb{Z}.$$
An integral lattice $L$ is said to be {\it even} if
$$(\al,\al) \in 2\Z \fora \al \in L,$$
and {\it non-degenerate} if
the induced bilinear form on the real vector space $L \otimes_\Z \R$ is non-degenerate,
and {\it positive-definite} if
$$(\al,\al)>0 \fora \al \in L\setminus \{0\}.$$
Let $L$ be an even non-degenerate lattice.
Since $L$ is an even $L\otimes_\Z \R$-lattice,
a lattice pair $\C[\hat{L}]$ can be constructed as above.
Since $\C[\hat{L}]$ is an even AH pair, it is a generalized full conformal vertex algebra by Proposition \ref{AH_general}.
Thus, by Theorem \ref{construction}, for any $p \in P(L\otimes_\Z \R)$,
$F_{\C[\hat{L}],L\otimes_\Z \R,p}$ is a full conformal vertex algebra.
We denote it by $F_{L,p}$ and call {\it a lattice full vertex algebra}.
\begin{rem}
It is natural in physics to choose the projection $p \in P(L\otimes_\Z \R)$ such that $\ker p$ is a negative-definite
subspace in $L\otimes_\Z \R$.
If the signature of $L\otimes_\Z \R$ is $(n,m)$,
then such projections are parametrized by
the orthogonal Grassmannian
$$
\mathrm{O}(n,m)/ \mathrm{O}(n) \times \mathrm{O}(m),
$$
where $\mathrm{O}(n,m)$ is an orthogonal group with
the signature $(n,m)$.
It is noteworthy that, in this case, the spectrum of the lattice full vertex algebra is compact.
Thus, we constructed a continuous family of compact full vertex algebras.
We study these algebras in more detail in section \ref{sec_toroidal}.
\end{rem}
To summarize, we constructed two adjoint functors and one equivalence of categories in this section:

\begin{align*}
\xymatrix{
\LP \ar@<1.2ex>[r]^{i}
& \gAH \ar@<1.2ex>[l]^{-^\lat}_{\scriptscriptstyle\boldsymbol{\top}}
},\\
\xymatrix{
\AH \ar@<1.2ex>[r]^{\;\;\;i}
& \GFA \ar@<1.2ex>[l]^{\;\;\;\;\;\;A_{-}}_{\;\;\;\;\;\;\;\;\scriptscriptstyle\boldsymbol{\top}}
},\\
\xymatrix{
\OHp \ar@<1.2ex>[r]^{F_{-}}
& \FHp \ar@<1.2ex>[l]^{\; \Om}_{\scriptscriptstyle\boldsymbol{\cong}}
}.
\end{align*}

\subsection{Remark on vertex algebras}\label{sec_Remark_vertex}
In this section, we discuss the equivalence of categories
in the case that a full $\mathcal{H}$-vertex algebra consists of only chiral vectors.

Let $V$ be a $\Z$-graded vertex algebra
and $H$ a real subspace of $V_1$ such that:
\begin{enumerate}
\item[HS1)]
$h(1)h'\in \mathbb{\R}\mathbf{1}$ for any $h,h'\in H$;
\item[HS2)]
For any $h,h'\in H$,
$h(n)h'=0$ if $n=0$ or $n\geq 2$;
\item[HS3)]
The bilinear form $(-,-)$ on $H$ defined by $h(1)h'=(h,h')\mathbf{1}$ for $h,h'\in H$ is non-degenerate.
\end{enumerate}
Then, as in Section \ref{sec_FHp}, $H$ generates a representation of the Heisenberg Lie algebra.
Set $$
(\Om_{V,H})_{t}^\al=\{v \in V_{t+\frac{(\al,\al)}{2}}^\al\;|\; h(0)v=(\al,h)v, h(n)v=0 \text{ for any }h \in H \text{ and } n \geq 1 \}
$$
for $\al \in H$ and $t \in \R$.
The above pair $(V,H)$ is said to be an $\mathcal{H}$-vertex algebra
if the following conditions hold:
\begin{enumerate}
\item[VH1)]
$h(0)$ is semisimple on $V$ with real eigenvalues for any $h\in H$;
\item[VH2)]
For any $\al \in H$,
there exists $N \in \Z$ such that 
$V_{t}^\al=0$ for any $t \leq N$.
\end{enumerate}
By Proposition \ref{graded_vertex},
an $\mathcal{H}$-vertex algebra is a full $\mathcal{H}$-vertex algebra.
A category of $\mathcal{H}$-vertex algebras
is a full subcategory of $\FHp$ whose objects are $\mathcal{H}$-vertex algebras.
We denoted  the category of $\mathcal{H}$-vertex algebras by $\VHp$.
We also denote the category of generalized vertex algebras by $\GVA$.

Let $(V,H)$ be an $\mathcal{H}$-vertex algebra.
Then, by the proof of Theorem \ref{vacuum_space},
$(\Om_{V,H},H)$ is a generalized vertex algebra.
Furthermore, the charge structure of $\Om_{V,H}$ is the identical projection $\mathrm{id}_H \in \End\;H$
since all vectors in $V$ are chiral.
Thus, $(V,H)$ can be recovered from $\Om_{V,H}$.
Let $V: \GVA \rightarrow \VHp$ be the functor defined by $V_{\Om,H}=F_{\Om,H,\mathrm{id}_H}$
for a generalized vertex algebra $(\Om,H)$.
Then, we have:
\begin{prop}\label{vacuum_vertex}
The restriction of the functor $\Om: \VHp \rightarrow \GVA$ gives an equivalence of the categories
and the inverse functor is given by $V: \GVA \rightarrow \VHp$.
\end{prop}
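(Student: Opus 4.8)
The plan is to deduce this from the equivalence $\Om:\FHp\rightleftarrows\OHp:F$ of Theorem \ref{equivalence} by restriction, after realizing both $\VHp$ and $\GVA$ as full subcategories cut out by a ``holomorphicity'' condition. The key observation is that a generalized vertex algebra is precisely a generalized full vertex algebra whose vertex operators are $\z$-independent (equivalently $\D=0$ and the $\R^2$-grading is concentrated on $\td=0$), exactly as a $\Z$-graded vertex algebra is a holomorphic full vertex algebra in Proposition \ref{graded_vertex}; its correlation functions in (GFV5) then automatically lie in $\GCor_2^{\text{hol}}\subset\GCor_2$. For such an object the tautological charge structure is $p=\mathrm{id}_H$, for which $\p=0$, $H_r=0$ and $(-,-)_\lat=(-,-)$. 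This gives a fully faithful embedding $\iota:\GVA\hookrightarrow\OHp$, $(\Om,H)\mapsto((\Om,H),\mathrm{id}_H)$, whose essential image is the full subcategory of holomorphic objects with charge $\mathrm{id}_H$; and $\VHp\subset\FHp$ is full by definition. So it suffices to show that the inverse equivalences $\Om$ and $F$ interchange these two full subcategories.

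First I would check that $\Om$ sends $\VHp$ into $\iota(\GVA)$. For $(V,H)\in\VHp$, viewing $V$ as a full vertex algebra (Proposition \ref{graded_vertex}) all its vectors are holomorphic, so $V_{h,\h}=0$ unless $\h=0$; hence $H\subset V_1=V_{1,0}$, which forces $H_r=H\cap\ker D=0$ and the charge structure to be $p=\mathrm{id}_H$. With $H_r=0$ the correction terms defining $\D_\Om$ and $\Ld_\Om(0)$ vanish, so on $\Om_{V,H}$ we have $\D_\Om=\D=0$ and the grading is concentrated on $\td=0$; by the construction of $\hY$ in the proof of Theorem \ref{vacuum_space} the resulting vertex operators are $\z$-independent and the correlation functions are holomorphic. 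Thus $(\Om_{V,H},H)$ is a generalized vertex algebra with charge $\mathrm{id}_H$, i.e. an object of $\iota(\GVA)$.

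Next I would check that $F$ sends $\iota(\GVA)$ into $\VHp$ and that this restriction is the functor $V$. For a generalized vertex algebra $(\Om,H)$, regarded as a generalized full vertex algebra, $F_{\Om,H,\mathrm{id}_H}=G_{H,\mathrm{id}_H}\otimes_{\Delta H}\Om$ by Theorem \ref{construction}. Since $p=\mathrm{id}_H$ gives $\p=0$, all anti-holomorphic modes and the factor $\z^{\p\al}$ in the construction of $G_{H,\mathrm{id}_H}$ drop out, so $G_{H,\mathrm{id}_H}$ is holomorphic and concentrated on $\h=0$; tensoring with the likewise holomorphic $\Om$ and restricting to $\Delta H$ yields a full vertex algebra concentrated on $\h=0$, hence with $\D=0$ and $\z$-independent vertex operators. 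Such a full vertex algebra equals its own $\ker\D$, which is a $\Z$-graded vertex algebra by Proposition \ref{vertex_algebra}. As $F_{\Om,H,\mathrm{id}_H}$ is already a full $\mathcal{H}$-vertex algebra with $H_r=0$, the axioms (H1)--(H4) collapse to (HS1)--(HS3) and (FH1),(FH2) to (VH1),(VH2); hence $V_{\Om,H}=F_{\Om,H,\mathrm{id}_H}$ is an $\mathcal{H}$-vertex algebra.

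Finally, since $F\circ\Om\cong\mathrm{id}_{\FHp}$ and $\Om\circ F\cong\mathrm{id}_{\OHp}$, these natural isomorphisms restrict to the full subcategories above, showing that $\Om|_{\VHp}$ and $F|_{\iota(\GVA)}=V$ are mutually inverse equivalences; composing with the identification $\iota(\GVA)\cong\GVA$ yields the claim. I expect the only real work to be the holomorphicity bookkeeping of the two middle paragraphs — namely that $p=\mathrm{id}_H$ is forced in the $\Om$-direction and forces $\D=0$ in the $F$-direction, so that holomorphic objects correspond to holomorphic objects — while the passage from the ambient equivalence to the subcategories is purely formal.
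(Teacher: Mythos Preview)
Your proposal is correct and takes essentially the same approach as the paper: the paper's argument is the brief paragraph preceding the proposition, which observes that for an $\mathcal{H}$-vertex algebra all fields are holomorphic so the charge structure is forced to be $\mathrm{id}_H$, and then defines $V_{\Om,H}=F_{\Om,H,\mathrm{id}_H}$ and invokes the ambient equivalence of Theorem~\ref{equivalence}. You have simply spelled out in more detail the bookkeeping that $\Om$ and $F$ interchange the holomorphic full subcategories, which the paper leaves implicit.
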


\section{Current-current deformation}
In this section, we define and study current-current deformations of full $\mathcal{H}$-vertex algebras.

Let $(F,H,p_0)$ be a full $\mathcal{H}$-vertex algebra.
%and $p_0$ the projection of $H$ onto $H_l=H\cap \ker \D$.
For $p\in P(H)$, 
set $F_p=G_{H,p} \otimes_{\Delta H} \Om_{F,H}$.
Then, by Theorem \ref{construction}, $F_p$ is a full $\mathcal{H}$-vertex algebra.
Thus, we have a family of full $\mathcal{H}$-vertex algebras parametrized by $P(H)$.
By Corollary \ref{decomposition}, $F_{p_0}$ is isomorphic to $F$ as a full $\mathcal{H}$-vertex algebra.

Let $O(H;\R)$ be the orthogonal group of the real vector space $(H,(-,-)_\lat)$.
Then, $O(H;\R)$ acts on $P(H)$ by
$\si \cdot p=\si p \si^{-1}$ for $\si \in O(H;\R)$ and $p \in P(H)$.
From the elementary linear algebra, the following lemma follows:
\begin{lem}\label{transitive_proj}
For projections $p,p' \in P(H)$, the following conditions are equivalent:
\begin{enumerate}
\item
There exits $\si \in O(H;\R)$ such that $\si \cdot p=p'$.
\item
The signature of the real spaces $\ker p$ and $\ker p'$ are the same.
\end{enumerate}
\end{lem}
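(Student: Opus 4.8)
The plan is to prove the equivalence by establishing both implications between the two conditions, treating the statement as a fact about projections on a real vector space with a (generally indefinite) non-degenerate symmetric bilinear form $(-,-)_\lat$. Recall that $P(H)$ consists of projections $p$ whose complementary images $\ker(1-p)$ and $\ker(p)$ are mutually orthogonal; since the form is non-degenerate, this orthogonality forces the restriction of $(-,-)_\lat$ to each of these two subspaces to be non-degenerate, and $H$ decomposes as an orthogonal direct sum $\ker(1-p)\oplus\ker(p)$.

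The implication $(1)\Rightarrow(2)$ is immediate: if $\si\in O(H;\R)$ satisfies $\si p\si^{-1}=p'$, then $\si$ is an isometry carrying $\ker p$ onto $\ker p'$ (and $\ker(1-p)$ onto $\ker(1-p')$), and an isometry between subspaces preserves the signature of the restricted bilinear form. So the hard direction is $(2)\Rightarrow(1)$, and this is where the main work lies. First I would use non-degeneracy to write $H=\ker(1-p)\oplus\ker p$ and $H=\ker(1-p')\oplus\ker p'$ as orthogonal direct sums with non-degenerate restricted forms. The signature of a non-degenerate symmetric real bilinear form on a finite-dimensional space is a complete isometry invariant (Sylvester's law of inertia), so the hypothesis that $\ker p$ and $\ker p'$ have the same signature gives an isometric isomorphism $\si_0:\ker p\to\ker p'$. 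The remaining point is to produce a matching isometry on the complementary summands. Since the total signature of $(H,(-,-)_\lat)$ is fixed and equals the sum of the signatures of the two orthogonal summands in each decomposition, the signature of $\ker(1-p)$ must equal that of $\ker(1-p')$ as well; hence there is an isometric isomorphism $\si_1:\ker(1-p)\to\ker(1-p')$.

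I would then glue these together: define $\si=\si_0\oplus\si_1$ on $H=\ker(1-p)\oplus\ker p$. Because both summands are orthogonal and $\si_0,\si_1$ are isometries, $\si$ is an isometry of $(H,(-,-)_\lat)$, so $\si\in O(H;\R)$. By construction $\si$ maps $\ker(1-p)$ onto $\ker(1-p')$ and $\ker p$ onto $\ker p'$, from which one checks directly that $\si p\si^{-1}=p'$: both sides are projections with the same image $\ker(1-p')$ and the same kernel $\ker p'$, and a projection is determined by its image and kernel. This yields condition $(1)$.

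The step I expect to be the genuine content rather than a formality is the extraction of matching signatures on \emph{both} summands from the hypothesis, which is stated only for $\ker p$ and $\ker p'$. The resolution is precisely the additivity of signatures across an orthogonal decomposition together with the fixed total signature of $H$: this forces the signatures of $\ker(1-p)$ and $\ker(1-p')$ to agree once those of $\ker p$ and $\ker p'$ do. Everything else is standard linear algebra (Sylvester's law and the fact that a projection is pinned down by its image and kernel), so I would present it concisely rather than in full detail.
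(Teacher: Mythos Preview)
Your proof is correct and is exactly the elementary linear-algebra argument the paper has in mind; the paper does not spell out a proof at all, stating only that the lemma ``follows from the elementary linear algebra.'' Your write-up, using Sylvester's law of inertia plus additivity of signatures across orthogonal decompositions to match both summands and then gluing the isometries, is a clean realization of that remark.
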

Thus, the $O(H;\R)$ orbit of $p_0 \in P(H)$
is equal to the orthogonal Grassmannian
$$
O(H;\R)/O(H_l;\R)\times O(H_r;\R),
$$
which is the connected component of $P(H)$ containing $p_0$.

We call the family of full $\mathcal{H}$-vertex algebras
$\{F_{\si \cdot p_0}  \}_{\si \in O(H;\R)}$
the {\it current-current deformation} of
the full $\mathcal{H}$-vertex algebra $(F,H,p_0)$.

By Corollary \ref{vac_omega} and Theorem \ref{construction},
we have:
\begin{prop}
If $F$ is a full $\mathcal{H}$-conformal vertex algebra,
then a current-current deformation of $F$ also has an energy-momentum tensor.
\end{prop}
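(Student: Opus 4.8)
The plan is to reduce the statement to the two structural results already in hand, namely Corollary \ref{vac_omega} and the final assertion of Theorem \ref{construction}. The conceptual point that makes this immediate is that a current-current deformation keeps the generalized full vertex algebra $\Om_{F,H}$ on the vacuum space \emph{fixed} and varies only the charge structure $p=\si\cdot p_0\in P(H)$. Since an energy-momentum tensor of $\Om_{F,H}$ is an intrinsic datum of this generalized full vertex algebra (it does not refer to the projection), it is available for every member of the family, and Theorem \ref{construction} then transports it back to each $F_p$.

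Concretely, let $(F,H,p_0,\om,\omb)$ be a full $\mathcal{H}$-conformal vertex algebra. First I would apply Corollary \ref{vac_omega} to conclude that $(\om-\om_{H_l},\omb-\om_{H_r})$ is an energy-momentum tensor of the generalized full vertex algebra $\Om_{F,H}$, regarded as an object of $\OHp$; this pair is built from the conformal vectors of $F$ together with the Heisenberg Virasoro vectors and does not mention $p_0$. Next, for each $\si\in O(H;\R)$ I set $p=\si\cdot p_0=\si p_0\si^{-1}$, which again lies in $P(H)$ because $\si$ is an isometry of $(H,(-,-)_\lat)$. Applying the ``furthermore'' clause of Theorem \ref{construction} to $\Om_{F,H}$ and the projection $p$ then yields that $F_p=F_{\Om_{F,H},H,p}=G_{H,p}\otimes_{\Delta H}\Om_{F,H}$ is a full conformal vertex algebra. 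As every algebra in the deformation family $\{F_{\si\cdot p_0}\}_{\si\in O(H;\R)}$ is of this form, each carries an energy-momentum tensor, which is the claim. Unwinding Theorem \ref{construction}, the tensor produced is the sum of the Heisenberg energy-momentum tensor $(\omega_{H_l},\omb_{H_r})$ of the factor $G_{H,p}$ (Proposition \ref{standard}), adapted to the decomposition $H=\ker\p\oplus\ker p$ fixed by $p$, and the tensor $(\om-\om_{H_l},\omb-\om_{H_r})$ of $\Om_{F,H}$, combined via the tensor-product construction (Proposition \ref{tensor}) and restricted to $G_{H,p}\otimes_{\Delta H}\Om_{F,H}$. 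Its central charge equals that of $F$, since the Heisenberg contribution has central charge $\dim\ker\p$, which is constant along the $O(H;\R)$-orbit by Lemma \ref{transitive_proj}.

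I do not expect a genuine obstacle, as the two cited results carry all the analytic and algebraic weight. The one point deserving care is precisely the independence of the vacuum-space energy-momentum tensor from the charge structure: I must make sure that the datum furnished by Corollary \ref{vac_omega} is attached to $\Om_{F,H}$ as an abstract generalized full vertex algebra rather than to the particular presentation $(F,H,p_0)$, so that it can be supplied unchanged to Theorem \ref{construction} for every $p=\si\cdot p_0$. Once this is recorded, the proof is a direct composition of the two functorial constructions.
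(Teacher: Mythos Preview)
Your proposal is correct and follows the same route as the paper: the paper's proof consists of exactly the two citations you identify, namely Corollary~\ref{vac_omega} to equip $\Om_{F,H}$ with an energy-momentum tensor and the ``furthermore'' clause of Theorem~\ref{construction} to transport it to each $F_{\si\cdot p_0}$. Your additional remarks on unwinding the construction and on the constancy of the central charge are correct elaborations but not needed for the argument.
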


A full $\mathcal{H}$-vertex algebra is called positive
if both $(H_l, (-,-)_l)$ and $(H_r, (-,-)_r)$ are positive-definite.
The following proposition says that the
compactness of conformal field theory is preserved by the current-current deformation under some mild assumption.
\begin{prop}\label{positive_compact}
Let $(F,H,p_0)$ be a full $\mathcal{H}$-vertex algebra such that $(\Om_{F,H})_{t,\td}^\al =0$
for any $t \leq 0$ or $\td \leq 0$ and any $\al \in H$.
If $F$ is positive and compact,
then a current-current deformation of $F$
is also positive and compact.
\end{prop}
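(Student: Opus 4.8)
The plan is to work with the explicit description of the deformation, $F_p=G_{H,p}\otimes_{\Delta H}\Om_{F,H}$ (Theorem \ref{construction}, Corollary \ref{decomposition}), and to read off the bidegrees of its homogeneous components from the \emph{fixed} grading of the generalized full vertex algebra $\Om=\Om_{F,H}$ together with the lattice part $G_{H,p}$. Write $p=\si\cdot p_0$ for $\si\in O(H;\R)$, so that $H_l'=\mathrm{im}\,p=\si H_l$ and $H_r'=\ker p=\si H_r$. Since $F$ is positive, $(-,-)_\lat|_{H_l}=(-,-)_l$ is positive-definite and $(-,-)_\lat|_{H_r}=-(-,-)_r$ is negative-definite, i.e. $p_0\in P_>(H)$; by Lemma \ref{transitive_proj} the whole orbit stays in $P_>(H)$. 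As $\si$ is a $(-,-)_\lat$-isometry, the forms of the deformed algebra, $(-,-)'_l=(-,-)_\lat|_{H_l'}$ and $(-,-)'_r=-(-,-)_\lat|_{H_r'}$, are again positive-definite, so $F_p$ is positive; moreover $(\al,\al)_p=(p\al,p\al)_\lat-(\p\al,\p\al)_\lat$ is positive-definite on $H$.

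Next I would compute the grading. Since the energy-momentum tensor of $F_p$ is the sum of that of $G_{H,p}$ (Proposition \ref{standard}) and the fixed one of $\Om$ (Corollary \ref{vac_omega}), a nonzero homogeneous vector in $G_{H,p}^\al\otimes(\Om)_{t,\td}^\al$ has $F_p$-bidegree
$$
\Bigl(h,\h\Bigr)=\Bigl(\tfrac12(p\al,p\al)_\lat+n+t,\ -\tfrac12(\p\al,\p\al)_\lat+m+\td\Bigr),\qquad n,m\in\Z_{\geq 0}.
$$
The crucial point is that the grading $(t,\td)$ of $\Om$ is independent of the projection while the lattice term is not. By hypothesis every nonzero $(\Om)_{t,\td}^\al$ has $t\geq0$ and $\td\geq0$ (the genuine vacuum sitting at $(0,0)$), and $\tfrac12(p\al,p\al)_\lat\geq0$, $-\tfrac12(\p\al,\p\al)_\lat\geq0$ by positivity; hence $h\geq0$ and $\h\geq0$ on every component and $F_p$ is bounded below (take $N=-1$).

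The real work is discreteness. Let $E^p_{\min}(\al)=\tfrac12(\al,\al)_p+t^\al_{\min}+\td^\al_{\min}$ be the minimal total energy carried by a charge $\al$ with $\Om^\al\neq0$, where $t^\al_{\min},\td^\al_{\min}$ are the minimal weights of $\Om^\al$. The computation above shows that in $F=F_{p_0}$ this minimal energy is $E^{p_0}_{\min}(\al)=\tfrac12(\al,\al)_{p_0}+t^\al_{\min}+\td^\al_{\min}$, and compactness of $F$ means exactly that $\{\al:E^{p_0}_{\min}(\al)\leq H\}$ is finite for every $H$. I would compare the two energies using that $(-,-)_p$ and $(-,-)_{p_0}$ are positive-definite forms on the finite-dimensional space $H$, hence equivalent: there is $c_1>0$ with $(\al,\al)_p\geq c_1(\al,\al)_{p_0}$ for all $\al$. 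With $c=\min(c_1,1)>0$ and $t^\al_{\min},\td^\al_{\min}\geq0$ this gives $E^p_{\min}(\al)\geq c\,E^{p_0}_{\min}(\al)$, whence
$$
\{\al: E^p_{\min}(\al)\leq H\}\subseteq\{\al: E^{p_0}_{\min}(\al)\leq H/c\},
$$
which is finite by compactness of $F$. So only finitely many charges contribute below any energy; for each the states below $H$ are finite in number, since the Fock space $G_{H,p}^\al$ has finite-dimensional graded pieces and $\Om^\al$ embeds into $F$ with finite-dimensional graded pieces and is bounded below. Summing over the finitely many charges yields discreteness, so $F_p$ is compact.

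The main obstacle is precisely this comparison: a priori the charge set need not be discrete in $H$, and the deformation redistributes holomorphic and anti-holomorphic weights, so boundedness of the $F_p$-energy does not obviously control the $F$-energy. The positivity assumption on the vacuum space ($t^\al_{\min},\td^\al_{\min}\geq0$) is exactly what makes the scaling argument valid—without it the term $\tfrac12(p\al,p\al)_\lat$ could drift below $\tfrac12(p_0\al,p_0\al)_\lat$ along some direction and destroy both boundedness below and discreteness. The only remaining point to verify carefully is the vacuum charge $\al=0$, where the sole subtlety is that the genuine vacuum lies at weight $(0,0)$ rather than in the strictly positive region.
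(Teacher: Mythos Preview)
Your proof is correct and follows essentially the same line as the paper's own argument. Both proofs establish positivity via Lemma \ref{transitive_proj}, obtain the lower bound $h,\h\geq 0$ from the explicit bidegree on $G_{H,p}\otimes_{\Delta H}\Om_{F,H}$ together with the hypothesis on $\Om$, and reduce discreteness to a norm-equivalence inequality between the two positive-definite forms $(-,-)_p$ and $(-,-)_{p_0}$ on the finite-dimensional space $H$; the paper writes this last step in the equivalent form $k_\si\|\al\|<\|\si^{-1}\al\|$ (using $(\al,\al)_p=(\si^{-1}\al,\si^{-1}\al)_{p_0}$), but the content is identical to your constant $c=\min(c_1,1)$.
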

\begin{proof}
Let $\si \in O(H;\R)$.
By Lemma \ref{transitive_proj}, $F_{\si \cdot p_0}$ is a positive full $\mathcal{H}$-vertex algebra.
Since for any $\al,\be \in H$, $(\si p_0 \si^{-1} \al, \si p_0 \si^{-1} \be)_\lat = (p_0 \si^{-1} \al, p_0 \si^{-1} \be)_\lat$,
\begin{align*}
F_{\si \cdot p_0}&= G_{H,\si \cdot p_0}\otimes_{\Delta H} \Om_{F,H}\\
&= \bigoplus_{\al \in H} 
M_{H, \si \cdot p_0}(\al) \otimes \Om_{F,H}^\al\\
&= \bigoplus_{\al \in H} 
M_{H, p_0}(\si^{-1} \cdot \al) \otimes \Om_{F,H}^\al.
\end{align*}
Thus, by the positivity and the assumption,
$(F_{\si \cdot p_0})_{h,\h}=0$ unless $h,\h \geq 0$,
thus the spectrum of $F_{\si \cdot p_0}$ is bounded below.

Let $N \in \R$.
It is easy to show that
$\sum_{h+\h < N}\dim (F_{\si \cdot p_0})_{h,\h}
< \infty$ if and only if
$\sum_{t,\td,\al} \dim (\Om_{F,H})_{t,\td}^\al < \infty$,
where in the sum $t,\td \in \R$ and $\al \in H$ satisfy $t+\td+\frac{1}{2}(\si^{-1}\al,\si^{-1}\al)_l
+\frac{1}{2}(\si^{-1}\al,\si^{-1}\al)_r < N$.
Set $||\al||=\frac{1}{2}(\al,\al)_l+\frac{1}{2}(\al,\al)_r $ for $\al \in H$.
Since $\si \in \mathrm{GL}(H)$,
by an elementary linear algebra,
there exists $k_\si \in \R_{>0}$ such that
$k_\si ||\al||<||\si^{-1}\al||$ for any $\al \in H$.
We may assume that $0<k_\si <1$.
Then, for any $\al \in H$ and $t,\td \geq 0$,
$$||\si^{-1}\al||+t+\td
> k_\si \Bigl(||\al||+ \frac{1}{k_\si}(t+\td) )\Bigr)
> k_\si \Bigl(||\al||+ t+\td)\Bigr).$$
Thus, the spectrum of $F_{\si\cdot p_0}$
is discrete since that of $F_{p_0}$ is discrete.
Hence, $F_{\si\cdot p_0}$ is compact.
\end{proof}
\begin{rem}
It seems that for any unitary compact conformal field theory
the assumption in the above proposition is satisfied.
We conjecture that the unitary compact conformal field theory
is stable under exactly marginal deformations.
\end{rem}

\subsection{Physical meaning of deformation}\label{sec_physics}
In this section, we discuss a relation between the
current-current deformation of a full $\mathcal{H}$-vertex algebra
and an exactly marginal deformation in physics.
Let $(F,H,p)$ be a full $\mathcal{H}$-vertex algebra and $h_l \in \ker \p$
and $h_r \in \ker p$ satisfy $(h_l,h_l)_\lat=1$ and $(h_r,h_r)_\lat=-1$.
Set $H^\perp = \{h\in H\;|\;(h,h_l)_\lat=0, (h,h_r)_\lat=0 \}$
and define a group homomorphism $\si:\R \rightarrow O(H;\R)\; g \mapsto \si(g)$ by
\begin{align*}
\begin{cases}
\si(g) |_{H^\perp} &=\mathrm{id}, \cr
\si(g)(h_l) &=\mathrm{cosh}(g)h_l + \mathrm{sinh}(g)h_r, \cr
\si(g)(h_r) &= \mathrm{cosh}(g)h_r + \mathrm{sinh}(g)h_l.
\end{cases}
\end{align*}
It is believed that a quantum field theory can be deformed by adding a new field to the Lagrangian (see Introduction).
We can show that the deformation family $\{F_{\si(g)\cdot p}\}_{g\in \R}$
corresponds to
the deformation by the $(1,1)$-field $Y(h_l(-1,-1)h_r, \uz)=h_l(z)h_r(\z)$
by using the path-integral. This is why we call this deformation the current-current deformation.

\subsection{Double coset description}
In this section, we gives a double coset description of the parameter space of a current-current deformation.
Let $(F,H,p)$ be a full $\mathcal{H}$-vertex algebra and let $(\psi,\psi')$ an automorphism of a generalized full vertex algebra $(\Om_{F,H},H)$.
Then, $\psi' \in O(H;\R)$. Thus, we have a group homomorphism
$\Aut (\Om_{F,H},H) \rightarrow O(H;\R)$
from the group of generalized full vertex algebra automorphisms
to the orthogonal group.
Denote the image of this map by $D_{F,H} \subset O(H)$,
which we call a {\it duality group}.
We note that $(\psi,\psi') \in \Aut (\Om_{F,H},H)$ lifts to a full vertex algebra automorphism
if and only if it preserves the charge structure, that is,  $\psi' \cdot p=p$.
The following theorem follows from Theorem \ref{equivalence}:
\begin{thm}\label{moduli}
For $p, p'\in P(H)$,
$F_p$ and $F_{p'}$ are isomorphic as full $\mathcal{H}$-vertex algebras
if and only if there exists $\si \in D_{F,H}$ such that $\si \cdot p=p'$.
In particular,
there is a bijection between the isomorphism classes of the current-current deformations
of $(F,H,p)$ and the double coset
$$D_{F,H}\backslash O(H;\R)/O(H_l;\R)\times O(H_r;\R).$$
\end{thm}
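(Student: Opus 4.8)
The plan is to reduce Theorem \ref{moduli} entirely to the category equivalence of Theorem \ref{equivalence}, together with the transitivity result Lemma \ref{transitive_proj}. The key observation is that the functors $\Om$ and $F$ set up a bijection between isomorphism classes of full $\mathcal{H}$-vertex algebras and isomorphism classes of objects in $\OHp$, i.e.\ pairs $(\Om,H,p)$ of a generalized full vertex algebra with a charge structure. Since in the family $\{F_p\}_{p\in P(H)}$ the underlying generalized full vertex algebra $\Om_{F,H}$ and its grading $(H,(-,-)_\lat)$ are fixed (by Corollary \ref{decomposition}, applying $\Om$ to $F_p$ recovers $(\Om_{F,H},H,p)$), comparing $F_p$ and $F_{p'}$ amounts to comparing the two objects $(\Om_{F,H},H,p)$ and $(\Om_{F,H},H,p')$ of $\OHp$.

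First I would prove the ``if and only if'' statement. Suppose $F_p \cong F_{p'}$ as full $\mathcal{H}$-vertex algebras. Applying the functor $\Om$ and using Corollary \ref{decomposition}, we obtain an isomorphism $(\Om_{F,H},H,p) \cong (\Om_{F,H},H,p')$ in $\OHp$; this consists of a generalized full vertex algebra automorphism $(\psi,\psi')$ of $(\Om_{F,H},H)$ together with the compatibility $\psi' \circ p = p' \circ \psi'$, i.e.\ $\psi'\cdot p = p'$ in the notation $\si\cdot p = \si p\si^{-1}$. By definition $\psi' \in D_{F,H}$, so there is $\si = \psi' \in D_{F,H}$ with $\si\cdot p = p'$. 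Conversely, given $\si \in D_{F,H}$ with $\si\cdot p = p'$, lift $\si$ to an automorphism $(\psi,\si)$ of $(\Om_{F,H},H)$; the condition $\si\cdot p = p'$ makes $(\psi,\si)$ a morphism $(\Om_{F,H},H,p)\to(\Om_{F,H},H,p')$ in $\OHp$, and applying the inverse functor $F$ (Theorem \ref{equivalence}) yields an isomorphism $F_p \cong F_{p'}$ of full $\mathcal{H}$-vertex algebras.

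For the second statement, I would observe that the current-current deformation consists of the full $\mathcal{H}$-vertex algebras $F_{\si\cdot p}$ with $\si$ ranging over $O(H;\R)$, so its members are parametrized by the orbit of $p$, which by Lemma \ref{transitive_proj} is the orthogonal Grassmannian $O(H;\R)/O(H_l;\R)\times O(H_r;\R)$ (the stabilizer of $p$ under the conjugation action being precisely $O(\ker(1-p))\times O(\ker p) = O(H_l;\R)\times O(H_r;\R)$). The first part shows that two parameters $\si_1\cdot p$ and $\si_2\cdot p$ give isomorphic algebras exactly when they lie in the same $D_{F,H}$-orbit under the left action. Hence the isomorphism classes are the $D_{F,H}$-orbits on the Grassmannian, which is the double coset $D_{F,H}\backslash O(H;\R)/O(H_l;\R)\times O(H_r;\R)$.

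The main subtlety I anticipate is verifying that an abstract isomorphism of full $\mathcal{H}$-vertex algebras $F_p \cong F_{p'}$ genuinely descends, via $\Om$, to an $\OHp$-morphism with the correct charge compatibility, rather than merely an isomorphism of the underlying generalized full vertex algebras. This is exactly where one must check that a full $\mathcal{H}$-vertex algebra homomorphism carries $H$ isometrically onto $H$ respecting the holomorphic/anti-holomorphic splitting $\ker p = H\cap\ker D$, so that the induced $\psi'$ both lies in $O(H;\R)$ and satisfies $\psi'\cdot p = p'$; this was established in the lemma showing $\Om$ is a functor (the computation $\phi(\ker p_1) = H\cap\ker D = \ker p_2$ and the isometry of $\phi$ on $H$). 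Given that groundwork, the remaining steps are formal consequences of the equivalence and Lemma \ref{transitive_proj}, so no new hard analysis is required.
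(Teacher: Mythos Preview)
Your proposal is correct and follows exactly the approach the paper intends: the paper's own proof consists solely of the remark that Theorem \ref{moduli} follows from Theorem \ref{equivalence}, and you have simply spelled out the details of that reduction, including the use of Corollary \ref{decomposition}, the functoriality lemma for $\Om$, and Lemma \ref{transitive_proj} for the Grassmannian description.
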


\subsection{Example: Toroidal Compactification}\label{sec_toroidal}
Let $L$ be an even non-degenerate lattice of signature $(n,m)$ and $H=L\otimes_\Z \R$.
Then, we have a lattice full vertex algebra $F_{L,H,p}$ for any $p \in P_>(H)$.
Since $D_{F_{L,H,p},H}$ is isomorphic to the lattice automorphism group, $\Aut\,L$,
the isomorphism classes is 
$$\Aut L \backslash O(n,m;\R)/O(n;\R)\times O(m;\R).
$$

Let $\tw= \Z z\oplus \Z w$ be the rank two even lattice defined by $(z,z)=(w,w)=0$ and $(z,w)=-1$.
Then, $\tw$ is a unique even unimodular lattice of signature $(1,1)$.
Set $\twN=\tw^{\oplus k}$ for $k\geq 1$.
The lattice full vertex algebras $\{F_{\twN,\twN\otimes_\Z \R,p}\}_{p\in P_{>}(\twN\otimes_\Z \R)}$ appear in the toroidal compactification of string theory (see for example \cite{P}),
which is parametrized by
\begin{align}
O(k,k;\Z) \backslash O(k,k;\R)/O(k;\R)\times O(k;\R).
\label{eq_Narain_k}
\end{align}

In the rest of this section, we explicitly describe the action of the duality group $O(k,k;\Z)$ in detail in the case of $k=1$.
Set $H_\tw=\tw\otimes_\Z \R$.
Let $p \in P_>(H)$.
Since $\ker \p$ is positive-definite, there is a unique (up to
the multiplication by $\pm 1=O(1;\R)$) vector $v \in \ker \p$
such that $(v,v)=1$. It is clear that $p$ is uniquely determined by this vector.
Let $v=az+bw \in H_\tw$ be a norm $1$ vector ($a,b\in \R$).
Then, by $(v,v)=-2ab$,
we may assume that $v=\frac{1}{\sqrt{2}}(R z-R^{-1} w)$ for $R \in \R_{>0}$.
Denote by $p_R$ the corresponding projection in $P_>(H_\tw)$.
Thus, we have an isomorphism
$\R_{>0} \rightarrow O(1,1;\R)/O(1;\R)\times O(1;\R),\; R \mapsto p_R$.
The lattice automorphism group $\Aut \tw$ is $\Z/2\Z\times \Z/2\Z$,
which is generated by the involutions $\si,\tau$ such that:
\begin{align*}
\si(z)=w, \si(w)=z,\\
\tau(z)=-z, \tau(w)=-w.
\end{align*}
The action of $\si$ on $p_R$ is determined by
$$\si(\frac{1}{\sqrt{2}}(R z-R^{-1} w))=- \frac{1}{\sqrt{2}}(R^{-1} z-R w).
$$
Hence, $\si \cdot p_R =p_{R^{-1}}$.
Since $\tau \in O(1)\times O(1) \subset O(1,1)$,
$$\Aut \tw \backslash O(1,1)/O(1)\times O(1) \cong \R_{\geq 1}.$$
In the string theory, $R$ is a radius of a compactification of the target space.
Denote by $C_{R}$ the full vertex algebra $F_{\tw,H_\tw,p_R}$.
The isomorphism $\tilde{\si}:C_R \rightarrow C_{R^{-1}}$
is called a {\it T-duality} of string theory.
Let $R=e^s$ for $s \in \R$.
Then, the action of the $1$-parameter deformation $\si(g)$
associated with $h_l=\frac{1}{\sqrt{2}}(e^s z-e^{-s} w), h_r=\frac{1}{\sqrt{2}}(e^s z+e^{-s} w)$
is  
\begin{align*}
\si(g)\Bigl(\frac{1}{\sqrt{2}}(e^s z-e^{-s} w)\Bigr)
&=\frac{1}{\sqrt{2}}(\mathrm{cosh}(g)(e^s z-e^{-s} w) +\mathrm{sinh}(g)(e^s z+e^{-s} w))\\
&=\frac{1}{\sqrt{2}}(e^{s+g}z-e^{-s-g}w).
\end{align*}
Thus, $\si(g)$ changes the radius $R=e^s$ into $e^gR= e^{g+s}$.

We end this section by studying the chiral vertex algebra $\ker \D$ of a full vertex algebra $C_R$.
It is easy to show that the conformal weight of $e_{nz+mw} \in \C[\hat{\tw}]$ is $(\frac{(nR^{-1}-mR)^2}{4},\frac{(nR^{-1}+mR)^2}{4})$ for $n,m\in \Z$.
The state $e_{nz+mw}$ is in $\ker \D$ if and only if
$R^2=-\frac{n}{m}$.
Thus, if $R^2 \in \R \setminus \mathbb{Q}$,
$\ker \D \otimes \ker D$ is isomorphic to the affine Heisenberg full vertex algebras $M_{H_\tw, p_R}$.
We assume that $R^2=\frac{p}{q}$ for some coprime intergers $p,q \in \Z_{> 0}$.
In this case,
$$\ker \D = M_{\ker \p_R} \otimes \bigoplus_{k \in \Z} \C e_{k(pz-qw)}.$$
Since the conformal weight of $e_{k(pz-qw)}$ is $(pqk^2,0)$,
$\ker \D$ is isomorphic to the lattice vertex algebra $V_{\sqrt{2pq}\Z}$
associated with the rank one lattice $\sqrt{2pq}\Z$.
In particular, $C_{\sqrt{\frac{p}{q}}}$ is a finite extension of 
the lattice full vertex algebra $V_{\sqrt{2pq}\Z} \otimes \bar{V}_{\sqrt{2pq}\Z}$.
We will determine the irreducible decomposition of $C_{\sqrt{\frac{p}{q}}}$
as a $V_{\sqrt{2pq}\Z} \otimes \overline{V}_{\sqrt{2pq}\Z}$-module.
We recall that there are $2pq$ irreducible modules of $V_{\sqrt{2pq}\Z}$,
denoted by $\{V_{\sqrt{2pq}\Z+ \frac{i}{\sqrt{2pq}}}\}_{i \in \Z/2pq\Z}$, see for example \cite{LL}.
Since 
\begin{align*}
\Bigl(p_R (pz-qw),p_R( nz+mw)\Bigr)&= nq-mp, \\
-\Bigl(\p_R (pz+qw), \p_R( nz+mw)\Bigr)&= nq+mp,
\end{align*}
$e_{nz+mw}$ is contained in $V_{\sqrt{2pq}\Z+ \frac{nq-mp}{\sqrt{2pq}}} \otimes
\overline{V}_{\sqrt{2pq}\Z+ \frac{nq+mp}{\sqrt{2pq}}}$.

We will use the following elementary lemma:
\begin{lem}
Let $(a,b) \in \Z^2$ satisfy $ap -b q =1$.
Then, $n_{p,q}=ap +b q$ satisfies
$n_{p,q}^2 = 1 \in \Z/4pq\Z$,
in particular, $n_{p,q} \in (\Z/2pq\Z)^\times$.
Furthermore, the value $n_{p,q}=ap+bq \in \Z/2pq\Z$ is independent of a choice of the solution.
\end{lem}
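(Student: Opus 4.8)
The plan is to treat both assertions by direct elementary computation, exploiting the single defining relation $ap - bq = 1$.

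For the congruence $n_{p,q}^2 = 1 \in \Z/4pq\Z$, the key is the algebraic identity
\begin{align*}
(ap+bq)^2 - (ap-bq)^2 = 4abpq.
\end{align*}
Since $ap - bq = 1$ by hypothesis, this gives $n_{p,q}^2 = 1 + 4abpq$, so that $n_{p,q}^2 - 1$ is an integer multiple of $4pq$. This immediately yields $n_{p,q}^2 = 1 \in \Z/4pq\Z$, and in particular $n_{p,q}^2 = 1 \in \Z/2pq\Z$; hence $n_{p,q}$ is its own inverse modulo $2pq$ and therefore lies in $(\Z/2pq\Z)^\times$.

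For the independence of $n_{p,q} \in \Z/2pq\Z$ from the chosen solution, I would first record that, because $p$ and $q$ are coprime, the integer solutions of $ap - bq = 1$ form a single $\Z$-orbit: if $(a_0,b_0)$ is one solution, then subtracting gives $(a-a_0)p = (b-b_0)q$, and $\gcd(p,q)=1$ forces $a = a_0 + tq$, $b = b_0 + tp$ for some $t \in \Z$. Substituting into $ap + bq$ then gives
\begin{align*}
ap + bq = (a_0+tq)p + (b_0+tp)q = a_0 p + b_0 q + 2tpq,
\end{align*}
so the value of $ap+bq$ changes only by a multiple of $2pq$. Thus $n_{p,q}$ is well defined as an element of $\Z/2pq\Z$, as claimed.

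The computations are entirely routine; the only point requiring a word of care is the parametrization of the solution set of $ap-bq=1$, where one must invoke $\gcd(p,q)=1$ (which holds since $R^2 = p/q$ is written in lowest terms) both to guarantee the existence of a solution and to ensure the full solution set is exactly the stated $\Z$-orbit. No genuine obstacle is anticipated beyond keeping track of this coprimality hypothesis.
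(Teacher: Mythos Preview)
Your proof is correct. The paper itself does not supply a proof of this lemma, stating only ``We will use the following elementary lemma''; your argument via the identity $(ap+bq)^2 - (ap-bq)^2 = 4abpq$ and the standard parametrization of solutions to the linear Diophantine equation is exactly the intended elementary verification.
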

Since $n_{p,q} \in (\Z/2pq\Z)^\times$, 
$\{kn_{p,q} \}_{k=0,1,\dots,2pq-1}$ runs through all the elements in 
$\Z/2pq\Z$.
Thus, we have:
\begin{prop}\label{twist}
If $R^2 \in \R \setminus \mathbb{Q}$,
then $\ker \D \otimes \ker D$ is isomorphic to the affine Heisenberg full vertex algebras $M_{H_\tw, p_R}$.
If $R^2=\frac{p}{q}$,
then $\ker \D \otimes \ker D$ is isomorphic to the lattice full vertex algebra $V_{\sqrt{2pq}\Z} \otimes \overline{V}_{\sqrt{2pq}\Z}$
and the irreducible decomposition of $C_{\sqrt{\frac{p}{q}}}$ is
$$
C_{\sqrt{\frac{p}{q}}}
=\bigoplus_{i \in \Z/2pq \Z} V_{\sqrt{2pq}\Z+ \frac{i}{\sqrt{2pq}}} \otimes
\bar{V}_{\sqrt{2pq}\Z+ \frac{n_{p,q} i}{\sqrt{2pq}}}.
$$
\end{prop}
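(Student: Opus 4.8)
The plan is to reduce everything to the explicit description of $C_R=F_{\tw,H_\tw,p_R}$ as $\bigoplus_{\al\in\tw}M_{H_\tw,p_R}(\al)\otimes\C e_\al$ and, for each lattice vector $e_{nz+mw}$, to track its two conformal weights together with its holomorphic and anti-holomorphic charges. For the first assertion I would argue directly from the weight formula recorded above: $e_{nz+mw}\in\ker\D$ requires its anti-holomorphic weight $\frac{(nR^{-1}+mR)^2}{4}$ to vanish, i.e. $nR^{-1}+mR=0$, which for $R^2\notin\mathbb Q$ forces $n=m=0$. Hence the only holomorphic lattice vector is $e_0$, so $\ker\D$ is spanned by the Heisenberg modes $h_l(-k)$ ($k\ge1$) applied to $e_0$; that is $\ker\D=M_{\ker\p_R}$, and symmetrically $\ker D=M_{\ker p_R}$, whence $\ker\D\otimes\ker D\cong M_{H_\tw,p_R}$.

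For the rational case $R^2=p/q$ I would first record, as already computed, that $e_{k(pz-qw)}$ has weight $(pqk^2,0)$, so $\ker\D$ is the rank-one lattice vertex algebra $V_{\sqrt{2pq}\Z}$ (and likewise $\ker D\cong\overline{V}_{\sqrt{2pq}\Z}$ via $e_{k(pz+qw)}$). The remaining content is the module decomposition. Here I would invoke the standard representation theory of rank-one lattice vertex algebras (the $2pq$ irreducibles $V_{\sqrt{2pq}\Z+i/\sqrt{2pq}}$, $i\in\Z/2pq\Z$, cf. \cite{LL}) together with the charge computation $\bigl(p_R(pz-qw),p_R(nz+mw)\bigr)=nq-mp$ and $-\bigl(\p_R(pz+qw),\p_R(nz+mw)\bigr)=nq+mp$, which places $e_{nz+mw}$ in $V_{\sqrt{2pq}\Z+\frac{nq-mp}{\sqrt{2pq}}}\otimes\overline{V}_{\sqrt{2pq}\Z+\frac{nq+mp}{\sqrt{2pq}}}$.

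Three points then need checking. First, every residue $i\in\Z/2pq\Z$ occurs as $nq-mp$ for some $(n,m)$, which follows from $\gcd(p,q)=1$. Second, the anti-holomorphic label $j=nq+mp$ depends only on $i=nq-mp\bmod 2pq$: if $q\,\Delta n-p\,\Delta m\equiv0\pmod{2pq}$, then reducing modulo $q$ and using $\gcd(p,q)=1$ gives $q\mid\Delta m$, and since $q\,\Delta n\equiv p\,\Delta m\pmod{2pq}$ we obtain $q\,\Delta n+p\,\Delta m\equiv2p\,\Delta m\equiv0\pmod{2pq}$; thus $i\mapsto j$ is a well-defined group endomorphism of $\Z/2pq\Z$, hence multiplication by some element. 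Third, the correspondence is multiplicity free: the sublattice $\Z(pz-qw)+\Z(pz+qw)\subset\tw$ has index $2pq$, so $\tw$ meets exactly $2pq$ distinct $(i,j)$-classes, matching the number of summands. Finally I would identify the multiplier: choosing $(n,m)$ with $nq-mp=1$ and comparing with the B\'ezout data $ap-bq=1$ of the preceding elementary lemma yields $j\equiv n_{p,q}\,i\pmod{2pq}$, the lemma also confirming $n_{p,q}^2\equiv1$ so that the map is bijective, which gives the stated decomposition.

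The hard part will be the second and fourth points: proving that $j$ is genuinely a function of $i$ modulo $2pq$, and pinning down the resulting multiplier as exactly $n_{p,q}$. The well-definedness is the crux, since a priori two vectors with the same holomorphic charge could carry different anti-holomorphic charges, and the congruence argument above is precisely what excludes this. Matching the multiplier to $n_{p,q}$ then demands care with the sign conventions implicit in how the conjugate algebra $\overline{V}_{\sqrt{2pq}\Z}$ labels its irreducible modules; this is the one place where a careless computation would produce $-n_{p,q}$ in place of $n_{p,q}$.
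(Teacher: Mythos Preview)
Your approach is essentially the paper's own: the paper computes the conformal weights, identifies $\ker\D$ and $\ker D$ with $V_{\sqrt{2pq}\Z}$ via $e_{k(pz\mp qw)}$, records the charges $(nq-mp,nq+mp)$, states the elementary lemma on $n_{p,q}$, and then concludes in one line (``Since $n_{p,q}\in(\Z/2pq\Z)^\times$, $\{kn_{p,q}\}$ runs through all elements; thus we have the proposition''). Your write-up is in fact more complete than the paper's, since you supply the well-definedness of $i\mapsto j$ and the index-$2pq$ count that the paper leaves implicit.

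One remark on your point (d): the direct computation with $i=nq-mp=1$ gives $j=nq+mp=n_{q,p}$, and one checks $n_{q,p}\equiv -n_{p,q}\pmod{2pq}$ (since $n_{p,q}\equiv -1\pmod{2p}$, $\equiv 1\pmod{2q}$, while $n_{q,p}$ satisfies the opposite congruences). So the ``careless'' sign is what actually appears. Your instinct that this is a convention issue is correct—choosing $-(pz+qw)$ rather than $pz+qw$ as the anti-holomorphic generator flips $j$ and gives $+n_{p,q}$—but you can also simply note that the displayed direct sum is invariant under $i\mapsto -i$, so the statements with $n_{p,q}$ and $-n_{p,q}$ coincide.
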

We remark that the condition $n_{p,q}^2 = 1 \in \Z/4pq\Z$ corresponds the condition (FV2).
Thus, for $N \in \Z_{>0}$ and each order $2$ element in $(\Z/ 4N\Z)^\times$,
there is an extension of the lattice full vertex algebra $V_{\sqrt{2N}\Z} \otimes \bar{V}_{\sqrt{2N}\Z}$.

For example, $C_{\sqrt{6}}$ is the diagonal model
$\bigoplus_{i \in \Z/12 \Z} V_{\sqrt{12}\Z+ \frac{i}{\sqrt{12}}} \otimes
\bar{V}_{\sqrt{12}\Z+ \frac{i}{\sqrt{12}}}$,
whereas 
$C_{\sqrt{\frac{2}{3}}}$ is twisted by $7$,
$\bigoplus_{i \in \Z/12 \Z} V_{\sqrt{12}\Z+ \frac{i}{\sqrt{12}}} \otimes
\bar{V}_{\sqrt{12}\Z+ \frac{7i}{\sqrt{12}}}.$
We also remark that $C_{1}$ is isomorphic to the
$\mathrm{SU}(2)$ WZW-model of level $1$,
which is the fixed point of the duality group.

\begin{rem}
Let $q:\Z / 2N\Z \rightarrow \R/2\Z $ be a norm defined by
$a \mapsto \frac{a^2}{2}$.
An element $n \in \Aut \Z/ 2N\Z=(\Z/2N\Z)^\times$ preserves
the norm $q$ if and only if $n^2 =1$ in $\Z/4N\Z$.
Thus, an order $2$ element in $(\Z/ 4N\Z)^\times$ corresponds to
the outer automorphism of the modular tensor category.
\end{rem}

\section{Current-current deformations and genera of vertex algebras}
%genus to T-duality = (1,1)-deformation
%T-duality to genus = reduction
%From Cartan subspace, choose h,h^\dagger,
% Then, we get virasoro element and deform the vertex operator
% we can prove that it is again  a full vertex algebra.
% which gives a new interpritation of $f-twist$
%A current-current deformation may produce new vertex algebras from a vertex algebra.

In \cite{M1}, we introduced an equivalence class for $\mathcal{H}$-vertex algebras, which we call a {\it genus of $\mathcal{H}$-vertex algebras} \cite{M1}. Although this notion of genera was introduced independently of current-current deformations, they are closely related to each other. In this section we explain the relationship between the genera and the current-current deformation.

The notion of a genus of $\mathcal{H}$-vertex algebras is motivated by the notion of a genus of lattices, and we first review the genus of lattices in the next section. In Section \ref{sec_genus_vertex} we review the definition of a genus of $\mathcal{H}$-vertex algebras, and we discuss the relationship between the genus and the current-current deformations (Theorem \ref{genus}).
In Section \ref{sec_genus_vertex_lattice} and \ref{sec_genus_mass}, we study the duality group based on \cite{M1}, and in Section \ref{sec_genus_hol} we obtain nontrivial duality groups from holomorphic vertex operator algebras of central charge $24$ (Theorem \ref{prop_new_grass}).

\subsection{Genus and mass of lattices}
Let $L$ be an integral lattice.
For a unital commutative ring $R$,
we can extend the bilinear form $(\;\,,\;)$ on $L$ bilinearly to $L\otimes_{\Z} R$.
%: L\otimes_\Z R \times L\otimes_\Z R \rightarrow R$$
%is induced from the bilinear form on $L$.
The dual of $L$ is the set 
$$L^\vee=\{\al \in L\otimes_{\Z}\R \;|\;(\al,L)\subset \Z \}.$$
The lattice $L$ is said to be unimodular if
$L = L^\vee$.
%Let $R$ be a unital commutative ring.
%Assume that $L$ is an integral lattice.
%Then, the bilinear form on $L$ is naturally extended to the bilinear form on $L\otimes R$,
%$$(,): L\otimes_\Z R \times L\otimes_\Z R \rightarrow R.$$

Two integral lattices $L$ and $M$ are said to be equivalent or {\it in the same genus} if 
$$L\otimes_{\Z}\mathbb{R}\simeq M\otimes_{\Z}\mathbb{R},\quad L\otimes_{\Z}\Z_p\simeq M\otimes_{\Z}\Z_p,$$
for all the prime integers $p$,
where $\Z_p$ is the ring of $p$-adic integers.
% and $\mathbb{R}$ is the real number field.
Denote by $\gen(L)$ the genus of lattices which contains $L$.
If $L$ is positive-definite, then a mass of $\gen(L)$ is defined by
\begin{align}\label{eq: mass formula for lattices}
\text{mass}(L) = \sum_{L' \in \gen(L)} \frac{1}{\# \Aut L'} \in \mathbb{Q},
\end{align}
where $\Aut L'$ is the automorphism group of the lattice $L'$.

Lattices over $\mathbb{R}$ are completely determined by the signature.
Similarly, lattices over $\Z_p$ are determined by some invariant, called $p$-adic signatures
(If $p=2$, we have to consider another invariant, called an oddity).
%Conventionally, a genus of lattices is denoted by using those invariants,
%e.g., the genus of $\sqrt{2}E_8 D_8$ is denoted by $\genus_{16,0}(2_\genus^{+10})$ (see \cite{CS} for the precise definition).
The Smith-Minkowski-Siegel's mass formula is a formula which computes $\text{mass}(L)$ 
by using those invariants (see \cite{Si,Mi,CS,Ki}).

Consider the unique even unimodular lattice $\tw$ of signature $(1,1)$. 
The proof of the following lemma can be found in \cite{KP,Ni}:
\begin{lem}
\label{def_genus}
The lattices $L_1$ and $L_2$ are in the same genus if and only if 
$$L_1 \otimes \tw\simeq L_2 \otimes \tw$$ 
as lattices.
\end{lem}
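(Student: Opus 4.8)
The plan is to reduce to the local--global description of the genus and then to Nikulin's classification of even lattices by discriminant forms. Recall that $L_1$ and $L_2$ lie in the same genus precisely when they are isometric over $\R$ and over every $\Z_p$, and that for even lattices this is equivalent to sharing the same signature $(t_+,t_-)$ together with isomorphic discriminant quadratic forms $q_{L_1}\cong q_{L_2}$ on $A_{L_i}=L_i^\vee/L_i$. Since $\tw$ is unimodular, $(L\otimes\tw)^\vee=L^\vee\otimes\tw$, so $A_{L\otimes\tw}\cong A_L\oplus A_L$, the discriminant form of $L\otimes\tw$ is the hyperbolic double $H(q_L)$ of $q_L$ (the form $\left(\begin{smallmatrix}0&b_L\\ b_L&0\end{smallmatrix}\right)$ on $A_L\oplus A_L$, where $b_L$ is the bilinear form attached to $q_L$), and $L\otimes\tw$ has signature $(t_++t_-,\,t_++t_-)$. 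I will use that the lattices relevant to the mass formula are positive definite of a common rank $n$, so that an isometry $L_1\otimes\tw\cong L_2\otimes\tw$ recovers the signatures, and not merely the ranks, of $L_1$ and $L_2$ (this is genuinely needed, as $\langle 2\rangle\otimes\tw\cong\langle -2\rangle\otimes\tw$ shows).

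For the forward implication I would argue as follows. If $L_1$ and $L_2$ are in the same genus, then they have equal signature and $q_{L_1}\cong q_{L_2}$, so $L_1\otimes\tw$ and $L_2\otimes\tw$ have equal signature $(n,n)$ and isomorphic discriminant forms $H(q_{L_1})\cong H(q_{L_2})$; hence the two tensor lattices themselves lie in one genus. The point is then that this genus consists of a single isometry class: $L_i\otimes\tw$ is indefinite and contains the hyperbolic plane $\tw$, hence is isotropic, and by Eichler's strong approximation theorem an indefinite lattice of rank $\geq 3$ with surjective spinor norm has class number one in its genus. I would check that the hyperbolic summand forces the spinor norm to be full, killing the spinor-genus obstruction, so that $L_1\otimes\tw\cong L_2\otimes\tw$. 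The low-rank case $n=1$, where the tensor lattices are binary, is a finite check done by hand.

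For the converse, an isometry $L_1\otimes\tw\cong L_2\otimes\tw$ forces equal signatures (whence $n_1=n_2$ and, by positive definiteness, matching real signatures of $L_1,L_2$) together with $H(q_{L_1})\cong H(q_{L_2})$. The remaining task is to descend this to $q_{L_1}\cong q_{L_2}$, that is, a Witt-type cancellation statement for finite quadratic forms, which I would prove $p$-locally on the homogeneous Jordan components. Combined with the recovered real data, this gives that $L_1$ and $L_2$ share signature and discriminant form, hence the same genus.

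The hard part will be precisely this cancellation, and it is concentrated at the prime $2$: for odd $p$ the $p$-part of the discriminant form is diagonalizable and $H(q)$ visibly determines $q$, whereas at $p=2$ the even $2$-adic forms (their $\mathfrak u$- and $\mathfrak v$-type blocks and the oddity constraints) demand a careful case analysis; likewise the small-rank cases $n=1,2$, where both Nikulin's uniqueness criterion and Eichler's rank-$\geq 3$ hypothesis degenerate, must be handled separately. These are exactly the technical points carried out in \cite{KP}.
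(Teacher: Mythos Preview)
The paper does not actually prove this lemma; it only cites Kneser--Puppe \cite{KP}. More importantly, you have been misled by what is almost certainly a typo: the intended statement is $L_1\oplus\tw\simeq L_2\oplus\tw$ (direct sum), not $L_1\otimes\tw$. The evidence is throughout Section~6: the mass formula in Theorem~\ref{mass_formula} involves $\Aut(L_{V,H}\oplus\tw)$; the lattice $\genus_{17,1}(2_{\genus}^{+10})$ in Section~6.5 is built as $\tw\oplus\sqrt{2}E_8\oplus D_8$ (rank $18$, not $32$); and the genus of $\mathcal{H}$-vertex algebras is defined via $V\otimes V_\tw$, which on the lattice side corresponds to $L\oplus\tw$ since $V_L\otimes V_M\cong V_{L\oplus M}$. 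You yourself noticed that the tensor-product reading is false without an extra positivity hypothesis ($\langle 2\rangle\otimes\tw\cong\langle-2\rangle\otimes\tw$ while $\langle 2\rangle$ and $\langle-2\rangle$ are not in the same genus), which should have been the signal that the statement was being misread rather than a cue to add hypotheses.

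For the correct direct-sum statement the argument is far simpler than the hyperbolic-double cancellation you set up. The converse is Witt cancellation: $(L_1\oplus\tw)\otimes\Z_p\cong(L_2\oplus\tw)\otimes\Z_p$ together with cancellation of the unimodular hyperbolic plane gives $L_1\otimes\Z_p\cong L_2\otimes\Z_p$ for every $p$, and similarly over $\R$. For the forward direction, if $L_1$ and $L_2$ are in the same genus then so are $L_1\oplus\tw$ and $L_2\oplus\tw$; these are indefinite even lattices containing a hyperbolic plane, and such a genus has a single isometry class (strong approximation, or Nikulin's uniqueness theorem for indefinite lattices). No delicate $2$-adic analysis of $H(q_{L_1})\cong H(q_{L_2})$ is required.
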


\subsection{Genus of vertex algebra and current-current deformation}
\label{sec_genus_vertex}
In the previous section, we recall the notion of a genus of lattices,
which is an equivalence relation of lattices
and important to classify lattices.
By using Lemma \ref{def_genus},
we generalize it and define a genus of $\mathcal{H}$-vertex algebra.

Let us consider the lattice vertex algebra $V_\tw$ associated with the rank 2 lattice $\tw$ (see Section \ref{sec_toroidal})
and let $(V,H)$ be an $\mathcal{H}$-vertex algebra.
Then, by Proposition \ref{tensor},
$V\otimes C_s$ is a full $\mathcal{H}$-vertex algebra and 
$V \otimes V_\tw$ is an $\mathcal{H}$-vertex algebra.

$\mathcal{H}$-vertex algebras $(V,H)$ and $(V',H')$ are said to be equivalent (or in the same genus) if
$(V\otimes V_\tw,H\oplus H_\tw)$ and $(V' \otimes V_\tw,H'\oplus H_\tw)$
are isomorphic as $\mathcal{H}$-vertex algebras,
which defines an equivalent relation on $\mathcal{H}$-vertex algebras.
An equivalent class is called a {\it genus} of $\mathcal{H}$-vertex algebras \cite{M1}.
The equivalent classes of an $\mathcal{H}$-vertex algebra $(V,H)$
is denoted by $\gen(V,H)$ or $\gen(V)$ for short.
\begin{thm}\label{genus}
Let $(V,H)$ and $(V',H')$ be $\mathcal{H}$-vertex algebras.
Then, the following conditions are equivalent:
\begin{enumerate}
\item
$\mathcal{H}$-vertex algebras $(V,H)$ and $(V',H')$ are in the same genus;
\item
There exits a current-current deformation between the full $\mathcal{H}$-vertex algebras $V\otimes C_s$ and $V' \otimes C_s$;
\item
Generalized full vertex algebras $(\Om_{V,H}\otimes \C[\hat{\tw}],H\oplus H_\tw)$ and $(\Om_{V',H'} \otimes \C[\hat{\tw}],H\oplus H_\tw)$
are isomorphic as generalized full vertex algebras.
\end{enumerate}
\end{thm}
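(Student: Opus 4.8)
The plan is to reduce each of the three conditions to a single statement: that the generalized full vertex algebras $\Om_{V,H}\otimes\C[\hat{\tw}]$ and $\Om_{V',H'}\otimes\C[\hat{\tw}]$ are isomorphic. This is condition (3) itself, so the work is to show that (1) and (2) each unwind to it, via the two equivalences $\Om\colon\FHp\to\OHp$ (Theorem \ref{equivalence}) and $\Om\colon\VHp\to\GVA$ (Proposition \ref{vacuum_vertex}), together with the orbit criterion Lemma \ref{transitive_proj}.

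First I would establish the computation that makes all three conditions comparable, namely that the vacuum-space functor is monoidal. The Heisenberg algebra acting on $V\otimes C_s$ (resp. $V\otimes V_\tw$) is $\hat H\oplus\hat H_\tw$, so its lowest-weight space is the tensor product of the lowest-weight spaces; and since the dressed operator $\hY(v,\uz)=E^-(-\al,\uz)Y(v,\uz)E^+(-\al,\uz)z^{(-p\al)(0)}\z^{(-\p\al)(0)}$ factorizes across the tensor product, the resulting generalized full vertex algebra structure is exactly the tensor product of Proposition \ref{tensor}. This yields
\[
\Om_{V\otimes C_s,\,H\oplus H_\tw}\cong\Om_{V,H}\otimes\Om_{C_s,H_\tw},
\qquad
\Om_{V\otimes V_\tw,\,H\oplus H_\tw}\cong\Om_{V,H}\otimes\Om_{V_\tw,H_\tw}.
\]
Because $C_s=F_{\C[\hat{\tw}],H_\tw,p_s}$, Theorem \ref{equivalence} gives $\Om_{C_s,H_\tw}\cong\C[\hat{\tw}]$; and since $V_\tw$ is the lattice vertex algebra, $\Om_{V_\tw,H_\tw}\cong\C[\hat{\tw}]$ as well, both carrying the signature-$(1,1)$ form of $\tw$. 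Hence the two algebras in the statement share the common vacuum space $\Om_{V,H}\otimes\C[\hat{\tw}]$ (and $\Om_{V',H'}\otimes\C[\hat{\tw}]$), and $V\otimes C_s$, $V\otimes V_\tw$ differ only by the charge structure placed on it, namely $p:=\mathrm{id}_H\oplus p_s\in P(H\oplus H_\tw)$ for the former and $\mathrm{id}_{H\oplus H_\tw}$ for the latter.

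For (1)$\Leftrightarrow$(3), I would apply the equivalence $\VHp\simeq\GVA$ of Proposition \ref{vacuum_vertex}. An $\mathcal{H}$-vertex algebra carries the identity charge, so $(V\otimes V_\tw,H\oplus H_\tw)\cong(V'\otimes V_\tw,H'\oplus H_\tw)$ holds iff $\Om_{V,H}\otimes\C[\hat{\tw}]\cong\Om_{V',H'}\otimes\C[\hat{\tw}]$ as generalized vertex algebras. Since both factors $\Om_{V,H},\C[\hat{\tw}]$ are holomorphic (the $\z$-dependence enters only through the holomorphic $V$, while $\C[\hat{\tw}]$ is position-independent), an isomorphism as generalized vertex algebras is the same as one as generalized full vertex algebras; this is exactly (3). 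For (2)$\Leftrightarrow$(3), recall that a current-current deformation connects $V\otimes C_s$ and $V'\otimes C_s$ precisely when $V'\otimes C_s\cong F_{\si\cdot p}$ for some $\si\in O(H\oplus H_\tw;\R)$, where $F_p\cong V\otimes C_s$. Given (3), an isomorphism $\phi$ with isometry $\phi'$ becomes, after setting $q:=(\phi')^{-1}(\mathrm{id}_{H'}\oplus p_s)\phi'\in P(H\oplus H_\tw)$, a morphism in $\OHp$; applying the functor $F$ (Theorem \ref{equivalence}) gives $V'\otimes C_s\cong F_{\Om_{V,H}\otimes\C[\hat{\tw}],\,H\oplus H_\tw,\,q}$. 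As $\ker q=(\phi')^{-1}(\ker p_s)$ and $\ker p=\ker p_s$ are both one-dimensional negative-definite, Lemma \ref{transitive_proj} yields $q=\si\cdot p$, so $V'\otimes C_s\cong F_{\si\cdot p}$, which is (2). Conversely, applying the functor $\Om$ to an isomorphism $V'\otimes C_s\cong F_{\si\cdot p}$ gives $\Om_{V',H'}\otimes\C[\hat{\tw}]\cong\Om_{F_{\si\cdot p}}\cong\Om_{V,H}\otimes\C[\hat{\tw}]$, which is (3).

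I expect the main obstacle to be the first step: the monoidality of the vacuum-space functor and, above all, checking that the two copies of $\C[\hat{\tw}]$ arising from $C_s$ and from $V_\tw$ carry the \emph{same} grading form $(-,-)_\lat$ of signature $(1,1)$, so that the common object really is $\Om_{V,H}\otimes\C[\hat{\tw}]$ with matching isometry type. The remaining bookkeeping of charge structures—tracking $\mathrm{id}_H\oplus p_s$ through the isometries and invoking the orbit criterion of Lemma \ref{transitive_proj}—is routine once the equivalences of Theorem \ref{equivalence} and Proposition \ref{vacuum_vertex} are in hand, and the holomorphic/full distinction in (3) must be handled with the above remark that both factors are position-independent in the anti-holomorphic variable.
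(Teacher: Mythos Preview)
Your proposal is correct and follows essentially the same route as the paper: both reduce (1) and (2) to (3) by identifying the vacuum space of $V\otimes C_s$ and of $V\otimes V_\tw$ with $\Om_{V,H}\otimes\C[\hat{\tw}]$, then invoke the categorical equivalences (Theorem \ref{equivalence}, Proposition \ref{vacuum_vertex}) together with the orbit criterion of Lemma \ref{transitive_proj} on the charge structures. The paper's proof is terser and omits the monoidality check and the matching of the signature-$(1,1)$ form on $\C[\hat{\tw}]$ that you flag as the main obstacle, but the logical skeleton is the same.
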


% Change the definition of the full vertex algebra
% drop the bounded below condition for F_{n,m}
% translate all results in section 1, 2 to here.

%P^+(H) ??
\begin{proof}[proof of Theorem \ref{genus}]
Since the vacuum spaces of
$V\otimes C_s$ and $V \otimes V_\tw$ 
are isomorphic to $\Om_{V,H} \otimes \C[\hat{\tw}]$,
(1) or (2) implies (3).
Assume that (3) holds.
Since all fields in $V\otimes V_\tw$ and $V' \otimes V_\tw$
are holomorphic,
they are isomorphic to $F_{\Om_{V,H} \otimes \C[\hat{\tw}],H\oplus H_\tw,\mathrm{id}}$,
where $\mathrm{id} \in P(H\oplus H_\tw)$ is the identity map.
Similarly,
by Lemma \ref{transitive_proj},
the projections which define $V\otimes C_s$ and $V'\otimes C_s$
is in the same orbit of $O(H\oplus H_\tw;\R)$
since the signature of the anti-holomorphic part $\ker p$ must be $(0,1)$.
Hence, (3) implies (1) and (2).
\end{proof}

\begin{rem}
\label{rem_genus_motivation}
The phenomenon that two vertex algebras which are not isomorphic to each other become isomorphic by tensoring $V_\tw$ corresponds to the notion of the genus of lattices from Lemma \ref{def_genus} for lattice vertex algebras.
It was found in \cite{HS} that such a phenomenon occurs even in the case of non-lattice vertex algebras.
Together with another example \cite{HS2}, we introduced the notion of the genus of $\mathcal{H}$-vertex algebras as an analogue of the notion of the genus of lattices \cite{M1}.
Theorem \ref{genus} says that such isomorphisms can be regarded as equivalence relations by the current-current deformations of conformal field theory.
\end{rem}

\subsection{From vertex algebra to lattice}
\label{sec_genus_vertex_lattice}
In this section, we construct an even $H$-lattice from an $\mathcal{H}$-vertex algebra $(V,H)$.
Let $(V,H)$ be an $\mathcal{H}$-vertex algebra and
$(\Om_{V,H},H)$ the generalized vertex algebra constructed in Proposition \ref{vacuum_vertex}
and $A_{\Om_{V,H}}$ the even AH pair constructed in Proposition \ref{omega_AH}.
The $\mathcal{H}$-vertex algebra $(V,H)$ is good
if $A_{\Om_{V,H}}$ is good.
By the following lemmas, almost all natural $\mathcal{H}$-vertex algebras are good:
\begin{lem}\label{good_criterion}
If $V$ is a simple vertex algebra and $V_0^0=\C\va$,
then $A_{\Om_{V,H}}$ is a good AH pair.
\end{lem}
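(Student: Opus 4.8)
The plan is to observe that $(A_{\Om_{V,H}},H)$ is already an even AH pair by Proposition \ref{omega_AH}, so only the two additional conditions (GAH1) and (GAH2) defining a \emph{good} AH pair remain to be verified. Condition (GAH1) is essentially immediate from the hypothesis: by definition of the vacuum space one has
$A_{\Om_{V,H}}^0 \subset (\Om_{V,H})_{0,0}^0 = \{v\in V_0^0 \mid h(0)v=0,\ h(n)v=0\ (n\geq 1)\} \subset V_0^0 = \C\va$,
and since $\va\in\ker D\cap\ker\D$ we have $\va\in A_{\Om_{V,H}}^0$, whence $A_{\Om_{V,H}}^0=\C\va$.

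The substance is (GAH2). Here I would exploit that the product $v\cdot w = v(-1,-1)w$ on $A_{\Om_{V,H}}$ is a single mode of the genuine vertex operator $Y$ of $V$, dressed by invertible Heisenberg operators. Fix nonzero $v\in A^\al$ and $w\in A^\be$. Since $v\in A_{\Om_{V,H}}\subset \ker D\cap\ker\D$, Proposition \ref{skew_symmetry} shows $\hY(v,\uz)=v(-1,-1)$ is independent of $\uz$, so $\hY(v,\uz)w=v\cdot w$ with no other terms. I would then unravel the definition $\hY(v,\uz)=E^-(-\al,\uz)\,Y(v,\uz)\,E^+(-\al,\uz)\,z^{(-p\al)(0)}\z^{(-\p\al)(0)}$, using that $w$ is a lowest-weight vector for the Heisenberg algebra (so $E^+(-\al,\uz)w=w$) and a charge eigenvector (so $z^{(-p\al)(0)}\z^{(-\p\al)(0)}w$ is a nonzero monomial times $w$). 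This yields an operator identity of the form
\begin{align*}
v\cdot w = z^{-(\al,\be)_\lat}\,E^-(-\al,\uz)\,Y(v,\uz)w,
\end{align*}
in which $E^-(-\al,\uz)$ is invertible with inverse $E^-(\al,\uz)$. Consequently $v\cdot w=0$ forces $Y(v,\uz)w=0$.

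To finish I would invoke simplicity: $V$ is a simple $\Z$-graded vertex algebra and hence a simple full vertex algebra (by Proposition \ref{graded_vertex}, noting that its full-vertex-algebra left ideals coincide with its ordinary ideals since every mode $a(r,s)$ is supported on $s=-1$). Lemma \ref{zero_divisor} then gives that $Y(v,\uz)w=0$ implies $v=0$ or $w=0$, contradicting our choice; therefore $v\cdot w\neq 0$, which is precisely (GAH2). The main obstacle is the bookkeeping in the middle step: one must check carefully that the dressing operators $E^{\pm}(-\al,\uz)$ and the charge operator act on the lowest-weight vector $w$ exactly as claimed, so that $v\cdot w$ is genuinely an invertible transform of $Y(v,\uz)w$ rather than merely its lowest-order coefficient. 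Once this identity is secured, the simplicity argument closes the proof.
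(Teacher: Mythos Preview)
Your proposal is correct and follows essentially the same approach as the paper: verify (GAH1) directly from $V_0^0=\C\va$, and for (GAH2) reduce $v\cdot w\neq 0$ to $Y(v,\uz)w\neq 0$ via the invertible dressing in the definition of $\hY$, then invoke Lemma~\ref{zero_divisor}. The paper's proof simply asserts the equivalence $ab\neq 0 \Leftrightarrow Y(a,z)b\neq 0$ ``by the definition of $\hY(-,z)$'' without spelling out the $E^{\pm}$ bookkeeping you carefully supply.
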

\begin{proof}
(GAH1) follows from $V_0^0=\C\va$.
Let $a \in A_{\Om_{V,H}}^\al$ and $b \in A_{\Om_{V,H}}^\be$ be non-zero vectors for some $\al,\be\in H$.
Then, $ab \neq 0$ if and only if $\hY(a,z)b \neq 0$.
By the definition of $\hY(-,z)$, $ab \neq 0$ if and only if $Y(a,z)b\neq 0$.
Thus, by Lemma \ref{zero_divisor}, (GAH2) holds.
\end{proof}
\begin{lem}[Lemma 3.13 in \cite{M1}]\label{stable_good}
Let $(V,H)$ be an $\mathcal{H}$-vertex algebra.
Then, $A_{\Om_{V\otimes V_\tw, H\oplus H_\tw}}$ is
isomorphic to $A_{\Om_{V,H}}\otimes \C[\hat{\tw}]$ as an even AH pair.
In particular, $A_{\Om_{V\otimes V_\tw, H\oplus H_\tw}}$ is good if and only if
$A_{\Om_{V,H}}$ is good.
\end{lem}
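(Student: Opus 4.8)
The plan is to compute both sides through the generalized-vertex-algebra picture, using the functor $A$ of Proposition~\ref{omega_AH} together with the fact that the lattice factor contributes nothing outside the bottom degree. First I would identify the vacuum spaces: the Heisenberg Lie algebra of $V\otimes V_\tw$ is $\hat{H}\oplus \hat{H}_\tw$ acting on the two tensor factors separately, so by the structure theorem for Heisenberg modules (\cite[Theorem 1.7.3]{FLM}) the lowest-weight condition factorizes and $\Om_{V\otimes V_\tw,H\oplus H_\tw}^{(\al,\be)}=\Om_{V,H}^\al\otimes \Om_{V_\tw,H_\tw}^\be$. Because the dressing operators $E^\pm$, the zero-mode twists $z^{(p\al)(0)}$, the conformal vectors $\om_{H_l}$, and hence the modified translations $D_\Om,\D_\Om$ all split over the two factors, this is in fact an isomorphism of generalized full vertex algebras, i.e. the tensor product of Proposition~\ref{tensor}: $\Om_{V\otimes V_\tw}\cong \Om_{V,H}\otimes \Om_{V_\tw,H_\tw}$.

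Next I would pin down the lattice factor. Since $V_\tw=\bigoplus_{\al\in\tw} M_{H_\tw}(0)\otimes \C e_\al$ and $e_\al$ has conformal weight $\tfrac12(\al,\al)$ with $\h=0$, the vector $e_\al$ lies in bidegree $(\Om_{V_\tw})_{0,0}^\al$; thus $\Om_{V_\tw,H_\tw}$ is concentrated in bidegree $(0,0)$ and equals $\bigoplus_\al \C e_\al=\C[\hat{\tw}]$ (Proposition~\ref{isom_lattice}). Consequently $D_\Om$ and $\D_\Om$, which raise $t$ and $\td$ by one, vanish on $\Om_{V_\tw}$, so $A_{\Om_{V_\tw}}=\ker D\cap\ker\D\cap(\Om_{V_\tw})_{0,0}=\Om_{V_\tw}=\C[\hat{\tw}]$.

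Then I would apply $A=\ker D\cap\ker\D\cap(-)_{0,0}$ to the tensor product. On $\Om_V\otimes \Om_{V_\tw}$ one has $D=D_\Om^V\otimes 1+1\otimes D_\Om^{V_\tw}=D_\Om^V\otimes 1$ and $\D=0$, using that $V$ is holomorphic (so $\D_\Om^V=0$) and that $D_\Om^{V_\tw}=\D_\Om^{V_\tw}=0$. Since $\Om_{V_\tw}$ sits entirely in bidegree $(0,0)$, the bidegree-$(0,0)$ subspace of the tensor product is $(\Om_V)_{0,0}\otimes \Om_{V_\tw}$, and intersecting with $\ker D$ yields $\big(\ker D_\Om^V\cap(\Om_V)_{0,0}\big)\otimes \Om_{V_\tw}=A_{\Om_V}\otimes \C[\hat{\tw}]$. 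For $a\in A_{\Om_V}$ the operator $\hY(a,\uz)=a(-1,-1)$ is position-independent by Proposition~\ref{skew_symmetry}, so the $(-1,-1)$-product on the tensor factor collapses to $(a_1\otimes e_\al)(-1,-1)(a_2\otimes e_\be)=\big(a_1(-1,-1)a_2\big)\otimes\big(e_\al(-1,-1)e_\be\big)$; with the $H\oplus H_\tw$-grading and the orthogonal-sum bilinear form this exhibits $A_{\Om_{V\otimes V_\tw}}\cong A_{\Om_V}\otimes \C[\hat{\tw}]$ as even AH pairs. Finally, for the goodness statement I would note that $\C[\hat{\tw}]$ is a lattice pair, so $e_\al e_\be=\epsilon(\al,\be)e_{\al+\be}\neq 0$ always and $(\C[\hat{\tw}])^0=\C e_0$; hence a homogeneous product $(a\otimes e_\al)(b\otimes e_\be)=\epsilon(\al,\be)(ab)\otimes e_{\al+\be}$ is nonzero iff $ab\neq0$, and $\big(A_{\Om_V}\otimes\C[\hat{\tw}]\big)^0=A_{\Om_V}^0\otimes\C e_0$ equals $\C\va$ iff $A_{\Om_V}^0=\C\va$, so (GAH1) and (GAH2) hold for the tensor product iff they hold for $A_{\Om_V}$.

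The only delicate point, which I would emphasize, is the concentration of $\Om_{V_\tw}$ in bidegree $(0,0)$: this is exactly what forces $D_\Om,\D_\Om$ to vanish on the lattice factor and makes $A$ commute with this particular tensor product. In general the functor $A$ need not be monoidal, so this degree-concentration of the lattice vacuum space is the crux of the argument; everything else is the bookkeeping of tensor-factor splittings already available from Proposition~\ref{tensor} and Theorem~\ref{vacuum_space}.
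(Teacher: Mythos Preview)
Your argument is correct. Note, however, that the paper does not supply its own proof of this lemma: it is stated with a citation to \cite[Lemma~3.13]{M1} and no further argument is given here. What you have written is a self-contained proof using only the machinery developed in the present paper---the tensor-product splitting of the vacuum space (via Proposition~\ref{tensor} and the factorization of $E^\pm$, $z^{(p\al)(0)}$, $D_\Om$), the identification $\Om_{V_\tw,H_\tw}\cong\C[\hat{\tw}]$ concentrated in bidegree $(0,0)$, and the resulting collapse of the functor $A$ on the tensor product. Your closing remark is well taken: the degree concentration of the lattice vacuum space is precisely what makes $A$ behave monoidally in this instance, and your verification of (GAH1)--(GAH2) for the goodness equivalence is clean. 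There is nothing to compare against in the text itself, so your proof stands as a complete substitute for the external reference.
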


Let $(V,H)$ be a good $\mathcal{H}$-vertex algebra.
Then, by Proposition \ref{lattice_functor}, we have the lattice pair $(A_{\Om_{V,H}}^\lat,H)$
and the even $H$-lattice $L_{\Om_{V,H},H}$.
Set $L_{V,H}=L_{\Om_{V,H},H}$.
By Proposition \ref{isom_lattice}, $A_{\Om_{V,H}}^\lat$ is isomorphic to
the twisted group algebra $\C[\hat{L_{V,H}}]$.
Since $\C[\hat{L_{V,H}}]$ is a subalgebra of the even AH pair $A_{\Om_{V,H}}$,
by the equivalence of categories the lattice vertex algebra $V_{L_{V,H}}$ is
a subalgebra of $V$ as an $\mathcal{H}$-vertex algebra.
This lattice subalgebra has the following universal property:
\begin{prop}
For any even $H$-lattice $M \subset H$ and an $\mathcal{H}$-vertex algebra homomorphism $\phi: V_M \rightarrow V$,
$$
\xymatrix{
V_{M} \ar[r]^{\phi} \ar@{-->}[d]_{\exists!}
 & V \\
V_{L_{V,H}} \ar[ru]^{i} & {}
}.
$$
\end{prop}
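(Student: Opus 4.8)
The plan is to derive the universal property formally, by transporting $\phi$ through the equivalence of categories and the two adjunctions established above and reducing everything to a statement about twisted group algebras.

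First I would pass to even AH pairs. Applying the equivalence $\Om:\VHp\rightarrow\GVA$ of Proposition \ref{vacuum_vertex}, followed by the functor $A$ (right adjoint to the inclusion $i:\AH\rightarrow\GFA$), to the homomorphism $\phi:V_M\rightarrow V$ yields a homomorphism of even AH pairs $\bar\phi:\C[\hat{M}]\rightarrow A_{\Om_{V,H}}$, whose isometry component is the inclusion $M\otimes_\Z\R\hookrightarrow H$. Here I use that, by construction $V_M=F_{\C[\hat{M}],\mathrm{id}}$ has vacuum AH pair $\C[\hat{M}]$ (cf.\ Proposition \ref{isom_lattice}), and that $A_{\Om_{V,H}}$ is a \emph{good} AH pair because $(V,H)$ is good (Lemma \ref{good_criterion}).

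The key point is that $\C[\hat{M}]$ is a \emph{lattice} pair: since $M$ is an even $H$-lattice it is in particular a subgroup of $H$, so the support $\{\al\;|\;\C[\hat{M}]^\al\neq0\}=M$ is a group and condition (LP) holds. Because $\lat:\gAH\rightarrow\LP$ is right adjoint to the inclusion $i:\LP\rightarrow\gAH$ (Proposition \ref{lattice_functor}), the morphism $\bar\phi$ out of the lattice pair $\C[\hat{M}]$ factors uniquely as
\[
\C[\hat{M}]\xrightarrow{\ \tilde\phi\ }A_{\Om_{V,H}}^\lat=\C[\hat{L_{V,H}}]\hookrightarrow A_{\Om_{V,H}},
\]
the last arrow being the counit of the adjunction. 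One can also see the factorization by hand: if $\al\in M$ and $\bar\phi(e_\al)\neq 0$, then $A_{\Om_{V,H}}^\al\neq 0$; since $M$ is a group, $-\al\in M$ and $e_\al e_{-\al}=\epsilon(\al,-\al)\va$ gives $\bar\phi(e_\al)\bar\phi(e_{-\al})=\epsilon(\al,-\al)\va\neq 0$, so $A_{\Om_{V,H}}^{-\al}\neq 0$ as well; hence $\al\in M_{A_{\Om_{V,H}},H}\cap(-M_{A_{\Om_{V,H}},H})=L_{V,H}$, i.e.\ the image of $\bar\phi$ lies in $A_{\Om_{V,H}}^\lat$.

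Finally I would transport $\tilde\phi$ back. Applying $i:\AH\rightarrow\GFA$ and the inverse equivalence $V:\GVA\rightarrow\VHp$ to the lattice-pair homomorphism $\tilde\phi:\C[\hat{M}]\rightarrow\C[\hat{L_{V,H}}]$ produces the desired $\mathcal{H}$-vertex algebra homomorphism $\psi:V_M\rightarrow V_{L_{V,H}}$, and the counit factorization of $\bar\phi$ becomes precisely $i\circ\psi=\phi$, with $i:V_{L_{V,H}}\hookrightarrow V$ the lattice subalgebra inclusion. Uniqueness of $\psi$ follows from the uniqueness of $\tilde\phi$ in the adjunction together with the fact that, restricted to lattice objects, the composite $\VHp\rightarrow\GVA\rightarrow\AH$ is an equivalence and hence faithful. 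The routine part is the bookkeeping that these functors compose as claimed and that the vacuum space of $V_M$ is $i(\C[\hat{M}])$; the only substantive step is the factorization through the lattice part $A_{\Om_{V,H}}^\lat$, which is exactly what Proposition \ref{lattice_functor} delivers, so I anticipate no real obstacle beyond keeping careful track of the isometric embedding $M\otimes_\Z\R\hookrightarrow H$ carried by $\phi$.
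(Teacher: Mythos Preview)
Your argument is correct and follows essentially the same route as the paper: both proofs transport the problem through the equivalence $\VHp\simeq\GVA$ and the two adjunctions $i\dashv A$ and $i\dashv\lat$ to reduce to the factorization through $A_{\Om_{V,H}}^\lat=\C[\hat{L_{V,H}}]$. The paper records this more tersely as a chain of hom-set bijections, while you unpack the same content by tracing the morphism and invoking the counit factorizations; the added ``by hand'' check that the image lands in the lattice part is a nice supplement but not logically needed once Proposition~\ref{lattice_functor} is in place.
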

\begin{proof}
By using adjoint functors, we have
\begin{align*}
\Hom_{\VHp}(V_M,V)&\cong \Hom_{\GVA}(\C[\hat{M}],\Om_{V,H})\\
&\cong \Hom_{\AH}(\C[\hat{M}],A_{\Om_{V,H}})\\
&\cong \Hom_{\gAH}(\C[\hat{M}],A_{\Om_{V,H}})\\
&\cong \Hom_{\LP}(\C[\hat{M}],A_{\Om_{V,H}}^\lat)\\
&\cong \Hom_{\LP}(\C[\hat{M}],\C[\hat{L_{V,H}}]).
\end{align*}
\end{proof}

Let $\Aut (V,H)$ the $\mathcal{H}$-vertex algebra automorphism group of $(V,H)$, that is, 
$$
\Aut (V,H)=\{f\in \Aut (V)\;|\; f(H)=H \}.
$$
Then, similarly to Section \ref{sec_equiv},
there is a group homomorphism $\Aut (V,H) \rightarrow O(H;\R)$.
%The image of this map is denote by $G_{V,H}$ in \cite{M2}.
Then, by the equivalence of categories,
we have:
\begin{lem}
For an $\mathcal{H}$-vertex algebra $(V,H)$,
$\Aut (V,H)$ is isomorphic to the automorphism group of the generalized vertex algebra $(\Om_{V,H},H)$.
\end{lem}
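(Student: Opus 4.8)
The plan is to read the lemma off from the equivalence of categories established in Proposition \ref{vacuum_vertex}, rather than to build the isomorphism by hand. The point is that both sides are automorphism groups of a single object in a category: $\Aut(V,H)=\{f\in \Aut(V)\;|\;f(H)=H\}$ is the unit group of the endomorphism monoid of $(V,H)$ in the category $\VHp$ of $\mathcal{H}$-vertex algebras, since an invertible full vertex algebra endomorphism fixing $H$ setwise is the same as a $\Z$-graded vertex algebra automorphism fixing $H$ (recall $V$ is regarded as a full vertex algebra via Proposition \ref{graded_vertex}); likewise, an automorphism of the generalized vertex algebra $(\Om_{V,H},H)$ is by definition an invertible endomorphism $(\psi,\psi')$ in the category $\GVA$. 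Thus the statement is exactly the assertion that the functor $\Om$ induces an isomorphism on these automorphism groups.

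First I would invoke the general categorical fact that a fully faithful functor $F\colon\mathcal{C}\rightarrow\mathcal{D}$ induces, for each object $X$, a bijective monoid homomorphism $\mathrm{End}_{\mathcal{C}}(X)\rightarrow\mathrm{End}_{\mathcal{D}}(FX)$; since a fully faithful functor reflects isomorphisms, this monoid isomorphism restricts on units to a group isomorphism $\mathrm{Aut}_{\mathcal{C}}(X)\cong\mathrm{Aut}_{\mathcal{D}}(FX)$. By Proposition \ref{vacuum_vertex}, the restriction of $\Om$ to $\VHp$ is an equivalence onto $\GVA$, in particular fully faithful, and it acts on a morphism $\phi$ by restriction to the vacuum space and to $H$, namely $\phi\mapsto(\phi|_{\Om_{V,H}},\phi|_H)$. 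Applying the above general fact with $X=(V,H)$ and $F=\Om$ then gives $\Aut(V,H)\cong\Aut(\Om_{V,H},H)$ as desired.

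There is no substantive analytic or algebraic obstacle here, as the entire content is already packaged in Proposition \ref{vacuum_vertex}; what remains is only the bookkeeping needed to confirm the two identifications of automorphism groups described above. The surjectivity half, if one wishes to make it explicit rather than simply quote fully faithfulness, is seen by running the inverse functor $V\colon\GVA\rightarrow\VHp$: given an automorphism $(\psi,\psi')$ of $(\Om_{V,H},H)$, the morphism $V(\psi,\psi')$ composed with the canonical isomorphism $V_{\Om_{V,H},H}\cong V$ coming from $V\circ\Om\cong\mathrm{id}$ produces an element of $\Aut(V,H)$ whose image under $\Om$ is $(\psi,\psi')$, confirming that the induced homomorphism is onto.
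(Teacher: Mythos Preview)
Your proposal is correct and follows exactly the route the paper takes: the lemma is stated immediately after Proposition \ref{vacuum_vertex} with the sentence ``by the equivalence of categories,'' and your argument simply unpacks that one-line justification by invoking the standard fact that an equivalence (hence fully faithful functor) induces an isomorphism on automorphism groups of any object.
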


By construction, the group $\Aut (V,H)$ acts on the lattice pair $A_{\Om_{V,H}}^\lat$.
Thus, we have a group homomorphism $\Aut (V,H) \rightarrow \Aut (L_{V,H})$,
where $\Aut (L_{V,H})$ is the lattice automorphism group.
The image of $\Aut ({V,H})$ in $\Aut(L_{V,H})$ is denoted by $G_{V,H}$.
The following lemma is clear from the definition:
\begin{lem}\label{duality_inj}
If $L_{V,H}$ is a free abelian group of rank equal to $\dim_\C H$,
then $G_{V,H}$ is equal to the duality group $D_{V,H}$ in $\mathrm{O}(H;\R)$.
\end{lem}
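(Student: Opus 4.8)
The plan is to realize both $G_{V,H}$ and $D_{V,H}$ as images of one and the same homomorphism out of $\Aut(V,H)$, and then to use the rank hypothesis to identify these images inside $O(H;\R)$. Recall that $\Aut(V,H)$ is isomorphic to the automorphism group $\Aut(\Om_{V,H},H)$ of the generalized vertex algebra; write a typical element as a pair $(\psi,\psi')$ with $\psi'\in O(H;\R)$. By definition $D_{V,H}$ is the image of the map $(\psi,\psi')\mapsto\psi'$.

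First I would describe $G_{V,H}$ in these same terms. The group $\Aut(V,H)$ acts on the even AH pair $A_{\Om_{V,H}}$, hence, by functoriality of $-^\lat$ (Proposition \ref{lattice_functor}), on its lattice part $A_{\Om_{V,H}}^\lat\cong\C[\hat{L_{V,H}}]$, whose grading group is the even $H$-lattice $L_{V,H}$. Writing $M$ for the support monoid of $A_{\Om_{V,H}}$, one has $L_{V,H}=M\cap(-M)$. Since $\psi$ sends $A_{\Om_{V,H}}^\al$ into $A_{\Om_{V,H}}^{\psi'(\al)}$, it preserves $M$ and hence the subgroup $L_{V,H}$; moreover the induced automorphism of $L_{V,H}$ is exactly the restriction $\psi'|_{L_{V,H}}$, which is an isometry because $\psi'\in O(H;\R)$ and a bijection because $\psi'$ is. Thus $G_{V,H}$ is the image of the map $(\psi,\psi')\mapsto\psi'|_{L_{V,H}}$ in $\Aut(L_{V,H})$.

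Next I would invoke the hypothesis. That $L_{V,H}$ is free of rank equal to $\dim H$ means precisely that $L_{V,H}\otimes_\Z\R=H$, i.e.\ $L_{V,H}$ spans $H$ over $\R$. Consider the restriction map $r\colon\mathrm{Stab}_{O(H;\R)}(L_{V,H})\to\Aut(L_{V,H})$. It is injective, since an isometry of $H$ is determined by its values on an $\R$-basis chosen inside $L_{V,H}$, and surjective, since any isometry of $L_{V,H}$ extends uniquely and $\R$-linearly to an isometry of $H=L_{V,H}\otimes_\Z\R$; hence $r$ is an isomorphism. By the previous paragraph every $\psi'\in D_{V,H}$ stabilizes $L_{V,H}$, so $D_{V,H}\subset\mathrm{Stab}_{O(H;\R)}(L_{V,H})$ and $r(D_{V,H})=G_{V,H}$. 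Applying $r^{-1}$ identifies $G_{V,H}$ with $D_{V,H}$ as subgroups of $O(H;\R)$, which is the assertion. The only point requiring care, and the one I would verify explicitly, is the claim in the middle paragraph that $\Aut(V,H)$ acts on the grading group $L_{V,H}$ through the restriction of $\psi'$ rather than through some cocycle twist: this amounts to tracing an automorphism through the equivalence $\Om$, the adjunction $A_{-}$, and the isomorphism $A_{\Om_{V,H}}^\lat\cong\C[\hat{L_{V,H}}]$ of Proposition \ref{isom_lattice}, and observing that on the $H$-grading each of these functors acts by the linear isometry $\psi'$. Once this compatibility is established the rank hypothesis does the rest; without it $r$ fails to be injective, so $D_{V,H}$ could be strictly larger than the subgroup corresponding to $G_{V,H}$.
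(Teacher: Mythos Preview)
Your argument is correct and is exactly the unpacking the paper has in mind: the paper gives no proof beyond the remark ``clear from the definition,'' and what you have written is precisely why it is clear --- both $G_{V,H}$ and $D_{V,H}$ arise as images of $(\psi,\psi')\mapsto\psi'$ (one restricted to $L_{V,H}$, one not), and the full-rank hypothesis makes the restriction map $\mathrm{Stab}_{O(H;\R)}(L_{V,H})\to\Aut(L_{V,H})$ an isomorphism. The compatibility check you flag in the last paragraph is immediate from the definition of a morphism of generalized full vertex algebras, which already requires $\psi(\Om^\al)\subset\Om^{\psi'(\al)}$; the functors $A_-$ and $-^{\lat}$ only restrict the grading, so no cocycle can appear.
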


\subsection{Mass formula}
\label{sec_genus_mass}
In this section, we recall the mass formula \cite{M1}.
An $\mathcal{H}$-vertex algebra $(V,H)$ is called positive if
$(H,(-,-))$ is positive-definite.
We note that since $H$ is positive-definite, $L_{V,H}$ is a positive-definite lattice and
$\Aut (L_{V,H})$ and $G_{V,H}$ are finite groups.

%
%Let $(V,H)$ be a positive $\mathcal{H}$-vertex algebra.
%Then, $V \otimes C_s$ is a full vertex algebra by Proposition \ref{tensor}.
%In fact, it is a full $\mathcal{H}$-vertex algebra $(V\otimes C_s, H\oplus H_\tw, \mathrm{id}\oplus p_s)$.
%By the assumption, the signature of $H\oplus H_\tw$ is $(n+1,1)$,
%where $n =\dim H$.
%\begin{lem}
%If VH pairs $(V,H)$ and $(V',H')$ are in the same genus and $(V,H)$ is positive,
%then $(V',H')$ is positive.
%\end{lem}
%\begin{proof}
%By comparing the signature of $H\oplus H_\tw$ and $H' \oplus H_\tw$, we can verify that $H'$ is positive-definite.
%\end{proof}

Let $(V,H)$ be a good positive-definite $\mathcal{H}$-vertex algebra.
By Lemma \ref{stable_good}, all $\mathcal{H}$-vertex algebras in the genus $\text{mass}(V,H)$
are good and positive-definite.
The mass of $\gen(V,H)$ is a rational number defined by
$$
\text{mass}(V,H) = \sum_{(W,H_W) \in \gen(V,H)} \frac{1}{\# G_{W,H_W}}.
$$
In \cite[Theorem 5.19]{M1}, we prove the following result:
\begin{thm}\label{mass_formula}
Let $(V,H)$ be a simple positive-definite $\mathcal{H}$-vertex algebra with $V_0^0=\C\va$.
If the index of the groups $[\Aut(L_{V,H}\oplus \tw): G_{V\otimes V_\tw, H\oplus H_\tw}]$ is finite,
then 
$$\frac{\text{mass}(V,H)}{\text{mass}(L_{V,H})} = [\Aut(L_{V,H}\oplus \tw): G_{V\otimes V_\tw, H\oplus H_\tw}].$$
\end{thm}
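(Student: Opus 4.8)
The plan is to deduce the formula from a single orbit-counting identity for two groups acting on one set of lattice data, using the genus equivalence of Theorem \ref{genus} to translate the vertex-algebra side into lattice language. Set $M=L_{V,H}\oplus\tw$, an even lattice of signature $(\dim H+1,1)$ (here I use that in our situation $L_{V,H}$ has full rank $\dim H$), and let $X$ be the set of sublattices $T\subset M$ with $T\cong\tw$ that are orthogonal summands. First I would record that $M$ is exactly the lattice attached to $V\otimes V_\tw$: by Lemma \ref{stable_good} the even AH pair $A_{\Om_{V\otimes V_\tw,H\oplus H_\tw}}$ equals $A_{\Om_{V,H}}\otimes\C[\hat\tw]$, so by Proposition \ref{isom_lattice} its associated lattice is $L_{V,H}\oplus\tw=M$. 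Consequently the duality group $D:=G_{V\otimes V_\tw,H\oplus H_\tw}$ is a subgroup of $\Gamma:=\Aut(M)$, of finite index by hypothesis.

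The heart of the argument is to identify the two orbit spaces. On the lattice side, since $\tw$ is unimodular and indefinite, every isometric copy of $\tw$ splits off as a summand, and its complement $L'$ satisfies $L'\oplus\tw\cong M$; by Lemma \ref{def_genus} this means $L'\in\gen(L_{V,H})$. Using Eichler's transitivity for even indefinite lattices of rank $\geq 3$ (as in \cite{KP,M1}), I would show that $T\mapsto T^\perp$ induces a bijection $X/\Gamma\cong\gen(L_{V,H})$ with $\mathrm{Stab}_\Gamma(T)\cong\Aut(\tw)\times\Aut(T^\perp)$. On the vertex-algebra side, a summand $T\in X$ determines a splitting of the generalized full vertex algebra $\Om_{V,H}\otimes\C[\hat\tw]$ off a $\C[\hat\tw]$-factor; by the equivalence of categories (Theorem \ref{equivalence}) together with Theorem \ref{genus} the complement recovers an $\mathcal{H}$-vertex algebra $(W,H_W)\in\gen(V,H)$, and two summands lie in the same $D$-orbit exactly when the corresponding $W$ are isomorphic. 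Thus $X/D\cong\gen(V,H)$, and since the lattice automorphisms of $T$ that lift to automorphisms of $W\otimes V_\tw$ give precisely $\Aut(\tw)\times G_{W,H_W}$ (here $T^\perp=L_{W,H_W}$ has full rank $\dim H$, so $G_{W,H_W}=D_{W,H_W}$ by Lemma \ref{duality_inj}), one obtains $\mathrm{Stab}_D(T)\cong\Aut(\tw)\times G_{W,H_W}$.

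With these identifications the masses become weighted orbit counts: writing $\text{mass}_G(X)=\sum_{[T]\in X/G}|\mathrm{Stab}_G(T)|^{-1}$ for $G\in\{D,\Gamma\}$, the stabilizer computations give $\text{mass}_\Gamma(X)=|\Aut\tw|^{-1}\text{mass}(L_{V,H})$ and $\text{mass}_D(X)=|\Aut\tw|^{-1}\text{mass}(V,H)$. Finally I would invoke the elementary groupoid identity $\text{mass}_D(X)=[\Gamma:D]\,\text{mass}_\Gamma(X)$, valid whenever $D\leq\Gamma$ has finite index and all stabilizers are finite; it follows, orbit by orbit on $X$, from the double-coset formula $\sum_{g\in D\backslash\Gamma/S}|D\cap gSg^{-1}|^{-1}=[\Gamma:D]/|S|$ applied to each finite stabilizer $S=\mathrm{Stab}_\Gamma(T)$ (proved by letting $S$ act on the finite set $D\backslash\Gamma$). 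The common factor $|\Aut\tw|^{-1}$ cancels, yielding $\text{mass}(V,H)=[\Gamma:D]\,\text{mass}(L_{V,H})$, which is the claim.

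The main obstacle is the vertex-algebra orbit identification $X/D\cong\gen(V,H)$ together with its stabilizer computation: this is where Theorem \ref{genus} and the adjoint-functor machinery of \cite{M1} must be combined to guarantee that a lattice-level splitting of $M$ genuinely corresponds to a splitting of the $\mathcal{H}$-vertex algebra, and that no vertex-algebra isomorphisms are lost or created relative to lattice isomorphisms. By contrast, the lattice-side transitivity and the purely combinatorial groupoid identity are comparatively standard.
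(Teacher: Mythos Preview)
The paper does not prove this theorem; it is quoted from \cite{M1}, so there is no in-paper argument to compare against. Your orbit-counting scheme on the set $X$ of $\tw$-summands of $M=L_{V,H}\oplus\tw$, together with the index identity $\text{mass}_D(X)=[\Gamma:D]\,\text{mass}_\Gamma(X)$, is the standard way such mass formulas are established and is almost certainly the argument of \cite{M1}; the combinatorial identity and the lattice-side bijection $X/\Gamma\cong\gen(L_{V,H})$ with stabilizers $\Aut\tw\times\Aut T^\perp$ are correct as you state them.

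Two refinements. First, you assume without justification that $L_{V,H}$ has full rank $\dim H$; this is not a consequence of the stated hypotheses (for example $V=M_H(0)$ satisfies them with $L_{V,H}=0$), so either verify that \cite{M1} imposes it separately or remove the dependence. In fact your appeal to Lemma~\ref{duality_inj} for the stabilizer is unnecessary: once an automorphism of $W\otimes V_\tw$ fixing the standard $\tw$ is shown to lie in $\Aut(W,H_W)\times\Aut(V_\tw,H_\tw)$, its image in $\Aut(M)$ is $G_{W,H_W}\times\Aut\tw$ directly from the definition of $G$, with no detour through $D_{W,H_W}$. Second, your self-identified main obstacle---that an arbitrary $\tw$-summand $T\subset M$ lifts to a splitting of the generalized vertex algebra---is indeed the substantive step, and you should say how it is resolved rather than only flag it. The point is that $T$ sits inside the lattice part of the good AH pair $A_{\Om_{V,H}}\otimes\C[\hat\tw]$, so the twisted group algebra $\C[\hat T]$ embeds as a subalgebra of \emph{invertible} elements (goodness gives no zero divisors, and $T$ being a group gives inverses); left multiplication by these units then makes $\Om_{V,H}\otimes\C[\hat\tw]$ a free rank-one $\C[\hat T]$-module in each $T^\perp$-degree, yielding the tensor decomposition $\Om'\otimes\C[\hat T]$ with $\Om'$ a generalized vertex algebra graded by $T^\perp$. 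This is exactly where simplicity (via Lemma~\ref{good_criterion}) and unimodularity of $\tw$ enter.
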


Thus, all the isomorphism classes of simple positive-definite $\mathcal{H}$-vertex algebras produced by the current-current deformation can be counted by the mass formula.

\subsection{Application to holomorphic VOA of central charge 24}
\label{sec_genus_hol}
A simple regular VOA is said to be holomorphic if it has only one irreducible module.
For a positive-definite even lattice $L$,
the corresponding lattice VOA $V_L$ is holomorphic if and only if $L$ is unimodular.
The genus of lattices and the mass formula are useful tools for classifying positive-definite even unimodular lattices (see \cite{CS}).
The genus of $\mathcal{H}$-vertex algebras, an analogue of the genus of lattices, is also useful for the classification of holomorphic VOAs.
In this section we will briefly describe them and their application to the CFT moduli space (see also introduction 0.6).

%
%The mass formula for lattices is useful when classifying lattices.
%For example, it is known that all the positive-definite even unimodular lattices of fixed rank $n$ form a genus $\genus_{n,0}$.
%It is known that $\genus_{n,0}$ is non-empty if and only if $n$ is a multiple of 8.
%The genus $\genus_{8,0}$ consists of an $E_8$ lattice and $\genus_{16,0}$ consists of two lattices $E_8 \oplus E_8$
%and $D_{16}^+$, which is an extension of the root lattice $D_{16}$.
%Also, $\genus_{24,0}$ consists of $24$ lattices, called Niemeier lattices.
%These can be proved using the mass formula \cite{CS}.
%
%The lattice vertex operator algebras corresponding to positive-definite even unimodular lattices
%have a very good representation-theoretic property called {\it holomorphic}
%(note that ``holomorphic'' here is different from holomorphic in the sense of holomorphic functions in the full/chiral context).
%In general, a vertex operator algebra $V$ is holomorphic if it satisfies the property that any module is completely reducible and every irreducible module is isomorphic to the $V$ itself.
%

Let $V=\bigoplus_{n\geq 0}V_n$ be a holomorphic VOA of central charge $24$ and assume that $V_1\neq 0$.
Then, $V_1$ is a reductive Lie algebra. There are $70$ possible Lie algebra structures on $V_1$, called Schelleken's list \cite{Sc}.
For a Lie algebra of the list, the existence and uniqueness of the corresponding VOA have been proved under a great deal of researches (see for example \cite{Ho,ELMS,CLM,MSc,BLS} for more detail).

In \cite[Proposition 5.36]{M1}, we prove the following result:
\begin{prop}
Let $V$ be a holomorphic VOA of central charge $24$ and $H_V \subset V_1$ be a Cartan subalgebra of the Lie algebra $V_1$.
Then, for any $(W,H_W) \in \gen(V,H_V)$, $W$ is also a holomorphic VOA of central charge $24$.
\end{prop}
\begin{rem}
The assumption in the above proposition that $V$ is a holomorphic VOA of central charge $24$ is unnecessary and in fact holds for any holomorphic $\mathcal{H}$-vertex algebra (see \cite[Theorem 4.5]{M1}).
\end{rem}

Thus, the $70$ holomorphic VOAs are divided into several genera of $\mathcal{H}$-vertex algebras.
In fact, this classification is consistent with the classification of holomorphic VOA of central charge 24 introduced by H\"ohn. H\"ohn proposed the construction of holomorphic VOAs using vertex operator algebras with group like fusion \cite{Ho}, which was proved by Lam \cite{La}.
From a vertex operator algebra with group like fusion, one can construct several holomorphic VOAs which are not isomorphic in general, however, we show that they fall into the same genus of $\mathcal{H}$-vertex algebras \cite{M7}.
%実はこの分類はH\"ohn's が導入した正則型VOAの分類と一致している.
%H\"ohn'sは group like fusion を持つ頂点作用素代数を用いて正則型VOAを構成することを提案し\cite{Ho}、これは Lam によって証明された\cite{La}。
%一つのgroup like fusion を持つ頂点作用素代数からは一般に同型でない複数の正則型VOA を構成できるが、我々はそれが同じ $\mathcal{H}$-vertex algebras の genus に入ることを示した \cite{Mo7}。
%
%\begin{prop}
%The notion of the genera of $\mathcal{H}$-vertex algebras
%coincides with
%the H\"ohn's genera for holomorphic VOAs of central charge 24.
%%That is, let $V$,$W$ be holomorphic VOAs of central charge 24 and
%%assume that $V_1$ and $W_1$ are not $D_{4,12}A_{2,6}$ or $F_{4,6}A_{2,2}$. Then, $V$ and $W$ belong to the same genus of $\mathcal{H}$-vertex algebras if and only if they belong to the same genus in the sense of H\"ohn.
%\end{prop}
%\begin{proof}
%We only give a sketch of the proof.
%Any holomorphic VOA of central charge $24$ with $V_1 \neq 0$ can be obtained as an extension of
%$W \otimes V_L$
%$O(L\oplus \tw) \backslash \mathrm{Iso}\left(A,D_{L \oplus \tw} \right) / \overline{\mathrm{Aut}W}$
%
%\end{proof}
We will not discuss this result in detail here, but together with \cite{M7} and Theorem \ref{genus}, we have:
\begin{prop}
\label{prop_genera_hol}
The notion of the genera of $\mathcal{H}$-vertex algebras coincides with the H\"ohn's genera for holomorphic VOAs of central charge 24.
There are $11$ genera of holomorphic VOAs of central charge $24$ whose $V_1$ is non-zero.
Moreover, if $V$ and $W$ are in the same genus, then there exists a current-current deformation between
the full VOAs $V\otimes C_s$ and $W\otimes C_s$.
\end{prop}

The parameter space of the current-current deformations of $V\otimes C_s$ in Theorem \ref{moduli} is a part of the CFT moduli space of central charge $(25,1)$.
Hence, it is important to determine {\it the duality groups} for these full VOAs.
The duality groups are generally difficult to determine because they are not finite groups, but by using \cite[Lemma 3.18]{BLS} and \cite[Theorem 5.38]{M1} for types A, B, and G in H\"ohn's notation for the genera, the duality groups can be completely determined.

In the case of type A, i.e., $V$ is a lattice VOA $V_L$ associated with an even unimodular lattice $L$ of rank $24$, the duality group is not interesting,
which is the automorphism group of the lattice ${I\hspace{-.1em}I}_{25,1}$, where ${I\hspace{-.1em}I}_{25,1}$ is the unique even unimodular lattice of signature $(25,1)$, and the parameter space is
\begin{align*}
\Aut {I\hspace{-.1em}I}_{25,1} \backslash \mathrm{O}(25,1;\R) / \mathrm{O}(25;\R)\times \mathrm{O}(1;\R).
\end{align*}
In other words, it is the same as the Narain moduli spaces \eqref{eq_Narain_k}.

Let $V_B$ (resp. $V_G$) be a holomorphic VOA of central charge $24$ of type B (resp. of type G),
and let $\genus_{17,1}(2_{\genus}^{+10})$ (resp. $\genus_{9,1}(2_\genus^{+6}3^{-6})$) be the unique even lattice of signature $(17,1)$ (resp. $(9,1)$) with the given discriminant form $2_{\genus}^{+10}$ (resp. $2_\genus^{+6}3^{-6}$) (see \cite{CS} for the notation).
Then, by Lemma \ref{duality_inj}, \cite[Lemma 3.18]{BLS} and the proof of \cite[Theorem 5.38]{M1}, we have:
% with the Lie algebra is isomorphic to $e_8 \oplus b_8$ (resp. $c_5g_2a_1$).
%Let $D_8$ (resp. $E_8$) be the root lattice of the Lie algebra $d_8$ (resp. $e_8$), and set
%$$\genus_{17,1}(2_{\genus}^{+10}) =  \tw \oplus \sqrt{2}E_8 \oplus D_8,$$
%which is an even lattice of signature $(17,1)$.
\begin{thm}
\label{prop_new_grass}
The duality group $D_{V_{B}\otimes C_s}$ and $D_{V_{G}\otimes C_s}$ are isomorphic to the automorphism groups of the lattices $\genus_{17,1}(2_{\genus}^{+10})$ and $\genus_{9,1}(2_\genus^{+6}3^{-6})$, respectively.
In particular, the current-current deformations of 
$V_{B}\otimes C_s$ and $V_{G}\otimes C_s$
are respectively parametrized by
\begin{align*}
\Aut \genus_{17,1}(2_{\genus}^{+10}) \backslash O(17,1;\R) /O(17;\R) \times O(1;\R)
\end{align*}
and
\begin{align*}
\Aut \genus_{9,1}(2_\genus^{+6}3^{-6}) \backslash O(9,1;\R) /O(9;\R) \times O(1;\R).
\end{align*}
\end{thm}

\vspace*{5mm}

\begin{center}
{\large \bf Acknowledgements
}
\end{center}
 \par \bigskip
First of all, I would like to offer my gratitude to my supervisor
Professor Masahito Yamazaki
for his great instruction, support and encouragement. I also wish to express my gratitude to Professor Yuji Tachikawa for valuable discussions and his suggestion to study the toroidal compactification of string theory, which is the starting point of this work,
and to Professor Atsushi Matsuo, who is my former supervisor, for his encouragement and valuable comments.
I would also like to express my thanks to Professor Ching Hung Lam for answering my questions about holomorphic VOAs and to the referee for the valuable comments that helped to improve the paper.

This work was supported by World Premier International Research Center Initiative 
(WPI Initiative), MEXT, Japan. The author was also supported by the Program for Leading Graduate Schools, MEXT, Japan.

\end{document}